\theoremstyle{plain}
\newcommand{\ds }{\ensuremath{\displaystyle}}
\newcommand{\st }{\ensuremath{\scriptstyle}}
\newcommand{\R }{\ensuremath{\mathbb R}}
\newcommand{\C }{\ensuremath{\mathbb C}}
\newcommand{\Q }{\ensuremath{\mathbb Q}}
\newcommand{\Z }{\ensuremath{\mathbb Z}}
\renewcommand{\P }{\ensuremath{\mathbb P}}
\newcommand{\HHH }{\ensuremath{\mathbb H}}
\DeclareMathOperator{\bl}{Bl}
\DeclareMathOperator{\ch}{ch}
\DeclareMathOperator{\reg}{reg}
\DeclareMathOperator{\supp}{supp}
\DeclareMathOperator{\td}{td}
\DeclareMathOperator{\tor}{Tor}
\DeclareMathOperator{\ext}{Ext}
\DeclareMathOperator{\fit}{Fitt}
\DeclareMathOperator{\pr}{pr}
\DeclareMathOperator{\torr}{tors}
\DeclareMathOperator{\gr}{Gr}
\DeclareMathOperator{\coh}{coh}
\DeclareMathOperator{\tors}{tors}
\DeclareMathOperator{\ran}{rank}
\newcommand{\dd }{\ensuremath{\mathcal{D}}}
\newcommand{\oo }{\ensuremath{\mathcal{O}}}
\newcommand{\hh }{\ensuremath{\mathcal{H}}}
\newcommand{\ff }{\ensuremath{\mathcal{F}}}
\newcommand{\kk }{\ensuremath{\mathcal{K}}}
\newcommand{\ii }{\ensuremath{\mathcal{I}}}
\newcommand{\cc }{\ensuremath{\mathcal{C}}}
\newcommand{\g }{\ensuremath{\mathcal{G}}}
\newcommand{\ttt }{\ensuremath{\mathcal{T}}}
\newcommand{\nn }{\ensuremath{\mathcal{N}}}
\newcommand{\eee }{\ensuremath{\mathcal{E}}}
\newcommand{\LL }{\ensuremath{\mathcal{L}}}
\newcommand{\zz}{\ensuremath{\mathcal{Z}}}
\newcommand{\qq}{\ensuremath{\mathcal{Q}}}
\newtheorem{theorem}{Theorem}[section]
\newtheorem{lemma}[theorem]{Lemma}
\newtheorem{proposition}[theorem]{Proposition}
\newtheorem{corollary}[theorem]{Corollary}
\theoremstyle{definition}\newtheorem{notation}[theorem]{Notation}}
\theoremstyle{definition}}
\theoremstyle{definition}}
\theoremstyle{definition}\newtheorem{example}[theorem]{Example}}
\theoremstyle{definition}\newtheorem{definition}[theorem]{Definition}}
\theoremstyle{definition}}
\theoremstyle{definition}\newtheorem{remark}[theorem]{Remark}}
\address{Institut de Math\'{e}matiques de Jussieu, UMR 7586\\
Case 247\\ Universit\'{e} Pierre et Marie Curie\\
4, place Jussieu\\
F-75252 Paris Cedex 05\\
France}
\email{jgrivaux@math.jussieu.fr}
\newcommand{\tore}[3]{\tor_{#1}(#2,#3)}
\newcommand{\bwe}{\ensuremath{\bigwedge}}
\newcommand{\we}{\ensuremath{\wedge}}
\newcommand{\oti}{\ensuremath{\otimes}}
\newcommand{\he}{^{\vphantom{*}} }
\newcommand{\be}{_{\vphantom{i}} }
\newcommand{\tix }{\ensuremath{\ti{X}}}
\newcommand{\tiy }{\ensuremath{\ti{Y}}}
\newcommand{\ba}[1]{\ensuremath{\overline{#1}}}
\newcommand{\ti }[1]{\ensuremath{\widetilde{#1}}}
\newcommand{\rb }{\ensuremath{\raisebox}}
\newcommand{\tim }{\ensuremath{\times}}
\newcommand{\oun}{\ensuremath{\mathcal{O}(1)}}
\newcommand{\ee }{\ensuremath{^{\, *}}}
\newcommand{\iz }{\ensuremath{_{Z}}}
\newcommand{\bh }{\ensuremath{\ba{\ch}}}
\newcommand{\ie }{\ensuremath{i_{E}}}
\newcommand{\tid }{\ensuremath{\ti{D}}}
\newcommand{\pti }[1]{\ensuremath{{ _{^{\, \centerdot}_{\, \, \scriptstyle{#1}}}}}}
\newcommand{\bop }{\ensuremath{\bigoplus\limits}}
\newcommand{\suq }{\ensuremath{\subseteq}}
\newcommand{\pz }[1]{\ensuremath
{\raisebox{0. ex}{$\centerdot$}\raisebox{-1ex}{$\scriptstyle{#1}$}}\, }
\newcommand{\chtz }{\ensuremath{\ch_{Z}}}
\newcommand{\chp }{\ensuremath{\ch_{p}}}
\newcommand{\hdp }{\ensuremath{H^{2p}_{D}}}
\newcommand{\pe }{\ensuremath{^{\, !}}}
\newcommand{\kan }{\ensuremath{G}}
\newcommand{\dpp }{\ensuremath{D'}}
\newcommand{\cro }[1]{[#1]}
\def\apl#1#2#3{#1\mkern -4 mu:\mkern - 8 mu
\xymatrix{#2\!\ar[r]&\!#3}
}
\def\aplpt#1#2#3#4{#1\mkern -4 mu:\mkern - 8 mu
\xymatrix{#2\!\ar[r]&\!#3#4}
}
\def\sutrgd#1#2#3{
\xymatrix{
0\ar[r]&#1\ar[r]&#2\ar[r]&#3\ar[r]&0
}
}
\def\sutrgdpt#1#2#3#4{
\xymatrix{
0\ar[r]&#1\ar[r]&#2\ar[r]&#3\ar[r]&0#4
}
}
\def\sutrgpt#1#2#3#4{
\xymatrix{
0\ar[r]&#1\ar[r]&#2\ar[r]&#3#4
}
}
\def\sutr#1#2#3{
\xymatrix{
#1\ar[r]&#2\ar[r]&#3
}
}
\def\sutrd#1#2#3{
\xymatrix{
#1\ar[r]&#2\ar[r]&#3\ar[r]&0
}
}
\def\flcourte{\xymatrix@C=10pt{
\ar[r]&
}}
\def\flgd#1#2{\xymatrix{#1\!
\ar[r]&\!#2
}}
\def\flcourtegd#1#2{\xymatrix@C=15pt{\!\!#1\!
\ar[r]&\!#2\!\!
}}
\def\flgdin#1#2{\xymatrix@C=3ex{\!\!\scriptstyle{#1}
\ar[r]&\scriptstyle{#2}\!\!\!
}}
\def\fldouble{\xymatrix@1{
\ar@{->>}[r]&
}}
\def\fle#1#2{
\xymatrix@1{
#1
\ar[r]&#2
}}
\def\flex#1#2#3{
{\xymatrix@1{
#1
\ar[r]^{#3}&#2
}}
}
\def\fledouble#1#2{
{\xymatrix@1{
#1
\ar@{->>}[r]&{#2}
}}
}
\def\flexdouble#1#2#3{
{\xymatrix@1{
#1
\ar@{->>}[r]^{#3}&{#2}
}}
}
\def\diagca#1#2#3#4#5#6#7#8{\xymatrix@1{
#1
\ar[d]_{#6}\ar[r]_{#5}&#2\ar[d]_{#7}\\
#3
\ar[r]_{#8}&#4
}}
\def\sutrois#1#2#3{
{\xymatrix@1{
#1
\ar[r]&#2
\ar[r]&#3
}}
}
\def\sutroiszerogdprime#1#2#3{
{\xymatrix@1{
0
\ar@<-0.5mm>[r]&#1
\ar@<-0.5mm>[r]&#2
\ar@<-0.5mm>[r]&#3
\ar@<-0.5mm>[r]&0
}}
}
\def\fleprime#1#2{
\xymatrix@1{
#1
\ar[r]&#2
}}
\def\sutroisnom#1#2#3#4#5{
{\xymatrix@1{
#1
\ar[r]^{#4}&#2
\ar[r]^{#5}&#3
}}
}
\def\sutroiszerogd#1#2#3{
{\xymatrix@1{
0
\ar[r]&#1
\ar[r]&#2
\ar[r]&#3
\ar[r]&0
}}
}
\def\strgdexp#1#2#3#4#5#6{
{\xymatrix@1{
0
\ar[r]&\rb{#2ex}{$#1$}
\ar[r]&\rb{#4ex}{$#3$}
\ar[r]&\rb{#6ex}{$#5$}
\ar[r]&0
}}
}
\def\sutroiszerog#1#2#3{
{\xymatrix@1{
0
\ar[r]&#1
\ar[r]&#2
\ar[r]&#3
}}
}
\def\suxtroiszerogd#1#2#3#4#5{
{\xymatrix@1{
0
\ar[r]&#1
\ar[r]^{#4}&#2
\ar[r]^{#5}&#3
\ar[r]&0
}}
}
\def\suquatre#1#2#3#4{
{\xymatrix@1{
#1
\ar[r]&#2
\ar[r]&#3
\ar[r]&#4
}}
}
\def\suxquatre#1#2#3#4#5#6#7{
{\xymatrix@1{
#1
\ar[r]^{#5}&#2
\ar[r]^{#6}&#3
\ar[r]^{#7}&#4
}}
}
\def\sucinq#1#2#3#4#5{
{\xymatrix@1{
#1
\ar[r]&#2
\ar[r]&#3
\ar[r]&#4
\ar[r]&#5
}}
}
\def\suxcinq#1#2#3#4#5#6#7#8#9{
{\xymatrix@1{
#1
\ar[r]^{#6}&#2
\ar[r]^{#7}&#3
\ar[r]^{#8}&#4
\ar[r]^{#9}&#5
}}
}
\DeclareMathOperator{\spa}{span}
\DeclareMathOperator{\id}{id}
\DeclareMathOperator{\im}{Im}
\newcommand{\Mg }{\ensuremath{\mathfrak{M}}}
\title{Chern classes in Deligne cohomology for coherent analytic sheaves}
\author{Julien Grivaux}
\begin{document}
\begin{abstract}
In this article, we construct Chern classes in rational Deligne
cohomology for coherent sheaves on a smooth complex
compact manifold. We
prove that these classes
satisfy the functoriality property under pullbacks,
the Whitney formula and
the Grothendieck-Riemann-Roch theorem
for projective morphisms between smooth
complex compact manifolds.
\end{abstract}
\maketitle
\tableofcontents

\section{Introduction}
Let $X$ be a smooth differentiable manifold and $E$ be a complex vector
bundle of rank $r$
on $X$. The Chern-Weil theory (see \cite[Ch.\!\! 3 \S\,3]{GrHa}) constructs
classes $c_{i}(E)^{\textrm{top}}$, $1\leq i\leq r$,
with values in
the de Rham cohomology $H^{2i}(X,\R)$, which generalize
the first Chern class of a line bundle in
$H^{2}(X,\Z)$ obtained by the exponential exact sequence.
These classes are compatible with pullbacks under smooth
morphisms and verify the Whitney sum formula
\[
c_{k}(E\oplus F)^{\textrm{top}}=
\sum _{i+j=k}c_{i}(E)^{\textrm{top}}\, c_{j}(F)
^{\textrm{top}}.
\]
There exist more refined ways of defining $c_{i}(E)^{\textrm{top}}$
in $H^{2i}(X,\Z)$. The first method is due to Chow (see the introduction
of \cite{Gr1}). The idea is to define explicitly the Chern classes of
the universal bundles of the grassmannians and to write any complex
vector bundle as a quotient of a trivial vector bundle. Of
course, computations have to be done on the grassmannians to
check the compatibilities. Note that in the holomorphic or in the algebraic
context, a vector bundle is not in general a quotient of a trivial
vector
bundle. Nevertheless, if $X$ is projective, this is true after tensorising by
a sufficiently high power of an ample line bundle and the construction
can be adapted (see \cite{Bry}).
\par\medskip
A more intrinsic construction is the splitting method, introduced by
Grothendieck in \cite{Gr1}.
Let us briefly recall how it works.
By the Leray-Hirsh theorem, we know
that
$H\ee\be(\P(E),\Z)$ is a free module over $H\ee(X,\Z)$ with basis
$1,\, \alpha ,\dots,\alpha ^{r-1} $, where $\alpha $
is the opposite of the first Chern
class of the relative
Hopf bundle on $\P(E)$. Now the Chern classes
of $E$ are
uniquely defined by the relation
\[
\alpha ^{r}+p\ee c_{1}(E)^{\textrm{top}}\, \alpha ^{r-1}
+\cdots +
p\ee c_{r-1}^{\textrm{top}}(E)\,\alpha
+p\ee c_{r}\he(E)^{\textrm{top}}=0.
\]
(see Grothendieck \cite{Gr1}, Voisin \cite[Ch.\!\! 11 \S\,2]{Vo1}, and Zucker
 \cite[\S\,1]{Zuc}).
\par\medskip
The splitting method works amazingly well in various contexts, provided
that we have
\begin{enumerate}
  \item [--] the definition of the first Chern class of a line bundle,

  \item [--] a structure theorem for the cohomology
  ring of a projective
  bundle considered as a module over the cohomology ring of the base.
\end{enumerate}
Let us now examine the algebraic case. Let $X$ be a smooth algebraic variety
  over a field $k$ of characteristic zero, and $E$ be an
  algebraic bundle on $X$. Then
  the splitting principle allows to define $c_{i}(E)^{\textrm{alg}}$
\begin{enumerate}
  \item [--] in the Chow ring $CH^{i}(X)$ if $X$ is quasi-projective,

  \item [--] in the algebraic de Rham cohomology group $H^{2i}_{\textrm{DR}}
  (X/k)$.
\end{enumerate}
Suppose now that $k=\C$. Then Grothendieck's comparison theorem
(see \cite{Gr3}) says that we have a ca\-no\-nical isomorphism
between
$H_{DR}^{2i}(X/\C\, )$ and $H^{2i}(X^{\textrm{an}},\C\, )$.
It is important to notice that the class
$c_{i}(E)^{\textrm{alg}}$ is mapped to $(2\pi \sqrt{-1})^{i}
c_{i}(E)^{\textrm{top}}$ by this morphism.
\par\medskip
Next, we consider the problem in the abstract analytic setting.
Let $X$ be a smooth complex analytic manifold and
$E$ be a holomorphic vector bundle on $X$.
We denote by
$\mathcal{A}^{p,q}_{\C}(X)$ the space of complex differential forms
of type $(p,q)$ on $X$ and we put $\mathcal{A}_{\C}(X)=
\bigoplus_{p,q}\mathcal{A}^{p,q}_{\C}(X)$. The Hodge filtration on $\mathcal{A}
_{\C}(X)$ is defined by
$F^{i}\mathcal{A}_{\C}(X)=\bigoplus_{p\geq i, q}
\mathcal{A}_{\C}^{p,q}(X)$. It induces a filtration $F^{i}H^{k}(X,\C\, )$
on $H^{k}(X,\C\, )$. For a detailed exposition see \cite[Ch.\!\! 7 and 8]{Vo1}. Let
$\Omega _{X}^{\mkern 2 mu{\ds\bullet}\vphantom{\geqslant i}}$
be the holomorphic de Rham complex on $X$. This is a complex of locally
free sheaves.
We can consider the analytic de Rham cohomology
$\HHH^{k+i}(X,\Omega_{X}^{{\ds\bullet}\geqslant i})$ which is the
hypercohomology of the truncated de Rham complex.
The maps of complexes
$\flgd{\Omega _{X}^{{\ds\bullet}\geqslant  i}}
{\Omega _{X}^{\mkern 2 mu{\ds\bullet}\vphantom{\geqslant i}}}$
and
$\flgd{\Omega _{X}^{{\ds\bullet}\geqslant  i}}{\Omega _{X}^{ \,  i}[-i]}$
give two maps
$\flgd{\HHH^{k+i}(X,\Omega _{X}^{{\ds\bullet}\geqslant  i})}
{F^{i}H^{k}(X,\C\, )}$
and
$\flgd{\HHH^{k+i}(X,\Omega _{X}^{{\ds\bullet}\geqslant  i})}
{H^{k}(X,\Omega _{X}^{ \,  i}).}$
In the compact K\"{a}hler case, the first map is an isomorphism, but it is no
longer true in the general case.
We will denote by
$H^{p,q}\be(X)$ the cohomology classes in $H^{p+q}(X,\C\, )$ which admit a
representative in $\mathcal{A}^{p,q}\be(X)$.
\par\medskip
If $E$ is endowed with the Chern connection associated to a
hermitian metric, the de Rham representative of $c_{i}(E)^{\textrm{top}}$
obtained by Chern-Weil theory
is of type $(i,i)$ and is unique modulo
$d(F^{i}\be\mathcal{A}_{X}^{2i-1})$.
This allows to define $c_{i}(E)$ in
$\HHH^{2i}\be(X,
\Omega _{X}^{{\ds\bullet}\geqslant  i})$, and then in $H^{i,i}(X)$
and $H^{i}(X,\Omega _{X}^{i})$. The notations for
these three classes will be
$c_{i}(E)^{\textrm{an}}$,
$c_{i}(E)^{\textrm{hodge}}$ and
$c_{i}(E)^{\textrm{dolb}}$.
\par\medskip
If we forget the holomorphic structure of $E$, we can consider its
topological Chern classes $c_{i}(E)^{\textrm{top}}$ in the Betti
cohomology groups
$H^{2i}(X,\Z)$. The image of $c_{i}(E)^{\textrm{top}}$ in
$H^{2i}(X,\C\, )$ is $c_{i}(E)^{\textrm{hodge}}$.
Thus $c_{i}(E)^{\textrm{top}}$ is an integral cohomology class whose
image in $H^{2i}(X,\C\, )$ lies in $F^{i}H^{2i}(X,\C\, )$. Such classes are
called \textit{Hodge classes} of weight $2i$.

The Chern classes
of $E$ in $H^{2i}(X,\Z)$, $F^{i}H^{2i}(X,\C\, )$, $\HHH^{2i}\bigl(X,
\Omega _{X}^{{\ds\bullet} \geqslant i }\bigr)$ and $H^{i}
\bigl(X,\Omega _{X}^{\, i}\bigr)$
appear
in the following diagram:
\[
\xymatrix@1@C=30pt@R=30pt
{
{}&
H^{2i}(X,\Z)\ni c_{i}(E)^{\textrm{{top}}}\ar[d]\\
c_{i}(E)^{\textrm{an}}\in
\HHH^{2i}(X,\Omega _{X}^{{\ds\bullet}\geqslant  i})\ar[r]\ar[d]
&
F^{i}H^{2i}(X,\C\, )\ni
c_{i}(E)^{\textrm{hodge}}\\
c_{i}(E)^{\textrm{dolb}}\in H^{i}(X,\Omega _{X}^{\, i})&
}
\]
\par\medskip
This means in particular that these different classes are compatible with the
Hodge decomposition in the compact K\"{a}hler case, and in general via the
$\flgd{\textrm{\!Hodge}}{\textrm{de Rham\!}}$ spectral sequence.
Furthermore, the knowledge of $c_{i}(E)^{\textrm{an}}$ allows to
obtain the two other classes $c_{i}(E)^{\textrm{dolb}}$ and
$c_{i}(E)^{\textrm{hodge}}$, but the converse is not true. Thus $c_{i}
(E)^{\textrm{an}}$ contains more information (except torsion) than the
other classes in the diagram.
\par\medskip
Recall now the Deligne cohomology groups $H^{p}_{D}(X,\Z(q))$
(see \cite[\S\,1]{EsVi} and Section \ref{SubSectionDeux}). We will be mainly
interested in the cohomology groups $H^{2i}_{D}(X,\Z(i))$. They admit
natural maps to $H^{2i}(X,\Z)$ and to
${\HHH^{2i}(X,\Omega _{X}^{ {\ds\bullet}\geqslant i})}$, which are
compatible with the diagram above. Furthermore, there is an exact
sequence
$\xymatrix{
0\ar[r]&\HHH^{2i-1}(X,
  \Omega _{X}^{  {\ds\bullet}\leqslant i-1})\, /
  _{\ds H^{2i-1}(X,\Z)}\ar[r]& H^{2i}_{D}(X,\Z(i))\ar[r]&H^{2i}(X,\Z),
}$ (see \cite{Vo1} and Proposition
\ref{PropositionUnComplementQuatreUnArt1} (i)). Thus a Deligne class is
a strong refinement of any of the above mentioned classes. The splitting
method works for the construction of $c_{i}(E)$ in
$H^{2i}_{D}(X,\Z(i))$, as explained in \cite[\S\,4]{Zuc}, and
\cite[\S\,8]{EsVi}.
These Chern classes, as all the others constructed above, satisfy the
following properties:
\begin{enumerate}
  \item [--] they are functorial with respect to pullbacks.

  \item [--] if $\sutrgd{E}{F}{G}$ is an exact sequence of vector
  bundles, then for all $i$
  \[
c_{i}(F)=\sum _{p+q=i}c_{p}(E)\,  c_{q}(G).
  \]
\end{enumerate}
The last property means that the total Chern class
$c=1+c_{1}+\cdots +c_{n}$
is
defined on the Grothendieck group $K(X)$ of holomorphic vector bundles
on $X$ and satisfies the additivity property
$c(x+x')=c(x)c(x')$.
\par\medskip
Now, what happens if we work with coherent sheaves instead of locally
free ones? If $X$ is quasi-projective and $\ff$ is an algebraic
coherent sheaf on
$X$, there exists a locally free resolution
\[
\xymatrix{
0\ar[r]&E_{1}\ar[r]&\cdots\ar[r]&\ar[r]E_{N}&\ff\ar[r]&0.
}
\]
(This is still true under the weaker assumption
that $X$ is a regular
separated scheme over $\C$ by Kleiman's lemma; see
\cite[II, 2.2.7.1]{SGA6}).
The total Chern class of $\ff$ is defined by
\[
c(\ff)=
c(E_{N}\he)\, c(E_{N-1}\he)^{-1}c(E_{N-2}\he)\dots
\]
The class $c(\ff)$ does not
depend on the locally free resolution (see \cite[\S\,4 and 6]{BoSe}).
More formally,
if $G(X)$ is the Grothendieck group of coherent sheaves on $X$, the
canonical map $\, \, \apl{\iota }{K(X)}{G(X)}$ is an isomorphism. The inverse
is given by $\flgd{[\ff]}{[E_{N}\he]-[E_{N-1}\he]+[E_{N-2}\he]-\dots }$
\par\medskip
In what follows, we consider the complex analytic case.
The problem of the existence of global locally free resolutions in
the analytic case has been opened for a long time. For
smooth complex
surfaces, such resolutions always exist by
\cite{Sch}. More recently, Schr\"{o}er and Vezzosi proved
in \cite {ScVe} the
same result for singular separated surfaces.
Nevertheless, for varieties of dimension at least $3$, a
negative answer to the question is provided by the following
counterexample of Voisin:
\par\medskip
\textbf{Theorem} \cite{Vo2}
\textit{On any generic complex torus of dimension greater than $3$,
the ideal sheaf of a point
does not admit a global locally free resolution.}
\par\medskip
Worse than that, even if $\ff$ admits a globally free resolution
$E^{\mkern 2 mu\ds\bullet}$, the method of Borel and Serre \cite{BoSe} does not
prove that $c(E_{N}\he)\, c(E_{N-1}\he)^{-1}c(E_{N-2}\he)\dots $
is independent of $E^{\mkern 2 mu\ds\bullet}$. In fact, the crucial point in their
argument is that \textit{every} coherent sheaf should have a resolution.
\par\medskip
Nevertheless Borel-Serre's method applies in a weaker context if we
consider Chern classes in $H\ee(X,\Z)$. Indeed, if $\ff$ is a coherent
sheaf on $X$ and $\mathcal{C}_{X}^{\omega }$ is the sheaf of
real-analytic functions on $X$, then $\ff\oti_{\oo_{X}\he}\mathcal{C}
_{X}^{\omega }$ admits a locally free real-analytic resolution by the
Grauert vanishing theorem \cite{Gra}. We obtain
by this method topological Chern classes
$c_{i}(\ff)^{\textrm{top}}$ in $H^{2i}(X,\Z)$.
\par\medskip
It is natural to
require that $c_{i}$ should take its values in more refined rings depending on
the holomorphic structure of $\ff$ and $X$. Such a construction has been
carried out by Atiyah for the Dolbeault cohomology ring in
\cite{Ati}. Let us briefly describe his method: the exact sequence
\[
\sutrgd{\ff\oti\Omega _{X}^{1}}{\wp_{X}^{1}(\ff)}{\ff}
\]
of principal parts of $\ff$ of order one
(see \cite[\S\,16.7]{EGA})
gives an extension class
(the Atiyah class) $a(\ff)$ in $\ext_{\oo_{X}\he}^{1}(\ff,\ff\oti\Omega
_{X}^{1})$. Then $c_{p}(\ff)$ is the trace of the $p$-th
Yoneda product of $a(\ff)$. These classes are used by O'Brian, Toledo and
Tong in \cite{OBToTo1} and \cite{OBToTo2} to prove the
Grothendieck-Riemann-Roch theorem on abstract manifolds in the Hodge
ring.
The Atiyah class has been constructed by Grothendieck and Illusie for
perfect complexes (see \cite[Ch.\!\! 5]{Il}).
Nevertheless, if $X$ is not a K\"{a}hler manifold,
there is no good relation between $H^{p}(X,\Omega _{X}^{\, p})$
and $\HHH^{2p}(X,\Omega _{X}^{  {\ds\bullet}\geqslant p})$, as the
Fr\"{o}licher spectral sequence may not degenerate at
$E_{1}$ for example.
\par\medskip
In this context, the most satisfactory construction was obtained
by Green in his unpublished thesis (see \cite{Gre} and
\cite{ToTo}). He proved the following theorem
\par\medskip
\textbf{Theorem 1} \cite{Gre}, \cite{ToTo}
\textit{Let $\ff$ be a coherent sheaf on $X$. Then there exist
Chern classes $c_{i}(\ff)^{\emph{Gr}}$ in
the analytic de Rham cohomology groups
$\HHH^{2i}(X,\Omega _{X}^{  {\ds\bullet}\geqslant i})$ which
are compatible with Atiyah Chern classes and topological Chern
classes.}
\par\medskip
In order to avoid the problem of nonexistence of locally free
resolutions, he introduced the notion of a
\textit{simplicial resolution} by simplicial vector bundles with
respect to a given covering. Green's basic result is the
following:
\par\medskip
\textbf{Theorem 2} \cite{Gre}, \cite{ToTo}
\textit{Any coherent sheaf on a smooth complex compact manifold
admits a finite simplicial resolution by simplicial holomorphic
vector bundles.}
\par\medskip
The next step in order to obtain Theorem 1 above, is to define
the Chern classes of a simplicial vector bundle. For this, Green
uses Bott's construction (see \cite{Bot}) which
can be adapted to the
simplicial context. Though, it is not clear how to extend
Green's method to Deligne cohomology.
\par\medskip
Let us now state the main result of this article:
\begin{theorem}\label{MainTheoremArt1}
Let $X$ be a complex compact manifold. For every coherent sheaf $\ff$ on
$X$, we can define classes $c_{p}(\ff)$
and $\ch_{p}(\ff)$ in $H_{\textrm{D}}^{2p}(X,\Q(p))
$
such that:
\begin{enumerate}\label{MainTheorem}
  \item [(i)] For every exact sequence
  $\sutrgd{\ff}{\g}{\hh}$ of coherent sheaves on $X$, we have $c(\g)=
  c(\ff)c(\hh)$ and $\ch(\g)=\ch(\ff)+\ch(\hh)$. The total Chern class
  $\apl{c}{G(X)}{H\ee_{D}(X,\Q)^{\tim}}$ is a group morphism and the
  Chern character $\apl{\ch}{G(X)}{H\ee_{D}(X,\Q)}$ is a ring morphism.

  \item [(ii)] If $\apl{f}{X}{Y}$
  is holomorphic and $y$ is an element of $
  \kan(Y)$, then $c(f\pe \be y)=f\ee c(y)$, where
  $\apl{f\pe}{\kan(Y)}{\kan(X)}$
  is the pullback in analytic \mbox{$K$-theory}.

  \item [(iii)] If $\eee$ is a locally free sheaf,
  then $c(\eee)$ is the usual Chern
  class in rational Deligne cohomology.

  \item [(iv)] If $Z$ is a smooth closed submanifold of $X$ and $\ff$ a
  coherent sheaf on $Z$, the Grothendieck-Riemann-Roch (GRR)
  theorem is valid
  for $\bigl(i_{Z}\he,\ff\bigr)$, namely
  \[
\ch\bigl(i_{Z*}\he\ff\bigr)=i_{Z*}\he\Bigl(\ch(\ff)\, \td
  \bigl(N_{Z/X}\he\bigr)^{-1}\Bigr).
\]
  \item [(v)] If $\apl{f}{X}{Y}$ is a projective morphism between smooth
  complex compact manifolds, for every coherent sheaf $\ff$ on $X$, we
  have the Grothendieck-Riemann-Roch theorem
  \[
\ch\bigl(f_{!}\he[\ff]\bigr)\td(Y)=f_{*}\he[\ch(\ff)]\td(X).
\]
  \end{enumerate}
\end{theorem}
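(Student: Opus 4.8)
The plan is to build the classes via a reduction to the locally free case, using the existence of finite resolutions after a suitable modification of $X$, rather than Green's simplicial machinery. The starting point is the observation that although a coherent sheaf $\ff$ on $X$ need not admit a global locally free resolution, one can always find a proper modification $\apl{\tau}{\tix}{X}$ (a finite sequence of blow-ups along smooth centers) such that $L\tau\ee\ff$, or rather a suitable coherent modification of $\tau\ee\ff$, becomes globally resolvable — or, better, such that $\ff$ can be written in $K$-theory, after pushing forward, in terms of sheaves supported on smooth submanifolds together with locally free sheaves. In fact the cleanest route is dévissage: by repeatedly using exact sequences and part (i) (additivity), and by Hironaka-type resolution of the support, one reduces the definition of $c_p(\ff)$ and $\ch_p(\ff)$ to the case $\ff = i_{Z*}\eee$ with $Z\subset X$ smooth and $\eee$ locally free on $Z$. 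For such sheaves one is forced to \emph{define} $\ch(i_{Z*}\eee)$ by the formula in (iv), namely $i_{Z*}\bigl(\ch(\eee)\td(N_{Z/X})^{-1}\bigr)$, using the Gysin map $i_{Z*}$ in Deligne cohomology. The first block of work is therefore: (a) construct the Gysin pushforward $i_{Z*}\colon H^{*}_D(Z,\Q(*))\to H^{*+2c}_D(X,\Q(*+c))$ for a smooth closed embedding of codimension $c$, with its expected projection formula, functoriality and compatibility with the cycle class; (b) prove that the resulting assignment on generators of $G(X)$ is well defined, i.e. independent of all the choices made in the dévissage. Step (b) is where the real content lies.

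Concretely, well-definedness will be proved by showing the formula is forced and consistent on the relations in $G(X)$. There are two families of relations to check. The first: if $\eee$ is locally free on $X$ already, the dévissage must return the classical Chern class of $\eee$ — this is (iii), and it follows once one checks the self-intersection / excess-intersection behaviour of $i_{Z*}$ against the classical splitting-principle classes. The second, and genuinely delicate, family: given two different smooth blow-ups $\tix\to X$ resolving the situation, the two resulting expressions must agree in $H^{*}_D(X,\Q(*))$. This is handled by the key case $\apl{\tau}{\bl_Z X}{X}$ of a single blow-up along a smooth center $Z$ with exceptional divisor $E\xrightarrow{q} Z$: one writes down the blow-up formula for $K$-theory ($K(\bl_Z X)\cong \tau\pe K(X)\oplus \bigoplus q\pe K(Z)$-type decomposition, à la SGA\,6 / Thomason) and the parallel blow-up formula for Deligne cohomology, and then checks that the two Gysin/pushforward prescriptions match term by term. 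Because Deligne cohomology sits in the exact sequence $0\to \HHH^{2i-1}(X,\Omega_X^{\ds\bullet\leqslant i-1})/H^{2i-1}(X,\Z)\to H^{2i}_D(X,\Q(i))\to H^{2i}(X,\Q)$ recalled in the introduction, one can check equalities by checking them in Betti cohomology (where Borel–Serre via the real-analytic resolution of $\ff\oti\cc_X^\omega$ applies and gives the topological classes of the theorem's introductory discussion) and separately in the "intermediate Jacobian" part; the compatibility with $c_i(\ff)^{\mathrm{top}}$ pins down the Betti component, and the Hodge-theoretic part is controlled by the analogous de Rham classes of Green (Theorem 1 of the excerpt). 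Functoriality (ii) then follows from the functoriality of $i_{Z*}$, of blow-up formulas, and of the classical Chern classes, using that pullback along $f\colon X\to Y$ can be put in a commutative diagram with a blow-up resolving $\ff$ on $Y$ and its pullback on $X$ (possibly after further blow-ups of $X$, by flattening $f$).

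With (i), (ii), (iii), (iv) in place, part (v) — GRR for a projective morphism $\apl{f}{X}{Y}$ — is deduced in the standard way: factor $f$ as a closed embedding $\apl{j}{X}{Y\times\P^N}$ followed by the projection $\apl{\pr}{Y\times\P^N}{Y}$. For $\pr$ one reduces (by the projective bundle formula in $K$-theory and in Deligne cohomology, plus multiplicativity of $\td$) to GRR for $Y\times\P^N\to Y$ with the structure sheaves $\oo(k)$, which is a direct computation. For the closed embedding $j$, GRR is exactly statement (iv) combined with additivity (i) and the fact — again provable by dévissage to the $i_{Z*}\eee$ generators — that every class in $G(X)$ is a $\Z$-linear combination, in $G(Y\times\P^N)$ after pushforward, of such generators; here one needs the compatibility of $i_{Z*}$ with composition of embeddings and the multiplicativity $\td(N_{Z/X})\td(N_{X/W}|_Z)=\td(N_{Z/W})$, i.e. the excess term behaves as in the algebraic case. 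The main obstacle throughout is step (b): proving that the dévissage-defined classes are independent of the chosen resolution. Everything else (construction of Gysin maps, projection and blow-up formulas in Deligne cohomology, the $\P^N$ computation, the factorization argument for (v)) is standard once the foundational compatibilities are established; the crux is the blow-up comparison identity relating the $K$-theoretic blow-up decomposition to the cohomological one, compatibly with $i_{*}$ and $\td$, and it is there that the argument must be carried out with care, using the Betti realization to reduce to already-known (Borel–Serre and Green) statements wherever possible.
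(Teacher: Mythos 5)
Your overall architecture --- reduce to ``locally free plus torsion'' via a modification of $X$, define the classes of torsion sheaves by forcing GRR for immersions, and deduce (v) from (iv) by factoring through $Y\times\P^{N}$ --- is the same as the paper's. However, two of your key steps do not work as stated.

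First, there is a circularity around (i). You propose to ``repeatedly use exact sequences and part (i) (additivity)'' to reduce the definition to generators of the form $i_{Z*}\eee$. Additivity over arbitrary short exact sequences of coherent sheaves is not available at definition time: it is precisely the Whitney formula, and it is the hardest statement in the theorem. The paper defines $\ch$ using only the specific exact sequence $\sutrgd{\ttt}{\sigma\ee\ff}{\eee}$ coming from the maximal locally free quotient on a modification (where the class is \emph{defined} as the sum), and must then prove $(\textrm{W}_{n})$ afterwards; that proof occupies all of Section \ref{SectionWhitneyFormulaArt1} and requires a deformation over $\P^{1}$ (the sheaf $\ti\g$ interpolating between $\g$ and $\ff\oplus\hh$, pulled back to the deformation to the normal cone $M_{Y/X}\he$) combined with the homotopy invariance of Deligne cohomology. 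Your proposal contains no substitute for this argument.

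Second, and more seriously, your well-definedness strategy rests on a false premise. You propose to verify identities in $H^{2i}_{D}(X,\Q(i))$ by checking them separately in Betti cohomology (via Borel--Serre and real-analytic resolutions) and in the analytic de Rham groups (via Green's classes). But the natural map from $H^{2i}_{D}(X,\Z(i))$ to $H^{2i}(X,\Z)\oplus\HHH^{2i}(X,\Omega_{X}^{\bullet\geqslant i})$ is not injective: by Proposition \ref{PropositionUnComplementQuatreUnArt1} (ii) its kernel is a quotient of $H^{2i-1}(X,\C)$, which is exactly the ``intermediate Jacobian'' content of the theory and the reason Deligne classes carry secondary invariants at all. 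Two candidate classes can therefore agree topologically and in Green's sense and still differ in Deligne cohomology. Worse, the control you want from Green's theory presupposes GRR for immersions for the Green classes, which is unknown for non-K\"ahler $X$; the paper shows that the compatibility of the two theories is in fact \emph{equivalent} to that open statement (Corollary \ref{DernierCorollaire} and the remark following it). All identities must instead be proved intrinsically in Deligne cohomology, which is what the paper does using the injectivity of $\sigma\ee$ for compositions of blowups, the blowup decomposition of $H\ee_{D}(\tix)$, and the Gysin and cycle-class calculus of Proposition \ref{PropositionDeuxInsertC}; there is no shortcut through the topological and Hodge realizations.
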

Our approach is completely different from \cite{BoSe}. Indeed,
Voisin's result prevents from using locally free resolutions.
Our geometric starting point, which will be used instead of locally free
resolutions, is the following (Theorem \ref{TheoremeUnChClArt1}):
\par\medskip
\textbf{Theorem}\label{newtheo}
\textit{Let $\ff$ be a coherent sheaf on $X$
of generic rank $r$. Then there exists a bimeromorphic
morphism $\apl{\pi }{\tix}{X}$ and a locally free sheaf $\qq$ on
$\tix$ of rank $r$, together with a surjective map
$\flgd{\pi \ee\ff}{\qq}$.}
\par\medskip
It follows that up to torsion sheaves, $\pi \pe[\ff]$ is locally free.
This will allow us to define our Chern classes by induction on the
dimension of the base. Of course, we will need to show that our Chern
classes satisfy the Whitney formula and are independent of the
bimeromorphic model $\tix$.
\par\medskip
The theorem above is a particular case of
Hironaka's flattening theorem (see \cite{Hiro2} and in the
algebraic case \cite{GrRa}). Indeed, if we apply Hironaka's result to
the couple $(\ff,\id)$, there exists a bimeromorphic map
$\apl{\sigma }{\ti{X}}{X}$ such that $\sigma \ee\ff\bigm/{\bigl(\sigma \ee\ff
\bigr)_{
\textrm{tor}}\he}$ is flat with respect to the identity morphism, and thus
locally free. For the sake of completeness,
we include an elementary proof of Theorem
\ref{TheoremeUnChClArt1}.
\par\medskip
Property (iv) of Theorem \ref{MainTheorem}
is noteworthy.
The lack of global resolutions
(see \cite{Vo2}) prevents from using the proofs of
Borel, Serre and of Baum, Fulton and McPherson (see \cite[Ch.\!\! 15 \S\,2]{Ful}).
The equivalent formula in the
topological setting is proved in \cite{AtHi}. In the holomorphic
context, O'Brian, Toledo and Tong (\cite{OBToTo3}) prove this formula
for the Atiyah Chern classes when there exists a
retraction from $X$ to $Z$, then they establish (GRR) for a projection and
they deduce that (GRR) is valid for any holomorphic map, so
\textit{a posteriori} for an immersion
(see \cite{OBToTo2}). Nevertheless,
our result does not give a new proof of (GRR) formula for an immersion in the
case of the Atiyah Chern classes. Indeed, the compatibility
between our
construction and the Atiyah Chern classes is a consequence of the (GRR)
theorem for an immersion in both theories, as explained further.
\par\medskip
Property (v) is an immediate consequence of (iv), as originally noticed
in \cite{BoSe}, since the natural map from $G(X)\oti_{\Z}\he G(\P^{N})$
to $G(X\tim \P^{N})$ is surjective (see \cite[Expos\'{e} VI]{SGA6}
and \cite{Bei}).
Yet, we do not obtain the (GRR)
theorem for a general holomorphic map between smooth complex compact
manifolds.
\par\medskip
Remark that $c_{p}(\ff)$ is only constructed in the rational
Deligne
cohomology group $H_{\textrm{D}}^{2p}(X,\Q(p))$.
The reason is that we
make full use of the Chern character, which has denominators, and
thus determines the total Chern class only up to torsion classes.
We think that it could be
possible to define $c_{p}(\ff)$ in $H_{\textrm{D}}^{2p}(X,\Z(p))$
following our approach, but with huge computations.
For $p=1$,
$c_{1}(\ff)$ can be easily constructed in
$H^{2}_{D}(X,\Z(1))$.
Indeed,
it suffices to define $c_{1}(\ff)=c_{1}(\det\ff)$, where
$\det\ff$ is the determinant line bundle of $\ff$
(see \cite{Knu}).
\par\medskip
It is interesting to compare the classes of
Theorem \ref{MainTheoremArt1}
with other existing theories. We adopt a more general setting by
using Theorem \ref{TheoremeUnChClArt1}.
We prove that, for any cohomology ring
satisfying
reasonable properties, a theory of Chern classes can be
completely determined if
we suppose that the GRR formula is
valid for immersions. More precisely,
our statement is the following
(Theorem \ref{ThPartSixTrois}):
\par\medskip
\textbf{Theorem}
\textit{Under the hypotheses $(\alpha )$-$(\delta )$
of page \pageref{unicity} on the cohomology ring,
a theory of Chern classes for coherent sheaves on smooth complex
compact manifolds which satisfies the GRR theorem for
immersions,
the Whitney additivity formula and the functoriality formula
is completely determined by the first Chern class of holomorphic
line bundles.}
\par\medskip
This theorem yields compatibility results
(Corollary \ref{DernierCorollaire}):
\par\medskip
\textbf{Corollary}
\textit{The classes
of Theorem \ref{MainTheoremArt1} are compatible with the
rational topological Chern classes and the Atiyah Chern classes.}
\par\medskip
Nevertheless, since GRR for immersions does not seem to be known
for the Green Chern classes if $X$ is not K\"{a}hler, the
theorem above does not give the compatibility in this setting.
In fact, the compatibility is \emph{equivalent} to the GRR
theorem for immersions for the Green Chern classes.
\par\medskip
Let us mention the link of our construction with secondary
characteristic classes.
\par\medskip
We can look at a subring of the ring of Cheeger-Simons characters on $X$
which are the ``holomorphic'' characters (that is the
\mbox{$G$-cohomology} defined in \cite[\S\,4]{Esn}, or equivalently the
restricted differential characters defined in \cite[\S\,2]{Bry}). This
subring can be mapped onto the Deligne cohomology ring, but not in an
injective way in general. When $E$ is a holomorphic vector bundle with a
compatible connection, the Cheeger-Simons theory (see \cite{ChSi})
produces Chern classes with values in this subring. It is known that
these classes are the same as the Deligne classes (see \cite{Bry} in the
algebraic case and \cite[\S\,5]{Zuc} for the general case). When $E$ is
topologically trivial, this construction gives the so-called secondary
classes with values in the intermediate jacobians of $X$ (see \cite{Nad}
for a different construction, and \cite{Ber} who proves the link with the
generalized Abel-Jacobi map). The intermediate jacobians of $X$ have been
constructed in the K\"{a}hler case by Griffiths. They are complex tori
(see \cite[Ch.\!\! 12 \S\,1]{Vo1}, and \cite[\S\,7 and 8]{EsVi}). If $X$ is
not K\"{a}hler, intermediate jacobians can still be defined but they are
no longer complex tori.
\par\medskip
Our result provides similarly refined Chern classes for coherent
sheaves, and in particular, secondary invariants for coherent sheaves
with trivial topological Chern classes.
\par\medskip
The organization of the paper is the following. We recall
in Part \ref{SectionDeux} the
basic properties of Deligne cohomology and Chern classes for locally
free sheaves in Deligne cohomology. The necessary results of analytic
\mbox{$K$-theory} with support are grouped in Appendix
\ref{AppendixArt1}; they will be
used extensively throughout the paper. The rest of the article is devoted
to the proof of Theorem \ref{MainTheoremArt1}. The construction of the
Chern classes is achieved by induction on $\dim X$. In Part
\ref{SectionConstructionOfChernClassesArt1} we perform the induction step
for torsion sheaves using the (GRR) formula for the immersion of
smooth divisors;
then we prove a d\'{e}vissage theorem which enables us to break any
coherent sheaf into a locally free sheaf and a torsion sheaf on a
suitable modification of $X$; this is the key of the construction of
$c_{i}(\ff)$ when $\ff$ has strictly positive rank. The Whitney formula,
which is a part of the induction process, is proved in Part
\ref{SectionWhitneyFormulaArt1}. After several reductions, we use a
deformation argument which leads to the deformation space of the
normal cone of a smooth hypersurface. We establish the (GRR) theorem for
the immersion of a smooth hypersurface in Part
\ref{SectionConstructionOfChernClassesArt1}, in Part
\ref{GRRForAnImmersionClassesArt1} we recall how to deduce the general
(GRR) theorem for an immersion from this particular case, using excess
formulae, then we deduce uniqueness results using
Theorem \ref{TheoremeUnChClArt1}.
\par\bigskip
\textbf{Acknowledgement.} I wish to thank Claire Voisin for introducing
me to this beautiful subject and for many helpful discussions.
I also thank Pierre Schapira for enlightening conversations.
\section{Notations and conventions}

All manifolds are complex smooth analytic connected manifolds. All the
results are clearly valid for non connected ones, by reasoning on the
connected components.
\par\medskip
Except in Section \ref{SectionDeux}, all manifolds are compact. By
\textit{submanifold}, we always mean a \textit{closed} submanifold.
\par\medskip
\textit{Holomorphic vector bundles}
\par\medskip
The rank of a holomorphic vector bundle is well defined since the manifolds are
connected. If $E$ is a holomorphic vector bundle, we denote by $\eee$
the associated locally free sheaf.
The letter $\eee$ always denotes a locally free sheaf.
\par\medskip
\textit{Coherent sheaves}
\par\medskip
If $\ff$ is a coherent analytic sheaf on
a smooth connected manifold $X$, then $\ff$ is locally free
outside a proper analytic subset $S$ of $X$ (see \cite{GrRe}). Then
$U=X\backslash S$ is connected.
By definition, the generic rank of $\ff$ is the rank of
the locally free sheaf $\ff_{\vert U}\he$.
If the generic rank of $\ff$ vanishes, $\ff$ is supported in a proper
analytic subset $Z$ of $X$, it is therefore annihilated by the action of a
sufficiently high power of the ideal sheaf $\ii_{Z}$.
Conversely, if $\ff$ is a torsion sheaf, $\ff$ is identically zero
outside a proper analytic subset of $X$, so it has generic rank zero.
The letter $\ttt$ always denotes a torsion sheaf.
\par\medskip
\textit{Divisors}
\par\medskip
A strict normal crossing divisor $D$ in $X$ is a formal sum
$m_{1}D_{1}+\cdots +m_{N}\he D_{N}\he$, where $D_{i}$, $1\leq i\leq N$,
are smooth transverse hypersurfaces and $m_{i}$, $1\leq i\leq N$, are nonzero
integers. If all the coefficients $m_{i}$ are positive, $D$ is
\textit{effective}. In that case, the associated
reduced divisor $D^{\textrm{red}}\be$ is the effective divisor
$D_{1}+\cdots +D_{N}\he$. A strict simple normal crossing divisor is
\textit{reduced} if for all $i$, $m_{i}=1$. We make no
difference between a reduced divisor and its support.
\par\smallskip
By a \textit{normal crossing divisor} we always
mean a \textit{strict} normal crossing divisor.
If $D$ is an effective simple normal crossing divisor, it defines an
ideal sheaf\ \,$\ii_{D}\he=\oo_{X}\he(-D)$. The associated quotient sheaf is
de\-no\-ted by $\oo_{D}\he$.
\par\smallskip
We use frequently Hironaka's desingularization theorem \cite{Hiro1} for
complex spaces as stated in \cite[Th.\!\! 7.9 and 7.10]{AG}.
\par\medskip
\textit{\textrm{Tor} sheaves}
\par\medskip
Let $\apl{f}{X}{Y}$ be a holomorphic map and $\ff$ be a coherent sheaf
on $Y$. We denote by $\tor_{i}(\ff,f)$ the sheaf
$\tor_{i}^{f^{-1}\oo_{Y}\he}\bigl(f^{-1}\ff,\oo_{X}\he\bigr)$.
\par\medskip
\textit{Grothendieck groups}
\par\medskip
The Grothendieck group of coherent analytic sheaves
(resp.\! of torsion coherent analytic sheaves) on a comp\-lex space $X$
is denoted by $G(X)$ (resp.\! $G_{\textrm{tors}}\he(X)$).
If $\ff$ is a coherent analytic sheaf on $X$, $[\ff]$ denotes its class
in $G(X)$.
The notation $G_{Z}\he(X)$ is defined in Appendix
\ref{AppendixArt1}. In order to avoid subtle confusions, we never
use here the Grothendieck group of locally free sheaves.
\section{Deligne cohomology and Chern classes for locally free
sheaves}
\label{SectionDeux}
In this section, we will expose the basics of Deligne cohomology for the
reader's convenience. For a more detailed exposition, see
\cite[\S\,1, 6, 7, 8]{EsVi},
\cite[Ch.\!\! 12]{Vo1}, and \cite{ElZu}.
\subsection{Deligne cohomology}\label{SubSectionDeux}
\begin{definition}\label{DefinitionUnComplementQuatreUnArt1}
Let $X$ be a smooth complex manifold and let $p$ be a
nonnegative integer. Then
\begin{enumerate}
  \item [--] The Deligne complex $\Z_{D,X}\he(p)$ of $X$ is the
  following complex of sheaves
  \[
\xymatrix{
  0\ar[r]&\Z_{X}\he\ar[r]^{(2i\pi )^{p}}
  &\oo_{X}\he\ar[r]^{d}&\ \cdots \ \ar[r]^{d}&\Omega
  ^{p-1}_{X},
  }
\]
   where $\Z_{X}\he$ is in degree zero. Similarly, the rational Deligne
  complex $\Q_{D,X}\he(p)$ is the same complex as above with $\Z_{X}\he$
  replaced by $\Q_{X}\he$.
  \par\bigskip
  \item [--] The Deligne cohomology groups $H_{D}^{i}(X,\Z(p))$ are
  the hypercohomology groups defined by
  \[
H^{i}_{D}(X,\Z(p))=\HHH^{i}(X,\Z_{D,X}\he(p)).
\]
  The rational Deligne cohomology groups are defined by the same formula
  as the hypercohomology groups of the rational Deligne complex.

  \item [--] The same
  definition holds for the Deligne cohomology
  with support in a closed subset $Z$:
  \[
H^{i}_{D,Z}(X,\Z(p))=\HHH^{i}_{Z}(X,\Z_{D,X}\he(p)).
\]
  \item [--] The total Deligne cohomology group of $X$ is
  $H\ee_{D}(X)=\bop_{k,p}H^{k}_{D}(X,\Z(p))$. We will denote by
  $H\ee_{D}(X,\Q)$ the total rational Deligne cohomology group.
\end{enumerate}
\end{definition}
\begin{example}
${}\he$
\begin{enumerate}
  \item [--] $H_{D}^{i}(X,\Z(0))$ is the usual Betti cohomology
  group $H^{i}(X,\Z)$.

  \item [--] $\Z_{D,X}\he(1)$ is quasi-isomorphic to $\oo\ee_{X}[-1]$ by the
  exponential exact sequence. Thus we have a group isomorphism
  $H^{2}_{D}(X,\Z(1))\simeq H^{1}\bigl(X,\oo\ee_{X}\bigr)\simeq
  \textrm{Pic}(X)$. The first Chern class of a line bundle $L$ in
  $H_{D}^{2}(X,\Z(1))$ is the element of
  $\textrm{Pic}(X)$ defined by $c_{1}(L)=\{L\}$.

  \item [--] $H^{2}_{D}(X,\Z(2))$ is the group of flat holomorphic line
  bundles, i.e.  holomorphic line bundles with a holomorphic
  connection (see \cite[\S\,1]{EsVi} and \cite{Esn}).

\end{enumerate}
\end{example}
For geometric interpretations of higher
  Deligne cohomology groups, we refer the reader to
  \cite{Gaj}.
  \par\medskip
Some fundamental properties of Deligne cohomology are listed below:
\begin{proposition}\label{PropositionUnComplementQuatreUnArt1}
${}\he$
\begin{enumerate}
  \item [(i)] We have an exact sequence
  $\sutrgd{\Omega _{X}^{  {\ds\bullet}\leqslant p-1}[-1]}
  {\Z_{D,X}\he(p)}{\Z_{X}\he}\!.$
\par\medskip
  In particular,
  $H^{2p}_{D}(X,\Z(p))$ fits into the exact sequence
  \[
\xymatrix{
  H^{2p-1}\be(X,\Z)\ar[r]&
  \HHH^{2p-1}\bigl(X,
  \Omega _{X}^{  {\ds\bullet}\leqslant p-1}\bigr)
  \ar[r]&
  H^{2p}_{D}(X,\Z(p))\ar[r]&H^{2p}(X,\Z)
  }\!.
\]

  \item [(ii)] The complex $\Z_{D,X}\he(p)[1]$ is quasi-isomorphic to
  the cone of the morphism
  \[
\xymatrix@C=10ex{\Z_{X}\he\oplus \Omega _{X}^{ {\ds\bullet}
   \geqslant p}
  \ar[r]^-{(2i\pi )^{p}\!,\, i}&\Omega _{X}^{\mkern 2mu {\ds\bullet}
  \vphantom{\geqslant i}}
  }.
\]
  Thus we have a long exact sequence:
  \[
 \xymatrix@C=18pt{
  \cdots \ar[r]&H^{k-1}(X,\C\, )\ar[r]&H^{k}_{D}(X,\Z(p))\ar[r]&\HHH^{k}\bigl(
  X,\Omega _{X}^{  {\ds\bullet}\geqslant p}\bigr)
  \oplus H^{k}(X,\Z)\ar[r]&H^{k}(X,\C\, )\ar[r]&\cdots
  }
\]
  and a similar exact sequence can be written with support in a closed subset
  $Z$.

  \item [(iii)] A cup-product
  \[
\flgd{H_{D}^{i}(X,\Z(p))\oti_{\Z}\he H^{j}_{D}(X,\Z(q))}
  {H^{i+j}_{D}(X,\Z(p+q))}
\]
  is defined
  and endows $H\ee_{D}(X)$ with a ring
  structure.


  \item [(iv)] If $\apl{f}{X}{Y}$ is a holomorphic map between two smooth
  complex manifolds, we have a pullback morphism
  $\apl{f\ee}{H_{D}^{i}(Y,\Z(p))}{H_{D}^{i}(X,\Z(p))}$ which is a ring
  morphism.


  \item [(v)] If $X$ is smooth, compact, and if $E$ is a holomorphic
  vector bundle on $X$ of rank $r$, then $H\ee_{D}(\P(E))$ is a free
  \mbox{$H\ee_{D}(X)$-module} with basis $1,\, c_{1}(\oo_{E}\he(1)),\dots ,
  c_{1}(\oo_{E}\he(1))^{r-1}$.

  \item [(vi)] For every $t$ in
  $\P^{1}$, let $j_{t}$ be the inclusion $
  \xymatrix{X\simeq X\tim\{t\}\, \ar@{^{(}->}[r]&X\tim \P^{1}}$. Then the
  pullback morphism $\apl{j_{t}\ee}{H\ee_{D}(X\tim\P^{1})}{H\ee_{D}(X)}$
  is independent of $t$ (homotopy principle).

%
%
\end{enumerate}
\end{proposition}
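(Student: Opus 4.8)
The plan is to establish the six items in turn, each reducing to standard homological algebra of complexes of sheaves once the corresponding facts for the constant sheaf and for the (truncated) holomorphic de Rham complex are in hand; detailed accounts are in \cite[\S\,1, 6, 7, 8]{EsVi}, \cite[Ch.\!\! 12]{Vo1} and \cite{ElZu}. For \textbf{(i)}, the stupid truncation realizes $\Omega_{X}^{\bullet\leqslant p-1}$, placed in cohomological degrees $1,\dots,p$, as a subcomplex of $\Z_{D,X}(p)$ with quotient $\Z_{X}$ concentrated in degree $0$; the long exact hypercohomology sequence of this short exact sequence of complexes is the asserted one, and the displayed four-term consequence is its piece around degree $2p$. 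The version with supports in $Z$ follows by applying $\HHH_{Z}^{\ast}(X,-)$ throughout. For \textbf{(ii)}, the inclusion $\iota\colon\Omega_{X}^{\bullet\geqslant p}\hookrightarrow\Omega_{X}^{\bullet}$ is injective with cokernel $\Omega_{X}^{\bullet\leqslant p-1}$, so $\mathrm{Cone}(\iota)\simeq\Omega_{X}^{\bullet\leqslant p-1}$; since quotienting out an acyclic subcomplex does not change the quasi-isomorphism type, adding in the arrow $(2i\pi)^{p}\colon\Z_{X}\to\Omega_{X}^{\bullet\leqslant p-1}$ gives $\mathrm{Cone}\bigl(\Z_{X}\oplus\Omega_{X}^{\bullet\geqslant p}\xrightarrow{(2i\pi)^{p},\,\iota}\Omega_{X}^{\bullet}\bigr)\simeq\mathrm{Cone}\bigl(\Z_{X}\xrightarrow{(2i\pi)^{p}}\Omega_{X}^{\bullet\leqslant p-1}\bigr)$, which is $\Z_{D,X}(p)[1]$ by the very definition of the Deligne complex. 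The long exact sequence is the cohomology triangle of this cone after invoking the holomorphic Poincaré lemma $\HHH^{k}(X,\Omega_{X}^{\bullet})=H^{k}(X,\C)$; the case with supports is identical.

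For \textbf{(iii)}, one fixes the explicit product of complexes $\Z_{D,X}(p)\otimes\Z_{D,X}(q)\to\Z_{D,X}(p+q)$ of \cite[\S\,3]{EsVi}, which on classes $(a,\omega)$ and $(b,\eta)$ returns $a\cdot b$ if $\deg a=\deg b=0$, returns $a\cdot\eta$ if $\deg b>0$, and returns $\omega\wedge\eta$ if $\deg a=p$. This product is associative and graded-commutative only up to explicit homotopies, which nevertheless suffices to endow $H^{\ast}_{D}(X)$ with a well-defined, associative, graded-commutative ring structure compatible, via the maps of (i) and (ii), with the cup products on Betti and holomorphic de Rham cohomology. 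For \textbf{(iv)}, pullback of differential forms and of locally constant sections defines a morphism of complexes $f^{-1}\Z_{D,Y}(p)\to\Z_{D,X}(p)$; applying $\HHH^{\ast}(X,-)$ produces $f^{\ast}$, and naturality of the product in \textbf{(iii)} makes it a ring morphism.

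Item \textbf{(v)} is the substantive one. Writing $h=c_{1}(\oo_{E}(1))\in H^{2}_{D}(\P(E),\Z(1))$, and using that all the maps in the long exact sequence of (ii) are compatible with cup products (by (iii), (iv) and the projection formula), it suffices to prove the analogous projective-bundle decomposition for the complexes $\Z_{X}$, $\Omega_{X}^{\bullet}$ and the truncations $\Omega_{X}^{\bullet\geqslant p}$, and then to apply the five lemma to the morphism of long exact sequences of (ii) for $\P(E)$ against $\bigoplus_{j=0}^{r-1}$ copies of that of $X$, suitably shifted in degree and twist, the comparison arrows being cup product with $1,h,\dots,h^{r-1}$. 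For $\Z_{X}$ this is the classical Leray--Hirsch theorem; for $\Omega_{X}^{\bullet}$ and its truncations it follows from the relative holomorphic de Rham complex of $\pi\colon\P(E)\to X$ together with the classes $h^{j}$, which yield $R\pi_{\ast}\Omega_{\P(E)}^{\bullet\geqslant p}\cong\bigoplus_{j=0}^{r-1}\Omega_{X}^{\bullet\geqslant p-j}[-2j]$, compatibly with the Hodge filtration because $h$ has Hodge weight one. Finally, \textbf{(vi)} is the special case of (v) for the trivial bundle $E=\oo_{X}^{\oplus 2}$: then $\P(E)=X\times\P^{1}$, $h=\pr_{2}^{\ast}c_{1}(\oo_{\P^{1}}(1))$, and $H_{D}^{\ast}(X\times\P^{1})=\pi^{\ast}H_{D}^{\ast}(X)\oplus\pi^{\ast}H_{D}^{\ast}(X)\cdot h$ as $H_{D}^{\ast}(X)$-modules; since $\pr_{2}\circ j_{t}$ is the constant map with value $t$ we get $j_{t}^{\ast}h=0$, while $j_{t}^{\ast}\circ\pi^{\ast}=\id$ because $\pi\circ j_{t}=\id_{X}$, so $j_{t}^{\ast}(\pi^{\ast}a+\pi^{\ast}b\cup h)=a$ is independent of $t$.

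The step I expect to be the main obstacle is \textbf{(v)}: one must check carefully that the three comparison maps appearing in the long exact sequence of (ii) are genuinely morphisms of modules over the respective cohomology rings — this is precisely where the compatibility of the product of (iii) with the cup products on Betti and de Rham cohomology is used — and that the projective-bundle formula for the truncated complex $\Omega_{X}^{\bullet\geqslant p}$ holds with the same generating classes $h^{j}$, so that the five lemma applies coherently in every degree and twist; the remaining items are then either immediate or, as above, reduce to \textbf{(v)}.
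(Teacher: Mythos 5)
Your argument is correct, and for items (i)--(iv) and (vi) it matches the paper, which treats (i) and (ii) as immediate, obtains (iii) from the morphism of complexes $\Z(p)\otimes_{\Z}\Z(q)\to\Z(p+q)$ of Esnault--Viehweg, deduces (iv) from its functoriality, and derives (vi) from (v) exactly as you do. Where you genuinely diverge is (v). The paper proves the projective bundle formula by d\'{e}vissage on the Deligne complex itself, using the short exact sequence $0\to\Omega_{X}^{p-1}[-p]\to\Z_{D,X}(p)\to\Z_{D,X}(p-1)\to 0$ and the five lemma: induction on the twist $p$ reduces everything to Leray--Hirsch for $\Z_{X}$ (the case $p=0$) and to the projective bundle formula for the single Dolbeault sheaf $\Omega^{p-1}$. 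You instead work from the cone description of (ii) and compare its long exact sequence for $\P(E)$ with a shifted direct sum of those for $X$, which requires the projective bundle formula for the truncated complexes $\Omega^{\bullet\geqslant p}$ --- itself obtained by a further d\'{e}vissage down to the Dolbeault case --- together with the multiplicativity of all three comparison maps in the cone triangle. Both routes terminate at the same two inputs (Betti and Dolbeault Leray--Hirsch); the paper's induction on $p$ reaches them with one d\'{e}vissage instead of two and with a lighter compatibility check, since cup product with $c_{1}(\oo_{E}(1))$ need only commute with the maps induced by a short exact sequence of complexes rather than with the connecting maps of the triangle in (ii). Your route has the mild advantage of exhibiting explicitly how the Deligne projective bundle formula interpolates between the Betti and truncated de Rham ones, which is the compatibility you rightly flag as the delicate point.
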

The assertions (i) and (ii) are obvious.
The cup product in (iii) comes from a morphism of complexes
$\mkern -4 mu\flgd{\Z(p)\oti_{\Z}\he\Z(q)}{\Z(p+q)}$, see
\cite[\S\,1]{EsVi}.
This morphism if functorial with respect to pullbacks, which
gives (iv).
Property (v) is proved by d\'{e}vissage using the exact
sequence
\[
\sutrgd{\Omega _{X}^{p-1}[-p]}{\Z_{D,X}\he(p)}{\Z_{D,X}\he(p-1)}
\]
and the five lemma (see \cite[\S\,8]{EsVi}).
Property (vi) is a consequence of (v): if
$\alpha $ is a Deligne class in $H\ee_{D}(X\tim\P^{1})$, we can write
$\alpha =\textrm{pr}\ee_{1}\lambda +\textrm{pr}_{1}\ee\mu \, .\,
c_{1}\bigl(\textrm{pr}_{2}\ee\oo_{\P^{1}}\he(1)\bigr)$. Thus
$j_{t}\ee\alpha =\lambda $.
\par\medskip
Remark that (vi) is false if we replace $\P^{1}(\C\, )$ by
$\C$, in contrast with the algebraic case. Indeed, take an
elliptic curve $S$ and choose an isomorphism
$\apl{\phi }{S}{\textrm{Pic}^{0}(S)}$. There is a universal
line bundle $\LL$ on
$S\times S$ such that
for all $x$ in
$S$,
$\LL_{\vert S\times x}\he\simeq\phi (x)$. Let
$\apl{\pi }{\C}{S}$ be the universal covering map of $S$.
Consider the class $\alpha =c_{1}\bigl[(\textrm{id} ,
\pi )\ee\be\LL\bigr]$, then for all
$t$ in $\C$, $j_{t}\ee\alpha =c_{1}\bigl[\phi (\pi (t))\bigr]
$.
\par\bigskip
We will now consider more refined properties of Deligne
cohomology.
\begin{proposition}[see \mbox{\cite[\S\,2]{ElZu}}]\label{PropositionUnInsertC}
${}\he$
\begin{enumerate}
  \item [(i)] If $X$ is a smooth complex manifold and $Z$ is a smooth
  subma\-nifold of $X$ of codimension $d$, there exists a cycle class $\{Z\}_{D}
  \he$ in $H_{D,Z}^{2d}(X,\Z(d))$ compatible with the Bloch cycle class
  (see \cite[\S\,5]{Blo}, \cite[\S\,6]{EsVi})
  and the topological cycle
  class.
  If $Z$ and $Z'$ intersect transversally, $\{Z\cap Z'\}_{D}\he
  =\{Z\}_{D}\he \, .\, \{Z'\}_{D}\he$.
  If $Z$ is a smooth hypersurface of $X$, the image of
  $\{Z\}_{D}\he$ in $H^{2}_{D}(X,\Z(1))\simeq\emph{Pic}(X)$ is
  the class of $\oo_{X}\he(Z)$.
  \item [(ii)]
  More generally, let $\apl{f}{X}{Y}$ be a proper holomorphic map
  between smooth complex manifolds and $d=\dim Y-\dim X$. Then there
  exists a Gysin morphism
  \[
\apl{f_{*}\he}{H_{D}^{2p}(X,\Z(q))}{H_{D}^{2(p+d)}(Y,\Z(q+d))}
\]
  compatible with the usual Gysin morphisms in integer and analytic de
  Rham cohomology. If $Z$ is a smooth submanifold of codimension
  $d$ of $X$ and $\apl{i_{Z}\he}{Z}{X}$ is the canonical
  inclusion, then $i_{Z*}\he(1)$ is the image of
  $\{Z\}_{D}\he$ in $H^{2d}_{D}(X,\Z(d))$.
\end{enumerate}
\end{proposition}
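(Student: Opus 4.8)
The plan is to reduce the whole proposition to a single purity (Thom) isomorphism for Deligne cohomology with support in a smooth submanifold, and then to read off all the assertions from it. First I would prove the purity statement: given $Z\subset X$ smooth of codimension $d$ and an integer $q\geq d$, write down the long exact sequence of Proposition~\ref{PropositionUnComplementQuatreUnArt1}(ii) with support in $Z$ for the pair $(X,Z)$ and compare it, term by term, with the corresponding sequence for $Z$ itself. The integral and analytic de Rham terms match under the topological Thom isomorphisms $H^{*}_{Z}(X,\Z)\cong H^{*-2d}(Z,\Z)$ and $H^{*}_{Z}(X,\C)\cong H^{*-2d}(Z,\C)$, while the Hodge-filtered terms match under
\[
\HHH^{*}_{Z}\bigl(X,\Omega_{X}^{{\ds\bullet}\geqslant q}\bigr)\cong\HHH^{*-2d}\bigl(Z,\Omega_{Z}^{{\ds\bullet}\geqslant q-d}\bigr),
\]
the filtered refinement of the de Rham Gysin isomorphism, which follows from the residue calculus for the holomorphic de Rham complex along $Z$ together with the local cohomology vanishing $\mathcal{H}^{j}_{Z}(\Omega_{X}^{p})=0$ for $j\neq d$. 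Once one checks that these three isomorphisms are compatible with the connecting and forgetful maps of the two long exact sequences, the five lemma gives, for every $k$, a canonical isomorphism
\[
\theta_{Z}\colon H^{k-2d}_{D}(Z,\Z(q-d))\xrightarrow{\ \sim\ }H^{k}_{D,Z}(X,\Z(q)).
\]
In particular $H^{2d}_{D,Z}(X,\Z(d))\cong H^{0}(Z,\Z)$, and I would \emph{define} $\{Z\}_{D}:=\theta_{Z}(1)$, where $1\in H^{0}(Z,\Z)$ is the fundamental class.

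The remaining assertions of (i) then follow formally. The image of $\{Z\}_{D}$ in $H^{2d}_{Z}(X,\Z)$, hence in $H^{2d}(X,\Z)$, is by construction the integral Thom class of $Z$, i.e. the topological cycle class, and compatibility with the Bloch cycle class is again a matter of normalisation (see \cite[\S\,5]{Blo}, \cite[\S\,6]{EsVi}). If $Z$ and $Z'$ meet transversally, with codimensions $d$ and $d'$, then $W=Z\cap Z'$ is smooth of codimension $d+d'$; the cup product of Proposition~\ref{PropositionUnComplementQuatreUnArt1}(iii), which respects supports, carries $\{Z\}_{D}\cdot\{Z'\}_{D}$ into $H^{2(d+d')}_{D,W}(X,\Z(d+d'))$, and since $H^{2(d+d')-1}_{W}(X,\C)=H^{-1}(W,\C)=0$ the natural map from that group to $H^{2(d+d')}_{W}(X,\Z)\oplus\HHH^{2(d+d')}_{W}(X,\Omega_{X}^{{\ds\bullet}\geqslant d+d'})$ is injective; so $\{W\}_{D}=\{Z\}_{D}\cdot\{Z'\}_{D}$ reduces to the classical transversal intersection formulas in integral and in filtered de Rham cohomology. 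For $d=1$, unwinding the quasi-isomorphism $\Z_{D,X}(1)\simeq\oo_{X}^{*}[-1]$ identifies $H^{2}_{D,Z}(X,\Z(1))$ with $\HHH^{1}_{Z}(X,\oo_{X}^{*})$, the group of holomorphic line bundles with a trivialisation off $Z$; under this identification $\{Z\}_{D}=\theta_{Z}(1)$ corresponds to $\oo_{X}(Z)$ together with its tautological section vanishing on $Z$ (as one sees locally), so its image in $H^{2}_{D}(X,\Z(1))\simeq\mathrm{Pic}(X)$ is the class of $\oo_{X}(Z)$.

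For the Gysin morphism of (ii) I would proceed in three stages. For a closed immersion $i_{Z}\colon Z\hookrightarrow X$ of a smooth submanifold of codimension $d$, set $i_{Z*}$ to be the composite of the purity isomorphism $\theta_{Z}\colon H^{2p}_{D}(Z,\Z(q))\xrightarrow{\ \sim\ }H^{2p+2d}_{D,Z}(X,\Z(q+d))$ (covered above, as $q+d\geq d$) with the forgetful map $H^{2p+2d}_{D,Z}(X,\Z(q+d))\to H^{2p+2d}_{D}(X,\Z(q+d))$; then $i_{Z*}(1)$ is by definition the image of $\{Z\}_{D}$, which is the last assertion of (ii). For a projective bundle $\pi\colon\P(E)\to Y$ with $E$ of rank $r$, use the basis $1,\xi,\dots,\xi^{r-1}$ of Proposition~\ref{PropositionUnComplementQuatreUnArt1}(v), where $\xi=c_{1}(\oo_{E}(1))$, and set $\pi_{*}\bigl(\sum_{j}\pi^{*}\alpha_{j}\cdot\xi^{j}\bigr)=\alpha_{r-1}$. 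Any projective morphism $f\colon X\to Y$ factors as $X\hookrightarrow\P(E)\xrightarrow{\pi}Y$, and one puts $f_{*}=\pi_{*}\circ i_{*}$, checking independence of the factorisation exactly as in the topological or de Rham setting (using the compatibility with those theories, which is built into the construction). For a general proper holomorphic $f$, pick, by Chow's lemma for complex spaces, a projective modification $g\colon X'\to X$ with $fg$ projective and set $f_{*}=(fg)_{*}\circ g^{*}$; this is well defined, since $g^{*}$ is split injective (because $g_{*}g^{*}=\mathrm{id}$) and any two choices of $g$ are dominated by a third, and it is compatible with the classical Gysin maps because its ingredients are.

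The hard part, I expect, is the very first step: both the Hodge-filtered Thom isomorphism and, above all, the verification that the integral, filtered de Rham and Betti Thom isomorphisms assemble into a single morphism between the two mapping-cone triangles, so that the five lemma genuinely applies. That is a cochain-level statement about the residue map, which forces one to work with explicit representatives of the Deligne complex $\Z_{D,X}(p)$ and of the local cohomology functor $\HHH^{*}_{Z}(X,-)$. Everything else — the cycle class, the transversality formula, the hypersurface case, and the construction and compatibilities of the Gysin morphism — is then formal.
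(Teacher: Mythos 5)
Your reduction collapses at precisely the step you flag as the hard one, and it collapses because the statement there is false, not merely delicate: there is no Hodge-filtered Thom isomorphism
\[
\HHH^{k}_{Z}\bigl(X,\Omega_{X}^{{\ds\bullet}\geqslant q}\bigr)\;\cong\;\HHH^{k-2d}\bigl(Z,\Omega_{Z}^{{\ds\bullet}\geqslant q-d}\bigr),
\]
hence no purity isomorphism $\theta_{Z}$, hence no definition $\{Z\}_{D}:=\theta_{Z}(1)$. The vanishing $\mathcal{H}^{j}_{Z}(\Omega_{X}^{p})=0$ for $j\neq d$ only says that $R\Gamma_{Z}(\Omega_{X}^{{\ds\bullet}\geqslant q})$ is computed by the complex $\mathcal{H}^{d}_{Z}(\Omega_{X}^{{\ds\bullet}\geqslant q})[-d]$. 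The residue quasi-isomorphism identifies the \emph{untruncated} complex $\mathcal{H}^{d}_{Z}(\Omega_{X}^{\mkern 2mu{\ds\bullet}})[-d]$ with $\C_{Z}[-2d]$, but this quasi-isomorphism is destroyed by the naive truncation $\sigma_{\geqslant q}$ because the sheaves $\mathcal{H}^{d}_{Z}(\Omega_{X}^{p})$ are not coherent and are enormous. Concretely, for $Z=\{0\}$ in a disc or in $\P^{1}$, with $d=q=1$ and $k=2$, one has $\HHH^{2}_{Z}\bigl(X,\Omega_{X}^{{\ds\bullet}\geqslant 1}\bigr)=H^{1}_{Z}(X,\Omega_{X}^{1})$, the infinite-dimensional space of principal parts of meromorphic $1$-forms at $0$, whereas $\HHH^{0}(Z,\oo_{Z})=\C$. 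The failure propagates to Deligne cohomology itself: for $X$ a disc, $H^{2}_{D,Z}(X,\Z(1))\cong H^{1}_{Z}(X,\oo_{X}^{*})\cong \oo^{*}(X\setminus\{0\})\bigm/\oo^{*}(X)\cong\Z\oplus\bigl(\oo(X\setminus\{0\})/\oo(X)\bigr)$, which is infinite-dimensional, while $H^{0}_{D}(Z,\Z(0))=\Z$. So the five-lemma argument cannot be repaired, and everything downstream of $\theta_{Z}$ (the definition of the cycle class, and the Gysin map for immersions as ``purity followed by forgetting supports'') is unfounded.

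The only piece of purity that survives, and the only one the paper uses, is the topological one: $H^{2d-1}_{Z}(X,\C)=0$ forces the map $H^{2d}_{D,Z}(X,\Z(d))\to \HHH^{2d}_{Z}\bigl(X,\Omega_{X}^{{\ds\bullet}\geqslant d}\bigr)\oplus H^{2d}_{Z}(X,\Z)$ to be \emph{injective}, so $\{Z\}_{D}$ is defined not as the image of $1$ under an isomorphism but as the unique lift of the explicit compatible pair $\bigl((2i\pi)^{d}\{Z\}_{\textrm{Bloch}},\{Z\}_{\textrm{top}}\bigr)$, the compatibility in $H^{2d}_{Z}(X,\C)$ being quoted from Esnault--Viehweg. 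Your treatment of transversality and of the hypersurface case via this injectivity is in the right spirit and could be salvaged once the class is defined correctly. For (ii), however, your route (factor projective morphisms through $\P(E)$, then invoke Chow's lemma) would at best reach projective morphisms of compact manifolds, whereas the proposition concerns arbitrary proper holomorphic maps between possibly non-compact manifolds; the paper instead represents $\Z_{D,X}(p)[1]$ as a cone built from complexes of locally integral currents and of filtered currents, all of whose terms are acyclic and stable under proper push-forward, so that $f_{*}$ is defined at the level of complexes for every proper $f$ and its compatibility with the Betti and de Rham Gysin maps is immediate.
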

\par\medskip
The point (i) is easy to understand.
By Proposition \ref{PropositionUnComplementQuatreUnArt1} (ii),
since $H_{Z}^{2d-1}(X,\Z)=0$, we have an exact sequence
\[
\sutrgpt{H_{D,Z}^{2d}(X,\Z(d))}{H_{Z}^{2d}(X,\Omega_{X}^{{\ds\bullet}
\geqslant d})\oplus H_{Z}^{2d}(X,\Z)}{H_{Z}^{2d}(X,\C\, )}{.}
\]
The couple $\bigl((2i\pi )^{d}\{Z\}_{\textrm{Bloch}}\he,
\{Z\}_{\textrm{top}}\he\bigr)$
is mapped to $0$ in
$H_{Z}^{2d}(X,\C\, )$ (see \cite[\S\,7]{EsVi}). Therefore, it defines a unique
element $\{Z\}_{D}\he$ in $H_{D,Z}^{2d}(X,\Z(d))$.
\par\medskip
For (ii), we introduce the sheaves $\dd_{X,\Z}^{k}$ of locally integral
currents of degree $k$ as done in \cite[\S\,2]{ElZu}, and \cite[\S\,2.2]{GiSo}.
These sheaves are, in a way to be properly defined,
a completion of the currents induced
by smooth integral chains on $X$
(see \cite[\S\,2.1]{Kin} and
\cite[\S\,4.1.24]{Fed}). Then
\begin{enumerate}
  \item [--] $\dd_{X,\Z}^{\mkern 2 mu{\ds\bullet}\vphantom{\geqslant i}}$
  is a soft resolution of $\Z_{X}\he$.

  \item [--] $\dd_{X,\Z}^{k}$ is a subsheaf of $\dd_{X}^{k}$ stable
  by
  push-forward under proper $C^{\infty }\be$ maps,
  where $\dd_{X}^{k}$ is the sheaf
  of usual currents of degree $k$ on $X$.

\end{enumerate}
Thus $\Z_{D,X}\he(p)[1]$ is quasi-isomorphic to the cone of the morphism
$\aplpt{\bigl([2i\pi ]^{p},i\bigr)}
{\dd_{X,\Z}^{\mkern 2 mu{\ds\bullet}\vphantom{\geqslant i}}
\oplus F^{p}\be\dd_{X}^{\mkern 2 mu{\ds\bullet}\vphantom{\geqslant i}}}
{\dd_{X}^{\mkern 2 mu{\ds\bullet}\vphantom{\geqslant i}}}{.}$
We will denote by $\ti{\Z}_{D,X}\he(p)$ this cone shifted by
minus one.
Since the sheaves $\dd_{X,\Z}^{k}$, $F^{p}\be\dd_{X}^{k}$ and
$\dd_{X}^{k}$ are acyclic, $Rf_{*}\he\Z_{D,X}\he(p)[1]$ is
quasi-isomorphic to the cone of the morphism
$\aplpt{\bigl([2i\pi ]^{p},i\bigr)}
{f_{*}\he\dd_{X,\Z}^{\mkern 2 mu{\ds\bullet}\vphantom{\geqslant i}}
\oplus f_{*}\he F^{p}\be \dd_{X}^{\mkern 2 mu{\ds\bullet}\vphantom{\geqslant i}}}
{f_{*}\he\dd_{X}^{\mkern 2 mu{\ds\bullet}\vphantom{\geqslant i}}}{.}$
The push-forward of currents by $f$ gives an explicit morphism
$\apl{f_{*}\he}{f_{*}\he\ti{\Z}_{D,X}\he(p)}{\ti{\Z}_{D,Y}
\he(p+d)[2d]}$ and then a morphism
$\apl{f_{*}\he}{Rf_{*}\he{\Z}_{D,X}\he(p)}{{\Z}_{D,Y}
\he(p+d)[2d]}$ in the derived category $\mathcal{D}^{b}\be
\bigl(\textrm{Mod}(\Z_{Y}\he)\bigr)$.
We get
the Gysin morphism by taking the hypercohomology on $Y$.
\par\medskip
The compatibility between $\{Z\}_{D}\he$ and $i_{Z*}\he(1)$
is shown in \cite{ElZu}.
\par\medskip
We now state all the properties of the Gysin morphism needed here. The
points (vi) and (vii) use Chern classes
of vector bundles. They will be defined in the
next section.

\begin{proposition}\label{PropositionDeuxInsertC}
${}\he$
\begin{enumerate}
  \item [(i)] $f_{*}\he$ is compatible with the composition of maps and
  satisfies the projection formula.
  \[
f_{*}\he\bigl(x\, .\, f\ee\be\, y\bigr)=f_{*}\he x\, .\, y.
\]
In particular, if\quad $\Gamma _{f}\he\subseteq X\times Y$ is the
graph of $f$ and if $X$ is compact, then for every Deligne class
$\alpha $ in $X$,
\[
f_{*}\he\alpha =p_{2*}\he\Bigl(p_{1}\ee\alpha \, .\,
\{\Gamma _{f}\he\}_{D}\he\Bigr).
\]
  \item [(ii)] Consider the cartesian diagram
  \[
\xymatrix@R=30pt@C=40pt{
Y\tim Z\, \ar@{^{(}->}[r]^{i_{Y\tim Z}\he}\ar[d]_{p}&X\tim Z\ar[d]^{q}\\
Y\, \ar@{^{(}->}[r]_{i_{Y}\he}&X
}
\]
Then\quad  $q\ee\be i_{Y*}\he=i_{Y\tim Z*}\he \, p\ee\be$.

  \item [(iii)] If $\apl{f}{X}{Y}$ is proper and generically finite of degree $d$,
  then $f_{*}\he f\ee\be=d\tim\id$.

  \item [(iv)] Consider the cartesian diagram, where
  $Y$ and $Z$ are compact and intersect transversally:
  \[
\xymatrix@R=30pt@C=50pt{
W\,\, \be \ar@{^{(}->}[r]^{i\he_{W\to Y}}
\ar@{_{(}->}[d]_{i\he_{W\to Z}}
&Y\be\ar@{_{(}->}[d]^{i_{Y}\he}
\\
Z\, \, \ar@{^{(}->}[r]_{i_{Z}\he}&X
}
\]
Then\quad
$i_{Y}\ee\, i_{Z*}\he =i_{\flcourtegd{\st W}{\st Y*}}\he i\ee
_{\flcourtegd{\st W}{\st Z}}$.

\item[(v)] Let $\apl{f}{X}{Y}$ be a surjective map
between smooth complex compact manifolds, and let $D$ be a
smooth hypersurface of $Y$ such that $f^{-1}\be (D)$ is a simple normal
crossing divisor. Let
us write $f\ee\be D=m_{1}\tid_{1}+\cdots +m_{N}\he\tid_{N}\he$. Let
$\apl{\ba{f}_{i\he}}{\tid_{i}\he}{D}$ be
the restriction of $f$ to
$\tid_{i}\he$. Then
\[
f\ee\be\, i_{D*}\he=\ds\sum _{i=1}^{N}m_{i}\,
i_{\tid_{i}*}\he \ba{f}_{i}{}\ee.
\]
\item [(vi)] Let $X$ be compact, smooth, and let $Y$ be a smooth
submanifold of codimension $d$ of $X$. Let $\tix$ be the blowup of $X$
along $Y$, as shown in the following diagram:
\[
\xymatrix{
E\ar[r]^{j}\ar[d]_{q}&\tix\ar[d]^{p}\\
Y\ar[r]_{i}&X
}
\]
Then we have an isomorphism
\[
\xymatrix@R=-2ex@C=10ex{
H\ee_{D}(X)\oplus\bop_{i=1}^{d-1}
H\ee_{D}(Y)\ar[r]&H\ee_{D}(\ti{X})\hphantom{\!\!
\ds\sum _{i=1}^{p-1}q\ee y_{i}\pti{E}\{E\}_{D}^{i}\pti{E}\{E\}_{D}^{i}}\\
\hphantom{H\ee_{D}(\, }
\bigl(x,(y_{i})_{1\leq i\leq d-1}\he\bigr)\ar@{|->}[r]&p\ee x+
\ds\sum _{i=1}^{d-1}j_{*}\he\Bigl[y_{i}\he\, c_{1}\bigl(
\oo_{N_{Y/X}\he}\he(-1)\bigr)^{i-1}\Bigr].
}
\]
  In particular, if $\alpha $ is a Deligne class on $X$ such that $j\ee\be\alpha$
  is the pullback of a Deligne class on $Y$, then $\alpha $ is the
  pullback of a unique Deligne class on $X$.
  \[
{}\he
\]
  Moreover, if $F$ is the excess conormal bundle defined by the exact
  sequence
  \[
\sutrgdpt{F}{q\ee\be N_{Y/X}\ee}{N_{E/\tix}\ee}{,}
\]
  we have the excess formula $p\ee\be \, i_{*}\he\, \alpha =
  j_{*}\he\bigl(q\ee\be \alpha \, \, c_{d-1}(F\ee\be)\bigr)$.

\item[(vii)] If $Y$ is a smooth compact submanifold of $X$ of codimension $d$,
we have the auto-intersection formula
\[
i_{Y}\ee\, i_{Y*}\he\alpha
=\alpha \, {c}_{d}
\bigl(N_{Z/X}\he\bigr).
\]
\end{enumerate}
\end{proposition}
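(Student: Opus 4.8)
The plan is to obtain (i)--(v) from the current-theoretic description of the Gysin morphism recalled above, and to concentrate the genuinely geometric content in the blow-up statement (vi), from which (vii) follows formally. Since $f_{*}$ on Deligne cohomology is induced by the honest push-forward of locally integral currents $f_{*}\colon f_{*}\ti{\Z}_{D,X}\he(p)\to\ti{\Z}_{D,Y}\he(p+d)[2d]$, and push-forward of currents is functorial in $f$ and commutes with wedging against pulled-back smooth forms and integral chains, both assertions of (i) already hold on the current complexes, hence after taking hypercohomology. The graph formula then follows by factoring $f=p_{2}\circ\gamma_{f}$ through the graph embedding $\gamma_{f}\colon X\hookrightarrow X\tim Y$: since $p_{1}\circ\gamma_{f}=\id_{X}$ one has $\gamma_{f}^{*}p_{1}^{*}\alpha=\alpha$, so the projection formula for $\gamma_{f}$ and $\gamma_{f*}(1)=\{\Gamma_{f}\}_{D}$ (Proposition \ref{PropositionUnInsertC}(ii)) give $f_{*}\alpha=p_{2*}\bigl(p_{1}^{*}\alpha\, .\, \{\Gamma_{f}\}_{D}\bigr)$. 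For (ii), $q^{*}i_{Y*}$ and $i_{Y\tim Z*}p^{*}$ both realize the exterior product with the constant current $1$ on $Z$. For (iii), the projection formula gives $f_{*}f^{*}\alpha=f_{*}(1)\, .\, \alpha$, and $f_{*}(1)\in H^{0}_{D}(Y,\Z(0))=H^{0}(Y,\Z)=\Z$ equals $\deg f$ by compatibility with the topological Gysin map. For (iv), when $Y$ and $Z$ meet transversally the pull-back $i_{Y}^{*}$ of the current of integration over $Z$ is defined and equals that over $W=Y\cap Z$, so the identity holds on the level of the current complexes. Finally (v) reduces, via the projection formula, to $f^{*}\{D\}_{D}=\sum_{i}m_{i}\{\tid_{i}\}_{D}$ in $H^{2}_{D}(X,\Z(1))\simeq\mathrm{Pic}(X)$, which holds because $f^{*}\oo_{X}\he(D)=\oo_{X}\he(f^{*}D)=\bigotimes_{i}\oo_{X}\he(\tid_{i})^{\otimes m_{i}}$; the passage from $\alpha=1$ to an arbitrary $\alpha$ is the current identity $f^{*}[D]=\sum_{i}m_{i}[\tid_{i}]$.

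The heart of the matter is (vi). I would first establish the isomorphism $H^{*}_{D}(\tix)\cong H^{*}_{D}(X)\oplus\bigoplus_{i=1}^{d-1}H^{*}_{D}(Y)$ by a Mayer--Vietoris computation: cover $\tix$ by the open set $U=\tix\setminus E$, biholomorphic to $X\setminus Y$, and a tubular neighbourhood $V$ of the exceptional divisor $E=\P(N_{Y/X})$ that retracts onto $E$; run the resulting long exact sequences of Deligne cohomology with supports, feeding in the projective bundle formula (Proposition \ref{PropositionUnComplementQuatreUnArt1}(v)) for $q\colon E\to Y$ and the classical blow-up formula in integral and analytic de Rham cohomology. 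Since Deligne cohomology is glued from these via the long exact sequences of Proposition \ref{PropositionUnComplementQuatreUnArt1}(ii), a termwise five-lemma yields the decomposition, and the same bookkeeping shows that $(x,(y_{i}))\mapsto p^{*}x+\sum_{i}j_{*}\bigl(y_{i}\,c_{1}(\oo_{N_{Y/X}}\he(-1))^{i-1}\bigr)$ realizes it; the ``in particular'' clause is then immediate. For the excess formula, $p^{*}i_{*}\alpha$ has support in $E$ and restricts to $0$ on $U$, hence lies in the summand $\bigoplus_{i}j_{*}H^{*}_{D}(Y)$; one identifies the class by cupping with the powers $c_{1}(\oo_{E}\he(1))^{k}$, using the freeness of $H^{*}_{D}(E)$ over $H^{*}_{D}(Y)$ and matching against the classical excess formula.

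Finally (vii): the case $d=1$ is standard, namely $i_{Y}^{*}i_{Y*}\alpha=\alpha\, c_{1}(N_{Y/X})$ for a smooth hypersurface $Y$, from the projection formula, $i_{Y*}(1)=\{Y\}_{D}\leftrightarrow\oo_{X}\he(Y)$ and the exact sequences of Deligne cohomology (see \cite{ElZu}). For general $d$, apply the excess formula of (vi) to the blow-up $p\colon\tix\to X$ along $Y$: since $i_{Y}\circ q=p\circ j$, applying $j^{*}$ to $p^{*}i_{Y*}\alpha=j_{*}\bigl(q^{*}\alpha\, c_{d-1}(F^{*})\bigr)$ and invoking the $d=1$ case gives $q^{*}\bigl(i_{Y}^{*}i_{Y*}\alpha\bigr)=q^{*}\alpha\, c_{d-1}(F^{*})\, c_{1}(N_{E/\tix})$. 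As $N_{E/\tix}=\oo_{E}\he(-1)$ and dualizing $0\to F\to q^{*}N_{Y/X}^{*}\to N_{E/\tix}^{*}\to 0$ gives, by the Whitney formula for Chern classes of holomorphic bundles in Deligne cohomology, $c_{d-1}(F^{*})\, c_{1}(\oo_{E}\he(-1))=q^{*}c_{d}(N_{Y/X})$, we get $q^{*}\bigl(i_{Y}^{*}i_{Y*}\alpha\bigr)=q^{*}\bigl(\alpha\, c_{d}(N_{Y/X})\bigr)$, and (vii) follows since $q^{*}$ is injective by the projective bundle formula. The main obstacle is (vi): one cannot deduce the blow-up decomposition from the known ones in integral and de Rham cohomology by injectivity, because Deligne cohomology mixes an integral lattice, a Hodge-filtered piece and a shifted term $H^{k-1}(X,\C)$, so the Mayer--Vietoris argument has to be run with supports, the five-lemma applied in each term of the defining triangles, and the excess class pinned down \emph{inside} the decomposition rather than by a naive comparison.
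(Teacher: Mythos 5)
The decisive gap is your treatment of the self-intersection formula for a smooth hypersurface ($d=1$ in (vii)), which you dismiss as ``standard\dots from the projection formula, $i_{Y*}(1)=\{Y\}_{D}$ and the exact sequences.'' The projection formula only yields $i_{Y}^{*}i_{Y*}\alpha=\alpha\,c_{1}(N_{Y/X})$ when $\alpha$ is the restriction of a class $\beta$ on $X$: one writes $i_{Y*}(i_{Y}^{*}\beta)=\beta\cdot\{Y\}_{D}$ and restricts. For a general $\alpha\in H_{D}^{*}(Y)$ there is no reason for $\alpha$ to extend to $X$ --- there is no purity theorem in Deligne cohomology, which is exactly the point the paper flags before this step. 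The paper's proof of this case is the bulk of the whole argument: it passes to the deformation to the normal cone $M_{Y/X}^{\circ}$, where inside the total space of $N_{Y/X}$ every class on $Y$ \emph{is} a pullback ($\alpha=i^{*}\pi^{*}\alpha$), proves the formula there by the projection-formula argument, and transports it back to $X$ by the homotopy principle (Proposition \ref{PropositionUnComplementQuatreUnArt1} (vi)) applied to $\gamma=F_{*}(\widetilde{\mathrm{pr}}_{1}^{*}\alpha)$. Since your excess formula in (vi) and your general-codimension (vii) both feed on this $d=1$ case, the chain is blocked here; this missing idea must be supplied.

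Two further problems. In (v), you reduce to $f^{*}\{D\}_{D}=\sum_{i}m_{i}\{\widetilde{D}_{i}\}_{D}$ and assert that the passage to arbitrary $\alpha$ is ``the current identity $f^{*}[D]=\sum_{i}m_{i}[\widetilde{D}_{i}]$''; but $i_{D*}\alpha$ is represented by a non-smooth current and $f$ is not a submersion, so this pullback is undefined. The paper avoids it by working with the graphs $\Gamma\subseteq D\times Y$ and $\Gamma'_{i}\subseteq D\times X$: only the cycle class identity $(\mathrm{id},f)^{*}\{\Gamma\}_{D}=\sum_{i}m_{i}\{\Gamma'_{i}\}_{D}$ is pulled back, and $\alpha$ is carried along via (i) and (ii). In (vi), your identification of the excess class is inverted: $p^{*}i_{*}\alpha$ lies by definition in the summand $p^{*}H_{D}^{*}(X)$ (with $x=i_{*}\alpha$), not in $\bigoplus_{i}j_{*}H_{D}^{*}(Y)$. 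The correct mechanism, as in the paper, is to show that $\beta=j_{*}\bigl(q^{*}\alpha\,c_{d-1}(F^{*})\bigr)$ \emph{also} lies in $p^{*}H_{D}^{*}(X)$ --- because $j^{*}\beta=q^{*}\bigl(\alpha\,c_{d}(N_{Y/X})\bigr)$ is a pullback from $Y$, killing all the $y_{i}$ --- and then to compute $p_{*}\beta=i_{*}\alpha$ using $q_{*}c_{d-1}(F^{*})=1$. Finally, ``matching against the classical excess formula'' in integral or de Rham cohomology cannot pin down a Deligne class, since the comparison maps out of $H_{D}^{*}$ are not injective; the identification must be carried out inside Deligne cohomology as above.
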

\begin{proof}
(i) and (ii)  hold at the
level of complexes. More precisely, the Gysin morphism $f_{*}\he$
is functorial
at the complexes level $\ti{\Z}_{D}\he(\, .\, )$. For the
projection formula, we use the complex $\ti{\Z}_{D,X}\he(\, .\, )$
for the variable $x$ and
the complex $\Z_{D,Y}\he(\, .\, )$ for the variable
$y$. To prove the last formula of (i), we remark that
$p_{2}$ is proper since $X$ is compact. Then we write
\begin{align*}
f_{*}\he\alpha &=p_{2*}\he(\textrm{id},f)_{*}\he\alpha =
p_{2*}\he(\textrm{id},f)_{*}\he\bigl[(\textrm{id},f)\ee\be
p_{1}\ee\alpha \bigr]\\
&=p_{2*}\he\bigl(p_{1}\ee\alpha \, .\, (\textrm{id},f)_{*}\he(1)\bigr)=
p_{2*}\bigl(p_{1}\ee\alpha \, .\, \{\Gamma
_{f}\he\}_{D}\he\bigr).
\end{align*}
For (ii), we can pull-back currents under $p$ and $q$ since
these morphisms are submersions. Then $p\ee\be$ and
$q\ee\be$ are defined for the complexes $\ti{\Z}_{D}\he(\, .\,
)$ and (ii) is an equality of complexes morphisms.
\par\medskip
For (iii), it is enough by the projection formula to prove that
$f_{*}\he(1)=d$, which is well known.
\par\medskip
The formulae (iv) and (v) are of the same type.
Let us prove (v) for
instance. We will define first some notations:
Let $\Gamma $ be the graph of $i_{D}\he\mkern -4 mu:\mkern - 8 mu
\xymatrix{D\, \ar@{^{(}->}[r]&\!Y}$\!\! and
$\Gamma _{i}\he$ be the graph
 of   $i_{\tid_{i}}\he\mkern -4 mu:\mkern - 8 mu
\xymatrix{\tid_{i}\, \ar@{^{(}->}[r]&\!X}$\!\!. We define
$\Gamma'_{i} =
  \bigl(\ba{f}_{i}\he,\id\bigr)_{*}\he\bigl(\Gamma _{i}\he\bigr)\suq D\times
  X$.
We call $\apl{p_{1}}{D\tim Y}{D}$ and
  $\apl{p_{2}}{D\tim Y}{Y}$ the first and second projections. In the
  same manner, we define the projections $\aplpt{p_{1}'}{D\tim X}{D}{,}$
  $\aplpt{p_{2}'}{D\tim X}{X}{,}$
$\aplpt{p_{1,i}'}{\tid_{i}\tim X}{\tid_{i}}{,}$ and
$\aplpt{p_{2,i}'}{\tid_{i}\tim X}{X}{.}$


%
%
%
%
%
%
%
%
We have
$(\id,f)\ee\be\{\Gamma \}_{D}\he=\ds\sum _{i=1}^{N}m_{i}\he\, \{\Gamma
'_{i}\}_{D}\he$
(this can be seen using explicit
description of the Bloch cycle class, see \cite{Blo}). Then
\begin{align*}
f\ee\be\, i_{D*}\he\, \alpha &=f\ee\be\,  p_{2*}\he
\, \bigl(p_{1}\ee\alpha\,  .\, \{\Gamma \}_{D}\he\bigr)
&&\textrm{by (i)}\\
&=p'_{2*}(\id,f)\ee\be\bigl(p_{1}\ee\, \alpha \, .\, \{\Gamma \}_{D}
\he\bigr)&&\textrm{by (ii)}\\
&=\sum _{i=1}^{N}m_{i}\he\, p'_{2*}\bigl(p'_{1}{}{\ee}\alpha \, .\,
\{\Gamma '_{i}\}
_{D}\he\bigr)&&\textrm{by the projection formula}\\
&=\sum _{i=1}^{N}m_{i}\he\, p'_{2*}\bigl(p'_{1}{}{\ee}\alpha \, .\,
(\ba{f}_{i}\he,\id)_{*}\he\{\Gamma _{i}\}
_{D}\he\bigr)\\
&=\sum _{i=1}^{N}m_{i}\he\, p_{2,i*}'\bigl(p'_{1,i}{}\ee
\ba{f}_{i}{}\ee\alpha \, .\,
\{\Gamma _{i}
\}_{D}\he\bigr)&&\textrm{by the projection formula}\\
&=\sum _{i=1}^{N}m_{i}\he\, i_{\tid_{i}*}\he\, \ba{f}_{i}{}\ee\, \alpha .
\end{align*}
\par\medskip
Before dealing with (vi), we prove (vii) when $Y$ is a hypersurface.
In the case of the \'{e}tale cohomology, it is possible to
assume that $\alpha =1$ (see
\cite[Expos\'{e} VII, \S\,4]{SGA5} and \cite[Cycle \S\,1.2]{SGA4}).
Remark that this is no longer possible here, for there is
no purity theorem in Deligne cohomology.
\par\medskip
We use the deformation
to the normal cone, an idea which goes back to Mumford
(see \cite{MuLS} and \cite[Expos\'{e} VII \S\,9]{SGA5}).
The aim was originally to prove the same formula in the Chow
groups.
Let $M_{Y/X}\he$ be the blowup of $X\tim \P^{1}$ along
$Y\tim \{0\}$, $\tix$ be the blowup of $X$ along $Y$, and
$M_{Y/X}^{\circ}=M_{Y/X}\he\backslash \tix$.
Then we have an injection
$
F\mkern -4 mu:\mkern - 8 mu\xymatrix{
Y\tim \P^{1}\, \ar@{^{(}->}[r]&M^{0}_{Y/X}}
$
over $\P^{1}$
(see \cite[Ch.\!\! 5]{Ful}
\S\,5.1).
We denote the inclusions $\xymatrix{N_{Y/X}\he\ar@{^{(}->}[r]&
M_{Y/X}^{\circ}}$ and

$\xymatrix{Y\, \ar@{^{(}->}[r]&N_{Y/X}\he}$ by $j_{0}\he$ and $i$,
the projections of ${\bigl(Y\tim \P^{1}\bigr)\tim M_{Y/X}^{\circ}}$
(resp. $Y\tim\P^{1}$, resp.
${\bigl(Y\tim \P^{1}\bigr)\tim N_{Y/X}\he}$, resp. ${Y\tim N_{Y/X}\he}$)
on its first and second factor by $\textrm{pr}_{1}$ and $\textrm{pr}_{2}$
(resp $\widetilde{\textrm{pr}}_{1}$ and $\widetilde{\textrm{pr}}_{2}$,
resp. $\textrm{pr}'_{1}$ and $\textrm{pr}'_{2}$, resp.
$\textrm{pr}''_{1}$ and $\textrm{pr}''_{2}$). Besides,
$\Gamma \suq Y\tim\P^{1} \tim M_{Y/X}^{\circ}$ is the
graph of $F$, and
$\Gamma '\suq Y\tim N_{Y/X}\he$ is the graph of $i$.
Finally, $\Gamma ''=
\bigl(i_{0}\he,\id_{N_{Y/X}\he}\he\bigr)_{*}\he\Gamma '
\suq Y\tim\P^{1}\tim N_{Y/X}\he$,
   where
   $i_{0}\he \mkern -4 mu:\mkern - 8 mu\xymatrix{
Y\tim \{0\}\, \ar@{^{(}->}[r]&Y\tim \P^{1}
  }$ is the injection of the central fiber.
Remark that $\textrm{pr}'_{2}$ and $\textrm{pr}''_{2}$ are proper maps
since $Y$ is compact.
  \par\medskip
We have $\bigl(i_{0}\he,\id_{N_{Y/X}\he}\he\bigr)_{*}\he\{\Gamma '\}_{D}\he
=\{\Gamma ''\}_{D}\he$ and
$\bigl(\id_{Y\tim \P^{1}}\he,j_{0}\he\bigr)\ee_{\be}\{\Gamma \}_{D}\he
=\{\Gamma ''\}_{D}\he$. Let
$\gamma =F_{*}\he\bigl(\ti{\textrm{pr}}_{1}\ee\alpha \bigr)$. Then
\begin{align*}
j_{0}\ee\gamma &=j_{0}\ee\, \textrm{pr}_{2*}
\he\bigl(\textrm{pr}_{1}\ee
\ti{\textrm{pr}}_{1}\ee\alpha \, .\, \{\Gamma \}_{D}\he\bigr)
=\textrm{pr}'_{2*}\Bigl[\bigl(\id_{Y\tim\P^{1}}\he,j_{0}\he\bigr)
\ee\be
\bigl(\textrm{pr}_{1}\ee
\ti{\textrm{pr}}_{1}\ee\alpha \, .\, \{\Gamma \}_{D}\he\bigr)
\Bigr]
&&\textrm{by (ii)}\\
&=\textrm{pr}'_{2*}\Bigl[\bigl(\id_{Y\tim\P^{1}}\he,j_{0}\he
\bigr)\ee\be\, \,
\textrm{pr}_{1}\ee
\ti{\textrm{pr}}_{1}\ee\alpha \, .\, \{\Gamma ''\}_{D}\he\Bigr]\\
&=\textrm{pr}'_{2*}\bigl(i_{0}\he, \id_{N_{Y/X}\he}\he\bigr)_{*}
\he\,
\Bigl(\bigl(i_{0}\he,\id_{N_{Y/X}\he}\he\bigr)\ee\be\, \,
\bigl(\id_{Y\tim\P^{1}}\he,j_{0}\he\bigr)\ee\be\, \,
\textrm{pr}_{1}\ee\ti{\textrm{pr}}_{1}\ee\alpha \, .\,
\{\Gamma '\}_{D\he}\Bigr)
&&\textrm{by the projection formula}\\
&=\textrm{pr}_{2*}''\bigl(\textrm{pr}_{1}''{}^{*}
\alpha \, .\, \{\Gamma '\}_{D}\he
\bigr)=i_{*}\he\alpha .
\end{align*}
By the homotopy principle (Proposition
\ref{PropositionUnComplementQuatreUnArt1} (vi)),
the class $F\ee\be\gamma _{\vert Y\tim
\{t\}}\he$ is independent of $t$. If $t\not = 0$, we have clearly
$F\ee\be\gamma _{\vert Y\tim
\{t\}}\he=i_{Y}\ee i_{Y*}\he\alpha $. For $t=0$,
$F\ee\be\gamma _{\vert Y\tim
\{0\}}\he =i\ee\be j\ee_{0}\gamma =i\ee\be i_{*}\he\alpha $.
Let $\apl{\pi }{N_{Y/X}\he}{Y} $ be the projection of $N_{Y/X}\he$ on $Y$.
Then $\alpha =i\ee\be\pi \ee\be\alpha $. Thus
\[
i\ee\be i_{*}\he\alpha =i\ee\be i_{*}\he(i\ee\be\pi \ee\be\alpha )=
i\ee\be\bigl(\pi \ee\be\alpha \, .\, \overline{\{Y\}}_{D}\he\bigr)
=\alpha \, .\,
i\ee\be\overline{\{Y\}}_{D}\he,
\]
where $\overline{\{Y\}}_{D}\he$
is the cycle class of $Y$ in $N_{Y/X}\he$.
\par
Now
$\overline{\{Y\}}_{D}\he=c_{1}\bigl(\oo_{N_{Y/X}\he}\he(Y)\bigr)$,
so that
$i\ee\be\overline{\{Y\}}_{D}\he
=c_{1}\bigl(N_{Y/N_{Y/X}\he}\he\bigr)=c_{1}\bigl(
N_{Y/X}\he\bigr)$.
\par\medskip
We can now prove (vi).
Its first part is straightforward using d\'{e}vissage as in
Proposition \ref{PropositionUnComplementQuatreUnArt1} (v)
and the analogous result
in Dolbeault cohomology and in integer cohomology.
\par\medskip
If $\alpha $ is a Deligne class on $\ti{X}$, we can write
$\alpha =p\ee x+
\ds\sum _{i=1}^{d-1}j_{*}\he\Bigl[y_{i}\he\, c_{1}\bigl(
\oo_{N_{Y/X}\he}\he(-1)\bigr)^{i-1}\Bigr]$.
Since $E$ is a hypersurface of $\tix$, by the formula proved above
$j\ee\be j_{*}\he\, \lambda  =\lambda  \,
c_{1}\bigl(N_{E/\tix}\he\bigr)=\lambda \, \, c_{1}\bigl(\oo_{N_{Y/X}\he}
\he(-1)\bigr)$
for any Deligne class $\lambda $ on $E$.
We obtain $j\ee\be\alpha =q\ee\be i\ee\be x+\ds
\sum _{i=1}^{d-1}(-1)^{i}y_{i}\, c_{1}\bigl(
\oo_{N_{Y/X}\he}\he(1)\bigr)^{i}$. Since $j\ee\be\alpha =q\ee\be\delta
$, all the classes $y_{i}$ vanish by Proposition
\ref{PropositionUnComplementQuatreUnArt1} (v). Thus
$\alpha =p\ee\be x$. By Proposition \ref{PropositionDeuxInsertC} (iii),
$x=p_{*}\he\alpha $.
\par\medskip
For the excess formula, let $\alpha $ be a Deligne class on $Y$. We define
$
\beta=j_{*}\he\bigl(q\ee\be\alpha \,\, c_{d-1}\he(F\ee\be)\bigr).
$
Then, by Proposition \ref{PropositionDeuxComplementQuatreUnArt1}
(i) and (ii),
\[
j\ee\be\beta =
\bigl[q\ee\be\alpha \,\,  c_{d-1}\he(F\ee\be)\bigr]
\, c_{1}\bigl(N_{E/\tix}\he\bigr)
=q\ee\be\bigl[\alpha \, c_{d}\he\bigl(N_{Y/X}\he\bigr)\bigr].
\]
By the discussion above, $\beta $ comes from the base, so that
\[
\beta =p\ee\be p_{*}\he\beta =
p\ee\be i_{*}\he q_{*}\he\bigl(q\ee\be\alpha \,
c_{d-1}\he(F\ee\be)\bigr)=
p\ee\be i_{*}\he\bigl[\alpha\, \,
q_{*}\he\bigl(c_{d-1}\he(F\ee\be)\bigr)\bigr]=
p\ee\be i_{*}\he\alpha
\]
for
$q_{*}\he\bigl(c_{d-1}\he(F\ee\be)\bigr)=1$ (see \cite[Lemme 19.b]{BoSe}).
\par\medskip
--- Proof of (vii). The formula is already true for $d=1$. We blowup $X$
along $Y$, use the excess formula (vi) and we obtain:
\begin{align*}
q\ee\be i\ee_{Y}i_{Y*}\he\, \alpha &=j\ee\be p\ee\be i_{Y*}\he\, \alpha=
j\ee\be\bigl[j_{*}\he\bigl(q\ee\be\alpha \, c_{d-1}\he(F\ee\be)\bigr)\bigr]\\
&=q\ee\be\alpha \,\,  c_{d-1}\he\bigl(F\ee\be\bigr)\,
c_{1}\bigl(N_{E/\tix}\bigr)=q\ee\be\bigl(\alpha \, c_{d}\he\bigl(N_{Y/X}\he\bigr)
\bigr).
\end{align*}
Since $q\ee\be$ is injective, we get the result.
\end{proof}
\subsection{Chern classes for holomorphic vector bundles}
We refer to \cite[\S\,4]{Zuc} and \cite[\S\,8]{EsVi} for all this section.
From now on, we will suppose that $X$ is smooth of dimension $n$.
Let $E$ be a
holomorphic bundle on $X$ of rank $r$,
and $\P(E)$ be the projective bundle
of $E$ endowed with the
line bundle $\oo_{E}\he(1)$.
Let $\alpha =c_{1}\bigl(\oo_{E}\he (1)\bigr)$.
By property (v) of
Proposition
\ref{PropositionUnComplementQuatreUnArt1},
we can define $\bigl(c_{i}(E)\bigr)_{1\leq
i\leq  r}\he$ by the relation
\[
\alpha ^{r}+p\ee c_{1}(E)\,\alpha ^{r-1}+\cdots +p\ee c_{r}(E)=0
\quad  \textrm{in}\
H_{D}^{2r}\bigl(\P(E),\Z(r)\bigr).
\]
Thus $c_{i}(E)$ is an element of $H^{2i}_{D}(X,\Z(i))$.
The knowledge of the Chern classes $c_{i}(E)$ allows to construct
exponential Chern classes $\ch_{i}(E)$, $0\leq i\leq n$, in
$H^{2i}_{D}(X,\Q(i))$.
These classes are obtained
as the values of certain universal polynomials with rational
coefficients on $c_{1}(E),\dots ,c_{r}\he(E)$ (see \cite{Hir}).
They can also be constructed with the
splitting principle using projective towers
(see \cite{Gr1}). They are completely
characterized by the following facts:
\begin{enumerate}
  \item [--] they satisfy the Whitney additivity formula (Proposition
  \ref{PropositionDeuxComplementQuatreUnArt1} (i));

  \item [--] they satisfy the functoriality formula under pullbacks
  (Proposition \ref{PropositionDeuxComplementQuatreUnArt1} (ii));

  \item [--] if $L$ is a line bundle, $\ch(L)=e^{c_{1}(L)}$.
\end{enumerate}
\begin{definition}\label{DefinitionDeuxComplementQuatreUnArt1}
The total Chern class of $E$ is the element
$c(E)$ of $H_{D}\ee(X)$ defined by
\[
c(E)=1+c_{1}(E)+\cdots +c_{r}\he(E).
\]
The Chern character of $E$ is the element $\ch(E)$ of $H_{D}\ee(X,\Q)$
defined by
\[
\ch(E)=\ch_{0}(E)+\cdots +\ch_{n}\he(E).
\]
\end{definition}
The splitting machinery gives the following proposition:
\begin{proposition}\label{PropositionDeuxComplementQuatreUnArt1}
${\he}$
\begin{enumerate}
  \item [(i)] If
  $\sutrgd{E}{F}{G}$ is an exact sequence of vector bundles on $X$, the
  Whitney formula holds:
  $c(F)=c(E)c(G)$ and $\ch(F)=\ch(E)+\ch(G)$.

  \item [(ii)] If $f$ is a holomorphic map between $X$ and $Y$ and if $E$
  is a holomorphic bundle on $Y$, we have $c\bigl(f\ee
  E\bigr)=f\ee c(E)$ and $\ch\bigl(f\ee
  E\bigr)=f\ee \ch(E)$.

  \item [(iii)] If $E$ and $F$ are two holomorphic vector bundles on
  $X$, then $\ch(E\oti F)=\ch(E)\ch(F)$.

\end{enumerate}
\end{proposition}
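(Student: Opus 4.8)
The plan is to carry over the classical splitting principle, which is now available because Proposition~\ref{PropositionUnComplementQuatreUnArt1} supplies its two structural ingredients: the projective bundle formula~(v) and the fact that $f\ee$ is a ring morphism~(iv). I would first settle~(ii). Given $\apl{f}{X}{Y}$ and a holomorphic bundle $E$ on $Y$ of rank $r$, there is a natural map $\apl{\ti{f}}{\P(f\ee E)}{\P(E)}$ over $f$ with $\ti{f}\ee\oo_{E}(1)=\oo_{f\ee E}(1)$; since the first Chern class of a line bundle is its class in $\textrm{Pic}=H^{2}_{D}(\,\cdot\,,\Z(1))$, which commutes with pullback, we get $\ti{f}\ee\alpha_{E}=\alpha_{f\ee E}$, where $\alpha_{E}=c_{1}(\oo_{E}(1))$. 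Applying the ring morphism $\ti{f}\ee$ to the defining relation $\alpha_{E}^{r}+p\ee c_{1}(E)\,\alpha_{E}^{r-1}+\dots+p\ee c_{r}(E)=0$ yields the analogous monic relation of degree $r$ for $\oo_{f\ee E}(1)$ with coefficients $f\ee c_{k}(E)$; by Proposition~\ref{PropositionUnComplementQuatreUnArt1}(v) the coefficients of such a relation are unique, so $c_{k}(f\ee E)=f\ee c_{k}(E)$. The statement for $\ch$ is then immediate, each $\ch_{k}$ being a fixed universal polynomial with rational coefficients in $c_{1},\dots,c_{r}$ and $f\ee$ being multiplicative.

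The crux is the split case: if $E=L_{1}\oplus\dots\oplus L_{r}$ with the $L_{i}$ line bundles, then $c(E)=\prod_{i=1}^{r}\bigl(1+c_{1}(L_{i})\bigr)$. On $\apl{p}{\P(E)}{X}$ the $i$-th summand determines the smooth divisor $D_{i}=\P\bigl(\bigoplus_{j\neq i}L_{j}\bigr)$, and the tautological inclusion identifies $\oo_{\P(E)}(D_{i})$ with $\oo_{E}(1)\oti p\ee L_{i}$, so $\{D_{i}\}_{D}=\alpha+p\ee c_{1}(L_{i})$ in $H^{2}_{D}\bigl(\P(E),\Z(1)\bigr)$, where $\alpha=c_{1}(\oo_{E}(1))$. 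For every subset $S$ the $(D_{i})_{i\in S}$ meet transversally along the smooth sub-bundle $\P\bigl(\bigoplus_{j\notin S}L_{j}\bigr)$, which is empty for $S=\{1,\dots,r\}$; iterating Proposition~\ref{PropositionUnInsertC}(i) therefore gives $\prod_{i=1}^{r}\{D_{i}\}_{D}=0$, i.e. $\prod_{i=1}^{r}\bigl(\alpha+p\ee c_{1}(L_{i})\bigr)=0$ in $H^{2r}_{D}\bigl(\P(E),\Z(r)\bigr)$. Expanding in powers of $\alpha$ and comparing with the defining relation through the uniqueness used above, $p\ee c_{k}(E)=e_{k}\bigl(c_{1}(L_{1}),\dots,c_{1}(L_{r})\bigr)$ for all $k$, i.e. $c(E)=\prod_{i}\bigl(1+c_{1}(L_{i})\bigr)$; equivalently $\ch(E)=\sum_{i}e^{c_{1}(L_{i})}$ by definition of the Chern character polynomials.

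The rest is formal. For an arbitrary bundle $E$, iterating the projective bundle construction produces a flag bundle $\apl{\sigma}{X_{E}}{X}$ along which $\sigma\ee$ is injective (at each stage by the freeness in Proposition~\ref{PropositionUnComplementQuatreUnArt1}(v)) and $\sigma\ee E$ acquires a filtration with line-bundle quotients. To prove~(i) for $\sutrgd{E}{F}{G}$, I would use the homotopy principle (Proposition~\ref{PropositionUnComplementQuatreUnArt1}(vi)) to deform this extension, over $X\times\P^{1}$, to the split extension $E\oplus G$, obtaining $c(F)=c(E\oplus G)$ and $\ch(F)=\ch(E\oplus G)$; pulling back to a flag bundle that splits both $E$ and $G$ into line bundles $L_{i}$ and $M_{j}$, the split case yields $c(E\oplus G)=\prod_{i}\bigl(1+c_{1}(L_{i})\bigr)\prod_{j}\bigl(1+c_{1}(M_{j})\bigr)=c(E)c(G)$ and $\ch(E\oplus G)=\ch(E)+\ch(G)$, and one descends by injectivity of the pullback. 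Statement~(iii) follows by the same scheme: after splitting $E$ and $F$, $E\oti F=\bigoplus_{i,j}L_{i}\oti M_{j}$, and since $c_{1}(L\oti M)=c_{1}(L)+c_{1}(M)$ in $\textrm{Pic}=H^{2}_{D}(\,\cdot\,,\Z(1))$ we get $\ch(E\oti F)=\sum_{i,j}e^{c_{1}(L_{i})+c_{1}(M_{j})}=\bigl(\sum_{i}e^{c_{1}(L_{i})}\bigr)\bigl(\sum_{j}e^{c_{1}(M_{j})}\bigr)=\ch(E)\ch(F)$, and again injectivity of the pullback returns the identity on $X$.

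The one genuinely non-formal point is the split-case computation, and within it the appeal to Proposition~\ref{PropositionUnInsertC}(i): one needs the cycle classes of the transverse divisors $D_{i}$ to multiply correctly in Deligne cohomology, where there is no purity and the cup product is delicate. Everything else is a line-by-line transcription of the standard splitting machinery (see \cite[\S\,4]{Zuc}, \cite[\S\,8]{EsVi}, and \cite{Gr1}, \cite{Hir} for the formalism), valid here precisely because Proposition~\ref{PropositionUnComplementQuatreUnArt1} provides the ring structure, functoriality, projective bundle formula and homotopy invariance that the machinery consumes.
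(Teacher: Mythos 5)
Your proposal is correct and coincides with the paper's intended argument: the paper offers no proof beyond the remark that ``the splitting machinery gives the following proposition,'' referring to \cite[\S\,4]{Zuc} and \cite[\S\,8]{EsVi}, and what you have written out is precisely that machinery, with the projective bundle formula, the ring-morphism property of $f\ee$, the homotopy principle and the cycle-class product supplying exactly the inputs the paper says it needs. One small imprecision to tighten: a flag bundle only \emph{filters} $\sigma\ee E$ and $\sigma\ee G$ by line bundles rather than splitting them into direct sums, so your genuinely split computation does not apply verbatim at that stage; however, iterating your own deformation step (each stage of the filtration is a two-term extension, which the homotopy principle over $X\times\P^{1}$ replaces by its split form) reduces the filtered case to the split one, so the argument closes.
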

\begin{notation}\label{RemarqueUnComplementQuatreUnArt1}
From now on,
  if $\eee$ is a locally free sheaf and $E$ is the associated holomorphic vector
  bundle,
  we will denote by $\ba{\ch}(\eee)$
  the Chern character $\ch(E)$. Thus $\ba{\ch}$ is well
  defined on a basis of any dimension but only for locally free
  sheaves, and we will make use of it in our construction.
%

\end{notation}
\section{Construction of Chern classes}
\label{SectionConstructionOfChernClassesArt1}
The construction of exponential Chern classes $\chp(\ff)$ in
$\hdp(X,\Q(p))$ for an arbitrary coherent sheaf $\ff$ on $X$
will be done by induction on $\dim X$.
If $\dim X=0$, $X$ is a point and everything is obvious.
\par
Let us now precisely state the induction hypotheses
$(\textrm{H}_{n})$:\label{RecurrenceUnChClArt1}
\begin{enumerate}
  \item [$(\textrm{E}_{n})$] If $\dim X\leq n$ and $\ff$ is a coherent analytic
  sheaf on $X$, then the Chern classes $\chp(\ff)$ are defined in
  $\hdp(X,\Q(p))$.

  \item [$(\textrm{W}_{n})$] If $\dim X\leq n$ and
  $\sutrgd{\ff}{\g}{\hh}$ is an exact sequence of
  analytic sheaves on $X$, then
  $\ch(\g)=\ch(\ff)+\ch(\hh)$.
  This means that $\ch$ is defined on
  $\kan(X)$ and is a group morphism.

  \item [$(\textrm{F}_{n})$] If $\dim X\leq n$, $\dim Y\leq n$, and if
  $\apl{f}{X}{Y}$ is a holomorphic map, then
  for all $y$ in $\kan(Y)$, $\ch(f\pe y)=f\ee\ch(y)$.

  \item [$(\textrm{C}_{n})$] If $\dim X\leq n$, the Chern classes are compatible
  with those constructed in Part \ref{SectionDeux}
  on locally free sheaves, i.e.
  $\ch(\ff)=\ba\ch(\ff)$ for all locally free sheaf $\ff$.

  \item [$(\textrm{P}_{n})$]
  If
  $\dim X\leq n$, $\ch$ is a ring morphism: if $x$, $y$ are two
  elements of $\kan(X)$,
  $\ch(x\, .\, y)=\ch(x)\ch(y)$ and $\ch(1)=1$.

  \item [$(\textrm{RR}_{n})$] If $Z$ is a smooth hypersurface of $X$,
  where $\dim X\leq n$, then the (GRR) theorem holds for $i_{Z}\he$:
  for every coherent sheaf $\ff$ on  $Z$, $\ch(i_{Z*}\he\ff)=
  i_{Z*}\he\bigl(\ch^{Z}(\ff)\td(N_{Z/X}\he)^{-1}\bigr)$.

\end{enumerate}
For the definition of analytic \mbox{$K$-theory}
and related operations we refer to
\cite{BoSe}.
\begin{remark}
To avoid any confusion, for a coherent sheaf on $Z\subsetneq X$, we use the
  notation $\ch^{Z}(\ff)$, emphasizing the fact that the class is taken on
  $Z$.
\end{remark}

From now on, we will suppose that all the properties of the induction
hypotheses $(\textrm{H}_{n-1})$
above are true.
\begin{theorem}\label{TheoremTroisTrois}
Assuming hypotheses $(\emph{H}_{n-1})$, we can define a Chern character
for analytic coherent sheaves on compact complex manifolds of dimension
$n$. It further satisfies $(\emph{P}_{n})$, $(\emph{F}_{n})$,
$(\emph{RR}_{n})$, $(\emph{W}_{n})$ and $(\emph{C}_{n})$.
\end{theorem}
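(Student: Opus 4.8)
The plan is to build the Chern character for an arbitrary coherent sheaf $\ff$ on a compact complex manifold $X$ of dimension $n$ by reducing, via the geometric d\'evissage Theorem \ref{TheoremeUnChClArt1}, to two cases already accessible under $(\textrm{H}_{n-1})$: torsion sheaves and locally free sheaves. First I would treat torsion sheaves. If $\ttt$ is a torsion sheaf on $X$, its support is contained in a proper analytic subset; after a desingularization (Hironaka, as quoted in the Notations section) and using the $K$-theory with supports recalled in the Appendix, one writes $[\ttt]$ in terms of classes pushed forward from smooth hypersurfaces, each of dimension $\leq n-1$. On those hypersurfaces the Chern character is already defined by $(\textrm{E}_{n-1})$, and one \emph{defines} $\ch(\ttt)$ by forcing the Grothendieck--Riemann--Roch formula for the immersion of a smooth divisor: $\ch(i_{D*}\g) := i_{D*}\bigl(\ch^{D}(\g)\td(N_{D/X})^{-1}\bigr)$. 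The content here is that this is well defined, i.e.\ independent of the chosen presentation of $[\ttt]$ as a combination of such push-forwards; this is where $(\textrm{W}_{n-1})$, $(\textrm{F}_{n-1})$, $(\textrm{P}_{n-1})$ together with the excess and auto-intersection formulae of Proposition \ref{PropositionDeuxInsertC}(vi),(vii) and the projection formula are used to compare two resolutions by blowing up and comparing on the exceptional divisors.

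Next, for a sheaf $\ff$ of generic rank $r>0$, I would invoke Theorem \ref{TheoremeUnChClArt1}: there is a bimeromorphic $\pi:\tix\to X$ and a surjection $\pi^{*}\ff\twoheadrightarrow\qq$ with $\qq$ locally free of rank $r$. Then in $K(\tix)$ one has $[\pi^{*}\ff]=[\qq]+[\kk]-[\text{stuff}]$ where the correction terms are supported on the exceptional locus, hence are (pushforwards of) torsion classes on manifolds of dimension $\leq n-1$ — more precisely $\pi^{!}[\ff]-[\qq]$ lies in the image of $G_{\textrm{tors}}$, using the $\tor$ sheaves and the Appendix machinery. So $\ch(\pi^{!}[\ff])$ is already defined by the locally free case $(\textrm{C}_{n-1})$ plus the torsion case just constructed. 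One then \emph{defines} $\ch(\ff)$ by descent: $j^{*}$-type arguments and Proposition \ref{PropositionDeuxInsertC}(vi) (the "$\alpha$ comes from the base" statement) show that a class on $\tix$ which restricts appropriately descends uniquely to $X$, and one sets $\ch(\ff):=$ that descended class, equivalently $\ch(\ff)=\pi_{*}\bigl(\ch(\pi^{!}[\ff])\,\td(T_{\tix})\td(\pi^{*}T_{X})^{-1}\bigr)$-type formula dictated by GRR for the bimeromorphic $\pi$. The key point is independence of the bimeromorphic model $\tix$: given two models, dominate both by a third (Hironaka), and check the two resulting classes agree using the torsion case and the excess formulae; this is the main obstacle, because it requires controlling the torsion corrections on the tower of blowups simultaneously with the Whitney formula.

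Once $\ch(\ff)$ is defined for all coherent $\ff$ on all $X$ with $\dim X\leq n$, the remaining properties are verified in the order $(\textrm{C}_{n})$, $(\textrm{F}_{n})$, $(\textrm{RR}_{n})$, $(\textrm{P}_{n})$, $(\textrm{W}_{n})$. Property $(\textrm{C}_{n})$ is immediate from the construction (for $\ff$ locally free one may take $\pi=\id$). Property $(\textrm{F}_{n})$ for $f:X\to Y$ follows by pulling back the d\'evissage data: fibre-product the bimeromorphic model, reduce again to the torsion case (where functoriality comes from Proposition \ref{PropositionDeuxInsertC}(v), the pullback of $i_{D*}$) and the locally free case $(\textrm{F}_{n-1})$. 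Property $(\textrm{RR}_{n})$ for a smooth hypersurface $Z\subset X$: if $\ff$ is a torsion sheaf on $Z$ this is essentially the \emph{definition} (one must match the desingularization used on $X$ with the intrinsic definition on $Z$, again via excess formulae); if $\ff$ has positive rank on $Z$ one splits off a locally free part on a model of $Z$ and combines with the torsion case and $(\textrm{RR}_{n-1})$ for immersions in the modified picture. Property $(\textrm{P}_{n})$ (ring morphism) and $(\textrm{W}_{n})$ (additivity) are then deduced together: additivity on a short exact sequence $0\to\ff\to\g\to\hh\to 0$ is reduced — after pulling back to a common model on which all three sheaves become extensions of torsion by locally free — to the already-known additivity for locally free sheaves $(\textrm{W}_{n-1})$-type input and the additivity built into the torsion definition; multiplicativity reduces similarly to $(\textrm{P}_{n-1})$ and Proposition \ref{PropositionDeuxComplementQuatreUnArt1}(iii). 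The hard part throughout is the well-definedness (independence of model) in the positive-rank case, since every other property is then formal consequence of the functoriality and projection/excess formulae of Proposition \ref{PropositionDeuxInsertC}; I expect the Whitney formula $(\textrm{W}_{n})$ to be the second most delicate point, which is presumably why the paper defers its proof to a separate section via a deformation-to-the-normal-cone argument.
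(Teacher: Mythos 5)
Your proposal follows essentially the same route as the paper: torsion sheaves are handled first by forcing the GRR formula for immersions of smooth hypersurfaces and descending along a desingularization of the support, positive-rank sheaves via the d\'{e}vissage Theorem \ref{TheoremeUnChClArt1} and descent from the modification using Proposition \ref{PropositionDeuxInsertC} (vi) (with independence of the model obtained by dominating two models by a third), and the Whitney formula via deformation to the normal cone, exactly as in Sections \ref{SectionConstructionOfChernClassesArt1} and \ref{SectionWhitneyFormulaArt1}. The only caveat is your proposed order of verification: since the statement of $(\textrm{F}_{n})$ quantifies over virtual classes in $G(Y)$, it presupposes $(\textrm{W}_{n})$, and the paper accordingly proves the Whitney formula first and deduces $(\textrm{F}_{n})$ and $(\textrm{P}_{n})$ afterwards by a careful bootstrap through the cases (bimeromorphic maps, torsion sheaves, locally free sheaves) already established during the construction.
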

Let us briefly explain the organization of the proof of this theorem.
In \S\,\ref{SubsectionTorsionsheaves}, we construct the Chern character
for torsion sheaves. In \S\,\ref{CasGeneral}, we construct the Chern
character for arbitrary coherent sheaves, using the results of
\S\,\ref{CasPositif}. Properties $(\textrm{RR}_{n})$ for a smooth
hypersurface and $(\textrm{C}_{n})$ will be obvious consequences of the
construction.
In \S\,\ref{ProofWhitney}, we prove $(\textrm{W}_{n})$ and
then $(\textrm{F}_{n})$ and $(\textrm{P}_{n})$ using the preliminary
results of \S\,\ref{CasLibre} and \S\,\ref{SectionDeuxWhitneyFormulaArt1}.
Finally, we prove $(\textrm{RR}_{n})$ in \S\,\ref{GRRForAnImmersionClassesArt1}.
\subsection{Construction for torsion sheaves}\label{SubsectionTorsionsheaves}
In this section, we define Chern classes for torsion sheaves by forcing
the Grothendieck-Riemann-Roch formula for immersions of
smooth hypersurfaces.
Let $\kan_{\textrm{tors}}\he(X)$ denote the Grothendieck group of the
abelian category of torsion sheaves on $X$.
We will prove the following
version of Theorem \ref{TheoremTroisTrois}\ \,  for torsion
sheaves:
\begin{proposition}\label{PropositionUnChClArt1}
We can define exponential Chern classes for torsion sheaves on any
\mbox{$n$-dimensional} complex manifold such that:
\begin{enumerate}
  \item [(i) $(\textrm{W}_{n})$]   If $\sutrgd{\ff}{\g}{\hh}$ is
  an exact sequence of
  torsion sheaves on $X$ with $\dim X\leq n$, then
  $\ch(\g)=\ch(\ff)+\ch(\hh)$. This means
  that $\ch$ is a group morphism defined on
  $\kan_{\mathrm{tors}}\he(X)$.

  \item [(ii) $(\textrm{P}_{n})$]  Let $\eee$ be a locally free sheaf
  and $x$ be an
  element of $\kan_{\mathrm{tors}}\he(X)$. Then
  $\ch([\eee].\, x)=\ba\ch\he(\eee)\, . \, \ch(x)$.

  \item [(iii) $(\textrm{F}_{n})$]  Let $\apl{f}{X}{Y}$
  be a holomorphic map where
  $\dim X\leq n$ and $\dim Y\leq n$, and $\ff$ be a coherent sheaf
  on $Y$ such that $\ff$ and $f\ee\ff$ are torsion sheaves.
  Then
  $\ch(f\pe[\ff])=f\ee\ch(\ff)$.

  \item [(iv) $(\textrm{RR}_{n})$]  If $Z$ is a smooth hypersurface of
  $X$ and $\ff$ is coherent on $Z$, then
  \[
\ch\bigl(i_{Z*}\he\ff\bigr)
  =i_{Z*}\he\bigl(\ch^{Z}\be(\ff)\td(N_{Z/X})^{-1}\bigr).
\]
\end{enumerate}
\end{proposition}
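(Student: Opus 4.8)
I would build $\ch$ on $\kan_{\textrm{tors}}\he(X)$ by induction on $d=\dim\supp\ff$, nested inside the standing induction on $n=\dim X$ for which $(\textrm{H}_{n-1})$ is assumed. The case $d=0$ is forced: $\ff$ is a finite sum of skyscrapers, and embedding each point in a flag of smooth submanifolds, each a hypersurface in the next, the value $\ch(\ff)$ is dictated by iterating the Todd‑twisted Gysin formula (the normal‑bundle Todd factors being trivial at a point, this gives the expected point class). All the content is in the inductive step, and there the first move is a dévissage reducing the problem to computing $\ch[\oo_V]$ for $V\subsetneq X$ an irreducible reduced analytic subset: filtering a torsion sheaf $\ff$ by the powers of the ideal of $\supp\ff$ and then by the top–dimensional components, one writes $[\ff]$ in $\kan_{\textrm{tors}}\he(X)$ as a $\Z$–combination of classes $i_{V*}\g$ with $\g$ torsion‑free on the smooth locus of $V$, modulo sheaves with support of dimension $<d$ (already treated); using $(\textrm{W}_{n-1})$ and $(\textrm{C}_{n-1})$ on a resolution one reduces further to $[\oo_V]$.

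\textbf{Forcing GRR through an embedded resolution.} For such a $V$, Hironaka's embedded resolution furnishes a composition of blow‑ups with smooth centres $\apl{p}{X'}{X}$, an isomorphism over $X\setminus\textrm{Sing}(V)$, with normal crossing exceptional locus and smooth strict transform $\ti V\subseteq X'$. There is then an identity in $\kan(X)$ expressing $[\oo_V]$ in terms of $p_!$ of sheaves on $X'$ supported on the (smooth) components of $p^{-1}(V)$, plus corrections with support of dimension $<d$. On $X'$, the Chern character of any sheaf supported on a smooth hypersurface $D$ is \emph{defined} by the forced relation $\ch(i_{D*}\g)=i_{D*}\bigl(\ch^{D}(\g)\,\td(N_{D/X'})^{-1}\bigr)$: this is legitimate since $\dim D=\dim X'-1$, so $\ch^{D}$ is supplied by $(\textrm{H}_{n-1})$, and it extends to all torsion classes supported on $D$ by Whitney additivity together with the dévissage identifying the Grothendieck group of torsion sheaves supported on $D$ with $\kan(D)$. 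One then sets $\ch^{X}[\oo_V]$ so that GRR holds for $p$, i.e. $\ch^{X}[\oo_V]\,\td(X):=p_{*}\bigl(\ch^{X'}(\,\cdot\,)\,\td(X')\bigr)$, the smaller–support corrections being disposed of by the inductive hypothesis on $d$.

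\textbf{Well‑definedness: the main obstacle.} The crux — and the technical heart of the section — is that this class must not depend on any choice, above all not on the resolution $p$. Since two resolutions are dominated by a third, it suffices to prove invariance under a single additional blow‑up with smooth centre $C$ (note $\dim C\le n-1$). Here I would use the blow‑up decomposition of Deligne cohomology of Proposition \ref{PropositionDeuxInsertC}(vi), the excess formula of Proposition \ref{PropositionDeuxInsertC}(vi) and the self‑intersection formula of Proposition \ref{PropositionDeuxInsertC}(vii), together with the projection and base‑change formulae of Proposition \ref{PropositionDeuxInsertC}(i),(ii), to match the two expressions; everything supported on $C$ and on the new exceptional divisor lives in dimension $\le n-1$ and is controlled by $(\textrm{H}_{n-1})$ (in particular $(\textrm{RR}_{n-1})$ and $(\textrm{F}_{n-1})$). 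In effect this amounts to establishing GRR for blow‑ups with smooth centre, restricted to the classes at hand, before GRR is available in general; I expect this compatibility computation to be the hardest and most calculation‑heavy part.

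\textbf{Deducing the four assertions.} Granting well‑definedness, $(\textrm{W}_n)$ follows because every ingredient of the construction — the dévissage filtrations, $p_!$, and the forced formula — is additive on short exact sequences. Assertion $(\textrm{RR}_n)$ for a smooth hypersurface $Z\subset X$ is immediate by taking $p=\id$, $V=Z$: unwinding the definition gives $\ch(i_{Z*}\ff)=i_{Z*}\bigl(\ch^{Z}(\ff)\,\td(N_{Z/X})^{-1}\bigr)$ with $\ch^{Z}$ coming from $(\textrm{H}_{n-1})$. Assertion $(\textrm{P}_n)$ follows from the projection formula of Proposition \ref{PropositionDeuxInsertC}(i) applied to the Gysin maps appearing in the construction, combined with $(\textrm{P}_{n-1})$ on the lower–dimensional pieces. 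Finally $(\textrm{F}_n)$ follows by pulling back a resolution of the support along $\apl{f}{X}{Y}$, applying $(\textrm{F}_{n-1})$ and the base‑change formula of Proposition \ref{PropositionDeuxInsertC}(ii), and using the functoriality of the Gysin morphisms.
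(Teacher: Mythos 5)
Your overall instinct---force GRR for smooth hypersurfaces using $(\textrm{H}_{n-1})$ and then transport everything through a Hironaka resolution of the support---matches the paper's, but two of your reduction steps do not survive in the analytic category, and they are precisely the points the paper's architecture is built to avoid. First, the d\'evissage to classes $[\oo_{V}\he]$ for $V$ irreducible is not available: to pass from an arbitrary sheaf $\g$ supported on $V$ to a multiple of $[\oo_{V}\he]$ modulo lower-dimensional corrections you invoke ``$(\textrm{W}_{n-1})$ and $(\textrm{C}_{n-1})$ on a resolution'', but $V$ is singular, global locally free resolutions on it need not exist (Voisin's counterexample is the starting point of the paper), the induction hypotheses apply only to \emph{smooth} manifolds of dimension $\leq n-1$, and there is no analytic localization sequence comparing $[\g]$ with $r[\oo_{V}\he]$ modulo classes of smaller support. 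The paper never isolates structure sheaves of singular subvarieties: for a smooth hypersurface $Z$ it defines $\ch$ on all of $\kan_{Z}\he(X)\simeq \kan(Z)$ at once by the forced formula (legitimate because $\ch^{Z}$ is defined on all of $\kan(Z)$ by $(\textrm{E}_{n-1})$), and extends to simple normal crossing divisors via the presentation of Proposition \ref{PropositionUnTraduction}, after checking the overlap compatibility on the $D_{ij}$---which costs the two computational Lemmas \ref{LemUnInsertA} and \ref{LemDeuxInsertA} that your sketch omits.

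Second, and more seriously, defining $\ch[\oo_{V}\he]$ by $p_{*}\he$, i.e.\ by forcing GRR for the resolution morphism $p$ itself, places the hardest statement of the paper at the foundation of the construction. As you concede, well-definedness then requires GRR for a blow-up with smooth centre before GRR, the Whitney formula or functoriality are available in dimension $n$; in the paper that statement is only proved at the very end (Theorems \ref{SansRef} and \ref{ThComplementaire}), and its proof uses $(\textrm{W}_{n})$, $(\textrm{F}_{n})$, $(\textrm{P}_{n})$ and the excess formulae, all of which presuppose the completed construction. There is also the separate problem that $p_{!}\he$ is far from injective, so a formula imposed on classes of the form $p_{!}\he\g$ is not obviously single-valued. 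The paper's mechanism is the opposite one: the candidate class $\gamma=\sum_{j}(-1)^{j}\ch\bigl(\tore{j}{\ff}{\tau}\bigr)$ is formed \emph{upstairs}, its restriction to the exceptional divisor is shown to be a pullback from the centre using $(\textrm{F}_{n-1})$ and an induction on the number of blowups, and Proposition \ref{PropositionDeuxInsertC} (vi)---the blow-up decomposition of Deligne cohomology---then guarantees that $\gamma$ descends to a unique class on $X$, uniqueness coming for free from the injectivity of $\tau\ee$; independence of the resolution follows by domination together with the functoriality of $\tau\pe$. Unless you can establish your blow-up compatibility independently, your construction is not known to be well defined, and the same over-optimism affects your one-line deduction of $(\textrm{F}_{n})$, which in the paper requires the factorization of $f$ through its image via Grauert's theorem and both functoriality lemmas.
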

We will proceed in three steps. In \S\,\ref{SubSectionUn}, we perform the
construction for coherent sheaves supported in a smooth hypersurface.
In \S\,\ref{SubSectionDeuxPrime}, we deal with sheaves supported in a simple
normal crossing divisor. In \S\,\ref{SubSectionTrois}, we study the
general case.
\subsubsection{}\label{SubSectionUn}
Let $Z$ be a smooth hypersurface of $X$ where $\dim X\leq n$. For $\g$
coherent on $Z$, we define $\ch\bigl(i_{Z*}\he\g\bigr)$ by the GRR
formula
$\ch\bigl(i_{Z*}\he\g\bigr)=i_{Z*}\he\bigl(\ch^{Z}\be
(\g)\td(N_{Z/X})^{-1}\bigr)$,
where $\ch^{Z}\be(\g)$ is defined by $(\textrm{E}_{n-1})$.
\par
If $\sutrgd{\g'}{\g}{\g''}$ is an exact sequence of coherent sheaves on
$Z$, by $(\textrm{W}_{n-1})$, we have
$\ch^{Z}\be(\g)=\ch\be^{Z}(\g')+\ch\be
^{Z}(\g'')$. Thus $\ch\bigl(i_{Z*}\he\g\bigr)=\ch\bigl(i_{Z*}\he\g'\bigr)
+\ch\bigl(i_{Z*}\he\g''\bigr)$.
We obtain
now a well-defined morphism
\[
\xymatrix{
\kan(Z)\ar[r]^{\sim}_{i_{Z*}\he}&\kan_{Z}\he(X)\ar[d]^{\ch_{Z}\he}\\
&H\ee_{D}(X,\Q)
}
\]
Remark that if $\g$ is a coherent sheaf on $X$ which can be written
$i_{Z*}\he\ff$, then the hypersurface $Z$ is not necessarily unique.
If $Z$ is chosen, $\ff$ is of course unique.
This
is the reason why we use the
notation $\ch_{Z}\he(\g)$. We will see in Proposition
\ref{NewPropChClArt1} that $\ch_{Z}\he(\g)$ is in fact independent of $Z$.
\par\medskip
The assertions of the following proposition are
particular cases of $(\textrm{C}_{n})$,
$(\textrm{F}_{n})$, and $(\textrm{P}_{n})$.
\begin{proposition}\label{LemmeDeuxChClArt1}
Let $Z$ be a smooth hypersurface of $X$.
\begin{enumerate}


  \item [(i)] For all $x$ in $\kan_{Z}\he(X)$,
  $\ch^{Z}\be\bigl(i_{Z}\pe x\bigr)
  =i_{Z}\pe\ch_{Z}\he(x)$.

  \item [(ii)] If $\eee$\! is a locally free sheaf on $X$ and
  $x$ is an element of $\kan_{Z}\he(X)$, then
  \[
\ch_{Z}\he(\cro\eee.\,
  x)=\ba\ch\he(\eee)\, .\, \ch_{Z}\he(x).
\]
\end{enumerate}
\end{proposition}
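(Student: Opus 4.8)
The plan is to reduce both assertions to the already-established Grothendieck--Riemann--Roch formula on $Z$ defining $\ch_Z$, together with the inductive hypotheses $(\textrm{F}_{n-1})$, $(\textrm{P}_{n-1})$ applied on $Z$ (which has dimension $\le n-1$), and the projection formula of Proposition \ref{PropositionDeuxInsertC} (i) for the immersion $i_Z$.

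For (i), write $x = i_{Z*}\he y$ for a unique $y$ in $\kan(Z)$, so that by definition $\ch_Z\he(x) = i_{Z*}\he\bigl(\ch^{Z}\be(y)\,\td(N_{Z/X}\he)^{-1}\bigr)$. Pulling back by $i_Z$ and using the auto-intersection formula (Proposition \ref{PropositionDeuxInsertC} (vii) in codimension one, i.e.\ $i_Z\ee\be i_{Z*}\he\lambda = \lambda\, c_1(N_{Z/X}\he)$), together with the projection formula, one finds
\[
i_Z\ee\be\ch_Z\he(x) = \ch^{Z}\be(y)\,\td(N_{Z/X}\he)^{-1}\, c_1(N_{Z/X}\he).
\]
On the other hand, $i_Z\pe x$ is the class on $Z$ obtained by deriving $i_Z\ee\be i_{Z*}\he y$; the standard computation $\bigl(i_Z\pe i_{Z*}\he\bigr)[y] = [y]\cdot\bigl(\sum_j (-1)^j[\textstyle\bigwedge^j N_{Z/X}\ee]\bigr)$ (the $\tor$-sheaves of $\oo_Z$ over itself inside $X$, which for a smooth divisor gives $\oo_Z \oplus N_{Z/X}\ee[1]$, hence the factor $1 - [N_{Z/X}\ee]$) shows that $i_Z\pe x = y\cdot\bigl(1 - [N_{Z/X}\ee]\bigr)$ in $\kan(Z)$. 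Applying $\ch^Z$ and using $(\textrm{P}_{n-1})$ on $Z$ gives $\ch^Z(i_Z\pe x) = \ch^Z(y)\,\bigl(1 - e^{-c_1(N_{Z/X}\he)}\bigr) = \ch^Z(y)\, c_1(N_{Z/X}\he)\,\td(N_{Z/X}\he)^{-1}$, which matches the expression above. The one subtlety to check is that the Todd-class identity $1 - e^{-c_1} = c_1\,\td(L)^{-1}$ holds for a line bundle $L$ with $c_1(L) = c_1$, which is immediate from $\td(L) = c_1/(1-e^{-c_1})$.

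For (ii), again write $x = i_{Z*}\he y$. The projection formula for the Gysin morphism (Proposition \ref{PropositionDeuxInsertC} (i)) gives, for the locally free sheaf $\eee$ on $X$,
\[
[\eee]\, .\, x = [\eee]\, .\, i_{Z*}\he y = i_{Z*}\bigl(i_Z\pe[\eee]\, .\, y\bigr) = i_{Z*}\bigl([\eee_{|Z}]\, .\, y\bigr),
\]
so by the definition of $\ch_Z$ and by $(\textrm{P}_{n-1})$ applied on $Z$ (to the product of the locally free class $[\eee_{|Z}]$ with $y$), together with $(\textrm{C}_{n-1})$ which identifies $\ch^Z([\eee_{|Z}])$ with $\ba\ch(\eee_{|Z}) = i_Z\ee\be\ba\ch(\eee)$, we get
\[
\ch_Z\he([\eee]\, .\, x) = i_{Z*}\bigl(i_Z\ee\be\ba\ch(\eee)\, .\, \ch^Z(y)\,\td(N_{Z/X}\he)^{-1}\bigr).
\]
Applying the projection formula once more in cohomology (this time Proposition \ref{PropositionDeuxInsertC} (i) for Deligne classes) pulls $\ba\ch(\eee)$ out of the push-forward, yielding $\ba\ch(\eee)\, .\, i_{Z*}\bigl(\ch^Z(y)\,\td(N_{Z/X}\he)^{-1}\bigr) = \ba\ch(\eee)\, .\, \ch_Z\he(x)$, as desired.

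The main obstacle is the bookkeeping of the $\tor$-sheaves in part (i): one must be careful that for $x = i_{Z*}\he y$ the derived pullback $i_Z\pe x$ is genuinely $y\cdot(1-[N_{Z/X}\ee])$ and not merely $y$, since $i_Z$ is not flat; this is exactly the Koszul resolution of $\oo_Z$ by $\oo_X \to \oo_X(-Z)$-type data restricted to $Z$. Once this is granted, everything else is a formal manipulation with the projection formula, the auto-intersection formula in codimension one, and the inductive properties on $Z$, all of which are available under $(\textrm{H}_{n-1})$.
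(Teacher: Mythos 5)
Your proof is correct and follows essentially the same route as the paper: both parts reduce to the auto-intersection formula $i_Z\ee\, i_{Z*}\he\lambda=\lambda\,c_1(N_{Z/X}\he)$, the Koszul identity $i_Z\pe\, i_{Z*}\he y=y\cdot(1-[N\ee_{Z/X}])$ (Proposition \ref{CorollaireUnAppendixArt1} (i) of the appendix), the projection formula, and the inductive hypotheses $(\textrm{P}_{n-1})$ and $(\textrm{C}_{n-1})$ on $Z$. The only cosmetic difference is that the paper phrases the $K$-theory manipulations through the module structure $\pz{Z}$ on $\kan_Z\he(X)$ from the appendix, whereas you invoke the $K$-theoretic projection formula directly; these are the same thing.
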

\begin{proof}
(i) We have $x=\ba{x}\pz{Z}[i_{Z*}\he\oo_{Z}\he]$ in
$\kan_{Z}\he(X)$, where $\ba{x}$ is defined in Appendix
\ref{SousSectionDeuxAppendixArt1}. Thus,
\begin{align*}
i_{Z}\ee\chtz\he(x)&=i_{Z}\ee i_{Z*}\he\bigl(\ch^{Z}\be(\ba{x})\td\bigl(
N_{Z/X}\he\bigr)^{-1}\bigr)\\
&=\ch^{Z}\be(\ba{x})\td\bigl(N_{Z/X}\he\bigr)^{-1}c_{1}\bigl(N_{Z/X}\he\bigr)
&&\textrm{by Proposition \ref{PropositionDeuxInsertC} (vii)}\\
&=\ch^{Z}\be(\ba{x})\biggl[1-e^{-\ds c_{1}\bigl(N_{Z/X}\he\bigr)}\biggr]\\
&=\ch^{Z}\be(\ba{x})\ch^{Z}\be\bigl(i_{Z}\pe i_{Z*}\he\oo_{Z}\he\bigr)
&&\textrm{by $(\textrm{C}_{n-1})$}\\
&=\ch^{Z}\be\bigl(\ba{x}\, .\, i_{Z}\pe i_{Z*}\he\oo_{Z}\he\bigr)
&&\textrm{by $(\textrm{P}_{n-1})$}\\
&=\ch^{Z}\be\bigl(i_{Z}\pe x\bigr).
\end{align*}
\par\medskip
(ii) We have
\begin{align*}
\chtz\he\bigl(\, [\eee]\, .\, x\bigr)&=\chtz\he\bigl(i_{Z*}\he
\bigl(i_{Z}\pe[\eee]\, .\, \ba{x}\bigr)\bigr)\\
&=i_{Z*}\he\Bigl(\ch^{Z}\be\bigl(i_{Z}\pe\, [\eee]\, .\, \ba{x}\bigr)
\td\bigl(N_{Z/X}\he\bigr)^{-1}\Bigr)\\
&=i_{Z*}\he\Bigl(i_{Z}\ee\, \ba{\ch}\bigl(  \eee \bigr)
\ch^{Z}\be(\ba{x})\td\bigl(N_{Z/X}\he\bigr)^{-1}\Bigr)
&&\textrm{by Proposition \ref{PropositionDeuxComplementQuatreUnArt1} (ii),
$(\textrm{P}_{n-1})$ and $(\textrm{C}_{n-1})$}\\
&=\ba{\ch}\bigl( \eee \bigr)\, i_{Z*}\he\Bigl(\ch^{Z}
\be(\ba{x})\td\bigl(
N_{Z/X}\bigr)^{-1}\Bigr)&&\textrm{by the projection formula}\\
&=\ba{\ch}\bigl( \eee \bigr)\chtz\he(x).
\end{align*}
\end{proof}
\subsubsection{}\label{SubSectionDeuxPrime} Let $D$ be a divisor in $X$ with simple
normal crossing.
By Proposition \ref{PropositionUnTraduction},
we have an exact sequence:
\[
\sutrd{\bop_{i<j}\kan_{D_{ij}}\he(X)}{\bop_{i}\kan_{D_{i}}\he(X)}
{\kan_{D}\he(X)}\!\!.
\]
Let us consider the morphism $\bop_{i}\ch_{D_{i}}\he$. If $\ff$
belongs to $\kan(D_{ij})$, then
\[
\ch_{D_{i}}\he\bigl(i_{D_{ij}*}\he\ff\bigr)=
i_{D_{i}*}\he\Bigl(\ch^{D_{i}}\be\bigl(i_{\flgdin{D_{ij}}{D_{i}*}}\ff\bigr)
\td\bigl(N_{D_{i}/X}\he\bigr)^{-1}\Bigr)
=
i_{D_{ij}*}\he\Bigl(\ch\be^{D_{ij}}(\ff)\td\bigl(N_{D_{ij}/X}\he
\bigr)^{-1}\Bigr)
\]
because of $(\textrm{RR}_{n-1})$ and the multiplicativity of the Todd
class.
\par
Thus
$\ch_{D_{i}}\he\bigl(i_{D_{ij}*}\he\ff\bigr)=
\ch_{D_{j}}\he\bigl(i_{D_{ij}*}\he\ff\bigr)$, and we get a map
$\apl{\ch_{D}\he}{\kan_{D}\he(X)}{H\ee_{D}(X,\Q)}$
such that the diagram
\[
\xymatrix{
\bop_{i}\kan_{D_{i}}\he(X)\ar[r]\ar[dr]_{\bop_{i}\ch_{D_{i}}\he}
&\kan_{D}\he(X)\ar[r]\ar[d]^{\ch_{D}\he}&0\\
&H\ee_{D}(X,\Q)
}
\]
is commutative.
\begin{proposition}\label{LemmeTroisChClArt1}
The classes $\ch_{D}\he$ have the following properties:
\begin{enumerate}
  \item [(i)] If $\eee$ is a locally free sheaf on  $X$ and $x$ is an
  element of $\kan_{D}\he(X)$, then
  \[
\ch_{D}\he(\cro\eee\, .\, x)=
  \ba\ch(\eee).\, \ch
  _{D}\he(x).
\]
  \item [(ii)] \label{petitdeuxi}
  Let $\ti D$ be an effective simple normal crossing
  divisor in $X$  such that $\ti{D}^{\textrm{red}}\be=D$.
  Then
  \[
\ch_{D}\he\bigl(\oo_{\tid }\bigr)=1-
  \ba\ch\bigl(\oo_{X}(-\tid)\he\bigr).
  \]
  \item [(iii)] \label{petittroisi}\emph{(First lemma of functoriality)} Let
  $\apl{f}{X}{Y}$ be a surjective map. Let $D$ be a reduced divisor in $Y$
  with simple normal crossing such that $f^{-1}(D)$ is also a divisor
  with simple normal crossing in $X$. Then for all $y$ in
  $\kan_{D}\he(Y)$,
  $\ch_{f^{-1}(D)}\he\bigl(f\pe y\bigr)=f\ee\ch_{D}\he(y)$.

  \item[(iv)] \emph{(Second lemma of functoriality)} Let $Y$ be a
  smooth submanifold of $X$ and $D$ be a reduced divisor in $X$
  with simple normal crossing. Then, for every $x$ in $\kan_{D}\he(X)$,
  $\ch^{Y}\be\bigl(i\pe_{Y} x\bigr)=i\ee_{Y}\ch_{D}\he(x)$.

\end{enumerate}
\end{proposition}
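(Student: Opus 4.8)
The plan is to reduce every assertion to the corresponding statement for a single smooth hypersurface (Proposition~\ref{LemmeDeuxChClArt1} together with the defining formula for $\ch_Z\he$ in \S\,\ref{SubSectionUn}), by writing an element of $\kan_D\he(X)$ as a sum of classes pushed forward from the components $D_i$, using the exact sequence of Proposition~\ref{PropositionUnTraduction} and the compatibility of products and of pullbacks with supports recorded in Appendix~\ref{AppendixArt1}. For part (i), write $x$ as the image of a family $(x_i)_i$ with $x_i\in\kan_{D_i}\he(X)$. The maps in the exact sequence of Proposition~\ref{PropositionUnTraduction} are $\kan(X)$-linear, so $[\eee]\, .\, x$ is the image of $\bigl([\eee]\, .\, x_i\bigr)_i$, and by the definition of $\ch_D\he$ and Proposition~\ref{LemmeDeuxChClArt1}~(ii),
\[
\ch_D\he\bigl([\eee]\, .\, x\bigr)=\sum_i\ch_{D_i}\he\bigl([\eee]\, .\, x_i\bigr)=\ba\ch(\eee)\sum_i\ch_{D_i}\he(x_i)=\ba\ch(\eee)\, .\, \ch_D\he(x).
\]

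For part (ii), I would first treat the case of a single smooth hypersurface $D=D_1$ with $\tid=mD_1$. The filtration of $\oo_{\tid}$ by the images of $\oo_X(-kD_1)$, $0\leq k\leq m$, gives $[\oo_{\tid}]=\sum_{k=0}^{m-1}\bigl[i_{D_1*}\he N^{\otimes(-k)}\bigr]$ in $\kan_{D_1}\he(X)$, where $N:=N_{D_1/X}\he\simeq\oo_X(D_1)_{\vert D_1}$. Using the defining formula for $\ch_{D_1}\he$, the identity $\ba\ch\bigl(N^{\otimes(-k)}\bigr)=e^{-k\, c_1(N)}$, the telescoping identity $\sum_{k=0}^{m-1}e^{-k c_1(N)}\, \td(N)^{-1}=\bigl(1-e^{-m c_1(N)}\bigr)\big/c_1(N)$, and the self-intersection formula $c_1\bigl(\oo_X(D_1)\bigr)^{k}=i_{D_1*}\he\bigl(c_1(N)^{k-1}\bigr)$ (a consequence of the projection formula and Proposition~\ref{PropositionDeuxInsertC}~(i),(vii)), one gets $\ch_{D_1}\he(\oo_{\tid})=1-e^{-m\, c_1(\oo_X(D_1))}=1-\ba\ch\bigl(\oo_X(-\tid)\bigr)$. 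The general case follows by induction on the number of components of $\tid$: writing $\tid=m_1D_1+\tid'$ with $\tid'{}^{\,\textrm{red}}$ having one component fewer, one has in $\kan_D\he(X)$ the relation $[\oo_{\tid}]=[\oo_{m_1D_1}]+[\oo_X(-m_1D_1)]\, .\, [\oo_{\tid'}]$, so that by (i), the one-component case and the induction hypothesis
\[
\ch_D\he(\oo_{\tid})=\bigl(1-\ba\ch(\oo_X(-m_1D_1))\bigr)+\ba\ch(\oo_X(-m_1D_1))\bigl(1-\ba\ch(\oo_X(-\tid'))\bigr)=1-\ba\ch\bigl(\oo_X(-\tid)\bigr).
\]

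For part (iii), since $\ch_{f^{-1}(D)}\he$ and $\ch_D\he$ agree with $\ch_{f^{-1}(D_i)}\he$ and $\ch_{D_i}\he$ on the images of $\kan_{f^{-1}(D_i)}\he(X)$ and $\kan_{D_i}\he(Y)$, and $f^{-1}(D_i)$ is a sub-divisor of the simple normal crossing divisor $f^{-1}(D)$, additivity reduces us to the case where $D=D_1$ is a single smooth hypersurface of $Y$. Write $f\ee D_1=\sum_j m_j\tid_{j}$, let $\ba{f}_{j}\colon\tid_{j}\to D_1$ be the restriction of $f$, and let $y=i_{D_1*}\he w$ with $w\in\kan(D_1)$. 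By Proposition~\ref{PropositionDeuxInsertC}~(v), then $(\textrm{F}_{n-1})$ and $(\textrm{P}_{n-1})$ on the $\tid_{j}$,
\[
f\ee\ch_{D_1}\he(y)=\sum_j m_j\, i_{\tid_{j}*}\he\Bigl(\ch^{\tid_{j}}\bigl(\ba{f}_{j}\pe w\bigr)\, \td\bigl(\ba{f}_{j}\ee N_{D_1/Y}\bigr)^{-1}\Bigr).
\]
On the other hand, using the decomposition $y=\ba y\, .\, [i_{D_1*}\he\oo_{D_1}]$ with $\ba y\in\kan(Y)$ (Appendix~\ref{SousSectionDeuxAppendixArt1}) and the fact that the Koszul resolution $0\to\oo_Y(-D_1)\to\oo_Y\to\oo_{D_1}\to 0$ pulls back, $f$ being surjective, to a locally free resolution of $\oo_X\big/\oo_X(-f\ee D_1)$, one gets $f\pe y=[f\pe\ba y]\, .\, \bigl[\oo_X\big/\oo_X(-f\ee D_1)\bigr]$; expanding the last class in $\kan_{f^{-1}(D_1)}\he(X)$ by the dévissage of (ii), applying the defining formula for $\ch_{f^{-1}(D_1)}\he$, the projection formula, $(\textrm{P}_{n-1})$ and the relation $i_{\tid_{j}}\pe f\pe=\ba{f}_{j}\pe i_{D_1}\pe$, one obtains an expression of the same shape, which coincides with the one above because $\ba{f}_{j}\ee N_{D_1/Y}\simeq\bigl(\oo_X(f\ee D_1)\bigr)_{\vert\tid_{j}}$ and the telescoping Todd sums match term by term; this is the equality $\ch_{f^{-1}(D)}\he\bigl(f\pe y\bigr)=f\ee\ch_D\he(y)$.

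For part (iv), the same reduction brings us to $y=i_{D_1*}\he w$ with $D_1$ a single smooth hypersurface of $X$. If $Y$ meets $D_1$ transversally, then $Y\cap D_1$ is a smooth hypersurface of $Y$ with $N_{(Y\cap D_1)/Y}\simeq(N_{D_1/X})_{\vert Y\cap D_1}$, and combining the transverse base-change formula of Proposition~\ref{PropositionDeuxInsertC}~(iv) with $(\textrm{F}_{n-1})$ and $(\textrm{RR}_{n-1})$ on $Y$ yields $\ch^{Y}\bigl(i_Y\pe y\bigr)=i_Y\ee\ch_{D_1}\he(y)$ at once; the general case is reduced to this one by blowing up $X$ so as to put $Y$ in general position with respect to $D$ (Hironaka) and invoking the functoriality already proved in (iii) and in Proposition~\ref{PropositionDeuxInsertC}. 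The delicate step in all of this is the bookkeeping hidden in (iii): one has to follow the non-reduced scheme $f\ee D_1=\sum_j m_j\tid_{j}$ and the conormal twists produced by the dévissage of its structure sheaf, and to check that after applying $\ch_{f^{-1}(D_1)}\he$ the discrepancy between $\td\bigl(N_{\tid_{j}/X}\bigr)$ and $\td\bigl(\ba{f}_{j}\ee N_{D_1/Y}\bigr)$ is exactly absorbed by the telescoping sums; the model case $f\ee D_1=m\tid_{1}$ already exhibits the mechanism, since there $\ba{f}_{1}\ee N_{D_1/Y}\simeq N_{\tid_{1}/X}^{\otimes m}$ and $m\, \td\bigl(N_{\tid_{1}/X}^{\otimes m}\bigr)^{-1}=\bigl(1-e^{-m c_1(N_{\tid_{1}/X})}\bigr)\big/c_1(N_{\tid_{1}/X})$.
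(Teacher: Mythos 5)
Parts (i) and (ii) are correct. For (i) you follow the paper's argument exactly. For (ii) your route is genuinely different and cleaner than the paper's: you treat the one--branch case $\tid=mD_1$ by the conormal filtration and telescoping, and then induct on the number of branches via the exact sequence $\sutrgd{\oo_{X}\he(-m_1D_1)\oti\oo_{\tid'}\he}{\oo_{\tid}\he}{\oo_{m_1D_1}\he}$ together with part (i), so that the multiplicativity $1-\ba\ch(\oo_X(-\tid))=1-\ba\ch(\oo_X(-m_1D_1))\,\ba\ch(\oo_X(-\tid'))$ does all the work. The paper instead deduces (ii) from a combinatorial decomposition (its Lemma \ref{LemUnInsertA}): it constructs classes $u_i\in G_{D_i}\he(X)$ with $\sum_i u_i=\oo_D\he$ and antisymmetric correction terms $\zeta_{ij}$ on the $D_{ij}$ such that $\ch(\ba u_i)\td(N_{D_i/X}\he)^{-1}-m_i\,i_{D_i}\ee\mu=\sum_{j\neq i}i_{D_{ij}\to D_i*}\zeta_{ij}$. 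Your shortcut buys a shorter proof of (ii), but it does not dispense with that lemma, because the lemma is the engine of (iii).

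This is where the genuine gap lies. In (iii), first, the element $\ba y$ attached to $y\in\kan_{D_1}\he(Y)$ lives in $\kan(D_1)$, not in $\kan(Y)$, so the expression $f\pe\ba y$ and the claimed identity $f\pe y=[f\pe\ba y]\,.\,[\oo_X/\oo_X(-f\ee D_1)]$ are not well formed; the correct statement is Proposition \ref{TheoremeDeuxAppendixArt1}, $f\pe(i_{D*}\he y)=\sum_k(\ba f_k\pe y)\pz{\tid_k}u_k$ for any $u_k$ with $\sum_k u_k=\oo_{\tid}\he$. Second, and more seriously, your assertion that ``the telescoping Todd sums match term by term'' is false as soon as $f\ee D_1$ has $N\geq 2$ branches: the individual contributions $i_{\tid_i*}\he\bigl(\alpha_i\,\ch(\ba u_i)\td(N_{\tid_i/X}\he)^{-1}\bigr)$ and $m_i\,i_{\tid_i*}\he(\alpha_i)\,\mu$ differ by classes pushed forward from the pairwise intersections $\tid_{ij}$, and these discrepancies cancel only in the sum over $i$, thanks to the antisymmetry $\zeta_{ij}=-\zeta_{ji}$. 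Proving this cancellation (Lemmas \ref{LemUnInsertA} and \ref{LemDeuxInsertA} of the paper) is the bulk of the proof of the proposition, and your single--branch model case $f\ee D_1=m\tid_1$ does not exhibit the phenomenon at all. In (iv), your reduction by blowing up forces you to treat the case where $Y$ \emph{equals} a branch of the divisor (after the first blowup, $\tau^{-1}(Y)$ is itself a component of the new normal crossing divisor), which your transversal argument does not cover; the paper handles it separately via $i_Y\pe[i_{D*}\he\oo_D\he]=[\oo_Y\he]-[N_{Y/X}\ee]$ and the auto--intersection formula. You must also exhibit a branch $\breve D_j$ over $Y$ for which $q_j\ee$ is injective (the paper tracks the strict transform of the first exceptional divisor to produce one); without this the blowup comparison gives no conclusion on $X$ itself.
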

\begin{proof}
We start with two technical lemmas which will be crucial for the proof
of (ii) and (iii).
\begin{lemma}\label{LemUnInsertA}
Let $D=m_{1}D_{1}+\cdots +m_{N}\he D_{N}\he$ be an effective
simple normal crossing
divisor in $X$, and $\mu $ be the element of $H_{D}\ee(X,\Q)$
defined by
\[
\mu =\sum _{k\geq 1}\dfrac {(-1)^{k-1}}{k!}\Bigl(m_{1}\{D_{1}\}_{D}\he
+\cdots +m_{N}\he\{D_{N}\he\}_{D}\he\Bigr)^{k-1}.
\]
Then there exist $u_{i}$ in $G_{D_{i}}\he(X)$, $1\leq i\leq N$, and
$\zeta _{ij}$ in $H_{D}\ee(D_{ij})$, $1\leq i,j\leq N$, $i\not = j$, such
that
\begin{enumerate}
  \item [(a)] $u_{1}+\cdots +u_{N}\he=\oo_{D}\he$ in $G_{D^{\textrm{red}}}(X)$.

  \item [(b)] $\zeta_{ij}=-\zeta_{ji}$, $1\leq i,j\leq N$, $i\not = j$.

  \item [(c)] $\ch(\ba{u}_{i})\td\bigl(N_{D_{i}/X}\he\bigr)^{-1}
  -m_{i}\, i_{D_{i}}\ee\, \mu =
  \ds\sum _{\genfrac{}{}{0pt}{1}{j=1}{j\not =i}}^{N}i_{\flcourtegd{\scriptstyle D_{ij}}{
  \scriptstyle D_{i}*}}
  \zeta _{ij},\quad  1\leq i\leq
  N$.
\end{enumerate}
\end{lemma}
\begin{proof}
We proceed by induction on the number $N$ of branches of $D^{\textrm{red}}\be$.
\par
If $N=1$, we must prove that
$\ch(\ba{u}_{1})\td\bigl(N_{D_{1}/X}\he\bigr)^{-1}
  =m_{1}\, i_{D_{1}}\ee\, \mu $, where $u_{1}=\oo_{m_{1}D_{1}}\he$. In
$G_{D_{1}}\he(X)$ we have
$\oo_{m_{1}D_{1}}\he=\ds\sum _{q=0}^{m_{1}-1}i_{D_{1}*}\he\bigl(
N_{D_{1}/X}^{*\, \oti q}\bigr)$, thus\quad
$\ds\ba{u}_{1}=\sum _{q=0}^{m_{1}-1}N_{D_{1}/X}^{*\, \oti q}$. Therefore
\begin{align*}
\ch(\ba{u}_{1})\td\bigl(N_{D_{1}/X}\he\bigr)^{-1}&=
\left(\sum _{q=0}^{m_{1}-1}e^{\ds-q\, c_{1}\bigl(N_{D_{1}/X}\he\bigr)}\right)
\, \dfrac {1-e^{\ds-c_{1}\bigl(N_{D_{1}/X}\he\bigr)}}
{c_{1}\bigl(N_{D_{1}/X}\he\bigr)} \\
&=\dfrac {1-e^{-\ds m_{1}c_{1}\bigl(N_{D_{1}/X}\he\bigr)}}
{c_{1}\bigl(N_{D_{1}/X}\he\bigr)}=m_{1}\, i_{D_{1}}\ee\, \mu .
\end{align*}
\par\medskip
Suppose that the lemma holds for divisors $D'$ such that $D_{\textrm{red}}'$
has $N-1$ branches. Let $D=m_{1}D_{1}+\cdots +m_{N}\he D_{N}\he$ and
$D'=m_{1}D_{1}+\cdots +m_{N-1}\he D_{N-1}\he$. By induction, there
exist $u_{i}'$ in $G_{D_{i}}\he(X)$, $1\leq i\leq N-1$, and
$\zeta '_{ij}$ in $H_{D}\ee(D_{ij})$, $1\leq i,j\leq N-1$, $i\not = j$,
satisfying properties (a), (b), and (c) of Lemma
\ref{LemUnInsertA}. For $0\leq k\leq m_{N}\he$, we introduce the divisors
$Z_{k}=m_{1}D_{1}+\cdots +m_{N-1}\he D_{N-1}\he+kD_{N}\he$. We have
exact sequences
\[
\sutrgdpt{i_{D_{N}\he}\ee\oo_{X}\he(-Z_{k})}{\oo_{Z_{k+1}}\he}
{\oo_{Z_{k}}\he}{.}
\]
Thus, in $G_{D^{\textrm{red}}}\he(X)$, we have
\[
\oo_{D}\he=\oo_{D'}\he+i_{D_{N}\he *}\he
\left[\sum _{q=0}^{m_{N-1}\he}i_{D_{N}\he}\ee\oo_{X}\he(-Z_{q})\right]
=\oo_{D'}\he+i_{D_{N}\he *}\he i_{D_{N}\he}\ee\left[
\oo_{X}\he(-D')\sum _{q=0}^{m_{N-1}\he}\oo_{X}\he\bigl(-qD_{N}\he\bigr)
\right].
\]
We choose
$
\begin{cases}
u_{N}\he=i_{D_{N}\he *}\he i_{D_{N}\he}\ee\Bigl[
\oo_{X}\he(-D')\ds\sum _{q=0}^{m_{N-1}\he}\oo_{X}\he\bigl(-qD_{N}\he\bigr)
\Bigr]\\
u_{i}=u'_{i}\qquad  \textrm{for}\ 1\leq i\leq N-1.
\end{cases}
$
\par\bigskip
Let $i$ be such that $1\leq i\leq N-1$. Then
\begin{align*}
\ch(\ba{u}_{i})&\td\bigl(N_{D_{i}/X}\he\bigr)
-m_{i}\, i_{D_{i}}\ee\, \mu =
\ch(\ba{u}_{i}')\td\bigl(N_{D_{i}/X}\he\bigr)
-m_{i}\, i_{D_{i}}\ee\, \mu'+m_{i}\, i_{D_{i}}\ee\, (\mu '-\mu )\\
&=\sum _{\genfrac{}{}{0pt}{1}{j=1}{j\not =i}}^{N-1}i_{\flcourtegd{\scriptstyle D_{ij}}{
\scriptstyle D_{i}*}}\zeta '_{ij}
+m_{i}\,\,  i_{D_{i}}\ee
\Biggl[\, \sum _{k=1}^{\infty }
\dfrac {(-1)^{k}}{k!}
\sum _{j=1}^{k-1}\dbinom{k-1}{j}\bigl(m_{1}\{D_{1}\}_{D}\he+\cdots\\
&\hspace*{34.834ex}\cdots
+
m_{N-1}\he\{D_{N-1}\he\}_{D}\he\bigr)^{k-1-j}\bigl(m_{N}\he\{D_{N}\he\}
_{D}\he\bigr)^{j}
\Biggr]\qquad \textrm{by induction}\\
&=\sum _{\genfrac{}{}{0pt}{1}{j=1}{j\not =i}}^{N-1}i_{\flcourtegd{\scriptstyle D_{ij}}{
\scriptstyle D_{i}*}}\zeta '_{ij}
+m_{i}\,\, i_{\flcourtegd{\scriptstyle D_{iN}\he}{\scriptstyle D_{i}\he *}}
i_{D_{iN}}\ee
\Biggl[\, \sum _{k=1}^{\infty }
\dfrac {(-1)^{k}}{k!}
\sum _{j=1}^{k-1}\dbinom{k-1}{j}\bigl(m_{1}\{D_{1}\}_{D}\he+\cdots\\
&\hspace*{34.834ex}\cdots
+
m_{N-1}\he\{D_{N-1}\he\}_{D}\he\bigr)^{k-1-j}m_{N}^{j}\{D_{N}\he\}_{D}^{j-1}
\Biggr].
\end{align*}
For the last equality, we have used that
\[
i_{D_{i}}\ee\bigl(\alpha\,
\{D_{N}\he\}_{D}\he\bigr)=i_{D_{i}}\ee\alpha \, \{D_{iN}\he\}_{D}\he
=i_{\flcourtegd{\scriptstyle D_{iN}\he}{\scriptstyle D_{i}\he *}}
\bigl(i_{D_{iN}\he}\ee\alpha \bigr)
\]
where $\{D_{iN}\he\}_{D}\he$ is the cycle class of $D_{iN}\he$ in
$D_{i}$.
\par\medskip
Let us define
\[
\begin{cases}
\zeta _{ij}=\zeta ' _{ij}
&\textrm{if}\ 1\leq i,j\leq N-1,\ i\not = j\\
\zeta _{iN}=m_{i}\, \,  i_{D_{iN}}\ee
\Biggl[\,\ds \sum _{k=1}^{\infty }
\dfrac {(-1)^{k}}{k!}
\sum _{j=1}^{k-1}\dbinom{k-1}{j}\bigl(m_{1}\{D_{1}\}_{D}\he+\cdots\\
\hspace*{25ex}\cdots +
m_{N-1}\he\{D_{N-1}\he\}_{D}\he\bigr)^{k-1-j}m_{N}^{j}\{D_{N}\he\}_{D}^{j-1}
\Biggr]&\textrm{if}\ 1\leq i\leq N-1\\
\zeta _{Nj}\he=-\zeta _{jN}\he&\textrm{if}\ 1\leq j\leq N-1.
\end{cases}
\]
Properties (a) and
(b) of Lemma \ref{LemUnInsertA} hold, and property (c)
of the same lemma hold for $1\leq i\leq N-1$. For $i=N$, let us now
compute both members of (c). We have
\begin{align*}
\sum _{l=1}^{N-1}i_{\flcourtegd{\st D_{Nl}\he}{\st D_{N}\he *}}\zeta
_{Nl}\he&=\sum _{l=1}^{N-1}m_{l}\, \, i_{D_{N}\he}\ee\Biggl[\sum _{k=1}^{\infty }
\dfrac {(-1)^{k-1}}{k!}\sum
_{j=1}^{k-1}\dbinom{k-1}{j}\bigl(m_{1}\{D_{1}\}_{D}\he+\cdots \\
&\hspace*{34.834ex}\cdots +m_{N-1}\he\{D_{N-1}\he\}\bigr)
^{k-1-j}m_{N}^{j}\{D_{N}\}_{D}^{j-1}
\{D_{l}\he\}\Biggr]\\
&\hspace*{-6.968ex}=i_{D_{N}\he}\ee\Biggl[\sum _{k=1}^{\infty }
\dfrac {(-1)^{k-1}}{k!}\sum
_{j=1}^{k-1}\dbinom{k-1}{j}\bigl(m_{1}\{D_{1}\}_{D}\he+\cdots
+m_{N-1}\he\{D_{N-1}\he\}_{D}\he\bigr)^{k-j}m_{N}^{j}\{D_{N}\}_{D}^{j-1}
\Biggr].\tag{$*$}\label{alpha}
\end{align*}
In the first equality, we have used
\[
i_{\flcourtegd{\st D_{Nl}\he}{\st D_{N}\he *}}
\, i_{D_{lN}\he}\ee\alpha =
i_{D_{N}\he}\ee\alpha \, \{D_{lN}\he\}_{D}\he=
i_{D_{N}\he}\ee\, \bigl(\alpha \, \{D_{l}\}_{D}\he\bigr)
\]
where $\{D_{lN}\he\}_{D}\he$ is the cycle class of $D_{lN}\he$ in
$D_{N}\he$.
\par\medskip
Now,
\begin{align*}
\ch(\ba{u}_{N}\he)&\td\bigl(N_{D_{N}\he/X}\he\bigr)^{-1}-m_{N}\he\,
i_{D_{N}\he}\ee\, \mu\\
&=i_{D_{N}\he}\ee\left[
e^{\ds-m_{1}\{D_{1}\}-\dots -m_{N-1}\he\{D_{N-1}\he\}}
\left(\sum _{q=0}^{m_{N-1}\he}e^{\ds-q\{D_{N}\he\}}\right)
\dfrac {1-e^{-\ds\{D_{N}\he\}}}{\{D_{N}\he\}}
\right]-m_{N}\he\, i_{D_{N}\he}\ee\, \mu
\intertext{by (C$_{n-1}$) and Proposition
\ref{PropositionDeuxComplementQuatreUnArt1} (ii) and (iii)}\\[-4ex]
&=i_{D_{N}\he}\ee\left[
e^{\ds-m_{1}\{D_{1}\}-\dots -m_{N-1}\he\{D_{N-1}\he\}}
\, \, \dfrac {1-e^{-\ds m_{N}\he\{D_{N}\he\}}}{\{D_{N}\he\}}-m_{N}\he\, \mu
\right]\\
&=i_{D_{N}\he}\ee\left[
m_{N}\he\sum _{r=0}^{\infty }\, \, \sum _{q=1}^{\infty }
\dfrac {(-1)^{r+q-1}}{r!\, q!}\bigl(
m_{1}\{D_{1}\}+\cdots +m_{N-1}\he\{D_{N-1}\he\}
\bigr)^{r}\, \bigl(m_{N}\he\{D_{N}\he\}\bigr)^{q-1}
\right.\\
&\hspace*{11.614ex}-m_{N}\he
\left.
\sum _{k=1}^{\infty }\dfrac {(-1)^{k-1}}{k!}\sum _{j=0}^{k-1}
\dbinom{k-1}{j}
\bigl(
m_{1}\{D_{1}\}+\cdots +m_{N-1}\he\{D_{N-1}\he\}
\bigr)^{k-1-j}\, \bigl(m_{N}\he\{D_{N}\he\}\bigr)^{j}
\right]\!.
\end{align*}
In the first term, we put $k=q+r$, $p=q-1$ and we obtain
\begin{align}
m_{N}\he \, i_{D_{N}\he}\ee
\Biggl[
\sum _{k=1}^{\infty }\, \,
\sum _{p=0}^{k-1}
\dfrac {(-1)^{k-1}}{k!}
\Biggl(\dbinom{k}{p+1}-\dbinom{k-1}{p}\Biggr)
\bigl(
&m_{1}\{D_{1}\}+\cdots\notag\\
\cdots  +&m_{N-1}\he\{D_{N-1}\he\}
\bigr)^{k-1-p}\, \bigl(m_{N}\he\{D_{N}\he\}\bigr)^{p}
\Biggr].\tag{$**$} \label{beta}
\end{align}
Now $\dbinom{k}{p+1}-\dbinom{k-1}{p}$ is equal to $\dbinom{k-1}{p+1}$
for $p\leq k-2$ and to zero for $p=k-1$.
It suffices to put $j=p+1$ in the sum to obtain the
equality of (\ref{alpha}) and (\ref{beta}).
\end{proof}
\begin{lemma}\label{LemDeuxInsertA}
Using the same notations as in Lemma \ref{LemUnInsertA}, let $\alpha _{i}
$ in $H_{D}\ee(D_{i})$, $1\leq i\leq N$, be such that
$i_{\flcourtegd{\st D_{ij}}{\st D_{i}}}\ee\alpha _{i}=
i_{\flcourtegd{\st D_{ij}}{\st D_{j}}}\ee\alpha _{j}$. Then there exist $u_{i}$
in $G_{D_{i}}\he(X)$, satisfying $u_{1}+\cdots +u_{N}\he=\oo_{D}\he$ in
$G_{D^{\textrm{red}}}\he(X)$,
such that
\[
\sum _{i=1}^{N}i_{D_{i}*}\he\Bigl(
\alpha _{i}\, \ch(\ba{u}_{i})\td\bigl(N_{D_{i}/X}\he\bigr)^{-1}
\Bigr)=\left(
\sum _{i=1}^{N}m_{i}\, i_{D_{i}*}\he(\alpha _{i})
\right)\mu .
\]
\end{lemma}
\begin{proof}
We pick
$u_{1},\dots ,u_{N}\he$ given by Lemma \ref{LemUnInsertA}. Then
\begin{align*}
\sum _{i=1}^{N}i_{D_{i}*}\he\Bigl(
\alpha _{i}\, \ch(\ba{u}_{i})\, &\td\bigl(N_{D_{i}/X}\he\bigr)^{-1}
\Bigr)-\left(
\sum _{i=1}^{N}m_{i}\, i_{D_{i}*}\he(\alpha _{i})
\right)\mu\\
&=\sum _{i=1}^{N}i_{D_{i}*}\he\Bigl[
\alpha _{i}\Bigl(\ch(\ba{u}_{i})\td\bigl(N_{D_{i}/X}\he\bigr)^{-1}
-m_{i}\, i_{D_{i}}\ee
\, \mu\Bigl)\Bigr]&&\textrm{by the projection formula}\\
&=\sum _{i=1}^{N}\, \, \sum _{\genfrac{}{}{0pt}{1}{j=1}{j\not =i}}^{N}i_{D_{i}*}
\Bigl[\alpha _{i}\,\,  i_{\flcourtegd{\st D_{ij}}{\st D_{i}*}}\he\zeta _{ij}\Bigr]\\
&=\sum _{i=1}^{N}\, \, \sum _{\genfrac{}{}{0pt}{1}{j=1}{j\not =i}}^{N}
i_{D_{ij}*}\he\Bigl(
i_{\flcourtegd{\st D_{ij}}{\st D_{i}}}\ee\alpha _{i}\,\,  \zeta _{ij}
\Bigr)&&\textrm{by the projection formula.}
\end{align*}
Grouping the terms $(i,j)$ and $(j,i)$, we get $0$, since
$\zeta _{ij}=-\zeta _{ji}$.
\end{proof}
We now prove Proposition \ref{LemmeTroisChClArt1}.
\begin{proof}
(i) We write $x=x_{1}+\cdots +x_{N}\he$
in $\kan_{D^{\textrm{red}}}\he(X)$, where $x_{i}$ is an element of
$\kan_{D_{i}}\he(X)$. Then
\begin{align*}
\ch_{D}\he(\cro\eee\, .\, x)&=\sum _{i=1}^{N}\ch_{D_{i}}\he(\cro\eee\, .\, x_{i})
=\sum _{i=1}^{N}\ba\ch\he(\eee)\, .\ch_{D_{i}}\he(x_{i})&&\textrm{by
Proposition
\ref{LemmeDeuxChClArt1} (ii)}\\
&=\ba\ch(\eee)\, .\ch_{D}\he(x)&&\textrm{by the very definition of
$\ch_{D}\he(x)$.}
\end{align*}
\par\medskip
(ii) We choose $u_{1},\dots ,u_{N}\he$ such that Lemma
\ref{LemUnInsertA} holds. Then
\begin{align*}
\ch\bigl(\oo_{\ti{D}}\he\bigr)&=\sum _{i=1}^{N}\ch(u_{i})
=\sum _{i=1}^{N}i_{D_{i}*}\he
\Bigl(\ch(\ba{u}_{i})\td\bigl(N_{\ti{D}_{i}/X}\he\bigr)^{-1}\Bigr)\\
&=\left(\sum _{i=1}^{N}m_{i}\{\ti{D}_{i}\}_{D}\he\right)\mu
=1-e^{-\ds\bigl(m_{1}\{\ti{D}_{1}\}_{D}\he+\cdots +m_{N}\he\{\ti{D}_{N}\he\}
_{D}\he\bigr)}
=1-\ba{\ch}\bigl(\oo_{X}\he(-\ti{D})\bigr).
\end{align*}
\par\medskip
(iii) By d\'{e}vissage we can suppose that $D$ is a smooth hypersurface of
$Y$. Let $\ba{f}_{i}\he $ be defined by the diagram
\[
\xymatrix@C=25pt@R=20pt{
\ti{D_{i}}\ar[r]\ar[d]_{\ba{f}_{i}\he }&X\ar[d]^{f}\\
D\ar[r]&Y
}
\]
and let $y$ be an element of $G(D)$.
We put $\alpha _{i}=\ba{f}_{i}\ee\ch^{D}\be(y)$. By
the functoriality property $(\textrm{F}_{n-1})$
we have
$i_{\flcourtegd{\st \ti{D}_{ij}}{\st \ti{D}_{i}}}\ee\alpha _{i}=
i_{\flcourtegd{\st \ti{D}_{ij}}{\st \ti{D}_{i}}}\ee\alpha _{j}$. We
choose again $u_{1},\dots ,u_{N}\he$ such that Lemma \ref{LemDeuxInsertA}
holds. By Proposition \ref{TheoremeDeuxAppendixArt1} of Appendix
\ref{AppendixArt1}
we can write
$
f\pe i_{D*}\he y=\ds\sum
_{i=1}^{N}\bigl(\ba{f}_{i}\pe y\bigr)\pz{\ti{D}_{i}}\he u_{i}
$. Thus
\begin{align*}
\ch_{\tid}\he\bigl(f\pe i_{D*}\he y\bigr)&=
\sum _{i=1}^{N}i_{\ti{D}_{i}*}\he\Bigl(
\ch^{\ti{D}_{i}}_{\vphantom{i}}\bigl(\ba{f}_{i}\pe y\bigr)\ch^{\ti{D}_{i}}\be
(\ba{u}_{i})
\td\bigl(N_{\ti{D}_{i}/X}\he\bigr)^{-1}
\Bigr)&&\textrm{by $(\textrm{P}_{n-1})$}\\
&=\sum _{i=1}^{N}i_{\ti{D}_{i}*}\he\Bigl(\alpha _{i}\ch^{\ti{D}_{i}}\be(\ba{u}_{i})
\td\bigl(N_{\ti{D}_{i}/X}\he\bigr)^{-1}\Bigr)&&\textrm{by}\ (\textrm{F}_{n-1})\\
&=\left(\sum _{i=1}^{N}m_{i}\, \, i_{\ti{D}_{i}*}\he(\alpha _{i})\right)\mu
&&\textrm{by Lemma \ref{LemDeuxInsertA}}\\
&=\left[\sum _{i=1}^{N}m_{i}\, \, i_{\ti{D}_{i}*}\he\bigl(\ba{f}_{i}\ee
\ch^{D}_{\vphantom{i}}(y)\bigr)\right]f\ee\left(
\dfrac {1-e^{-\ds\{D\}_{D}\he}}{\{D\}_{D}\he}\right) \\
&=f\ee\left[i_{D*}\he\bigl(\ch^{D}_{\vphantom{i}}(y)\bigr)\cdot
\dfrac {1-e^{-\ds\{D\}_{D}\he}}{\{D\}_{D}\he}\right]
&&\textrm{by Proposition \ref{PropositionDeuxInsertC} (v)}\\
&=f\ee i_{D*}\he\Bigl(\ch^{D}_{\vphantom{i}}(y)
\td\bigl(N_{D/Y}\he\bigr)^{-1}\Bigr)&&\textrm{by the projection formula}\\
&=f\ee\, \ch_{D}\he\bigl(i_{D*}\he y\bigr).
\end{align*}
\end{proof}
(iv)
We will first prove it under the assumption that, for all $i$,
either $Y$ and
$D_{i}\he$
intersect transversally, or $Y=D_{i}\he$. By
d\'{e}vissage, we can suppose that $D$ has only one branch and that $Y$
and $D$ intersect transversally, or $Y=D$. We deal with both cases
separately.
\par\medskip
-- If $Y$ and $D$ intersect transversally, then
$i_{Y}\pe\bigl[i_{D*}\he\oo_{D}\he\bigr]=
\bigl[i_{\flcourtegd{\st Y\cap D}{\st Y*}}\he\oo_{Y\cap
D}\he\bigr]$. Thus, by Proposition \ref{TheoremeUnAppendixArt1} of
Appendix \ref{AppendixArt1},
\[
i\pe_{Y}x=i\pe_{Y}\bigl(\ba{x}\,\,  \pz{D}\,\,  \he
\bigl[i_{D*}\he\oo_{D}\he\bigr]\bigr)=
i_{\flcourtegd{\st Y\cap D}{\st D}}\pe\ba{x}\,\,  \pz{Y\cap D}\he\, \,
\bigl[i_{\flcourtegd{\st Y\cap D}{\st Y*}}\he\oo_{Y\cap
D}\he\bigr]=i_{\flcourtegd{\st Y\cap D}{\st Y*}}\he
\bigl(i_{\flcourtegd{\st Y\cap D}{\st D}}\pe\ba{x}\bigr)
\]
and we obtain
\begin{align*}
\ch^{Y}\be\bigl(i_{Y}\pe x\bigr)&=i_{\flcourtegd{\st Y\cap D}{\st Y*}}\he
\Bigl(\ch^{Y\cap D}\be\bigl(i_{\flcourtegd{\st Y\cap D}{\st D}}\pe\ba{x}\bigr)
\td\bigl(N_{Y\cap D/Y}\he\bigr)^{-1}\Bigr)&&\textrm{by $(\textrm{RR}_{n-1})$}\\
&=i_{\flcourtegd{\st Y\cap D}{\st Y*}}\he
\Bigl(i_{\flcourtegd{\st Y\cap D}{\st D}}\ee
\ch^{D}\be(\ba{x})\, \, i_{\flcourtegd{\st Y\cap D}{\st D}}\ee
\td\bigl(N_{D/X}\he\bigr)^{-1}\Bigr)&&\textrm{by $(\textrm{F}_{n-1})$}\\
&=i_{Y}\ee \, i_{D*}\he
\Bigl(\ch^{D}\be(\ba{x})\td\bigl(N_{D/X}\he\bigr)^{-1}\Bigr)&&
\textrm{by Proposition \ref{PropositionDeuxInsertC} (iv)}\\
&=i_{Y}\ee\ch_{D}\he(x).
\end{align*}
-- If $Y=D$, $i_{Y}\pe\bigl[i_{D*}\he\oo_{D}\he\bigr]=\bigl[\oo_{Y}\he\bigr]
-\bigl[N_{Y/X}\ee\bigr]$. Thus
$i_{Y}\pe\, x=\ba{x}-\ba{x}\, .\, \bigl[N_{Y/X}\ee\bigr]$ and
\begin{align*}
\ch^{Y}\be\bigl(i_{Y}\pe x\bigr)&=\ch^{Y}\be(\ba{x})-
\ch^{Y}\be(\ba{x})\ba{\ch}
\bigl(N_{Y/X}\ee\bigr)&&\textrm{by $(\textrm{P}_{n-1})$ and
$(\textrm{C}_{n-1})$}\\
&=\ch\be^{Y}(\ba{x})\biggl(1-e^{-\ds c_{1}\bigl(N_{Y/X}\he\bigr)}\biggr)\\
&=\ch\be^{Y}(\ba{x})\, \td\bigl(N_{Y/X}\he\bigr)^{-1}\, c_{1}\bigl(
N_{Y/X}\he\bigr)\\
&=i_{Y}\ee i_{Y*}\he\Bigl(\ch^{Y}\be(\ba{x})\td\bigl(N_{Y/X}\he\bigr)^{-1}\Bigr)
&&\textrm{by Proposition \ref{PropositionDeuxInsertC} (vii)}\\
&=i_{Y}\ee \ch_{Y}\he(x).
\end{align*}
We examine now the general case.
By Hironaka's desingularization theorem, we can desingularize $Y\cup D$
by a succession $\tau $ of $k$ blowups with smooth centers such that
$\tau ^{-1}(Y\cup D)$ is a divisor with simple
normal crossing. By first blowing up $X$ along $Y$, we can suppose
that $\tau ^{-1}(Y)=\breve{D}$ is a subdivisor of $\tid=\tau ^{-1}(
Y\cup D)$. We have the following diagram:
\[
\xymatrix{\breve{D}_{j}\ar[r]^{i_{\breve{D}_{j}}}\ar[d]_{q_{j}}&
\tix\ar[d]^{\tau }\\
Y\ar[r]_{i_{Y}\he}&X
}
\]
Then
\begin{align*}
q\ee_{j}\ch^{Y}\be\bigl(i\pe_{Y}x\bigr)&=\ch\be
^{\breve{D}_{j}}\bigl(q\pe_{j}i\pe_{Y}x\bigr)&&
\textrm{by $(\textrm{F}_{n-1})$}\\
&=\ch\be^{\breve{D}_{j}}\bigl(i\pe_{\breve{D}_{j}}\tau \pe x\bigr)\\
&=i\ee_{\breve{D}_{j}}\ch_{\ti{D}}\he\bigl(\tau \pe x\bigr)
&&\textrm{since $\breve{D}_{j}$ and $\ti{D}_{i}$ intersect transversally,
or $\breve{D}_{j}=\ti{D}_{i}$}\\
&=i\ee_{\breve{D}_{j}}\tau \ee\ch_{D}\he(x)&&
\textrm{by the first lemma of functoriality \ref{LemmeTroisChClArt1} (iii)}\\
&=q\ee_{j}i\ee_{Y}\ch_{D}\he(x).
\end{align*}
We can now write $q_{j}$ as $\delta \circ\mu_{j} $, where
$E$ is the exceptional divisor of the blowup of $X$ along $Y$,
$\apl{\delta }{E}{Y}$ is the canonical projection and
$\apl{\mu _{j}}{\breve{D}_{j}}{E}$ is the restriction of the last $k-1$
blowups to $\breve{D}_{j}$.
Write $\tau =\tau _{k}\he\circ\tau _{k-1}\he\circ\dots \circ\tau _{1}\he$
where $\tau _{i}$ are the blowups. Let us define a sequence of divisors
$\bigl(E_{i}\he\bigr)_{0\leq i\leq k}$ by induction: $E_{0}\he=E$, and
$E_{i+1}\he$ is the strict transform of $E_{i}\he$ under $\tau _{i+1}\he$.
Since the $E_{i}\he$ are smooth divisors, all the maps
$\apl{\tau _{i+1}\he}{E_{i+1}\he}{E_{i}\he}$ are isomorphisms. There
exists $j$ such that $E_{k}\he=\breve{D}_{j}\he$. We deduce that $\mu _{j}=
\apl{\tau _{\vert \breve{D}_{j}}\he}{\breve{D}_{j}}{D}$ is an
isomorphism.
Since
$\delta$ is the projection
of the projective bundle $\flgd{\P\bigl(N_{Y/X}\he\bigr)}{Y}$\!\!,
$\delta \ee$
is injective.
Thus $q\ee_{j}=\mu \ee_{j}\delta \ee$ is injective
and we get
$\ch\be^{Y}\bigl(i\pe_{Y}x\bigr)=i\ee_{Y}\ch_{D}\he(x)$.
\end{proof}
Now, we can clear up the problem of the dependence with respect to $D$
of $\ch_{D}\he(\ff)$.
\begin{proposition}\label{NewPropChClArt1}
If $D_{1}$ and $D_{2}$ are two divisors of $X$ with simple normal crossing
such that $\supp\ff\suq D_{1}$ and $\supp\ff\suq D_{2}$, then
$\ch_{D_{1}}\he
(\ff)=\ch_{D_{2}}\he(\ff)$.
\end{proposition}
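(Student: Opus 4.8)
The plan is to compare the two classes after pulling everything back to a common modification of $X$. Besides the functoriality lemmas just established I will use the following elementary \emph{monotonicity} remark: if $B\subseteq B'$ are reduced simple normal crossing divisors and $x\in\kan_{B}(X)$, then, writing again $x$ for its image in $\kan_{B'}(X)$, one has $\ch_{B'}(x)=\ch_{B}(x)$. This is immediate from the construction of \S\,\ref{SubSectionDeuxPrime}: if $x$ is lifted to a tuple indexed by the branches of $B$, that same tuple, completed by $0$ on the remaining branches of $B'$, lifts the image of $x$ in $\kan_{B'}(X)$ (the surjections $\bigoplus\kan_{B_i}(X)\to\kan_{B}(X)$ and $\bigoplus\kan_{B'_j}(X)\to\kan_{B'}(X)$ being compatible with the inclusion of summands), and since $\ch_{B'}$ is well defined the two tuples produce the same value $\sum_i\ch_{B_i}(x_i)=\ch_B(x)$.

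First I would reduce to the case where $D_1$ and $D_2$ are reduced, which is harmless since $\ch_D$ depends only on the support $|D|$, being defined on $\kan_{|D|}(X)$ from the branches of $|D|$. Next, by Hironaka's theorem I would choose a proper bimeromorphic morphism $\apl{\pi}{\tix}{X}$, a composition of blow-ups with smooth centres, such that $\pi^{-1}(D_1)$, $\pi^{-1}(D_2)$ and $\pi^{-1}(D_1\cup D_2)$ are all (reduced) simple normal crossing divisors; concretely, one first desingularises the ideal sheaves $\ii_{D_1}$ and $\ii_{D_2}$, so that each $\pi^{-1}(D_i)$ becomes an honest divisor, and then blows up further to make the union normal crossing. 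Then $\pi^{-1}(D_1)$ and $\pi^{-1}(D_2)$ are unions of branches of $\pi^{-1}(D_1\cup D_2)$, and $\pi^{-1}(\supp\ff)\subseteq\pi^{-1}(D_1)\cap\pi^{-1}(D_2)$, so the pull-back $\pi\pe[\ff]$ in $K$-theory with support (Appendix \ref{AppendixArt1}) lies in $\kan_{\pi^{-1}(D_i)}(\tix)$ for $i=1,2$ and has the same image in $\kan_{\pi^{-1}(D_1\cup D_2)}(\tix)$ for both choices.

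Now the first lemma of functoriality (Proposition \ref{LemmeTroisChClArt1} (iii)), applied to $\pi$ and to each reduced normal crossing divisor $D_i$, gives
\[
\pi\ee\ch_{D_i}\he(\ff)=\ch_{\pi^{-1}(D_i)}\bigl(\pi\pe[\ff]\bigr),\qquad i=1,2,
\]
while the monotonicity remark applied to $\pi^{-1}(D_i)\subseteq\pi^{-1}(D_1\cup D_2)$ shows that both right-hand sides equal $\ch_{\pi^{-1}(D_1\cup D_2)}\bigl(\pi\pe[\ff]\bigr)$. Hence $\pi\ee\ch_{D_1}\he(\ff)=\pi\ee\ch_{D_2}\he(\ff)$. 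Finally, since $\pi$ is proper and bimeromorphic it is generically finite of degree $1$, so $\pi_{*}\he\pi\ee=\id$ on $H\ee_{D}(X,\Q)$ by Proposition \ref{PropositionDeuxInsertC} (iii); thus $\pi\ee$ is injective and $\ch_{D_1}\he(\ff)=\ch_{D_2}\he(\ff)$.

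The only genuinely delicate point is the choice of $\pi$: one must ensure that each preimage $\pi^{-1}(D_i)$ is itself a divisor, not merely that the total preimage $\pi^{-1}(D_1\cup D_2)$ is normal crossing, which is why one desingularises the two ideals $\ii_{D_i}$ separately before imposing the normal crossing condition; granting that, the rest is a direct assembly of the functoriality lemmas and of Proposition \ref{PropositionDeuxInsertC}.
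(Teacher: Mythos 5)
Your proof is correct and follows essentially the same route as the paper's: reduce to the inclusion case $D\subseteq D'$ (your ``monotonicity remark'', which the paper treats as clear), pass to a Hironaka modification $\tau$ on which $\tau^{-1}(D_1\cup D_2)$ has simple normal crossings, apply the first functoriality lemma to each $D_i$, and conclude by injectivity of $\tau^{*}$. Your extra care about each $\pi^{-1}(D_i)$ being a simple normal crossing divisor is welcome but automatic here: for a bimeromorphic $\pi$ the preimage of a divisor is a divisor, and a divisor contained in the SNC divisor $\pi^{-1}(D_1\cup D_2)$ is a union of its branches, hence already SNC.
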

\begin{proof}
This property is clear if $D_{1}\suq D_{2}$. We will reduce the
general situation to this case. By Hironaka's theorem, there exists
$\apl{\tau }{\tix}{X}$ such that $\tau ^{-1}(D_{1}\cup D_{2})$ is a
divisor with simple normal crossing. Let $\tid_{1}=\tau ^{-1}D_{1}$ and
$\tid _{2}=\tau ^{-1}D_{2}$. By the first functoriality lemma
\ref{LemmeTroisChClArt1} (iii), since $\tid_{1}\suq\tid$, we have
$\tau\ee\ch_{D_{1}}\he(\ff)=\ch_{\tid_{1}}\he\bigl(\tau \pe[\ff]\bigr)=
\ch_{\tid}\he\bigl(\tau \pe[\ff]\bigr)
$. The same property holds for $D_{2}$.
The map $\tau $
is a succession of blowups, thus $\tau \ee$ is injective and
we get $\ch_{D_{1}}\he(\ff)=\ch_{D_{2}}\he(\ff)$.
\end{proof}
\begin{definition}\label{NewDefinitionChClArt1}
If $\supp(\ff)\suq D$ where $D$ is a normal simple crossing divisor,
$\ch(\ff)$ is defined as $\ch_{D}\he(\ff)$.
\end{definition}
By Proposition
\ref{NewPropChClArt1}, this definition makes sense.
\subsubsection{}\label{SubSectionTrois} We can now define $\ch(\ff)$ for an
arbitrary torsion sheaf.
\par\medskip
Let $\ff$ be a torsion sheaf. We say that
a succession of blowups
with smooth centers $\apl{\tau }{\tix}{X}$ is a
desingularization of $\ff$ if there exists a divisor with simple normal
crossing $D$ in $\tix$ such that $\tau ^{-1}\bigl(\supp(\ff)\bigr)\suq
D$. By Hironaka's theorem applied to $\supp(\ff)$, there always exists
such a $\tau $. We say that $\ff$
can be desingularized in $d$ steps if
there exists a desingularization $\tau $ of $\ff$ consisting of at most $d$
blowups. In that case, $\ch(\tau \pe[\ff])$ is defined by Definition
\ref{NewDefinitionChClArt1}.
\begin{proposition}\label{LemmeQuatreChClArt1}
There exists a class $\ch(\ff)$ uniquely determined by $\ff$ such that
\begin{enumerate}
  \item [(i)] If $\tau $ is a desingularization of $\ff$, then
  $\tau \ee\ch(\ff)=\ch(\tau \pe[\ff])$.

  \item [(ii)] If $Y$ is a smooth submanifold of $X$, then
  $\ch\be^{Y}\bigl(i\pe_{Y}\cro\ff\bigr)
  =i\ee_{Y}\ch(\ff)$.
\end{enumerate}
\end{proposition}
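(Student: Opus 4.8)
The plan is to build $\ch(\ff)$ by induction on the least number $d$ of blowups needed to desingularize $\supp(\ff)$, the whole argument taking place inside the standing hypotheses $(\mathrm{H}_{n-1})$ and the formalism of analytic $K$-theory with supports of Appendix~\ref{AppendixArt1}. For $d=0$ the support already lies in a simple normal crossing divisor $D$, so I would set $\ch(\ff)=\ch_{D}(\ff)$ (Definition~\ref{NewDefinitionChClArt1}); then property~(i) for an arbitrary desingularization $\tau$ is precisely the first functoriality lemma~\ref{LemmeTroisChClArt1}(iii) applied to $\tau^{-1}(D)\suq\tau^{-1}(\supp\ff)$ (again simple normal crossing), and property~(ii) is the second functoriality lemma~\ref{LemmeTroisChClArt1}(iv). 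Both $\ch$ and these two properties extend by $\Z$-linearity to $\Z$-combinations of torsion sheaves supported on such divisors, which I use freely below since $\sigma\pe[\ff]$ is in general only a virtual sheaf.

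For the inductive step, fix one desingularization $\tau=\rho\circ\sigma$ of $\ff$ made of $d$ blowups, where $\sigma\colon X_{1}\to X$ is the blowup of a single smooth centre $C$, with exceptional divisor $E=\sigma^{-1}(C)=\P(N_{C/X})$, inclusion $j\colon E\hookrightarrow X_{1}$, and $\bar\sigma=\sigma_{\vert E}\colon E\to C$. Since $\supp(\sigma\pe[\ff])\suq\sigma^{-1}(\supp\ff)$ and $\rho$ has $d-1$ blowups, $\rho$ is a desingularization of $\sigma\pe[\ff]$, so by the induction hypothesis $\ch(\sigma\pe[\ff])$ is already defined on $X_{1}$ and satisfies (i) and (ii). The crucial point is that $j\ee\ch(\sigma\pe[\ff])$ is pulled back from $C$: by (ii) on $X_{1}$ it equals $\ch^{E}(j\pe\sigma\pe[\ff])$; functoriality of the derived pullback gives $j\pe\sigma\pe[\ff]=\bar\sigma\pe\,i_{C}\pe[\ff]$ in $\kan(E)$; and $(\mathrm{F}_{n-1})$ then gives $\ch^{E}(j\pe\sigma\pe[\ff])=\bar\sigma\ee\ch^{C}(i_{C}\pe[\ff])$. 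The blowup formula of Proposition~\ref{PropositionDeuxInsertC}(vi) therefore furnishes a unique class in $H\ee_{D}(X,\Q)$, which I define to be $\ch(\ff)$, with $\sigma\ee\ch(\ff)=\ch(\sigma\pe[\ff])$.

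It then remains to verify (i), (ii) and independence of $\sigma$. First, if $\tau'=\rho'\circ\sigma$ is \emph{any} desingularization of $\ff$ factoring through $\sigma$, then $\tau'\ee\ch(\ff)=\rho'\ee\sigma\ee\ch(\ff)=\rho'\ee\ch(\sigma\pe[\ff])=\ch(\rho'\pe\sigma\pe[\ff])=\ch(\tau'\pe[\ff])$ by the induction hypothesis (i) on $X_{1}$; call this $(\mathrm{i}')$. For (ii), given a smooth submanifold $Y\suq X$ I would argue as in~\ref{LemmeTroisChClArt1}(iv): by Hironaka, pick a desingularization $\tau''$ of $\ff$ that factors through $\sigma$, resolves $\supp(\ff)\cup Y$ into a simple normal crossing divisor $\tid$, and whose preimage of $Y$ has a component $\breve{Y}$ with $q:=\tau''_{\vert\breve{Y}}\colon\breve{Y}\to Y$ a composite of towers of blowups and projective bundle projections, hence with injective pullback; then $q\ee\ch^{Y}(i_{Y}\pe[\ff])=\ch^{\breve{Y}}(q\pe i_{Y}\pe[\ff])=\ch^{\breve{Y}}(i_{\breve{Y}}\pe\tau''\pe[\ff])=i_{\breve{Y}}\ee\ch(\tau''\pe[\ff])=i_{\breve{Y}}\ee\tau''\ee\ch(\ff)=q\ee i_{Y}\ee\ch(\ff)$, using $(\mathrm{F}_{n-1})$, functoriality of the derived pullback, \ref{LemmeTroisChClArt1}(iv) on $\tix''$, Definition~\ref{NewDefinitionChClArt1} and $(\mathrm{i}')$, and injectivity of $q\ee$ then gives (ii). Finally, for (i) with an arbitrary desingularization $\tau'$, Hironaka provides a desingularization $\tau'''=\tau'\circ\psi$ of $\ff$, with $\psi\colon\tix'''\to\tix'$ a tower of blowups, that also factors through $\sigma$: on the $\sigma$-side $(\mathrm{i}')$ gives $\tau'''\ee\ch(\ff)=\ch(\tau'''\pe[\ff])$, while $\tau'\pe[\ff]$ already lies over a simple normal crossing divisor, so the case $d=0$ of (i), i.e.\ \ref{LemmeTroisChClArt1}(iii), gives $\ch(\tau'''\pe[\ff])=\ch(\psi\pe\tau'\pe[\ff])=\psi\ee\ch(\tau'\pe[\ff])$; comparing and using injectivity of $\psi\ee$ yields $\tau'\ee\ch(\ff)=\ch(\tau'\pe[\ff])$. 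Uniqueness of $\ch(\ff)$, and hence independence of the choice of $\sigma$, is immediate: a class satisfying (i) becomes $\ch(\tau\pe[\ff])$ after pullback along any desingularization $\tau$, and $\tau\ee$ is injective.

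I expect the main obstacle to be organizational rather than computational, namely keeping the two nested inductions — on $\dim X$ through $(\mathrm{H}_{n-1})$, and on $d$ — coherent. The two genuinely delicate points are the descent step, where one must check that the restriction of $\ch(\sigma\pe[\ff])$ to the exceptional divisor lies in the image of $\bar\sigma\ee$ (this is exactly where $(\mathrm{F}_{n-1})$ and the compatibility of derived pullbacks with $K$-theory with supports do the work), and the upgrade of (i) from desingularizations factoring through $\sigma$ to arbitrary ones, which rests on dominating two towers of blowups by a common third together with the injectivity of pullbacks along towers of blowups supplied by Proposition~\ref{PropositionDeuxInsertC}(vi).
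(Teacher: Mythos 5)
Your construction coincides with the paper's: the same induction on the number $d$ of blowups needed to desingularize $\supp(\ff)$, the same descent of the class $\sum_{j}(-1)^{j}\ch\bigl(\tore{j}{\ff}{\sigma}\bigr)$ from the first blowup to $X$ via Proposition \ref{PropositionDeuxInsertC} (vi), using the induction hypothesis (ii) on the exceptional divisor together with $(\textrm{F}_{n-1})$, and the same domination-of-resolutions plus injectivity-of-pullback arguments for well-definedness and for properties (i) and (ii). The differences are purely organizational (the paper names the alternating sum $\gamma(\tix_{1},\ff)$ and establishes independence of the chosen resolution before extending (i) to arbitrary desingularizations), so your argument is correct.
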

\begin{proof}
Let $d$ be the number of blowups necessary to desingularize $\ff$. Both
assertions will be proved at the same time by induction on $d$.
\par\medskip
If $d=0$, $\supp(\ff)$ is a subset of a divisor with simple normal
crossing $D$.
The properties (i) and (ii) are immediate
consequences of the two lemmas of functoriality \ref{LemmeTroisChClArt1} (iii)
and (iv).
\par\medskip
Suppose now that Proposition \ref{LemmeQuatreChClArt1} is
proved for torsion sheaves which can be
desingularized in $d-1$ steps. Let $\ff$
be a torsion sheaf which can be desingularized with at most $d$
blowups. Let $(\tix,\tau )$ be such a desingularization. We write $\tau $
as $\ti\tau \, \circ\, \tau _{1}$, where
$\ti\tau $ is the first blowup in $\tau $ with $E$ as
exceptional divisor, as shown in the following dia\-gram:
\[
\xymatrix{
&\tix\ar[d]^{\tau _{1}}\\
E\ar[r]^{i_{E}\he}\ar[d]_{q}&\tix_{1}\ar[d]^{\ti\tau }\\
Y\ar[r]_{i_{Y}\he}&X
}
\]
Then $\tau _{1}$ consists of at most $d-1$
blowups and is a desingularization of the sheaves
$\tore{j}{\ff}{\ti\tau }$, $0\leq j\leq n$.
By induction, we can consider the following expression in
$H\ee_{D}(\ti{X}_{1},\Q)$:
\[
\gamma \bigl(\tix_{1},\ff\bigr)=\ds\sum _{j=0}^{n}\, (-1)^{j}
\ch\Bigl(
\tore{j}{\ff}{\ti\tau }\Bigr).
\]
We have
\begin{align*}
i\ee_{E}\gamma \bigl(\tix_{1},\ff\bigr)&=
\sum _{j=0}^{n}\, (-1)^{j}\ch\be^{E}\Bigl(i\pe_{E}
\bigl[\tore{j}{\ff}{\ti\tau }\bigr]\Bigr)&&
\textrm{by induction, property (ii)}\\
&=\ch\be^{E}\bigl(i\pe_{E}\ti\tau \pe\cro\ff\bigr)&&\textrm{by $(\textrm{W}_{n-1})$}\\
&=\ch\be^{E}\bigl(q\pe i\pe_{Y}\cro\ff\bigr)=
q\ee\ch\be^{Y}\bigl(i\pe_{Y}\cro\ff\bigr)&&\textrm{by
$(\textrm{F}_{n-1}).$}
\end{align*}
By Proposition \ref{PropositionDeuxInsertC} (vi),
there exists a unique class $\ch(\ff,\tau )$ on $X$ such that
$\gamma \bigl(\tix_{1},\ff\bigr)=\ti\tau \ee\ch\bigl(\ff,\tau \bigr)$.
Now
\begin{align*}
\tau \ee\ch(\ff,\tau )&=\tau \ee_{1}\gamma \bigl(\tix_{1},\ff\bigr)\\
&=\sum _{j=0}^{n}\, (-1)^{j}\ch\Bigl(\tau \pe_{1}
\bigl[\tore{j}{\ff}{\ti\tau }\bigr]\Bigr)&&
\textrm{by induction, property (i)}\\
&=\ch\bigl(\tau \pe_{1}\ti\tau \pe[\ff]\bigr)=\ch\bigl(\tau
\pe[\ff]\bigr).
\end{align*}
Suppose that we have two resolutions. We dominate them by a third
one, according to the diagram:
\[
\xymatrix{
&W\ar[dl]_{\mu }\ar[dd]^{\delta }\ar[dr]^{\breve{\mu }}&\\
\tix_{1}\ar[dr]_{\tau }&&\tix_{2}\ar[dl]^{\breve{\tau }}\\
&X& }
\]
Then
\begin{align*}
\delta \ee\ch(\ff,\tau )&=\mu \ee\ch\bigl(\tau \pe[\ff]\bigr)\\
&=\ch\bigl(\mu \pe\tau \pe[\ff]\bigr)\quad
&&\textrm{by the first lemma of functoriality \ref{LemmeTroisChClArt1} (iii)}\\
&=\ch\bigl(\delta \pe[\ff]\bigr)=\delta \ee\ch(\ff,\breve{\tau })&&
\textrm{by symmetry.}
\end{align*}
The map $\delta \ee$ being injective, $\ch(\ff,\tau )=\ch(\ff,\breve{\tau
})$, and we can therefore define $\ch(\ff)$ by $\ch(\ff)=\ch(\ff,\tau
)$ for any desingularization $\tau $ of $\ff$ with at most $d$ blowups.
\par\medskip
We have shown that (i) is true when $\tau $ consists of at most $d$
blowups.
In the general case, let
$\apl{\tau }{\ti{X}_{1}}{X}$ be an arbitrary
desingularization of $\ff$ and $\breve{\tau }$ be a desingularization of
$\ff$ with at most $d$ blowups. We can find $W$, $\mu $ and $\breve{\mu }$
as before. Then
\begin{align*}
{\mu }\ee\be\ch\bigl(\tau \pe\be[\ff]\bigr)&=
\ch\bigl(\delta \pe[\ff]\bigr)&&\textrm{by the first
functoriality lemma \ref{LemmeTroisChClArt1} (iii)}\\
&=\breve{\mu }\ee\be\ch\bigl(\breve{\tau }\pe\be[\ff]\bigr)&&\textrm{by the first
functoriality lemma \ref{LemmeTroisChClArt1} (iii)}\\
&=\breve{\mu }\ee\be\breve{\tau }\ee\be\ch(\ff)&&\textrm{since $\breve{\tau }$
consists of at most $d$ blowups}\\
&=\mu \ee\be\tau \ee\be\ch(\ff).
\end{align*}
It remains to show (ii). For this, we desingularize
$\supp (\ff)\cup Y$ exactly as
in the proof of the second lemma of functoriality \ref{LemmeTroisChClArt1} (iv).
We have a
diagram
\[
\xymatrix{
\breve{D}_{i}\ar[r]_{i_{\breve{D}_{i}}\he}\ar[d]_{q_{i}}&\tix\ar[d]^{\tau }\\
Y\ar[r]_{i_{Y}\he}&X
}
\]
where $q_{i}\ee$ is injective for at least one $i$. Then
\begin{align*}
q_{i}\ee
\bigl(i_{Y}\ee\ch(\ff)\bigr)&=i_{\breve{D}_{i}}\ee\tau \ee\ch(\ff)=
i_{\breve{D}_{i}}\ee\ch(\tau \pe[\ff])&&\textrm{by (i)}\\
&=\ch\be^{\breve{D}_{i}}\bigl(i_{\breve{D}_{i}}\pe\tau \pe[\ff]\bigr)&&
\textrm{by the second lemma of functoriality \ref{LemmeTroisChClArt1} (iv)}\\
&=\ch\be^{\breve{D}_{i}}\bigl(q_{i}\pe\, i_{Y}\pe[\ff]\bigr)
=q_{i}\ee\ch\be^{Y}\bigl(i_{Y}\pe
[\ff]\bigr)&&\textrm{by $(\textrm{F}_{n-1}).$}
\end{align*}
Thus $i_{Y}\ee\ch(\ff)=\ch\be^{Y}\bigl(i_{Y}\pe\, [\ff]\bigr)$, which proves
(ii).
\end{proof}
We have now completed the existence part of Theorem
\ref{TheoremTroisTrois} for torsion sheaves.
\par\medskip
We turn to the proof of Proposition
\ref{PropositionUnChClArt1}. So doing,
we establish almost all the properties listed in the induction hypotheses
for torsion sheaves.
\begin{proof}
(i) Let $(\ti{X},\tau )$ be a desingularization of
$\supp(\ff)\cup\supp(\hh)$ and $D$ be the associated simple
normal crossing divisor. Then $\tau \pe\ff$, $\tau \pe\g$
and $\tau \pe\hh$ belong to $\kan_{D}\he(\tix)$ and
$\tau \pe\ff+\tau \pe\hh=\tau \pe\g$ in $\kan_{D}\he(X)$. Thus, by
Proposition \ref{LemmeQuatreChClArt1} (i),
\[
\tau \ee\bigl[\ch(\ff)+\ch(\hh)\bigr]=\ch\bigl(\tau \pe[\ff]\bigr)
+\ch\bigl(\tau \pe[\hh]\bigr)
=\ch\bigl(\tau \pe[\g]\bigr)=\tau \ee\ch(\g).
\]
The map $\tau \ee$ being injective, we get the Whitney formula
for torsion sheaves.
\par\medskip
(ii) The method is the same:
let $x=[\g]$ and
let $\tau $ be a desingularization of
$\g$. Then, by Proposition \ref{LemmeQuatreChClArt1} (i)
and Proposition \ref{LemmeTroisChClArt1} (i),
\[
\tau \ee\ch([\eee]\, .\, [\g])=\ch\bigl(\tau \pe[\eee]\, .\, \tau \pe[\g]\bigr)=
\ba\ch\bigl(\tau \pe[\eee]\bigr)\, .\, \ch\bigl(\tau \pe[\g]\bigr)=
\tau \ee\bigl(\ba\ch(\eee)\, .\, \ch(\g)\bigr).
\]
\par\medskip
(iii) This property is known when $f$ if the immersion of a
smooth submanifold and when $f$ is a bimeromorphic morphism
by Proposition \ref{LemmeQuatreChClArt1}. Let us consider now
the general case. By Grauert's direct image theorem, $f(X)$ is an
irreducible analytic
subset of $Y$. We desingularize $f(X)$ as an abstract complex space. We
get a connected smooth manifold $W$ and a
bimeromorphic morphism $\apl{\tau }{W}{f(X)}$ obtained as a succession of
blowups with smooth centers in $f(X)$.
We perform a similar sequence of blowups, starting from $Y_{1}=Y$ and
blowing up at each step in $Y_{i}$ the smooth center blown up at the
\mbox{$i$-th} step of the desingularization of $f(X)$.
Let $\apl{\pi _{Y}\he}{\tiy}{Y}$ be
this morphism.
The strict transform of $f(X)$ is $W$. The map $\xymatrix{\tau :\tau ^{-1}
\bigl(f(X)_{\mathrm{reg}}\bigr)\ar[r]^-{\sim}&f(X)_{\mathrm{reg}}}$ is an
isomorphism. So we get a morphism $\flgd{f(X)_{\mathrm{reg}}}{W}$ which
is in fact a meromorphic map
from $f(X)$ to $W$, and finally, after
composition on the left by $f$, from $X$ to $W$.
We desingularize this morphism:
\[
\xymatrix{
\tix\ar@{->>}[dr]^{\ti{f}}\ar[d]_{\pi _{X}\he}&\\
X\ar@{.>}[r]&W
}
\]
and we get the following global
diagram, where $\pi _{X}\he$ is a bimeromorphic map:
\[
\xymatrix@C=8ex{
\tix\ar[r]^{\ti{f}}\ar[d]_{\pi _{X}\he}&W\ar[r]^{i_{W}\he}\ar[d]^{\tau
}&\tiy\ar[d]^{\pi _{Y}\he}\\
X\ar[r]_-{f}&f(X)\ar[r]&Y
}
\]
Now $f\circ \pi_{X}\he=\pi _{Y}\he\circ\bigl(
i_{W}\he\circ\ti{f}\, \bigr)$, and we know the functoriality formula for $\pi
_{X}\he$, $\pi _{Y}\he$ and $i_{W}\he$
by Proposition \ref{LemmeQuatreChClArt1}. Since $\pi _{X}\ee$ is injective,
it is enough
to show the functoriality formula for
$\ti{f}$. So we will assume that $f$ is onto.
\par\medskip
Let $(\tau ,\tiy)$ be a
desingularization of $\ff$. We have the diagram
\[
\xymatrix@C=8ex{\hspace*{-5ex}X\times_{Y}\he\tiy\ar[r]\ar[d]&\tiy\ar[d]
^{\tau}\\
X\ar[r]^{f}&Y}
\]
where
$\ti{\tau} ^{-1}(\supp\ff)=D\suq\tiy$ is a divisor with simple normal
crossing and the map
$\flgd{X\times_{Y}\he\tiy}{X}$ is a bimeromorphic morphism.
We have a meromorphic map $\xymatrix{X\ar@{.>}[r]&
X\times_{Y}\he\tiy}$\!\!\!\!,
and we desingularize it by a morphism
$\flgd{T}{X\times_{Y}\he\tiy}$\!\!\!. Therefore, we obtain the following
commutative diagram, where $\apl{\pi }{T}{X}$ is a bimeromorphic map:
\[
\xymatrix@C=8ex
{
T\ar[r]^-{\ti{f}}\ar[d]_{\pi }&\tiy\ar[d]^{\tau }\\
X\ar[r]^{f}&Y
}
\]
Therefore we can assume that $\supp(\ff)$ is included in a divisor
with simple normal crossing $D$. We desingularize $f^{-1}(D)$ so that
we are led to the case $\supp(\ff)\suq D$, where $D$ and $f^{-1}(D)$ are
divisors with simple normal crossing in $Y$ and $X$ respectively.
In this case, we can use the first lemma of functoriality
\ref{LemmeTroisChClArt1} (iii).
\end{proof}
\subsection{The case of sheaves of positive rank}\label{CasPositif}
In this section, we consider the case of sheaves of arbitrary rank.
We are going to introduce the main tool of the construction, namely a
d\'{e}vissage theorem for coherent analytic sheaves.
Let $X$ be a
complex compact manifold and $\ff$
an analytic coherent sheaf on $X$. We have
seen in section \ref{SubsectionTorsionsheaves} how to define $\ch(\ff)$
when $\ff$ is a torsion sheaf.
\par\medskip
Suppose that $\ff$ has strictly positive generic rank.
When $\ff$ admits a global locally free
resolution, we could try to define $\ch(\ff)$ the usual way.
As explained in the introduction, this condition on $\ff$ is not
necessarily fulfilled.
Even if such a resolution exists, the definition of $\ch(\ff)$
depends a priori on this resolution.
A good substitute for a locally free resolution
is a locally free quotient with maximal rank,
since the kernel is
then a torsion sheaf.
Let $\ff_{\textrm{tor}}\he\suq\ff$ be the maximal torsion
subsheaf of $\ff$. Then $\ff$ admits a locally free quotient $\eee$ of maximal
rank if and only if $\ff\bigm/_{\ds\ff_{\textrm{tor}}\he}$ is locally
free. In this case, $\eee=\ff\bigm/_{\ds\ff_{\textrm{tor}}\he}$.
\par\medskip

Unfortunately, the existence of such a quotient is
not assured (for instance,
take a torsion-free sheaf which is not locally free),
but we will show that it exists up to a bimeromorphic morphism.
\begin{theorem}\label{TheoremeUnChClArt1}
Let $X$ be a complex compact manifold and $\ff$ a coherent analytic
sheaf on $X$. There exists a bimeromorphic morphism
$\apl{\sigma }{\tix}{X}$\!\!, which is a finite composition
of blowups with smooth centers,
such that $\sigma \ee\ff$ admits a locally free quotient of
maximal rank on $\tix$. Such quotients are unique,
up to a unique isomorphism.
\end{theorem}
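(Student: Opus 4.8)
The plan is to first produce \emph{some} proper bimeromorphic morphism $\apl{p}{Y}{X}$ together with a rank-$r$ locally free quotient of $p\ee\ff$, where $r$ denotes the generic rank of $\ff$, and then to replace $p$ by a finite composition of blowups with smooth centres. I dispose of uniqueness first, since it is essentially automatic. Let $\apl{\sigma}{\tix}{X}$ be bimeromorphic and let $\flgd{\sigma\ee\ff}{\eee}$ be a surjection with $\eee$ locally free of rank $r$ (here $r$ is indeed maximal, since $\sigma\ee\ff$ again has generic rank $r$). As $\eee$ is torsion free, the kernel of this surjection contains $(\sigma\ee\ff)_{\textrm{tor}}$, so the surjection factors through $(\sigma\ee\ff)/(\sigma\ee\ff)_{\textrm{tor}}$; the induced map from this torsion free sheaf of rank $r$ onto $\eee$ is surjective with torsion kernel, hence is an isomorphism. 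Therefore \emph{every} maximal locally free quotient of $\sigma\ee\ff$ is canonically identified with $(\sigma\ee\ff)/(\sigma\ee\ff)_{\textrm{tor}}$, and a morphism of quotients between two such is unique because the quotient maps are epimorphisms. This settles uniqueness.

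For existence, let $\apl{\pi}{\grr_{r}(\ff)}{X}$ be the relative Grassmannian of rank-$r$ locally free quotients of $\ff$. Locally on $X$ a finite presentation of $\ff$ exhibits $\grr_{r}(\ff)$ as a closed analytic subspace of $X\times\grr(r,N)$, so $\pi$ is proper; moreover $\grr_{r}(\ff)$ carries a universal surjection $\flgd{\pi\ee\ff}{\qq}$ with $\qq$ locally free of rank $r$. Over the dense open set $U=X\backslash S$ on which $\ff$ is locally free of rank $r$, the only rank-$r$ locally free quotient of a rank-$r$ locally free sheaf is the identity one, so $\pi$ restricts to an isomorphism $\pi^{-1}(U)\xrightarrow{\sim}U$. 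Let $Y$ be the reduced complex space underlying the closure of $\pi^{-1}(U)$ in $\grr_{r}(\ff)$. Since $U$ is connected, $Y$ is irreducible; and $p:=\pi_{\vert Y}$ is proper, generically an isomorphism onto $X$, with dense (hence all of $X$) image. Thus $\apl{p}{Y}{X}$ is a proper bimeromorphic morphism and $\qq_{\vert Y}$ is a rank-$r$ locally free quotient of $p\ee\ff$.

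It remains to replace $p$ by a finite composition $\apl{\sigma}{\tix}{X}$ of blowups with smooth centres. The key observation is that the meromorphic inverse of $p$ is classified by the tautological quotient $\ff_{\vert U}$ of $\ff_{\vert U}$ and extends to a genuine morphism precisely once a suitable coherent ideal sheaf on $X$ (essentially the one cutting out the image of $\Lambda^{r}\ff$ inside the line bundle $(\Lambda^{r}\ff)^{\ast\ast}$) has been made invertible. By Hironaka's desingularization and principalization theorems (see \cite{Hiro1}, \cite{Hiro2}, \cite{AG}), there is a finite composition of blowups with smooth centres $\apl{\sigma}{\tix}{X}$ through which $p$ factors, say $\sigma=p\circ g$ with $\apl{g}{\tix}{Y}$. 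Pulling back the universal surjection by $g$ and using right-exactness of $g\ee$, we obtain a surjection $\flgd{\sigma\ee\ff=g\ee p\ee\ff}{g\ee(\qq_{\vert Y})}$ onto the rank-$r$ locally free sheaf $g\ee(\qq_{\vert Y})$, which is the required quotient; as $\tix$ is smooth and compact, $\sigma$ has the asserted form.

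I expect the last paragraph to be the only delicate point: producing the smooth-centre blowup tower $\sigma$ dominating $p$ — equivalently, resolving the indeterminacies of the meromorphic inverse of $p$, or flattening $\ff$ directly as in the discussion following the statement — is exactly where Hironaka's machinery enters essentially, while the Grassmannian construction, the computation of $\pi$ over $U$, and the rank-and-torsion bookkeeping are routine. If one prefers to remain wholly inside a blowup framework one may instead apply Hironaka's flattening theorem to the pair $(\ff,\id)$ to get $\sigma$ at once, with $(\sigma\ee\ff)/(\sigma\ee\ff)_{\textrm{tor}}$ automatically locally free of rank $r$; the Grassmannian picture is kept here only because it makes the universal, hence canonical, nature of the quotient transparent.
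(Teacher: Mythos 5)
Your proof is correct and follows essentially the same route as the paper: you build the relative Grassmannian of rank-$r$ quotients with its universal surjection (the paper constructs this space and the quotient $\sigma\ee\ff/\nn$ by hand in local presentations), observe it is an isomorphism over the locus where $\ff$ is locally free, and then dominate it by a finite tower of smooth-centre blowups via Hironaka's resolution together with the Hironaka--Chow lemma, pulling back the universal quotient by right-exactness. Your uniqueness argument via $(\sigma\ee\ff)/(\sigma\ee\ff)_{\textrm{tor}}$ is exactly the intended one (the paper declares it ``clear'').
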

\begin{proof}
Let $r$ be the rank of $\ff$.
We define a universal set $\tix$ by $\tix=\coprod_{x\in X}\gr\ee\bigl(
r,\ff_{\vert x}\bigr)$ where
$\gr\ee\bigl(
r,\ff_{\vert x}\bigr)$ is the dual grassmannian of
quotients of $\ff_{\vert x}$ with rank $r$.
The set $\tix$ is the disjoint union of all the
quotients of rank $r$ of all fibers of $\ff$. The
canonical map $\apl{\sigma }{\tix}{X}$
is an isomorphism on
$\sigma ^{-1}\bigl(\ff_{\reg}\bigr)$. We will now endow $\tix$ with
the
structure of a reduced complex space.
\par\medskip
Let us first argue locally. Let
$\xymatrix{\oo_{\vert U}
^{p}\ar[r]^-{M}&\oo_{\vert U}^{q}\ar[r]&\ff_{\vert U}\he\ar[r]&0
}$ be a presentation of $\ff$ on an open set $U$. Here, $M$ is an
element of $\Mg_{q,p}\bigl(\oo_{U}\he\bigr)$.
Then, for every $x$ in $U$, we get the exact sequence
\[
\xymatrix@C=8ex{\C^{p}\ar[r]^-{M(x)}&\C^{q}\ar[r]^{\pi _{x}\he}
&\ff_{\vert x}\he\ar[r]&0.
}
\]
We have an inclusion
\[
\xymatrix@C=8ex{
\gr\ee\bigl(r,\ff_{\vert x}\he\bigr)\, \ar@{^{(}->}[r]&\gr\ee\bigl(r,\C^{q}\bigr)
\ar[r]^-{\sim}&\gr(q-r,q)
}
\]
given by
\[
\xymatrix@C=8ex{
(q,Q)\ar@{|->}[r]&(q\circ\pi _{x}\he,Q)\ar@{|->}[r]&\ker(q\circ\pi
_{x}\he),
}
\]
where $q$ and $Q$ appear in the following diagram:
\[
\xymatrix@C=8ex{\C^{p}\ar[r]^-{M(x)}
&\C^{q}\ar[r]^{\pi _{x}\he} \ar@{.>}[dr]_{\ti{q}}
&\ff_{\vert x}\ar[r]\ar[d]^{q}&0
\\
&&Q\ar[d]&\\
&&0&
}
\]
Therefore, we have a fibered inclusion
$\xymatrix{\sigma ^{-1}(U)\, \ar@{^{(}->}[r]&
U\times\gr(q-r,q)}$. We know that
\[
\sigma ^{-1}(U)=\bigl\{(x,E)\in U\times\gr(q-r,q)\ \textrm{such that\ }
\im M(x)\suq E\bigr\}.
\]
Let
$(e_{1},\dots ,e_{q})$ be the canonical basis of $\C^{q}$, and
$e_{1}\ee,\dots ,e_{q}\ee$ its dual basis. We can suppose that $e_{1}\ee,\dots
,e_{q-r}\ee$ are linearly independent on $E$. We parametrize $\gr(q-r,q)$
in the neighborhood of $x$ by a matrix $A=
\bigl(a_{i,j}\bigr)\in\Mg_{r,q-r}(\C\, )$. The
associated vector space will be spanned by the columns of the matrix
$
\begin{pmatrix}
\id_{q-r}\\
A
\end{pmatrix}
$.
Writing
$M=\Bigl(M_{j}^{i}\Bigr)
_{\genfrac{}{}{0pt}{}{1\leq j\leq q}{1\leq i\leq p}}\he$,
$\im(x)$
is a subspace of $E$ if and only if for all $i$, $1\leq i\leq p$,
\[
\Bigl(M_{1}^{i}(x)e_{1}+\cdots +M_{q}^{i}(x)e_{q}\Bigr)\we\bwe_{l=1}^{q-r}
\bigl(e_{l}+a_{1,l}e_{q-r+1}+\cdots +
a_{r,l}e_{q}\bigr)=0
\]
in $\bwe^{q-r+1}\C^{q}$.
This is clearly an analytic condition in the
variables $x$ and $a_{ij}$, thus $\sigma ^{-1}(U)$
is an analytic subset of $U\times \gr(q-r,q)$. We endow $\sigma ^{-1}(U)$
with the associated \emph{reduced} structure.
\par\medskip
We must check carefully that the structure
defined above does not depend on the chosen presentation.
\par\medskip
Let us consider two resolutions of $\ff$ on $U$
\[
\xymatrix@C=8ex@R=1.5ex{
\oo_{\vert U}^{p}\ar[r]^{M}&\oo_{\vert U}^{q}\ar[r]^{\pi }&\ff_{\vert
U}\he
\ar[r]&0
\\
\oo_{\vert U}^{p'}\ar[r]^{M'}&\oo_{\vert U}^{q'}\ar[r]^{\pi '}&\ff_{\vert U}
\he\ar[r]&0
}
\]
and a $(q',q)$ matrix $\apl{\alpha }{\oo_{\vert U}^{q}}{\oo_{\vert U}^{q'}}$
such that the diagram
\[
\xymatrix@C=8ex{
\oo_{\vert U}^{p}\ar[r]^{M}&\oo_{\vert U}^{q}\ar[r]^{\pi }
\ar[d]^{\alpha }&\ff_{\vert U}\he\ar[d]^{\id}\ar[r]&0\\
\oo_{\vert U}^{p'}\ar[r]^{M'}&\oo_{\vert U}^{q'}\ar[r]^{\pi '}&\ff_{\vert U}
\he\ar[r]&0
}
\]
commutes. The morphism
\quad $\xymatrix{
\sigma ^{-1}(U)\vphantom{\Bigl(}\ar[r]^{\id}\ar@{^{(}->}[d]&\sigma ^{-1}(U)
\vphantom{\Bigl(}
\ar@{^{(}->}[d]\\
U\times \gr(q-r,q)\ar[r]&U\times \gr(q'-r,q')
}$
\par
is given by
$\xymatrix{
(x,E)\ar@{|->}[r]&\bigl(x,\pi _{x}'{^{-1}}\pi _{x}\he(E)\bigr)
}$
according to the following diagram:
\[
\xymatrix@C=8ex{
\C^{p}\ar[r]^{M(x)}&\C^{q}\ar[r]^{\pi _{x}\he}
\ar[d]^{\alpha (x)}&\ff_{\vert x}\he\ar[r]\ar[d]^{\id}&0\\
\C^{p'}\ar[r]_{M'(x)}&\C^{q'}\ar[r]_{\pi '_{x}}&\ff_{\vert
x}\he\ar[r]&0
}
\]
Since $\pi '_{x}\bigl(\alpha (x)(E)\bigr)=\pi _{x}^{\vphantom{1}}(E)$, we have
$\pi _{x}'{^{-1}}\pi _{x}^{\vphantom{1}}(E)=\pi _{x}'{^{-1}}\pi '_{x}
\bigl(\alpha (x)(E)\bigr)$. We can write this
\[
\pi _{x}'{^{-1}}\pi _{x}^{\vphantom{1}}(E)=\alpha (x)(E)+\ker \pi '_{x}=
\alpha (x)(E)+\im M'(x).
\]
If $(x_{0},E_{0})$ is an element of $\sigma ^{-1}(U)\times \gr(q-r,q)$,
then
$\alpha (x_{0})(E_{0})+\im M'(x_{0})$ belongs to $\gr(q'-r,q')$. We can
suppose as above that $e_{1}\ee,\dots ,e_{q-r}\ee$ are linearly independent
on $E_{0}$. Therefore, $E_{0}$ is spanned by $q-r$ vectors $A_{1},\dots
,A_{q-r}$ where
$A_{i}=\bigl(0,\dots ,1,\dots ,a_{1,i},\dots ,a_{q-r,i}\bigr)
{}^{\mathrm{T}}$,
the first ``$1$'' having index $i$.
Then $\alpha (x_{0})(E_{0})+\im M'(x_{0})$ is spanned by the vectors
$\bigl(\alpha (x_{0})(A_{i})+M'{}^{j}(x_{0})\bigr)
_{\genfrac{}{}{0pt}{}{1\leq i\leq q-r}{1\leq j\leq p'}}
\he
$, where the $M'{}^{j}$ are the columns of $M'$. We can
find $q'-r$ independent vectors in this family, and this property holds
also in a neighborhood of $(x_{0},E_{0})$. We denote these vectors by
$\bigl(\alpha (x)(A_{i_{k}})+M'{}^{j_{k}}(x)\bigr)_{1\leq k\leq q'-r}$.
Let us
define
\[
f\bigl(A_{1},\dots ,A_{q-r},x\bigr)=\spa\bigl(
\alpha (x)(A_{_{i_{k}}})+M'{}^{j_{k}}(x)\bigr)_{1\leq k\leq q'-r}.
\]
This is a holomorphic map defined in a neighborhood of $(x_{0},E_{0})$
with values in $U\times \gr(q'-r,q')$.
On the same pattern, we can define another map $g$ on
a neighborhood of $f\bigl(x_{0},E_{0}\bigr)$ with values in
$U\times \gr(q-r,q)$.
The couple $(f,g)$ defines an isomorphism of complex spaces.
This proves that $\tix$ is endowed with the structure of an intrinsic
reduced complex analytic space (not generally smooth).
\par\medskip
We define a subsheaf $\nn$ of $\sigma \ee\ff$ by
\[
\nn(V)=\bigl\{s\in\sigma \ee\ff(V)\textrm{\ such that\ }
\forall \bigl(x,\{q,Q\}\bigr)\in V,\ s_{x}\he\in\ker q
\bigr\}.
\]
Remark that $\nn$ is supported in the singular locus of $\sigma \ee\ff$.
\begin{lemma}\label{LemmeSixChClArt1}
$\nn$ satisfies the following properties:
\begin{enumerate}
  \item [(i)] $\nn$ is a coherent subsheaf of $\sigma \ee\ff$.

  \item [(ii)] $\sigma \ee\ff/_{\ds\nn}$ is locally free and
  $\ran\bigl(\sigma \ee\ff/_{\ds\nn}\bigr)=\ran(\ff)$.
\end{enumerate}
\end{lemma}
\begin{proof}
We take a local presentation
$
\xymatrix
{
\oo_{\vert U}^{p}\ar[r]^{M}&\oo_{\vert U}^{q}\ar[r]^{\pi }&\ff_{\vert
U}\he\ar[r]&0
}
$ of $\ff$. Then $\sigma \ee\ff$ has the presentation
\[
\xymatrix
{
\oo_{\vert \sigma ^{-1}(U)}^{p}\ar[r]^{M\circ\, \sigma }
&\oo_{\vert \sigma ^{-1}(U)}^{q}\ar[r]&\sigma \ee\ff_{\vert
\sigma ^{-1}(U)}\he\ar[r]&0.
}
\]
Let
$\xymatrix{(x,E)\ar@{|->}[r]&\bigl(f_{1}(x,E),\dots ,f_{q}(x,E)\bigr)}$
be a
section of $\sigma \ee\ff$ on $V\suq \sigma ^{-1}(U)$. Then $s$ is a
section of $\nn$ if and only if for every $(x,E)$ in $V$,
$\bigl(f_{1}(x,E),\dots ,f_{q}(x,E)\bigr)$ is an element of $E$.
Let $U_{q-r,q}\he$ be the universal bundle on $\gr(q-r,q)$.
Then $U_{q-r,q\, \vert V}\he$ is a subbundle
of $\oo_{\vert V}^{q}$ and $\nn_{\vert V}\he$ is the image of
$U_{q-r,q\, \vert V}\he$ by the morphism
$
\xymatrix{U_{q-r,q\, \vert V}\ar@{^{(}->}[r]&
\oo_{\vert \sigma ^{-1}(U)}^{q}\ar[r]^{\pi }
&\sigma \ee\ff_{\vert
\sigma ^{-1}(U)}\he}
$. So $\nn$ is coherent.
\par\medskip
(ii) Let us define $\eee$ by $\eee=\sigma \ee\ff/_{\ds\nn}$. For all $(x,E)$
in $V$, we have an exact sequence
\[
\sutrd{\nn_{\vert (x,E)}\he}{\ff_{\vert x}\he}
{\eee_{\vert (x,E)}\he}
\]
and a
commutative diagram
\[
\xymatrix@C=8ex{E\, \ar@{^{(}->}[r]\ar[d]&\C^{q}\ar[d]^{\pi _{x}\he}\\
\nn_{\vert (x,E)}\he\ar[r]&\ff_{\vert x}\he
}
\]
The first vertical arrow is the morphism
$\flgd{U_{q-r,q\, \vert U}\he}{\nn}$
restricted at $(x,E)$, so it is onto.
Thus $\pi _{x}\he(E)$ is the image of the
morphism $\flgd{\nn_{\vert (x,E)}\he}{\ff_{\vert x}\he}$\!\!. Since we have an
exact sequence
\[
\xymatrix{
0\ar[r]&\pi _{x}\he(E)\ar[r]&\ff_{\vert x}\he\ar[r]^{q}&Q\ar[r]&0
}
\]
where $(x,E)=(x,Q)$, then $\dim\pi _{x}\he(E)=\dim\ff_{\vert x}\he-r$. Since
$\dim\ff_{\vert x}\he=\dim\pi _{x}\he(E)+\dim\eee_{\vert (x,E)}\he$, we get
$\dim\eee_{\vert (x,E)}\he=r$.
We can see that $\nn$ is a torsion sheaf,
for $\eee$ is locally free of rank $r$.
\end{proof}
\par\medskip
We can now finish the proof of Theorem \ref{TheoremeUnChClArt1}. Using
Hironaka's theorem, we desingularize the complex space $\tix$. We get a
succession of blowups
with smooth centers $\apl{\ti\sigma }{\tix'}{\tix}$ where $\tix'$
is smooth. By the Hironaka-Chow lemma (see \cite[Th.\!\! 7.8]{AG}),
we can suppose that $\tau =\sigma \circ\ti\sigma $
is a succession of blowups with smooth centers.
Since $\eee$ is locally free,
the following sequence is exact:
\[
\sutrgd{\ti\sigma \ee\nn}{\tau \ee\ff}{\ti\sigma \ee\eee}\!.
\]
\par\medskip
Therefore $\tau \ee\ff$ admits a locally free quotient of maximal rank.
The unicity is clear.
This finishes the proof.
\end{proof}
\subsection{Construction of the classes in the general case}\label{CasGeneral}
Let $X$ be
a complex compact manifold of
dimension $n$.
\subsubsection{} Let $\ff$ be a coherent sheaf on $X$ which
has a locally free quotient of maximal rank.
We have an exact sequence
\[
\sutrgd{\ttt}{\ff}{\eee}
\]
where $\ttt$ is a torsion sheaf and $\eee$ is
locally free. Then we define $\ch(\ff)$ by
$\ch(\ff)=\ch(\ttt)+\ba\ch(\eee)$,
where $\ch(\ttt)$ has been constructed in part
\ref{SubsectionTorsionsheaves}. Remark that $\ch(\ff)$ depends only on
$\ff$, the exact sequence $\sutrgd{\ttt}{\ff}{\eee}$ being unique up to
(a unique) isomorphism.
\par\medskip
We state now the Whitney formulae which apply to the Chern
characters we have defined above.
\begin{proposition}\label{PropXChClArt1}
Let $\sutrgd{\ff}{\g}{\hh}$ be an exact sequence of coherent analytic
sheaves on $X$. Then $\ch(\ff)$, $\ch(\g)$ and $\ch(\hh)$
are well defined and verify $\ch(\g)=\ch(\ff)+\ch(\hh)$ under
any of the following hypotheses:
\begin{enumerate}
  \item [(i)] $\ff$, $\g$, $\hh$ are locally free sheaves on $X$.

  \item [(ii)] $\ff$, $\g$, $\hh$ are torsion sheaves.

  \item [(iii)] $\g$ admits a locally free quotient of maximal rank and $\ff$
  is a torsion sheaf.
\end{enumerate}
\end{proposition}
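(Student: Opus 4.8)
The plan is to dispose of (i) and (ii) by citation and to reduce (iii) to (ii). Case~(i) is exactly Proposition~\ref{PropositionDeuxComplementQuatreUnArt1}~(i), the Whitney formula for holomorphic vector bundles in Deligne cohomology, read through the identification $\ba\ch(\eee)=\ch(E)$ of Notation~\ref{RemarqueUnComplementQuatreUnArt1}; here $\ch(\ff),\ch(\g),\ch(\hh)$ are well defined by that very notation. Case~(ii) is the assertion $(\textrm{W}_{n})$ for torsion sheaves, that is, Proposition~\ref{PropositionUnChClArt1}~(i), already established via Proposition~\ref{LemmeQuatreChClArt1}.

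For~(iii), I would first write down the exact sequence defining $\ch(\g)$. Since $\g$ admits a locally free quotient of maximal rank, its torsion subsheaf $\ttt:=\g_{\textrm{tor}}$ sits in an exact sequence $\sutrgd{\ttt}{\g}{\eee}$ with $\eee$ locally free of rank $r=\ran(\g)$, and by construction $\ch(\g)=\ch(\ttt)+\ba\ch(\eee)$. As $\ff$ is a torsion subsheaf of $\g$, it is contained in $\ttt$; set $\ttt':=\ttt/\ff$, a coherent torsion sheaf. The surjection $\g\to\eee$ vanishes on $\ff$, hence factors through $\hh=\g/\ff$, and the snake lemma applied to the evident morphism of short exact sequences yields an exact sequence $\sutrgd{\ttt'}{\hh}{\eee}$. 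Since $\hh/\ttt'\simeq\g/\ttt=\eee$ is locally free, hence torsion-free, whereas $\ttt'$ is torsion, the sheaf $\ttt'$ is precisely the torsion subsheaf of $\hh$; as moreover $\ran(\hh)=\ran(\g)=r$ (because $\ff$ is torsion), $\hh$ itself admits a locally free quotient of maximal rank, $\ch(\hh)$ is well defined, and by definition $\ch(\hh)=\ch(\ttt')+\ba\ch(\eee)$. Applying case~(ii) to the exact sequence $\sutrgd{\ff}{\ttt}{\ttt'}$ of torsion sheaves gives $\ch(\ttt)=\ch(\ff)+\ch(\ttt')$, whence
\[
\ch(\g)=\ch(\ttt)+\ba\ch(\eee)=\ch(\ff)+\ch(\ttt')+\ba\ch(\eee)=\ch(\ff)+\ch(\hh),
\]
which is the desired formula.

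The only point that requires a moment's care is the identification of $\ttt/\ff$ with the full torsion subsheaf of $\hh$; it is forced by the two facts that a quotient of the torsion sheaf $\ttt$ is again torsion and that the locally free sheaf $\eee$ is torsion-free. Everything else is a formal manipulation of the definition $\ch(\cdot)=\ch(\cdot_{\textrm{tor}})+\ba\ch(\cdot/\cdot_{\textrm{tor}})$ together with the Whitney formula for torsion sheaves, so I do not anticipate a genuine obstacle: the substance of the construction has already been invested in Proposition~\ref{PropositionUnChClArt1} and in the d\'evissage of Theorem~\ref{TheoremeUnChClArt1}, and the present statement is the bookkeeping that renders the positive-rank construction coherent. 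These three cases are precisely what will be needed to bootstrap the full Whitney formula $(\textrm{W}_{n})$ in \S\ref{ProofWhitney}.
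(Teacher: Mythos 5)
Your proposal is correct and follows essentially the same route as the paper: (i) and (ii) by citing Proposition \ref{PropositionDeuxComplementQuatreUnArt1} (i) and Proposition \ref{PropositionUnChClArt1} (i) respectively, and for (iii) the same d\'evissage through $\ttt'$ fitting into $\sutrgd{\ttt'}{\hh}{\eee}$ and $\sutrgd{\ff}{\ttt}{\ttt'}$, with the Whitney formula for torsion sheaves applied to the latter. The only cosmetic difference is that you define $\ttt'$ as $\ttt/\ff$ and identify it with $\ker(\hh\to\eee)$ via the snake lemma, whereas the paper defines it directly as that kernel; the two are the same sheaf.
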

\begin{proof}
(i) If $\ff$, $\g$, $\hh$ are locally free sheaves on $X$, then $\ch(\ff)
=\ba\ch(\ff)$, $\ch(\g)
=\ba\ch(\g)$, $\ch(\hh)
=\ba\ch(\hh)$ and we use Proposition
\ref{PropositionDeuxComplementQuatreUnArt1} (i).
\par\medskip
(ii) This is Proposition \ref{PropositionUnChClArt1} (i).
\par\medskip
(iii) Let
$\eee$ be the locally free quotient of maximal rank of $\g$. We have an
exact sequence
\[
\sutrgd{\ttt}{\g}{\eee}
\]
where $\ttt$ is a torsion sheaf.
Since $\ff$ is a torsion sheaf, the morphism
$\sutr{\ff}{\g}{\eee}$ is identically zero.
Let us define $\ttt'$ by the exact sequence
\[
\xymatrix{0\ar[r]&\ttt'\ar[r]&\hh\ar[r]&\eee\ar[r]&0.}
\]
Then $\ttt'$ is a torsion sheaf and we have the exact sequence
of torsion sheaves
\[
\xymatrix{0\ar[r]&\ff\ar[r]&\ttt\ar[r]&\ttt'\ar[r]&0.}
\]
Thus $\hh$ admits a locally free quotient of maximal rank, so that $\ch(\hh)$
is defined and
\begin{align*}
\ch(\hh)&=\ba\ch(\eee)+\ch(\ttt')=\ba\ch(\eee)+\ch(\ttt)-\ch(\ff)
&&\textrm{by (ii)}\\
&=\ch(\g)-\ch(\ff).
\end{align*}
\end{proof}
Let us now look at the functoriality properties with respect to
pullbacks.
\begin{proposition}\label{PropYChClArt1}
Let $\apl{f}{X}{Y}$ be a holomorphic map. We assume that
\begin{enumerate}
  \item [--] $\dim Y=n$ and $\dim X\leq n$,

  \item [--] if $\dim X=n$, $f$ is surjective.
\end{enumerate}
Then for every coherent sheaf on $Y$ which admits a locally free
quotient of maximal rank, the following properties hold:
\begin{enumerate}
  \item [(i)] The Chern characters $\ch\bigl(
  \tore{i}{\ff}{f}\bigr)$ are well defined.

  \item [(ii)]
  $f\ee\ch(\ff)=\ds\sum _{i\geq 0}(-1)^{i}\ch\bigl(
  \tore{i}{\ff}{f}\bigr)$.
\end{enumerate}
\end{proposition}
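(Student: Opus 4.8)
The plan is to split $\ff$ along its torsion subsheaf, reduce assertion (ii) to the case of a torsion sheaf, and then dispose of that case with the functoriality statements already available for torsion sheaves.

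First I would use the defining exact sequence $\sutrgd{\ttt}{\ff}{\eee}$ ($\ttt$ torsion, $\eee$ locally free), so that $\ch(\ff)=\ch(\ttt)+\ba\ch(\eee)$. Applying $f^{-1}$ and using $\tore{i}{\eee}{f}=0$ for $i\geq 1$ (since $\eee$ is locally free), the long exact sequence of $\tor$ gives $\tore{i}{\ttt}{f}\simeq\tore{i}{\ff}{f}$ for $i\geq 1$ and a short exact sequence $\sutrgd{f\ee\ttt}{f\ee\ff}{f\ee\eee}$. This already yields (i): if $\dim X\leq n-1$ every coherent sheaf on $X$ has a Chern character by $(\textrm{E}_{n-1})$; if $\dim X=n$ then $f$ is surjective, so $\tore{i}{\ttt}{f}$ ($i\geq 1$) is supported in the proper analytic set $f^{-1}(\supp\ttt)$ hence torsion, while $f\ee\ff$ carries $f\ee\eee$ as a locally free quotient of maximal rank $\ran\ff$ with torsion kernel $f\ee\ttt$, so $\ch(f\ee\ff)$ is defined by the construction of \S\ref{CasGeneral}. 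In all cases $\ch(f\ee\ff)=\ch(f\ee\ttt)+\ba\ch(f\ee\eee)$, by Proposition \ref{PropXChClArt1} (iii) when $\dim X=n$ and by $(\textrm{W}_{n-1})$ and $(\textrm{C}_{n-1})$ when $\dim X\leq n-1$. Combining these identities with $f\ee\ba\ch(\eee)=\ba\ch(f\ee\eee)$ (Proposition \ref{PropositionDeuxComplementQuatreUnArt1} (ii)) and the isomorphisms above, assertion (ii) for $\ff$ collapses to the single equality $f\ee\ch(\ttt)=\sum_{i\geq0}(-1)^{i}\ch\bigl(\tore{i}{\ttt}{f}\bigr)=\ch\bigl(f\pe[\ttt]\bigr)$ for the torsion sheaf $\ttt$.

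If $\dim X=n$, then $f$ is surjective, $f\ee\ttt$ is torsion, and this equality is exactly Proposition \ref{PropositionUnChClArt1} (iii). The case $\dim X\leq n-1$ is where the real work lies, because $f$ now lowers dimension and $(\textrm{F}_{n-1})$ does not apply to $f$. Here I would invoke Hironaka's theorem to choose $\apl{\tau}{\tiy}{Y}$, a finite composition of blowups with smooth centers, which desingularizes $\ttt$ and for which the strict transform $Z'$ of the analytic set $f(X)$ is a smooth submanifold of $\tiy$; note $\dim Z'\leq\dim X\leq n-1$. The map $f$ composed with the meromorphic inverse of $\apl{\tau}{Z'}{f(X)}$ is a meromorphic map from $X$ to $Z'$, which I resolve by a composition of blowups $\apl{\pi}{\tix}{X}$, obtaining a holomorphic $\apl{\bar g}{\tix}{Z'}$ with $\tau\circ i_{Z'}\circ\bar g=f\circ\pi$ and $\dim\tix\leq n-1$. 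Then I chain equalities: $\pi\ee f\ee\ch(\ttt)=\bar g\ee i_{Z'}\ee\tau\ee\ch(\ttt)=\bar g\ee i_{Z'}\ee\ch\bigl(\tau\pe[\ttt]\bigr)$ by Proposition \ref{LemmeQuatreChClArt1} (i); next $i_{Z'}\ee\ch\bigl(\tau\pe[\ttt]\bigr)=\ch^{Z'}\bigl(i_{Z'}\pe\tau\pe[\ttt]\bigr)$, applying Proposition \ref{LemmeQuatreChClArt1} (ii) to each $\tore{j}{\ttt}{\tau}$ and additivity $(\textrm{W}_{n-1})$ on $Z'$; then $\bar g\ee\ch^{Z'}(\cdot)=\ch^{\tix}\bigl(\bar g\pe(\cdot)\bigr)$ by $(\textrm{F}_{n-1})$ for $\bar g$; finally $\ch^{\tix}\bigl(\pi\pe f\pe[\ttt]\bigr)=\pi\ee\ch^{X}\bigl(f\pe[\ttt]\bigr)$ by $(\textrm{F}_{n-1})$ for $\pi$ and $(\textrm{W}_{n-1})$. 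Since $\pi\ee$ is injective, $f\ee\ch(\ttt)=\ch\bigl(f\pe[\ttt]\bigr)$, which finishes the proof.

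I expect the dimension-dropping case to be the main obstacle: neither $(\textrm{F}_{n-1})$ nor Proposition \ref{PropositionUnChClArt1} (iii) applies to $f$ itself, and one must manufacture, by resolving $f(X)$ inside $Y$, a factorization through a smooth submanifold of dimension $<n$ on which the induction hypotheses do apply; the accompanying bookkeeping --- checking that every intermediate $K$-theory class is of an allowed type (torsion, or supported on a manifold of dimension $<n$) and that the bimeromorphic pullbacks stay injective --- is the delicate part.
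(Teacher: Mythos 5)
Your proof is correct and, in its skeleton, identical to the paper's: both split $\ff$ by the canonical sequence $\sutrgd{\ttt}{\ff}{\eee}$, use $\tore{i}{\ff}{f}\simeq\tore{i}{\ttt}{f}$ for $i\geq 1$ together with $\ba{\ch}(f\ee\eee)=f\ee\ba{\ch}(\eee)$, and thereby reduce (ii) to the single identity $f\ee\ch(\ttt)=\ch\bigl(f\pe[\ttt]\bigr)$ for the torsion part; part (i) is handled the same way in both. The one divergence is how that last identity is disposed of when $\dim X\leq n-1$: the paper simply cites Proposition \ref{PropositionUnChClArt1} (iii) uniformly, whereas you re-prove it by factoring $f$ through a desingularization of $f(X)$ inside a blowup of $Y$ and descending via $(\textrm{F}_{n-1})$ and Proposition \ref{LemmeQuatreChClArt1}. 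Your caution is a defensible reading of the statement of Proposition \ref{PropositionUnChClArt1} (iii), which formally assumes $f\ee\ttt$ to be a torsion sheaf --- a hypothesis that can fail when $f(X)\subseteq\supp\ttt$ --- but the proof of that proposition in \S\,\ref{SubsectionTorsionsheaves} already contains precisely the factorization you reconstruct (desingularize $f(X)$, lift the blowups to $Y$, resolve the induced meromorphic map from $X$), so your detour duplicates an argument the paper treats as already established rather than introducing a new one. The only step in your version needing a word of care is the simultaneous choice of $\tau$ (an SNC model for $\supp\ttt$ \emph{and} a smooth strict transform of $f(X)$); the paper sidesteps this by performing the two resolutions in separate stages. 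Otherwise the bookkeeping --- injectivity of $\pi\ee$, compatibility of derived pullbacks with composition, and the use of $(\textrm{W}_{n-1})$ and $(\textrm{F}_{n-1})$ on the lower-dimensional manifolds --- matches the paper's.
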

\begin{proof}
(i) If $\dim X<n$, the classes
$\ch\bigl(
  \tore{i}{\ff}{f}\bigr)$ are
defined by the induction property $(\textrm{E}_{n-1})$. If $\dim X=n$
and
$f$ is surjective, then $f$ is generically finite. Thus all the sheaves
$\tore{i}{\ff}{f}$, $i\geq 1$,
are torsion sheaves on $X$, so their Chern classes are defined
by Proposition \ref{PropositionUnChClArt1}.
The sheaf $f\ee\ff$ admits on $X$ a locally
free quotient of maximal rank so that $\ch(f\ee\ff)$ is well defined.
\par\medskip
(ii) We have
an exact sequence
\[
\sutrgd{\ttt}{\ff}{\eee}
\]
where $\ttt$ is a torsion sheaf and $\eee$
is a locally free sheaf. Remark that, for $i\geq 1$,
$\tore{i}{\ff}{f}\simeq\tore{i}{\ttt}{f}$. Thus, by Proposition
\ref{PropositionDeuxComplementQuatreUnArt1}
(ii) and Proposition \ref{PropositionUnChClArt1} (iii),
\begin{align*}
\sum _{i\geq 0}(-1)^{i}\ch
\bigl(\tore{i}{\ff}{f}\bigr)&=\ba\ch \bigl(f\ee\eee\bigr)+
\ch\bigl(f\ee\ttt\bigr)+
\sum_{i\geq 1}(-1)^{i}\ch\Bigl(\tore{i}{\ttt}{f}\Bigr)\\
&=f\ee\ba\ch(\eee)+\ch\bigl(f\pe[\ttt]\bigr)
=f\ee\bigl(\ba\ch(\eee)+\ch(\ttt)\bigr)=
f\ee\ch(\ff).
\end{align*}
\end{proof}
\subsubsection{} We consider now an arbitrary coherent sheaf $\ff$
on $X$. By Theorem \ref{TheoremeUnChClArt1}, there exists
$\apl{\sigma } {\tix}{X}$ obtained as a finite composition of
blowups with smooth centers
such that $\sigma \ee\ff$ admits a locally free quotient
of maximal rank.
This is the key property for the definition of $\ch(\ff)$ in full
generality.
\begin{theorem}\label{LemmeHuitChClArt1}
There exists a class $\ch(\ff)$ on $X$ uniquely determined
by $\ff$ such
that:
\begin{enumerate}
  \item [(i)]
If $\apl{\sigma }{\tix}{X}$ is a succession of
blowups with smooth centers such that $\sigma \ee\ff$
admits a locally free quotient of maximal rank, then
$\sigma \ee\ch(\ff)=\ds\sum _{i\geq 0}
(-1)^{i}\ch\bigl(\tore{i}{\ff}{\sigma }\bigr)$.

  \item [(ii)] If $Y$ is a smooth submanifold of $X$, $\ch^{Y}\bigl(
  i_{Y}\pe[\ff]\bigr)=i\ee_{Y}\ch(\ff)$.
\end{enumerate}
\end{theorem}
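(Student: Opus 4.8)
The plan is to imitate the proof of Proposition~\ref{LemmeQuatreChClArt1}, with the dévissage Theorem~\ref{TheoremeUnChClArt1} playing the role of a desingularization and Propositions~\ref{PropXChClArt1}--\ref{PropYChClArt1} the role of the functoriality lemmas \ref{LemmeTroisChClArt1}\,(iii)--(iv). Uniqueness is immediate: by Theorem~\ref{TheoremeUnChClArt1} a $\sigma\colon\tix\to X$ fulfilling the hypothesis of (i) exists, and being a finite composition of blowups with smooth centres it is bimeromorphic of degree one, so $\sigma\ee$ is injective (Proposition~\ref{PropositionDeuxInsertC}\,(iii)); hence (i) pins down $\ch(\ff)$. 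For existence I would induct on the least number $d(\ff)$ of blowups occurring in such a $\sigma$ (finite by Theorem~\ref{TheoremeUnChClArt1}), proving (i) and (ii) together. Recall that $\ch(\g)$ is already available whenever $\g$ carries a locally free quotient of maximal rank (\S\,\ref{CasGeneral}), in particular when $\g$ is torsion (\S\,\ref{SubsectionTorsionsheaves}); for such a $\g$ on a compact $n$-manifold $Z$, Proposition~\ref{PropYChClArt1}\,(ii) gives $\mu\ee\ch(\g)=\sum_{k\geq 0}(-1)^{k}\ch(\tore{k}{\g}{\mu})$ for any composition of blowups with smooth centres $\mu\colon Z'\to Z$, and the same proposition together with $(\textrm{W}_{n-1})$ gives $i\ee\ch(\g)=\ch^{W}(i\pe\cro\g)$ for any smooth submanifold $i\colon W\hookrightarrow Z$ with $W\neq Z$. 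These are exactly (i) and (ii) in the base case $d(\ff)=0$ (the case $W=Z$ of (ii) being trivial).

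For the inductive step, assume $d(\ff)=d\geq 1$, fix $\sigma\colon\tix\to X$ realizing $d$ blowups with smooth centres of codimension $\geq 2$ such that $\sigma\ee\ff$ has a locally free quotient of maximal rank, and factor $\sigma=\ti\sigma\circ\sigma_{1}$ with $\ti\sigma\colon\tix_{1}\to X$ the first blowup, along $Y\subsetneq X$, with exceptional divisor $E$ and projection $q\colon E\to Y$, and $\sigma_{1}$ consisting of $d-1$ blowups. Since $\sigma_{1}\ee(\ti\sigma\ee\ff)=\sigma\ee\ff$ has a locally free quotient of maximal rank, $d(\ti\sigma\ee\ff)\leq d-1$, so the theorem holds for $\ti\sigma\ee\ff$ on $\tix_{1}$. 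I would then set
\[
\gamma=\ch(\ti\sigma\ee\ff)+\sum_{j\geq 1}(-1)^{j}\ch\bigl(\tore{j}{\ff}{\ti\sigma}\bigr)\in H\ee_{D}(\tix_{1},\Q),
\]
the $\tore{j}{\ff}{\ti\sigma}$ with $j\geq 1$ being torsion, and check $i_{E}\ee\gamma=q\ee\ch^{Y}(i_{Y}\pe\cro\ff)$: restrict each summand to $E$ via property (ii) for $\ti\sigma\ee\ff$ and via Proposition~\ref{LemmeQuatreChClArt1}\,(ii), recombine by additivity $(\textrm{W}_{n-1})$ on $E$ (which has dimension $n-1$), use $\ti\sigma\pe\cro\ff=\sum_{j}(-1)^{j}\cro{\tore{j}{\ff}{\ti\sigma}}$ and the functoriality of $K$-theory pullback along $\ti\sigma\circ i_{E}=i_{Y}\circ q$, and conclude with $(\textrm{F}_{n-1})$. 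Thus $i_{E}\ee\gamma$ is a $q$-pullback, and the blowup formula of Proposition~\ref{PropositionDeuxInsertC}\,(vi) (in rational Deligne cohomology) produces a unique class, which I name $\ch(\ff)$, with $\ti\sigma\ee\ch(\ff)=\gamma$.

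It remains to verify (i) and (ii) for this $\ch(\ff)$. Pulling $\gamma$ back by $\sigma_{1}$, applying property (i) for $\ti\sigma\ee\ff$ to the $j=0$ term and Proposition~\ref{LemmeQuatreChClArt1}\,(i) to the torsion terms $j\geq 1$, and recombining the resulting alternating sum with Proposition~\ref{PropositionUnChClArt1}\,(i)---the point being that, after splitting off the unique non-torsion summand $\tore{0}{\tore{0}{\ff}{\ti\sigma}}{\sigma_{1}}=\sigma\ee\ff$, only torsion classes remain, and the corresponding element of $K$-theory is $\sigma\pe\cro\ff-\cro{\sigma\ee\ff}=\sum_{i\geq 1}(-1)^{i}\cro{\tore{i}{\ff}{\sigma}}$---one obtains $\sigma\ee\ch(\ff)=\sum_{i\geq 0}(-1)^{i}\ch(\tore{i}{\ff}{\sigma})$. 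For an arbitrary $\tau\colon\tix'\to X$ as in Theorem~\ref{TheoremeUnChClArt1}, dominate $\sigma$ and $\tau$ by a composition of blowups $\delta=\sigma\circ\mu=\tau\circ\nu$ (desingularize $\tix\times_{X}\tix'$ and apply the Hironaka--Chow lemma, as in \S\,\ref{SubsectionTorsionsheaves}); the same recombination, now using $\mu\ee\ch(\tore{0}{\ff}{\sigma})=\sum_{k}(-1)^{k}\ch(\tore{k}{\sigma\ee\ff}{\mu})$ (Proposition~\ref{PropYChClArt1}\,(ii)) on the degree-zero term, gives $\mu\ee\bigl(\sum_{i}(-1)^{i}\ch(\tore{i}{\ff}{\sigma})\bigr)=\sum_{\ell}(-1)^{\ell}\ch(\tore{\ell}{\ff}{\delta})=\nu\ee\bigl(\sum_{k}(-1)^{k}\ch(\tore{k}{\ff}{\tau})\bigr)$; hence $\delta\ee\ch(\ff)$ equals both sides, and injectivity of $\nu\ee$ gives $\tau\ee\ch(\ff)=\sum_{k}(-1)^{k}\ch(\tore{k}{\ff}{\tau})$, which is (i) in general and shows the construction is independent of $\sigma$. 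Finally, for (ii) with $Y\subsetneq X$ smooth, choose $\tau\colon\tix\to X$ a composition of blowups with smooth centres resolving $\ff$ and, after blowing up $Y$ first, such that the strict transform $\breve{D}$ of $Y$ is a smooth component of a simple normal crossing divisor in $\tix$, so that $q\colon\breve{D}\to Y$ (the restriction of $\tau$) has injective pullback, exactly as in the proof of Proposition~\ref{LemmeTroisChClArt1}\,(iv); then, using (i) just proved, property (ii) for $\tau\ee\ff$ and Proposition~\ref{LemmeQuatreChClArt1}\,(ii) on the torsion part, $(\textrm{W}_{n-1})$, the functoriality of $K$-theory pullback, and $(\textrm{F}_{n-1})$, one gets $q\ee(i_{Y}\ee\ch(\ff))=i_{\breve{D}}\ee\tau\ee\ch(\ff)=\ch^{\breve{D}}(i_{\breve{D}}\pe\tau\pe\cro\ff)=\ch^{\breve{D}}(q\pe i_{Y}\pe\cro\ff)=q\ee\ch^{Y}(i_{Y}\pe\cro\ff)$, and injectivity of $q\ee$ concludes.

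The main obstacle is the entanglement of (i) and (ii): the descent step---realizing $\gamma$ as $\ti\sigma\ee$ of a class on $X$ through Proposition~\ref{PropositionDeuxInsertC}\,(vi)---requires knowing how $\ch(\ti\sigma\ee\ff)$ restricts to the exceptional divisor, which is property (ii) for a sheaf of strictly smaller $d$-invariant; so the two statements cannot be separated in the induction. A second, ubiquitous difficulty is that the full Whitney formula $(\textrm{W}_{n})$ is not yet at our disposal, so every $K$-theoretic recombination must be arranged so that, once the single non-torsion class $\sigma\ee\ff$ is split off, only torsion classes survive, where additivity of $\ch$ is known (Proposition~\ref{PropositionUnChClArt1}\,(i)); keeping track of these cancellations is where most of the computation lies.
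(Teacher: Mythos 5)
Your proposal follows essentially the same route as the paper: induction on the number of blowups, definition of the intermediate class $\gamma$ on $\tix_{1}$, the restriction computation $i_{E}\ee\gamma=q\ee\ch^{Y}(i_{Y}\pe[\ff])$ feeding into the blowup decomposition of Proposition \ref{PropositionDeuxInsertC} (vi), the Tor-composition/spectral-sequence recombination (the paper's Lemmas \ref{SousLemmeUnArt1} and \ref{SousLemmeDeuxArt1}), domination of two resolutions for independence, and the injective-branch argument for (ii). The argument is correct as written and matches the paper's proof in all essentials.
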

\begin{remark}
By Proposition \ref{PropYChClArt1} (i), all the terms in (i) are defined.
\end{remark}
\begin{proof}
The proof of (i) will use Lemmas \ref{SousLemmeUnArt1},
\ref{SousLemmeDeuxArt1}, and \ref{LemmeOnzeChClArt1}. We will prove the
result by induction on the number $d$ of blowups in $\sigma
$. If $d=0$, $\ff$ admits a locally free quotient
of maximal rank and we can use
Proposition \ref{PropYChClArt1}.
\par\medskip
Suppose now that (i) and (ii) hold at step $d-1$. As usual, we
look at the first blowup in $\sigma $
\[
\xymatrix@C=8ex{
&\tix\ar[d]^{\sigma _{1}}\ar@/^4ex/[dd]^{\sigma }\\
E\ar[r]^{i_{E}\he}\ar[d]_{q}&\tix_{1}\ar[d]^{\ti\sigma }\\
Y\ar[r]_{i_{Y}\he}&X.
}
\]
The sheaves $\tore{j}{\ff}{\sigma}$
are torsion sheaves for $j\geq 1$ and
$\sigma _{1}\ee
\tore{0}{\ff}{\ti{\sigma }}
=\sigma \ee\be\ff$ admits a locally free quotient
of maximal rank. Since $\sigma _{1}\he$ consists of $d-1$ blowups, we
can define by induction a class $\gamma (\ff)$ in
$H\ee_{D}(\ti{X}_{1},\Q)$
as follows:
\[
\gamma (\ff)=\sum _{j\geq 0}\, (-1)^{j}\ch\Bigl(\tore{j}{\ff}{\ti\sigma }\Bigr).
\]
\begin{lemma}\label{SousLemmeUnArt1}
$\sigma _{1}\ee\gamma (\ff)=
\ds\sum _{i\geq 0}(-1)^{i}\ch\bigl(
\tore{i}{\ff}{\sigma}
\bigr)$.
\end{lemma}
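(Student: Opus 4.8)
The plan is to unwind $\sigma _{1}\ee\gamma (\ff)$ one summand at a time, reduce to the composition law for $\tor$ sheaves under the factorisation $\sigma =\ti\sigma\circ\sigma _{1}$, and then carry out an Euler--characteristic computation in a spectral sequence whose only non-torsion entry turns out to be inert. Concretely, I would first write $\gamma (\ff)=\ch\bigl(\ti\sigma\ee\ff\bigr)+\sum _{j\geq 1}(-1)^{j}\ch\bigl(\tore{j}{\ff}{\ti\sigma}\bigr)$ and apply $\sigma _{1}\ee$. The morphism $\sigma _{1}$ is a succession of at most $d-1$ blowups with smooth centers, and $\sigma _{1}\ee\bigl(\ti\sigma\ee\ff\bigr)=\sigma \ee\ff$ admits a locally free quotient of maximal rank; hence the induction hypothesis applied to $\ti\sigma\ee\ff$ (Theorem \ref{LemmeHuitChClArt1} (i) at step $d-1$) gives $\sigma _{1}\ee\ch\bigl(\ti\sigma\ee\ff\bigr)=\sum _{i\geq 0}(-1)^{i}\ch\bigl(\tore{i}{\ti\sigma\ee\ff}{\sigma _{1}}\bigr)$. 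For $j\geq 1$ the sheaf $\tore{j}{\ff}{\ti\sigma}$ is a torsion sheaf on $\tix_{1}$, and since $\sigma _{1}$ is bimeromorphic its pull-back stays torsion; so Proposition \ref{PropositionUnChClArt1} (iii), read in $K$-theory, yields $\sigma _{1}\ee\ch\bigl(\tore{j}{\ff}{\ti\sigma}\bigr)=\sum _{i\geq 0}(-1)^{i}\ch\bigl(\tore{i}{\tore{j}{\ff}{\ti\sigma}}{\sigma _{1}}\bigr)$. Summing over $j$ and using $\tore{i}{\ti\sigma\ee\ff}{\sigma _{1}}=\tore{i}{\tore{0}{\ff}{\ti\sigma}}{\sigma _{1}}$, I obtain
\[
\sigma _{1}\ee\gamma (\ff)=\sum _{i,j\geq 0}(-1)^{i+j}\,\ch\Bigl(\tore{i}{\tore{j}{\ff}{\ti\sigma}}{\sigma _{1}}\Bigr).
\]

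Next I would invoke the Grothendieck spectral sequence of the composite left-derived inverse image $L\sigma \ee=L\sigma _{1}\ee\circ L\ti\sigma\ee$ (legitimate because $\ti\sigma$ and $\sigma _{1}$ have finite $\tor$-dimension, being blowups of manifolds along smooth centers): $E_{2}^{i,j}=\tore{i}{\tore{j}{\ff}{\ti\sigma}}{\sigma _{1}}\Rightarrow\tore{i+j}{\ff}{\sigma}$. Since $\tor$ vanishes in negative degrees, no nonzero differential of any page has source or target $E_{r}^{0,0}$, so $E_{\infty}^{0,0}=E_{2}^{0,0}=\sigma \ee\ff$ and in fact $\tore{0}{\ff}{\sigma}=\sigma \ee\ff$; all other entries on all pages are torsion sheaves, and $\tore{k}{\ff}{\sigma}$ is torsion for $k\geq 1$. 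The differentials cancel pairwise in the Euler characteristic, so the usual bookkeeping --- carried out entirely inside $G_{\textrm{tors}}(\tix)$ --- gives
\[
\sum _{(i,j)\neq (0,0)}(-1)^{i+j}\Bigl[\tore{i}{\tore{j}{\ff}{\ti\sigma}}{\sigma _{1}}\Bigr]=\sum _{k\geq 1}(-1)^{k}\bigl[\tore{k}{\ff}{\sigma}\bigr].
\]
Applying $\ch$, which is additive on $G_{\textrm{tors}}(\tix)$ by Proposition \ref{PropositionUnChClArt1} (i), and then adding $\ch(\sigma \ee\ff)=\ch\bigl(\tore{0}{\tore{0}{\ff}{\ti\sigma}}{\sigma _{1}}\bigr)=\ch\bigl(\tore{0}{\ff}{\sigma}\bigr)$ to both sides, I recover $\sigma _{1}\ee\gamma (\ff)=\sum _{i\geq 0}(-1)^{i}\ch\bigl(\tore{i}{\ff}{\sigma}\bigr)$, which is the assertion.

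The one genuinely delicate point is that the Whitney formula $(\textrm{W}_{n})$ is not yet available, so $\ch$ cannot be assumed additive for arbitrary coherent sheaves on $\tix$. This is exactly why one must isolate the unique non-torsion entry $\sigma \ee\ff$ of the spectral sequence --- which happily is inert, receiving and emitting no differential --- and perform the whole Euler-characteristic argument within $G_{\textrm{tors}}(\tix)$, where additivity of $\ch$ has already been proved in \S\ref{SubsectionTorsionsheaves}. Everything else is the formal composition law for derived inverse images, together with the instances of Theorem \ref{LemmeHuitChClArt1} and of Proposition \ref{PropositionUnChClArt1} established at the previous stages.
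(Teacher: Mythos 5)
Your argument is correct and is essentially the proof given in the paper: the same decomposition of $\sigma_1\ee\gamma(\ff)$ into a double sum via the induction hypothesis for $\ti\sigma\ee\ff$ and the torsion-sheaf functoriality for the higher $\tor$'s, followed by the same Tor spectral sequence $E_2^{p,q}=\tore{p}{\tore{q}{\ff}{\ti\sigma}}{\sigma_1}\Rightarrow\tore{p+q}{\ff}{\sigma}$, with the same key observation that every entry except the inert $E_r^{0,0}$ is torsion so the Euler-characteristic bookkeeping can be done in $G_{\tors}(\tix)$ where $\ch$ is already additive. Your write-up is merely more explicit than the paper's about which previously established statements justify each step.
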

\begin{proof}
We have by induction
\[
\sigma \ee_{1}\gamma(\ff)
=
\sum _{p,q\geq 0}\, (-1)^{p+q}
\ch\Bigl[\tore{p}{\tore{q}{\ff}{\ti\sigma}}{\sigma_{1}}
\Bigr]
=
\sum _{p,q\geq 0}\,(-1)^{p+q}\ch(E_{2}^{\, p,q})
\]
where the Tor spectral sequence satisfies
\begin{align*}
E_{2}^{\, p,q}&=\tore{p}{\tore{q}{\ff}{\ti\sigma}}{\sigma_{1}}\\
E_{\,\infty }^{\, p,q}&=\gr^{p}\, \tore{p+q}{\ff}{\sigma}.
\end{align*}
All the $E_{r}^{\, p,q}$, $2\leq r\leq \infty $, are torsion sheaves
except perhaps $E_{r}^{\, 0,0}$.
Remark that no arrow $d_{r}^{\, p,q}$ starts or arrives at $E_{r}^{\,
0,0}$. Thus we have
\[
\sum _{\genfrac{}{}{0pt}{2}{p,q}{p+q\geq 1}}(-1)^{p+q}[E_{2}^{\, p,q}]=
\sum _{\genfrac{}{}{0pt}{2}{p,q}{p+q\geq 1}}(-1)^{p+q}[E_{\infty }^{\, p,q}]=
\sum _{i\geq 1}\, (-1)^{i}\bigl[\tore{i}{\ff}{\sigma}\bigr]
\]
in $\kan_{\torr}\he(X)$. Using Proposition \ref{PropXChClArt1} (ii), we get
\begin{align*}
\sigma _{1}\ee\gamma(\ff)&
=
\ch\bigl(E_{2}^{\, 0,0}\bigr)+
\ch\, \Bigl(\sum _{i\geq 1}\, (-1)^{i}
\tore{i}{\ff}{\sigma}\Bigr)\\
&
=
\sum _{i\geq 0}\, (-1)^{i}\ch\Bigl(
\tore{i}{\ff}{\sigma}\Bigr).
\end{align*}
\end{proof}
\begin{lemma}\label{SousLemmeDeuxArt1}
There exists a unique class $\ch(\ff,\sigma )$ on $X$ such that $\gamma
(\ff)=\ti{\sigma }\ee\ch(\ff,\sigma )$.
\end{lemma}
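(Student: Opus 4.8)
The plan is to apply Proposition \ref{PropositionDeuxInsertC} (vi) to the class $\alpha=\gamma(\ff)$ on the blowup $\ti\sigma\colon\tix_1\to X$ of $X$ along the smooth centre $Y$, with exceptional divisor $E$ and projection $q\colon E\to Y$. That proposition asserts that a Deligne class on $\tix_1$ is the pullback of a (necessarily unique) Deligne class on $X$ as soon as its restriction $i_E^{*}\alpha$ to $E$ is the $q$-pullback of a Deligne class on $Y$. So the whole task reduces to computing $i_E^{*}\gamma(\ff)$ and recognising it as $q^{*}$ of something on $Y$ --- exactly the step already carried out in the torsion case inside the proof of Proposition \ref{LemmeQuatreChClArt1}.

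First I would move $i_E^{*}$ inside each summand of $\gamma(\ff)=\sum_{j\ge 0}(-1)^{j}\ch\bigl(\tore{j}{\ff}{\ti\sigma}\bigr)$. For $j\ge 1$ the sheaves $\tore{j}{\ff}{\ti\sigma}$ are torsion, so $i_E^{*}\ch\bigl(\tore{j}{\ff}{\ti\sigma}\bigr)=\ch^{E}\bigl(i_E^{!}[\tore{j}{\ff}{\ti\sigma}]\bigr)$ by Proposition \ref{LemmeQuatreChClArt1} (ii); for $j=0$ the sheaf $\tore{0}{\ff}{\ti\sigma}=\ti\sigma^{*}\ff$ acquires a locally free quotient of maximal rank after the remaining $d-1$ blowups composing $\sigma_1$, so $\ch(\ti\sigma^{*}\ff)$ is already defined on $\tix_1$ and satisfies the same restriction formula by part (ii) of Theorem \ref{LemmeHuitChClArt1} at step $d-1$. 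Since $\dim E=n-1$, the map $\ch^{E}$ is additive on $\kan(E)$ by $(\textrm{W}_{n-1})$, and $i_E^{!}$ is additive, so
\[
i_E^{*}\gamma(\ff)=\ch^{E}\!\Bigl(\sum_{j\ge 0}(-1)^{j}\,i_E^{!}[\tore{j}{\ff}{\ti\sigma}]\Bigr)=\ch^{E}\bigl(i_E^{!}\,\ti\sigma^{!}[\ff]\bigr).
\]
Then I would invoke the base-change identity $i_E^{!}\circ\ti\sigma^{!}=q^{!}\circ i_Y^{!}$ in analytic $K$-theory for the cartesian square relating $E,\tix_1,Y,X$ (Appendix \ref{AppendixArt1}), giving $i_E^{*}\gamma(\ff)=\ch^{E}\bigl(q^{!}\,i_Y^{!}[\ff]\bigr)$. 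Finally $q$ is the projection of the projective bundle $\P(N_{Y/X})\to Y$, and both $\dim E$ and $\dim Y$ are $\le n-1$, so $(\textrm{F}_{n-1})$ yields $\ch^{E}(q^{!}z)=q^{*}\ch^{Y}(z)$ for every $z\in\kan(Y)$, whence $i_E^{*}\gamma(\ff)=q^{*}\bigl(\ch^{Y}(i_Y^{!}[\ff])\bigr)$. This is precisely the hypothesis of Proposition \ref{PropositionDeuxInsertC} (vi), which then produces a unique class $\ch(\ff,\sigma)$ on $X$ with $\gamma(\ff)=\ti\sigma^{*}\ch(\ff,\sigma)$; uniqueness is also clear from the injectivity of $\ti\sigma^{*}$.

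There is no deep obstacle here: the argument is formally the same descent along a blowup used in the torsion case. The one point that genuinely demands attention is the bookkeeping --- checking that \emph{every} Chern character appearing above is already legitimately defined before it is used. The terms $\ch\bigl(\tore{j}{\ff}{\ti\sigma}\bigr)$ on $\tix_1$ are available by Proposition \ref{PropYChClArt1} (i) together with the inductive hypothesis applied to the $(d-1)$-fold composition $\sigma_1$, and the compatibility $i_E^{*}\ch(\,\cdot\,)=\ch^{E}\bigl(i_E^{!}(\,\cdot\,)\bigr)$ for the positive-rank term $\ti\sigma^{*}\ff$ rests on part (ii) of Theorem \ref{LemmeHuitChClArt1} at step $d-1$, which is legitimate since (i) and (ii) are being proved simultaneously by induction on $d$.
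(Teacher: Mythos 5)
Your argument is correct and is essentially the paper's own proof: restrict $\gamma(\ff)$ to the exceptional divisor $E$, use the inductive compatibility with $i_E^{!}$ term by term, collapse the alternating sum to $\ch^{E}(i_E^{!}\,\ti\sigma^{!}[\ff])=\ch^{E}(q^{!}\,i_Y^{!}[\ff])=q^{*}\ch^{Y}(i_Y^{!}[\ff])$ via $(\textrm{W}_{n-1})$ and $(\textrm{F}_{n-1})$, and conclude with Proposition \ref{PropositionDeuxInsertC} (vi). Your explicit bookkeeping of which terms are covered by Proposition \ref{LemmeQuatreChClArt1} (ii) (the torsion Tor's) versus Theorem \ref{LemmeHuitChClArt1} (ii) at step $d-1$ (the $j=0$ term) is a slightly more careful reading of the paper's terse ``by induction property (ii)'', but it is the same proof.
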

\begin{proof} We have
\begin{align*}
i_{E}\ee\gamma (\ff)&=i\ee_{E}\Bigl(
\sum _{j\geq 0}\, (-1)^{j}\ch\bigl(
\tore{j}{\ff}{\sigma}
\bigr)
\Bigr)\\
&=\sum _{j\geq 0}\, (-1)^{j}\ch^{E}\bigl(i_{E}\pe
\, \bigl[\tore{j}{\ff}{\sigma}\bigr]
\bigr)&&\textrm{by induction property (ii)}\\
&=\ch\be^{E}\bigl(i_{E}\pe\ti{\sigma }{}\pe[\ff]\bigr)=\ch\be^{E}\bigl(q\pe
i\pe_{Y}[\ff]\bigr)=q\ee\ch\be^{Y}\bigl(i_{Y}\pe[\ff]\bigr)&&
\textrm{by $(\textrm{F}_{n-1}).$}
\end{align*}
By Proposition \ref{PropositionDeuxInsertC} (vi), there exists a
unique class $\ch(\ff,\sigma )$ on $X$ such that
$\gamma (\ff)=\ti\sigma \ee\ch(\ff,\sigma )$.
\end{proof}
Putting Lemma \ref{SousLemmeUnArt1} and Lemma \ref{SousLemmeDeuxArt1}
together
\[
\sigma \ee\ch(\ff,\sigma )=\sigma \ee_{1}\gamma (\ff)=
\sum _{i\geq 0}(-1)^{i}\ch\bigl(\tore{i}{\ff}{\sigma}\bigr).
\]
\begin{lemma}\label{LemmeOnzeChClArt1}
The class $\ch(\ff,\sigma )$ does not depend on $\sigma $.
\end{lemma}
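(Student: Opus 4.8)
To show that $\ch(\ff,\sigma)$ is independent of the chosen desingularization $\sigma$, I would use the standard domination argument that has already been employed twice in the paper (in Proposition \ref{LemmeQuatreChClArt1} and Proposition \ref{NewPropChClArt1}). Given two desingularizations $\apl{\sigma}{\tix_{1}}{X}$ and $\apl{\breve\sigma}{\tix_{2}}{X}$, each a finite composition of blowups with smooth centers such that $\sigma\ee\ff$ and $\breve\sigma\ee\ff$ admit locally free quotients of maximal rank, I would dominate both by a common model. Concretely, the meromorphic map $\xymatrix{\tix_{1}\ar@{.>}[r]&\tix_{2}}$ can be eliminated by Hironaka's theorem together with the Hironaka--Chow lemma (\cite[Th.\!\! 7.8]{AG}), yielding a smooth manifold $W$ with morphisms $\apl{\mu}{W}{\tix_{1}}$ and $\apl{\breve\mu}{W}{\tix_{2}}$, both successions of blowups with smooth centers, such that $\delta=\sigma\circ\mu=\breve\sigma\circ\breve\mu\colon W\to X$ is again a succession of blowups with smooth centers.

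Next I would compute $\delta\ee\ch(\ff,\sigma)$ in two ways. Using Lemma \ref{SousLemmeUnArt1} applied to $\sigma$ (or rather its conclusion $\sigma\ee\ch(\ff,\sigma)=\sum_{i\geq 0}(-1)^{i}\ch(\tore{i}{\ff}{\sigma})$) and pulling back further by $\mu$, the Tor spectral sequence for the composition $\delta=\sigma\circ\mu$ together with Proposition \ref{PropXChClArt1} (ii) gives
\[
\delta\ee\ch(\ff,\sigma)=\mu\ee\Bigl(\sum_{i\geq 0}(-1)^{i}\ch(\tore{i}{\ff}{\sigma})\Bigr)=\sum_{i\geq 0}(-1)^{i}\ch(\tore{i}{\ff}{\delta}),
\]
exactly as in the proof of Lemma \ref{SousLemmeUnArt1}, since the $\ch(\tore{i}{\ff}{\sigma})$ have already been produced by induction on the number of blowups and property (i) holds for $\mu$ by the inductive hypothesis. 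By symmetry the same expression equals $\delta\ee\ch(\ff,\breve\sigma)$. Since $\delta$ is a composition of blowups with smooth centers, $\delta\ee$ is injective (on Deligne cohomology, by the computation of Proposition \ref{PropositionDeuxInsertC} (vi) applied repeatedly), whence $\ch(\ff,\sigma)=\ch(\ff,\breve\sigma)$.

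One technical point requires care: the domination $W$ must itself satisfy the hypothesis that $\delta\ee\ff$ admits a locally free quotient of maximal rank, so that $\gamma(\ff)$ and hence $\ch(\ff,\delta)$ are defined for it. This is automatic: if $\sigma\ee\ff\twoheadrightarrow\eee$ is a locally free quotient of maximal rank on $\tix_{1}$, then $\mu\ee\sigma\ee\ff=\delta\ee\ff\twoheadrightarrow\mu\ee\eee$ is a locally free quotient of maximal rank on $W$ (pullback is right exact and preserves local freeness and rank). A second point is that in the identity above one uses Proposition \ref{PropXChClArt1} (ii) for the torsion sheaves $E_{2}^{p,q}$ with $(p,q)\neq(0,0)$ exactly as in Lemma \ref{SousLemmeUnArt1}; the term $E_{2}^{0,0}$ again receives and emits no differentials, so no extra argument is needed. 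I expect the main obstacle to be purely bookkeeping, namely checking that the spectral-sequence manipulation of Lemma \ref{SousLemmeUnArt1} applies verbatim to the composite $\delta=\sigma\circ\mu$ rather than to a single first blowup; once that is granted, the injectivity of $\delta\ee$ closes the argument.
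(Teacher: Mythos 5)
Your proposal is correct and is essentially the paper's own argument: dominate the two resolutions by a common one, identify the pullback of each candidate class with the symmetric expression $\sum_{i\geq 0}(-1)^{i}\ch\bigl(\tore{i}{\ff}{\delta}\bigr)$ via the Tor spectral sequence (the content of Lemma \ref{SousLemmeUnArt1}), and conclude by injectivity of $\delta\ee$. The two technical points you flag (that the dominating model still carries a locally free quotient of maximal rank, and that the spectral-sequence manipulation applies to the composite) are exactly the points the paper relies on implicitly, so nothing is missing.
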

\begin{proof}
As usual, if we have two resolutions, we dominate them by a third one,
as shown in the diagram
\[
\xymatrix{
&\tix\ar[dl]_{\sigma  _{1}}\ar[dr]^{\sigma _{1}'}\ar[dd]^{\sigma}&\\
\tix_{1}\ar[dr]_{\widetilde\sigma }&&\tix_{1}'\ar[dl]^{\widetilde\sigma'}\\
&X&
}
\]
Now
\begin{align*}
\sigma \ee\be\ch(\ff,\widetilde\sigma )&=\sigma_{1} \ee\gamma (\ff)&&
\textrm{by Lemma \ref{SousLemmeDeuxArt1}}\\
&= \sum _{i\geq 0}(-1)^{i}\ch\bigl(
\tore{i}{\ff}{\sigma}
\bigr)&&\textrm{by Lemma \ref{SousLemmeUnArt1}}.
\end{align*}
By symmetry
$\sigma \ee\ch(\ff,\widetilde\sigma)=\sigma \ee\ch(\ff,\widetilde\sigma ')$ and
we get the result.
\end{proof}
We proved the existence statement and part (i) of Theorem
\ref{LemmeHuitChClArt1} if $\sigma $ consists of at most $d$
blowups.
The general case follows using the diagram above.
\par\medskip
We must now prove Theorem \ref{LemmeHuitChClArt1}
(ii).
Let $Y$ be a smooth submanifold of $X$. We choose $\apl{\sigma }{\tix}{X}$
such that $\sigma \ee\ff$ admits a locally free quotient of maximal rank
and $\sigma ^{-1}(Y)$ is a simple
normal crossing divisor with
branches $D_{j}$.
We choose as usual $j$ such that $q_{j}\ee$ is injective,
$q_{j}$ being defined by the
diagram
\[
\xymatrix@C=8ex{
D_{j}\ar[r]^{i_{D_{j}}\he}\ar[d]_{q_{j}}&\tix\ar[d]^{\sigma }\\
Y\ar[r]^{i_{Y}\he}&X
}
\]
We have
\begin{align*}
q_{j}\ee\ch\be^{Y}\bigl(i\pe_{Y}[\ff]\bigr)&=\ch\be^{D_{j}}
\bigl(q_{j}\pe
i_{Y}\pe[\ff]\bigr)=\ch\be^{D_{j}}\bigl(i\pe_{D_{j}}\sigma
\pe[\ff]\bigr)\\
&=\sum _{i\geq 0}(-1)^{i}\, \, i_{D_{j}}\ee\ch\bigl(
\tore{i}{\ff}{\sigma}
\bigr)&&\textrm{by Proposition \ref{PropYChClArt1} (ii).}
\end{align*}
Now, by the point (i),
we have
$\sum _{i\geq 0}(-1)^{i}\ch\bigl(
\tore{i}{\ff}{\sigma}
\bigr)=\sigma \ee\ch(\ff)$. Thus we get
\[
q_{j}\ee\ch\be^{Y}(i_{Y}\pe[\ff])= i\ee_{D_{j}}\sigma
\ee\ch(\ff)=q\ee_{j}\bigl(i\ee_{Y}\ch(\ff)\bigr).
\]
Therefore $\ch\bigl(i\pe_{Y}[\ff]\bigr)=i\ee_{Y}\ch(\ff)$ and the proof is
complete.
\end{proof}
\section{The Whitney formula}\label{SectionWhitneyFormulaArt1}
In the previous section, we achieved an important step in the induction
process by defining the classes $\ch(\ff)$ when $\ff$ is any coherent
sheaf on a \mbox{$n$-dimensional} manifold.
To conclude the proof of Theorem \ref{TheoremTroisTrois}, it remains to
check properties $(\textrm{W}_{n})$, $(\textrm{F}_{n})$ and
$(\textrm{P}_{n})$. The crux of the proof is in fact property
$(\textrm{W}_{n})$. The main result of this section is Theorem
\ref{EncoreUnTheoWhFoArt1}.
The other induction hypotheses will be
proved in Theorem \ref{PropositionDeuxWhitneyFormulaArt1}.
\begin{theorem}\label{EncoreUnTheoWhFoArt1}
$(\emph{W}_{n})$ holds.
\end{theorem}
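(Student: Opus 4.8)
The plan is to strip away, by a chain of reductions, everything that already follows from the induction hypotheses $(\textrm{H}_{n-1})$ and from the constructions of \S\,\ref{SubsectionTorsionsheaves}--\S\,\ref{CasGeneral}, until a single model identity is left, and then to settle that identity by a deformation to the normal cone of a smooth hypersurface.

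First I would reduce to the case where $\ff$, $\g$ and $\hh$ each admit a locally free quotient of maximal rank. Applying Theorem \ref{TheoremeUnChClArt1} to $\ff\oplus\g\oplus\hh$ produces a finite composition of blowups with smooth centers $\apl{\sigma}{\tix}{X}$ for which $\sigma\ee\ff$, $\sigma\ee\g$, $\sigma\ee\hh$ all have a locally free quotient of maximal rank. Since $\sigma\ee$ is injective and $\sigma\ee\ch(\cdot)=\sum_i(-1)^i\ch(\tore{i}{\cdot}{\sigma})$ by Theorem \ref{LemmeHuitChClArt1} (i), it suffices to verify the Whitney identity for the alternating sums of $\tor$ sheaves on $\tix$. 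Now $\sutrgd{\ff}{\g}{\hh}$ yields a finite long exact sequence of $\tor$ sheaves, all of whose terms of index $\geq 1$ are torsion, because $\ff,\g,\hh$ are generically locally free and $\sigma$ is generically an isomorphism. Splitting this long exact sequence into short ones: the terms of index $\geq 1$ are governed by the Whitney formula for torsion sheaves (Proposition \ref{PropXChClArt1} (ii)); the degree-zero part is controlled by the exact sequence $\sutrgd{\kk}{\sigma\ee\ff}{\sigma\ee\ff/\kk}$, where $\kk=\ke(\sigma\ee\ff\to\sigma\ee\g)$ is torsion, which is handled by Proposition \ref{PropXChClArt1} (iii). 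All the correction terms telescope, leaving exactly the Whitney identity for $\sutrgd{\sigma\ee\ff/\kk}{\sigma\ee\g}{\sigma\ee\hh}$, whose three sheaves admit locally free quotients of maximal rank. So we may assume this of $\ff,\g,\hh$ from the start.

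Next comes a purely homological reduction. Using $\sutrgd{\ff_{\textrm{tor}}}{\ff}{\eee_\ff}$ and pushing $\ff_{\textrm{tor}}\suq\g_{\textrm{tor}}$ into $\g$, Proposition \ref{PropXChClArt1} (iii) together with the definition of $\ch$ on sheaves with a locally free quotient reduces the problem to exact sequences $\sutrgd{\eee}{\g}{\hh}$ with $\eee$ locally free (the other two still admitting locally free quotients of maximal rank). In that case put $\eee_\g=\g/\g_{\textrm{tor}}$ and $\eee_\hh=\hh/\hh_{\textrm{tor}}$ (both locally free); the induced surjection $\eee_\g\to\eee_\hh$ has locally free kernel $\eee'$, which contains $\eee$ with torsion cokernel $\ttt$, and a diagram chase gives $\ttt\simeq\hh_{\textrm{tor}}/\g_{\textrm{tor}}$. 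Feeding this into the definition of $\ch$ for sheaves with a locally free quotient, Proposition \ref{PropXChClArt1} (ii)--(iii) and the Whitney formula for locally free sheaves (Proposition \ref{PropositionDeuxComplementQuatreUnArt1} (i)) yield
\[
\ch(\g)-\ch(\eee)-\ch(\hh)=\ba{\ch}(\eee')-\ba{\ch}(\eee)-\ch(\ttt),
\]
where $\sutrgd{\eee}{\eee'}{\ttt}$ is an exact sequence with $\eee,\eee'$ locally free of \emph{equal rank} and $\ttt$ torsion. Hence $(\textrm{W}_{n})$ reduces to the single statement that for such a datum on an $n$-dimensional manifold one has $\ch(\ttt)=\ba{\ch}(\eee')-\ba{\ch}(\eee)$; equivalently, that the intrinsic class $\ch(\ttt)$ of the torsion sheaf $\ttt$ (built from a desingularization of $\supp\ttt$ and the Grothendieck--Riemann--Roch formula for smooth hypersurfaces) agrees with the naive value read off the length-one presentation $\sutrgd{\eee}{\eee'}{\ttt}$.

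Finally I would dispatch this model case. Taking determinants, $\eee\hookrightarrow\eee'$ gives a non-zero section of a line bundle whose divisor $D_0$ contains $\supp\ttt$ and satisfies $c_1(\det\eee')-c_1(\det\eee)=c_1(\oo_{X}\he(D_0))$; desingularizing $D_0$ and applying d\'{e}vissage together with the two lemmas of functoriality (Proposition \ref{LemmeTroisChClArt1} (iii)--(iv)), we reduce to the case where $\supp\ttt$ is a smooth hypersurface $D$. The remaining equality is then obtained by deforming the morphism $\eee\to\eee'$ along the deformation space $M_{D/X}$ of the normal cone of $D$ (the space introduced in the proof of Proposition \ref{PropositionDeuxInsertC}), whose central fibre carries the normal bundle $N_{D/X}$: over that central fibre the degenerated morphism becomes an explicit Koszul-type model, for which $\ch$ of the cokernel is computed directly from $(\textrm{RR}_{n})$ for the hypersurface $D$ (compare Proposition \ref{LemmeTroisChClArt1} (ii) and Proposition \ref{PropositionDeuxInsertC} (vii)); the homotopy principle (Proposition \ref{PropositionUnComplementQuatreUnArt1} (vi)) then carries the identity from $t=0$ to a general fibre, i.e.\ back to $X$. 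I expect this last step to be the main obstacle: the earlier reductions are formal consequences of properties already in hand, whereas here one must genuinely control the classes $\ch(\cdot)$ across the $\P^{1}$-family and match the general fibre with the normal-cone model.
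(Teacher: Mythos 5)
Your overall strategy is the paper's: reduce to a short exact sequence $\sutrgd{\eee}{\eee'}{\ttt}$ with $\eee,\eee'$ locally free of equal rank and $\ttt$ torsion, reduce further to $\ttt$ supported on a hypersurface, and finish by a deformation to the normal cone combined with the homotopy principle and $(\textrm{RR}_{n})$ for hypersurfaces. Your first two reductions are correct: the telescoping of the Tor long exact sequence and the bookkeeping leading to $\ch(\g)-\ch(\eee)-\ch(\hh)=\ba{\ch}(\eee')-\ba{\ch}(\eee)-\ch(\ttt)$ both check out, and amount to a mild reorganization of Lemmas \ref{LemAWhFoArt1}, \ref{LemBWhFoArt1} and the nine-lemma step of \S\,\ref{SectionUnWhitneyFormulaArt1}.

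The genuine gap is in your third step. Arranging, via determinants and blowups, that $\supp\ttt$ lies in a simple normal crossing divisor does not reduce the problem to a smooth hypersurface, for two reasons. First, the target of the reduction must be stronger than ``$\supp\ttt$ is a smooth hypersurface $D$'': the deformation argument needs $\ttt=i_{D*}\eee$ with $\eee$ locally free \emph{on} $D$ — this is what makes the kernel of $\eee'\to i_{D*}\eee$ locally free (Lemma \ref{NouveauLemmeClChArt1}) and what makes the Tor sheaf on $M_{D/X}$ the push-forward of a locally free sheaf on the exceptional divisor (Lemma \ref{LemmeQuatreWhitneyFormulaArt1}); a sheaf such as $\oo_{2D}$ has smooth support but is not of this form. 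Second, d\'{e}vissage of $\ttt$ decomposes $\ch(\ttt)$, but you must simultaneously decompose $\ba{\ch}(\eee')-\ba{\ch}(\eee)$: you need a filtration of $\ttt$ whose graded pieces are push-forwards of locally free sheaves on smooth hypersurfaces \emph{and} whose successive preimages inside $\eee'$ stay locally free, so that the inductive step applies to a shorter sequence of the same shape. The functoriality lemmas \ref{LemmeTroisChClArt1} (iii)--(iv) do not produce such a filtration. This is precisely the role of the structure theorem for torsion sheaves of projective dimension one (Proposition \ref{PropositionUnWhitneyFormulaArt1}): after further blowups making all Fitting ideals principal, Gauss elimination shows $\ttt$ is locally isomorphic to $\bigoplus_{i}\oo_{X}/\ii_{D_{i}}$ with $D_{1}\leq\cdots\leq D_{r}$, so that its restriction to a branch $Y$ of $D_{1}$ is locally free on $Y$ and the induction on the number of branches (Lemma \ref{EncoreUnLemDeuxWhitneyFormulaArt1}) goes through. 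Your final step is also only a sketch — the paper deforms the extension class itself, producing $\ti{\g}$ on $X\times\P^{1}$ with $\ti{\g}_{0}\simeq\ff\oplus\hh$, and only then pulls back to $M_{Y/X}$ — but it is consistent with the method; the missing structure theorem is the real obstruction.
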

\par\medskip
To prove Theorem
\ref{EncoreUnTheoWhFoArt1}, we need several reduction steps.
\subsection{Reduction to the case where $\ff$ and $\g$ are locally free and
$ \hh $ is a torsion sheaf}\label{CasLibre}
\label{SectionUnWhitneyFormulaArt1}
\begin{proposition}\label{LemmeUnWhitneyFormulaArt1}
Suppose that $(\emph{W}_{n})$ holds when $\ff$ and $\g$ are locally
free sheaves and $\hh$ is a torsion sheaf. Then $(\emph{W}_{n})$ holds for
arbitrary sheaves.
\end{proposition}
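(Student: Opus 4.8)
The plan is to reduce $(\textrm{W}_{n})$ for an arbitrary exact sequence $0\to\ff\to\g\to\hh\to 0$ on $X$ to the stated special case in two stages: first pass to a blowup of $X$ on which $\ff$, $\g$ and $\hh$ all acquire a locally free quotient of maximal rank, then settle that case by splitting each sheaf into its torsion part and its locally free part.

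\emph{Passing to a blowup.} Apply Theorem \ref{TheoremeUnChClArt1} to $\ff\oplus\g\oplus\hh$: one gets $\apl{\sigma}{\tix}{X}$, a finite composition of blowups with smooth centres, such that $\sigma\ee\ff$, $\sigma\ee\g$ and $\sigma\ee\hh$ each admit a locally free quotient of maximal rank (the torsion subsheaf of a direct sum is the direct sum of the torsion subsheaves, and a direct summand of a locally free sheaf is locally free). Each blowup involved is generically finite of degree one, so $p_{*}p\ee=\id$ by Proposition \ref{PropositionDeuxInsertC} (iii); hence $\sigma\ee$ is injective on Deligne cohomology and it suffices to prove $\sigma\ee\ch(\g)=\sigma\ee\ch(\ff)+\sigma\ee\ch(\hh)$, which by Theorem \ref{LemmeHuitChClArt1} (i) is the identity
\[
\sum_{i\geq 0}(-1)^{i}\Bigl[\ch\bigl(\tore{i}{\g}{\sigma}\bigr)-\ch\bigl(\tore{i}{\ff}{\sigma}\bigr)-\ch\bigl(\tore{i}{\hh}{\sigma}\bigr)\Bigr]=0 .
\]
Now the long exact sequence of $\tor$-sheaves $\tore{i}{\,\cdot\,}{\sigma}$ associated with $0\to\ff\to\g\to\hh\to 0$ is a bounded acyclic complex $C_{\bullet}$ of coherent sheaves on $\tix$ whose only non-torsion terms are $\sigma\ee\ff,\sigma\ee\g,\sigma\ee\hh$, occupying its three lowest degrees, since $\tore{i}{\ff}{\sigma}$, $\tore{i}{\g}{\sigma}$ and $\tore{i}{\hh}{\sigma}$ are supported on the exceptional locus of $\sigma$ for $i\geq 1$. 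Splitting $C_{\bullet}$ into short exact sequences $0\to Z_{j}\to C_{j}\to Z_{j-1}\to 0$ and evaluating $\ch$ by Proposition \ref{PropXChClArt1} (ii) on the purely torsion pieces and by Proposition \ref{PropXChClArt1} (iii) on $0\to Z_{2}\to\sigma\ee\ff\to Z_{1}\to 0$ (here $Z_{2}=\ker(\sigma\ee\ff\to\sigma\ee\g)$ is the image of $\tore{1}{\hh}{\sigma}$, hence torsion, and this step also endows $Z_{1}$ with a locally free quotient of maximal rank), the alternating sum telescopes, and the displayed identity becomes the Whitney formula for $0\to Z_{1}\to\sigma\ee\g\to\sigma\ee\hh\to 0$, in which now all three sheaves admit a locally free quotient of maximal rank.

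\emph{The case of a locally free quotient of maximal rank.} It remains to prove $(\textrm{W}_{n})$ for $0\to\ff\to\g\to\hh\to 0$ with $\ff,\g,\hh$ each admitting a locally free quotient of maximal rank. Use the canonical extensions $0\to\ttt_{\ff}\to\ff\to\eee_{\ff}\to 0$, $0\to\ttt_{\g}\to\g\to\eee_{\g}\to 0$, $0\to\ttt_{\hh}\to\hh\to\eee_{\hh}\to 0$ (so $\ttt$ is the torsion subsheaf and $\eee$ the locally free quotient), for which by definition $\ch(\ff)=\ch(\ttt_{\ff})+\ba\ch(\eee_{\ff})$, and likewise for $\g$ and $\hh$. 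The maps of the sequence carry torsion subsheaves into torsion subsheaves and $\ttt_{\ff}=\ff\cap\ttt_{\g}$, so we obtain a short exact sequence of three-term complexes
\[
0\to(\ttt_{\ff}\to\ttt_{\g}\to\ttt_{\hh})\to(\ff\to\g\to\hh)\to(\eee_{\ff}\to\eee_{\g}\to\eee_{\hh})\to 0
\]
whose middle term is acyclic. Its long exact homology sequence shows that $0\to\ttt_{\ff}\to\ttt_{\g}\to\ttt_{\hh}$ is exact, that $\eee_{\ff}\hookrightarrow\eee_{\g}$, that $\eee_{\g}\twoheadrightarrow\eee_{\hh}$, and that there is a canonical isomorphism $\kk/\eee_{\ff}\simeq\mathcal{C}$, where $\kk:=\ker(\eee_{\g}\to\eee_{\hh})$ and $\mathcal{C}:=\coker(\ttt_{\g}\to\ttt_{\hh})$ is a torsion sheaf. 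Being the kernel of a surjection of locally free sheaves, $\kk$ is itself locally free; thus $0\to\kk\to\eee_{\g}\to\eee_{\hh}\to 0$ is a Whitney sequence of locally free sheaves (Proposition \ref{PropXChClArt1} (i)), while $0\to\eee_{\ff}\to\kk\to\kk/\eee_{\ff}\to 0$ has its first two terms locally free and its last term the torsion sheaf $\kk/\eee_{\ff}\simeq\mathcal{C}$ --- exactly the configuration granted by the hypothesis. Therefore $\ba\ch(\eee_{\g})-\ba\ch(\eee_{\ff})-\ba\ch(\eee_{\hh})=\ba\ch(\kk)-\ba\ch(\eee_{\ff})=\ch(\mathcal{C})$, whereas additivity of $\ch$ along the exact sequence of torsion sheaves $0\to\ttt_{\ff}\to\ttt_{\g}\to\ttt_{\hh}\to\mathcal{C}\to 0$ (Proposition \ref{PropXChClArt1} (ii)) gives $\ch(\ttt_{\g})-\ch(\ttt_{\ff})-\ch(\ttt_{\hh})=-\ch(\mathcal{C})$. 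Adding the two identities yields $\ch(\g)-\ch(\ff)-\ch(\hh)=0$.

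\emph{The main obstacle.} The delicate step is the second stage: after breaking each of $\ff,\g,\hh$ into a torsion part and a locally free part, one must check that the contribution of the torsion parts and that of the locally free parts exactly cancel, and the fact making the hypothesis applicable is that the kernel $\kk$ of the vector-bundle surjection $\eee_{\g}\twoheadrightarrow\eee_{\hh}$ is again locally free, so that $0\to\eee_{\ff}\to\kk\to\kk/\eee_{\ff}\to 0$ is genuinely of the form ``locally free $\to$ locally free $\to$ torsion''. In the first stage the only care required is to flatten $\ff$, $\g$ and $\hh$ simultaneously (apply Theorem \ref{TheoremeUnChClArt1} to their direct sum) and to control the higher $\tor$-sheaves, which is harmless since they are torsion and $\ch$ is already known to be additive on torsion sheaves.
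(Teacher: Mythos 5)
Your proof is correct. The first stage is essentially the paper's own first reduction (Lemma \ref{LemAWhFoArt1}): pull back by a $\sigma$ flattening all three sheaves, use injectivity of $\sigma\ee$, and telescope the long exact $\tor$ sequence, whose higher terms are torsion, to reduce to the sequence $\sutrgd{\qq}{\sigma\ee\g}{\sigma\ee\hh}$ with $\qq=Z_{1}$ the image of $\sigma\ee\ff$. Where you genuinely diverge is the second stage. The paper proceeds in two further steps: Lemma \ref{LemBWhFoArt1} first reduces to the case where $\hh$ is a torsion sheaf (by peeling off the locally free quotient $\eee_{1}$ of $\hh$ and comparing the locally free quotients of $\g$ and of the intermediate sheaf $\ff_{1}$), and only then applies the nine lemma to the torsion/locally-free decompositions of $\ff$ and $\g$. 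You instead decompose all three sheaves at once into a short exact sequence of three-term complexes $0\to T_{\bullet}\to A_{\bullet}\to E_{\bullet}\to 0$ with $A_{\bullet}$ acyclic, and read off from the long exact homology sequence that both the torsion defect and the locally free defect equal $\pm\ch(\mathcal{C})$ with $\mathcal{C}=\coker(\ttt_{\g}\to\ttt_{\hh})\simeq\kk/\eee_{\ff}$. This is a clean generalization: when $\hh$ is torsion your argument specializes exactly to the paper's nine-lemma diagram (your $\mathcal{C}$ is the paper's $\ttt_{4}$), so you save one reduction at the price of a slightly more elaborate homological bookkeeping. One small justification you should add: the long exact sequence only gives $\ker(\eee_{\ff}\to\eee_{\g})\simeq H^{1}(T_{\bullet})$; to conclude that both vanish (hence that $\eee_{\ff}\hookrightarrow\eee_{\g}$ and that $\sutrg{\ttt_{\ff}}{\ttt_{\g}}{\ttt_{\hh}}$ is exact) you must observe that the left-hand side is a subsheaf of a locally free sheaf, hence torsion-free, while the right-hand side is a subquotient of a torsion sheaf, hence torsion, so both are zero.
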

\begin{proof}
We proceed by successive reductions.
\begin{lemma}\label{LemAWhFoArt1}
It is sufficient to prove $(\emph{W}_{n})$ when $\ff$, $\g$, $\hh$
admit a locally free quotient of maximal rank.
\end{lemma}
\begin{proof}
We take a general exact sequence
$\sutrgdpt{\ff}{\g}{\hh}{.}$ Let $\apl{\sigma }{\tix}{X}$ be a
bimeromorphic
morphism such that $\sigma \ee\ff$, $\sigma \ee\g$ and $\sigma \ee\hh$
admit locally free quotients of maximal rank
(we know that such a $\sigma $ exists by Theorem \ref{TheoremeUnChClArt1}).
We have an exact sequence defining $\qq$ and $\ttt_{1}$:
\par\vspace*{2ex}
\begin{center}
\begin{pspicture}(-4.646ex,-2)(10,2)
\psset{xunit=0.2ex,yunit=0.2ex}
\psset{linewidth=0.09292ex}
\psset{arrowinset=0.6}
\rput(-8,0){$\cdots $}\psline{->}(5,1)(25,1)
\rput(60,0){$\tore{1}{\g}{\sigma }$}
\psline{->}(92,1)(112,1)
\rput(145,0){$\tore{1}{\hh}{\sigma }$}
\psline{->}(178,1)(198,1)
\rput(213,2){$\sigma \ee\ff$}
\psline{->}(228,1)(248,1)
\rput(263,2){$\sigma \ee\g$}
\psline{->}(278,1)(298,1)
\rput(313,2){$\sigma \ee\hh$}
\psline{->}(328,1)(348,1)
\rput(356,2){$0$}
\rput(188,-30){$\ttt_{1}$}
\psline{->}(172,-10)(182,-22)
\psline{->}(193,-22)(203,-10)
\rput(167,-55){$0$}
\rput(208,-55){$0$}
\psline{->}(176,-50)(186,-38)
\psline{->}(193,-38)(203,-50)
\rput(238,30){$\qq$}
\rput(219,55){$0$}
\rput(257,55){$0$}
\psline{->}(243,22)(253,10)
\psline{->}(243,38)(253,51)
\psline{->}(223,10)(233,22)
\psline{->}(223,50)(233,38)
\end{pspicture}
\end{center}
\par
Remark that $\ttt_{1}$ is a torsion sheaf.
By Proposition \ref{PropXChClArt1} (iii),
$\qq$ admits
a locally free quotient of maximal rank and
$\ch(\sigma \ee\ff)=\ch(\ttt_{1})+\ch(\qq)$.
Furthermore,
\[
[\ttt_{1}]-\bigl[\tore{1}{\hh}{\sigma }\bigr]+
\bigl[\tore{1}{\g}{\sigma }\bigr]-\dots =0 \mathrm{\ in\ }
\kan_{\tors}\he(\tix).
\]
Then by Proposition \ref{LemmeQuatreChClArt1} (i) and Proposition
\ref{PropXChClArt1} (ii),
\begin{align*}
\sigma \ee\bigl(\ch(\ff)+\ch(\hh)-\ch(\g)\bigr)&=
\ds\sum _{i\geq 0}(-1)^{i}\Bigl[\ch\bigl(\tore{i}{\ff}{\sigma }\bigr)
+\ch\bigl(\tore{i}{\hh}{\sigma }\bigr)-
\ch\bigl(\tore{i}{\g}{\sigma }\bigr)\Bigr]\\
&=\ch\bigl(\sigma \ee\ff\bigr)+\ch\bigl(\sigma
\ee\hh\bigr)-\ch\bigl(\sigma \ee\g\bigr)-\ch(\ttt_{1})\\
&=\ch(\qq)+\ch\bigl(\sigma \ee\hh\bigr)-\ch\bigl(\sigma \ee\g\bigr).
\end{align*}
Since $\sigma \ee$ is injective, Lemma \ref{LemAWhFoArt1} is proved.
\end{proof}
\begin{lemma}\label{LemBWhFoArt1}
It is sufficient to prove $(\emph{W}_{n})$ when $\ff$, $\g$ admit a
locally free quotient of maximal rank and $\hh$ is a torsion sheaf.
\end{lemma}
\begin{proof}
By Lemma \ref{LemAWhFoArt1}, we can assume that $\ff$, $\g$, $\hh$ admit
a locally free quotient of maximal rank.
In the sequel, the letter ``$\ttt$'' will denote
a torsion sheaf and the letter ``$\eee$'' a locally free
sheaf. Let $\eee_{1}$ be the locally free quotient of maximal rank of
$\hh$, so we have an exact sequence
\[
\sutrgdpt{\ttt_{1}}{\hh}{\eee_{1}}{.}
\]
We define $\ff_{1}$ by the exact sequence
\begin{align*}
&\sutrgdpt{\ff_{1}}{\g}{\eee_{1}}{.}
\intertext{Then we get a third exact sequence}\\[-4ex]
&\sutrgdpt{\ff}{\ff_{1}}{\ttt_{1}}{.}
\end{align*}
We have by definition
$\ch(\hh)=\ba\ch(\eee_{1})+\ch(\ttt_{1})$. Thus,
\begin{align*}
\ch(\ff)+\ch(\hh)-\ch(\g)&=\bigl(\ch(\ff)+\ch(\ttt_{1})-\ch(\ff_{1})\bigr)+
\bigl(\ch(\ff_{1})+\ba\ch(\eee_{1})-\ch(\g)\bigr)\\
&\hspace*{13 ex}-\bigl(\ch(\ttt_{1})+\ba\ch(\eee_{1})-\ch(\hh)\bigr)\\
&=\bigl(\ch(\ff)+\ch(\ttt_{1})-\ch(\ff_{1})\bigr)+\bigl(
\ch(\ff_{1})+\ba\ch(\eee_{1})-\ch(\g)\bigr).
\end{align*}
Let
$\eee_{2}$ be the locally
free quotient of maximal rank of $\g$. We define $\ttt_{2}$ by
the exact sequence
\[
\sutrgdpt{\ttt_{2}}{\g}{\eee_{2}}{.}
\]
The morphism from $\g$ to $\eee_{1}$
(via $\hh$) induces a morphism $\flgd{\eee_{2}}{\eee_{1}}$ which remains
of course surjective.
Let $\eee_{3}$ be the kernel of this morphism, then $\eee_{3}$
is a locally free sheaf. We get an exact sequence
\[
\sutrgdpt{\ttt_{2}}{\ff_{1}}{\eee_{3}}{.}
\]
Therefore $\ff_{1}$ admits a
locally free quotient of maximal rank and
$\ch(\ff_{1})=\ch(\ttt_{2})+\ba\ch(\eee_{3})$. On the other hand, by
Proposition \ref{PropXChClArt1} (i),
$
\ba\ch\bigl(\eee_{1}\bigr)+\ba\ch\bigl(\eee_{3}\bigr)=
\ba\ch\bigl(\eee_{2}\bigr)
$,
and we obtain
$
\ch\bigl(\ff_{1}\bigr)+\ba\ch\bigl(\eee_{1}\bigr)-\ch(\g)=
\bigl(\ch(\ttt_{2})+\ba\ch(\eee_{3})\bigr)+
\bigl({\ba\ch}(\eee_{2})-{\ba\ch}(\eee_{3})\bigr)-
\bigl(\ch(\ttt_{2})+\ba\ch(\eee_{2})\bigr)=0
$.
Therefore,
\[
\ch(\ff)+\ch(\hh)-\ch(\g)=\ch(\ff)+
\ch(\ttt_{1})-\ch(\ff_{1}).
\]
Since $\ttt_{1}$ is a torsion sheaf, we are
done.
\end{proof}
We can now conclude the proof of Proposition
\ref{LemmeUnWhitneyFormulaArt1}.
\par\medskip
By Lemma \ref{LemBWhFoArt1}, we can
suppose that $\ff$, $\g$ admit locally free quotients of maximal rank
and $\hh$ is a torsion sheaf.
Let $\eee_{1}$ and $\eee_{2}$ be the locally free
quotients of maximal rank of $\ff$ and $\g$. We define
$\ttt_{1}$ and $\ttt_{2}$ by the two exact
sequences
\begin{align*}
&\sutrgd{\ttt_{1}}{\ff}{\eee_{1}}\\
&\sutrgdpt{\ttt_{2}}{\g}{\eee_{2}}{.}
\end{align*}
The morphism $\flgd{\ff}{\g}$ induces a morphism
$\flgd{\ttt_{1}}{\ttt_{2}}$.
We get a morphism  $\flgd{\eee_{1}}{\eee_{2}}$ with torsion
kernel and cokernel. Since $\eee_{1}$ is a locally free sheaf, this morphism is
injective. In the following diagram, we introduce the cokernels $\ttt_{3}$
and $\ttt_{4}$:
\[
\xymatrix{
&0\ar[d]&0\ar[d]&0\ar[d]&\\
0\ar[r]&\ttt_{1}\ar[d]\ar[r]&\ff\ar[r]\ar[d]&\eee_{1}\ar[r]\ar[d]&0\\
0\ar[r]&\ttt_{2}\ar[d]\ar[r]&\g\ar[r]\ar[d]&\eee_{2}\ar[r]\ar[d]&0\\
&\ttt_{3}\ar[d]&\hh\ar[d]&\ttt_{4}\ar[d]&\\
&0&0&0&
}
\]
By the nine lemma,
$\sutrgd{\ttt_{3}}{\hh}{\ttt_{4}}$ is an exact sequence of torsion sheaves.
Then by Proposition \ref{PropositionUnChClArt1} (i),
\begin{align*}
\ch(\ff)+\ch(\hh)-\ch(\g)&=\ch\bigl(\ttt_{1}\bigr)+\ba\ch\bigl(\eee_{1}\bigr)
+\ch\bigl(\ttt_{3}\bigr)+\ch\bigl(\ttt_{4}\bigr)
-\ch\bigl(\ttt_{2}\bigr)-\ba\ch\bigl(\eee_{2}\bigr)\\
&=
\ba\ch\bigl(\eee_{1}\bigr)+\ch\bigl(\ttt_{4}\bigr)-\ba\ch\bigl(\eee_{2}\bigr).
\end{align*}
This finishes the proof.
\end{proof}
\subsection{A structure theorem for
coherent torsion sheaves of projective dimension
one} \label{SectionDeuxWhitneyFormulaArt1}
In section \ref{SectionUnWhitneyFormulaArt1}
we have reduced the Whitney formula to the particular case where $\ff$ and $\g$
are locally free sheaves and $\hh$ is a torsion sheaf. We are now going to prove
that it is sufficient to suppose that $\hh$ is
the push-forward of a locally free sheaf on a smooth
hypersurface of $X$. The main tool of this section is the following
proposition:
\begin{proposition}\label{PropositionUnWhitneyFormulaArt1}
Let $\hh$ be a torsion sheaf which admits a global locally free
resolution of length two. Then there exist a bimeromorphic morphism
$\apl{\sigma }{\tix}{X}$ obtained by a finite number
of blowups with smooth centers,
a simple normal crossing divisor $D$, $D\suq X$,
and an increasing sequence $\bigl(D_{i}\bigr)
_{1\leq i\leq r}$ of subdivisors of $D$ such
that $\sigma \ee\hh$ is
everywhere locally isomorphic to
$\bop_{i=1}^{r}\oo_{\tix}/_{\ds \ii_{D_{i}}\he}$.
\end{proposition}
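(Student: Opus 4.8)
Since $\hh$ is a torsion sheaf of projective dimension one, fix a global resolution $0\to\eee_{1}\to\eee_{0}\to\hh\to0$ with $\eee_{0},\eee_{1}$ locally free, and let $\phi\colon\eee_{1}\to\eee_{0}$ be the middle map. As $\hh$ is torsion, $\eee_{0}$ and $\eee_{1}$ have the same rank $r$, $\phi$ is injective, and $\det\phi$ is a nonzero section of an invertible sheaf whose zero locus is $\supp\hh$. The plan is to control $\hh$ through its Fitting ideals $\fit_{j}(\hh)$, $0\le j\le r$: these are intrinsic, generated locally by the $(r-j)\tim(r-j)$ minors of $\phi$, and nested, $\fit_{0}(\hh)\suq\fit_{1}(\hh)\suq\cdots\suq\fit_{r}(\hh)=\oo_{X}$.

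First I would apply Hironaka's theorem---simultaneous principalization of a finite family of ideal sheaves with embedded resolution, as in \cite[Th.\ 7.9 and 7.10]{AG}, together with the Hironaka--Chow lemma \cite[Th.\ 7.8]{AG}---to the family $\fit_{0}(\hh),\dots,\fit_{r-1}(\hh)$. This produces a finite composition of blowups with smooth centers $\sigma\colon\tix\to X$ such that each inverse-image ideal $\sigma^{-1}\fit_{j}(\hh)\cdot\oo_{\tix}$ is invertible, say $=\oo_{\tix}(-E_{j})$ with $E_{j}$ effective and supported on one fixed reduced simple normal crossing divisor of $\tix$. Since Fitting ideals commute with base change, $\fit_{j}(\sigma\ee\hh)=\oo_{\tix}(-E_{j})$, and the nesting forces $E_{r}=0\le E_{r-1}\le\cdots\le E_{0}$. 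Set $D_{i}:=E_{r-i}-E_{r-i+1}$ for $1\le i\le r$ and $D:=D_{r}$: these are effective divisors, $D_{1}+\cdots+D_{k}=E_{r-k}$ so that $\ii_{D_{1}+\cdots+D_{k}}=\fit_{r-k}(\sigma\ee\hh)$, and each $D_{i}$ is a subdivisor of the simple normal crossing divisor $D$. What remains is to prove that $\sigma\ee\hh$ is everywhere locally isomorphic to $\bop_{i=1}^{r}\oo_{\tix}/\ii_{D_{i}}$ and that $D_{1}\le D_{2}\le\cdots\le D_{r}$.

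Both assertions are local, so I would fix $x\in\tix$, set $R=\oo_{\tix,x}$ (a regular local ring in which the local equations of the branches of $D$ through $x$ are part of a regular system of parameters), and prove the following by induction on $\rho$: if $\theta$ is a square matrix of size $\rho$ over $R$ with $\det\theta\neq0$ such that every $\fit_{j}(\coker\theta)$, $0\le j\le\rho$, is generated by a monomial in those parameters, then $\coker\theta\cong\bop_{i=1}^{\rho}R/(\nu_{i})$ with $\nu_{i}$ monomials, $(\nu_{1}\cdots\nu_{i})=\fit_{\rho-i}(\coker\theta)$, and $\nu_{1}\mid\nu_{2}\mid\cdots\mid\nu_{\rho}$. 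The case $\rho\le1$ is trivial. For the step, let $g$ be the monomial generating $\fit_{\rho-1}(\coker\theta)$, i.e.\ the ideal of entries of $\theta$; since $g$ divides every entry, write $\theta=g\psi$, so $\psi$ has a unit entry, and elementary row and column operations over $R$ bring $\psi$ to $\begin{pmatrix}1&0\\0&\psi'\end{pmatrix}$ with $\psi'$ of size $\rho-1$, whence $\coker\theta\cong R/(g)\oplus\coker(g\psi')$. Comparing the minors of $g\psi$ with those of $\operatorname{diag}(1,\psi')$ yields the bookkeeping identity $\fit_{j}(\coker(g\psi'))=g^{-1}\fit_{j}(\coker\theta)$ for $0\le j\le\rho-1$, again a monomial ideal (the division is legitimate because $\fit_{\rho-1}$ is the largest of $\fit_{0},\dots,\fit_{\rho-1}$); thus the induction hypothesis applies to $\coker(g\psi')$ and, after translating back through this identity, provides the desired decomposition with the factor $R/(g)=R/\ii_{D_{1},x}$ split off. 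Finally $\nu_{1}\mid\nu_{2}$, i.e.\ $D_{1}\le D_{2}$, holds because every $2\tim2$ minor of $\theta=g\psi$ lies in $(g^{2})$, so $\fit_{\rho-2}(\coker\theta)\suq(g^{2})$, and the remaining divisibilities come from the induction hypothesis for $\coker(g\psi')$. Taking $\rho=r$ and $\theta=\phi_{x}$ at every $x$ gives the proposition.

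The main obstacle is this local lemma: it asserts that once \emph{all} Fitting ideals of $\hh$ have been made locally principal and monomial, $\sigma\ee\hh$ becomes locally a direct sum of cyclic modules with \emph{nested} annihilators---a Smith normal form statement which is false for general modules over a regular local ring of dimension $\ge2$, and which therefore uses essentially that the whole chain $\fit_{0}(\hh),\dots,\fit_{r-1}(\hh)$ was principalized, not merely $\fit_{0}(\hh)=(\det\phi)$. The delicate points are the minor comparison producing $\fit_{j}(\coker(g\psi'))=g^{-1}\fit_{j}(\coker\theta)$ and the verification that the successive monomial quotients form an increasing sequence of divisors; the Hironaka step is, by contrast, routine.
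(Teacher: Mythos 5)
Your proof is correct and follows essentially the same route as the paper: principalize all the Fitting ideals simultaneously by Hironaka (using that they commute with pullback), then run the local Gauss-elimination/Smith-normal-form induction, splitting off one cyclic factor at a time because the entry ideal is principal and hence some entry of the divided matrix is a unit. The only differences are cosmetic: a different indexing convention for the Fitting ideals, and a more explicit verification of the identity $\fit_{j}(\coker(g\psi'))=g^{-1}\fit_{j}(\coker\theta)$ and of the divisibility chain $\nu_{1}\mid\cdots\mid\nu_{r}$, which the paper leaves largely implicit.
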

\begin{proof}
Let $\sutrgd{\eee_{1}}{\eee_{2}}{\hh}$ be a locally free resolution of
$\hh$, such that $\ran\bigl(\eee_{1}\bigr)=\ran\bigl(\eee_{2}\bigr)=r$.
Recall that the $k$th Fitting ideal of $\hh$ is the coherent ideal sheaf
generated by the determinants of all the $k\times k$ minors of $M$ when $M$
is any local matrix realization in coordinates of the morphism
$\flgd{\eee_{1}}{\eee_{2}}$ (for a general presentation of the Fitting ideals,
see \cite{Eis}). We have
\[
\fit_{1}(\hh)\supseteq\fit_{2}(\hh)\supseteq\cdots
\supseteq\fit_{r}(\hh)\supsetneq\{0\}.
\]
These ideals have good functoriality properties.
Indeed, if $\apl{\sigma }{\tix}{X}$ is a bimeromorphic morphism, the
sequence
$\sutrgd{\sigma \ee\eee_{1}}{\sigma \ee\eee_{2}}{\sigma \ee\hh}$ is exact
and $\fit_{j}\bigl(\sigma \ee\hh\bigr)=\sigma \ee\fit_{j}(\hh)$ (by
$\sigma \ee\fit_{j}(\hh)$, we mean of course its image in $\oo_{\tix}$).
By the Hironaka theorem, we can suppose, after taking a finite number of
pullbacks under blowups with smooth centers, that all the Fitting ideals
$\fit_{k}(\ff)$ are ideal sheaves
associated with effective normal crossing divisors $D'_{k}$.
Now, take an element $x$ of $X$. Consider an exact sequence
\[
\xymatrix{
\oo_{U}^{r}\ar[r]_{M}&\oo_{U}^{r}\ar[r]&\hh_{\vert U}\he\ar[r]&0
}
\]
in a neighbourhood of $x$. The matrix $M$ is a $r\tim r$ matrix of holomorphic
functions on $U$.
Let $\bigl\{\phi _{1}=0\bigr\}$ be an equation of
$D_{1}$ around $x$. Then we  can write $M=\phi _{1}M_{1}$ and the
coefficients of $M_{1}$ generate $\oo_{x}\he$.
Thus, at least one of these coefficients
does not vanish at $x$. We can suppose that it is the upper-left one.
By Gauss elimination process, we can write
\[ M=\phi _{1}
\newcommand*{\tempa}{\multicolumn{1}{|c}{}}
\newcommand*{\tempb}{\multicolumn{1}{c|}{}}
\left(
\begin{array}{cccr}
1&0&\cdots &0\\ \cline{2-4}
0&\tempa&&\tempb\\
\vdots&\tempa&\, \raisebox{3pt}{$M_{2}$}&\tempb\\
0&\tempa&&\tempb\\
\cline{2-4}
\end{array}\
\right).
\]
Then, since $\fit_{k}(\ff)_{\vert U}\he=\fit_{k}(M)$, we get
$\fit_{2}(M)=\phi _{1}^{2}\fit_{1}(M_{2})$. Since $\fit_{2}(M)$ is
principal, so is $\fit_{1}(M_{2})$ and we write
$\fit_{1}(M_{2})=\bigl(\phi _{2}\bigr)$. Then, by the same argument as
above, we can write
\[ M=
\newcommand*{\tempa}{\multicolumn{1}{|c}{}}
\newcommand*{\tempb}{\multicolumn{1}{c|}{}}
\left(
\begin{array}{ccccr}
\phi _{1}&0&\cdots &\cdots &0\\
0&\phi _{1}\phi _{2}&0&\cdots &0\\ \cline{3-5}
\vdots&0&\tempa&&\tempb\\
\vdots&\vdots&\tempa&\hspace*{-7pt}\raisebox{5pt}{$M_{3}$}&\tempb\\
0&0&\tempa&&\tempb\\
\cline{3-5}
\end{array}\
\right).
\]

By this algorithm, we get
\[
M=
\begin{pmatrix}
\phi _{1}&0&\cdots &0\\
0&\phi _{1}\phi _{2}&&\\
\vdots&&\ddots&\\
0&&&\phi _{1}\cdots \phi _{r}
\end{pmatrix}
\]
\par\medskip
and then
$
\ff_{\vert U}\he\simeq\Bigl(\oo_{X}\he\mkern - 4 mu
\bigm/_{\ds\phi _{1}\oo_{X}\he}
\Bigr)_{\vert U}\he\oplus
\cdots\oplus\Bigl(\oo_{X}\he \mkern - 4 mu
\bigm/_{\ds\phi _{1}\dots \phi _{r}\oo_{X}\he}\Bigr)_{\vert
U}\he
$.
Thus, if $D_{1},\dots ,D_{r}$ are the divisors
of $\phi _{1},\phi _{1}\, \phi _{2},\dots ,
\phi _{1}\, \phi _{2}\dots \phi _{r}$, we have $D_{k}=D'_{k}-D'_{k-1}$,
which
shows that the divisors $D_{k}$ are intrinsically defined by $\ff$.
\end{proof}
From now on, we will say that a torsion sheaf $\hh$ is \textit{principal}
if it is
everywhere locally isomorphic to a fixed sheaf $\bop_{i=1}^{r}\oo_{X}\he/_{\ds
\ii_{D_{i}}\he}$ where the $D_{i}$ are
(non necessarily reduced) effective normal crossing divisors and
$D_{1}\leq D_{2}\leq \cdots \leq D_{r}$. We will
denote by $\nu (\hh)$ the number of branches of $D$, counted with their
multiplicities.
\begin{proposition}\label{LemmeDeuxWhitneyFormulaArt1}
It suffices to prove the Whitney formula when $\ff$ and $\g $ are
locally free sheaves and $\hh$ is the push-forward
of a locally free sheaf on a smooth hypersuface.
\end{proposition}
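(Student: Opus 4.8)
The plan is to start from the reduction already obtained in Proposition \ref{LemmeUnWhitneyFormulaArt1}: it suffices to prove $(\textrm{W}_{n})$ for an exact sequence $\sutrgd{\ff}{\g}{\hh}$ with $\ff$, $\g$ locally free and $\hh$ a torsion sheaf. In that situation $\hh$ automatically has a global locally free resolution of length two, so Proposition \ref{PropositionUnWhitneyFormulaArt1} applies: after pulling back by a suitable composition of blowups with smooth centers $\apl{\sigma}{\tix}{X}$ we may assume that $\hh$ is \emph{principal}, i.e.\ everywhere locally isomorphic to $\bop_{i=1}^{r}\oo_{X}/\ii_{D_{i}}$ for an increasing chain of effective normal crossing divisors $D_{1}\leq\cdots\leq D_{r}$. (One must check, as in the proof of Lemma \ref{LemAWhFoArt1}, that it is legitimate to replace $X$ by $\tix$: since $\sigma\ee$ is injective on Deligne cohomology and $\sigma\ee$ of the exact sequence remains exact with $\sigma\ee\ff$, $\sigma\ee\g$ locally free and $\sigma\ee\hh$ a torsion sheaf whose higher Tor's against $\sigma$ are again torsion, the Whitney formula upstairs implies it downstairs, using Proposition \ref{PropXChClArt1} to control the error torsion terms and Proposition \ref{LemmeQuatreChClArt1}(i).)

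Next I would induct on the invariant $\nu(\hh)$, the number of branches of $D^{\textrm{red}}$ counted with multiplicity. The base case $\nu(\hh)\leq 1$ is exactly the situation where $\hh$ is (a direct sum of copies of line bundles pushed forward from) a single smooth hypersurface, possibly with multiplicity; a further dévissage along the exact sequences $\sutrgd{i_{D_{N}*}\oo_{X}(-Z_{k})}{\oo_{Z_{k+1}}}{\oo_{Z_{k}}}$ of Lemma \ref{LemUnInsertA} reduces a thickened hypersurface to the reduced one, and twisting by a locally free sheaf is handled by Proposition \ref{LemmeTroisChClArt1}(i); so the base case is precisely the statement we are reducing to. For the inductive step, I would peel off one branch: writing $D_{r}=D_{r-1}+m D$ for a smooth irreducible $D$, the surjection $\oo_{X}/\ii_{D_{r}}\twoheadrightarrow\oo_{X}/\ii_{D_{r-1}}$ (for each summand) globalizes, since $\hh$ is principal, to a surjection $\hh\twoheadrightarrow\hh'$ with $\hh'$ principal and $\nu(\hh')<\nu(\hh)$, and kernel $\hh''$ which is principal supported on fewer (or lower multiplicity) branches, so $\nu(\hh'')<\nu(\hh)$ as well. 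Composing with $\g\twoheadrightarrow\hh$ gives $\g\twoheadrightarrow\hh'$ with kernel $\g'$, and one gets a $3\times 3$ diagram relating $\ff$, $\g'$, $\hh''$ and $\g$, $\hh$, $\hh'$; all the rows and columns involve only locally free sheaves and torsion sheaves of strictly smaller $\nu$, so the induction hypothesis plus Proposition \ref{PropositionUnChClArt1}(i) for the torsion part closes the argument.

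The main obstacle I anticipate is \emph{globalizing the local filtration}: Proposition \ref{PropositionUnWhitneyFormulaArt1} only asserts that $\sigma\ee\hh$ is \emph{locally} isomorphic to the model sheaf $\bop_{i}\oo_{\tix}/\ii_{D_{i}}$, and the divisors $D_{i}$ are canonically defined (via Fitting ideals), but a single global isomorphism need not exist. What we really need is not a global splitting but the existence of a \emph{global} filtration of $\sigma\ee\hh$ whose graded pieces are push-forwards of locally free sheaves from the $D_{i}^{\textrm{red}}$ — equivalently the surjection $\hh\twoheadrightarrow\hh'$ above with controlled kernel. I expect this to follow because the Fitting ideals, hence the $D_{i}$, are intrinsic, so the locally defined subsheaf "$\ii_{D_{1}}\cdot(\text{last summand})$" and its analogues patch; making this precise — perhaps by expressing the relevant subsheaf as $\fit_{k}$-torsion or as the image/kernel of an intrinsic map built from the Fitting ideals — is the delicate point, and once it is in place the rest is the bookkeeping above together with the already-established properties of $\ch$ on torsion sheaves.
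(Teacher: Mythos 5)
Your reduction chain down to ``$\ff,\g$ locally free and $\hh$ principal'' matches the paper's (Proposition \ref{LemmeUnWhitneyFormulaArt1} followed by Proposition \ref{PropositionUnWhitneyFormulaArt1}; note the pullback step is cleaner than you make it, since $\tore{1}{\hh}{\sigma }$ is a torsion subsheaf of the locally free sheaf $\sigma \ee\ff$ and hence vanishes, so $\sigma \ee$ of the sequence is already exact). The genuine gap is in your inductive step on $\nu (\hh)$. You peel off the \emph{top} of the local filtration, i.e.\ you want a surjection $\hh\twoheadrightarrow\hh'$ that replaces the last local summand $\oo_{X}/_{\ds\ii_{D_{r}}\he}$ by $\oo_{X}/_{\ds\ii_{D_{r-1}}\he}$, and you yourself point out that the local direct-sum decomposition of a principal sheaf is not canonical, so this map does not obviously globalize; you leave the patching as a ``delicate point'' to be made precise. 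As written this is not a proof: the Fitting ideals pin down the divisors $D_{i}$ but not a distinguished summand, and nothing in your text produces the required global subsheaf. (It can in fact be made intrinsic as $\ii_{D_{r-1}}\he\hh$, but even then its support is the possibly non-reduced, possibly multi-branch divisor $D_{r}-D_{r-1}$, so you would still need further d\'{e}vissage, and you would also need to prove that the kernel of $\g\to\hh'$ is locally free before invoking your $3\times 3$ diagram.)

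The paper avoids all of this by peeling from the \emph{bottom}. Take $Y$ a branch of $D_{1}$. Since $Y\leq D_{i}$ for every $i$, the restriction $\eee=\hh_{\vert Y}\he$ is locally free on $Y$, and the surjection $\hh\twoheadrightarrow i_{Y*}\he\eee$ is the \emph{canonical} restriction map, global by construction with no patching needed; its kernel $\ti{\hh}$ is principal with local model $\bop_{i}\oo_{X}/_{\ds\ii_{D_{i}-Y}\he}$, so $\nu (\ti{\hh})=\nu (\hh)-1$. Setting $\ti\eee=\ke\bigl(\g\to i_{Y*}\he\eee\bigr)$, a local $\tor_{2}$/Nakayama argument (Lemma \ref{NouveauLemmeClChArt1}) shows $\ti\eee$ is locally free, and the original sequence splits into $0\to\ff\to\ti\eee\to\ti{\hh}\to 0$ (handled by the induction on $\nu $) and $0\to\ti\eee\to\g\to i_{Y*}\he\eee\to 0$ (the target case of the proposition). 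If you replace your top-down surjection by this restriction map and add the local-freeness lemma for the kernel, your argument closes; without that, the globalization step is missing.
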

\begin{proof} We proceed in several steps.
\begin{lemma}\label{NouveauLemmeClChArt1}
Consider an exact sequence
$
\sutroiszerogd{\ff}{\g}{i_{Y*}\he\eee}
$
where $Y$ is a smooth hypersurface of $X$, $\g$ is a locally free sheaf
on $X$ and $\eee$ is a locally free sheaf on $Y$. Then $\ff$ is locally
free on $X$.
\end{lemma}
\begin{proof}
Let $m_{x}\he$ be the maximal ideal of the local ring $\oo_{x}\he$.
By Nakayama's lemma, it suffices to show that for every $x$ in $X$,
$\tor_{1}^{\oo_{x}\he}\bigl(\ff_{x}\he,\oo_{x}\he/{\ds
m_{x}\he}\bigr)=0$. Since $Y$ is a hypersurface, $i_{Y*}\he\eee$
admits a locally free resolution of length two. Thus
$\tor_{1}^{\oo_{x}\he}\bigl(\ff_{x}\he,\oo_{x}\he/{\ds
m_{x}\he}\bigr)\simeq\tor_{2}^{\oo_{x}\he}\bigl((i_{Y*}\he\eee)_{x}\he,
\oo_{x}\he/{\ds
m_{x}\he}\bigr)=0$.
\end{proof}
\begin{lemma}\label{EncoreUnLemUnWhitneyFormulaArt1}
It suffices to prove $(\emph{W}_{n})$ when $\ff$, $\g$ are locally
free sheaves and $\hh$ is principal.
\end{lemma}
\begin{proof}
By Proposition \ref{LemmeUnWhitneyFormulaArt1},
it is enough to prove the Whitney formula
when $\ff$, $\g$ are locally free sheaves and $\hh$ is a torsion sheaf. So we
suppose that $\ff$, $\g$ and $\hh$ verify these hypotheses.
By Proposition \ref{PropositionUnWhitneyFormulaArt1},
there exists a bimeromorphic morphism
$\apl{\sigma }{\ti{X}}{X}$ such that $\sigma \ee\hh$ is principal. We
have an exact sequence
\[
\xymatrix
{
0\ar[r]& \tore{1}{\hh}{\sigma }\ar[r]&{\sigma \ee\ff}\ar[r]&{\sigma \ee\g}\ar[r]&
{\sigma \ee\hh}\ar[r]&0.
}
\]
But $\tore{1}{\hh}{\sigma }$ is a torsion sheaf and $\sigma \ee\ff$ is
locally free, so we get an exact sequence
\[
\sutrgdpt{\sigma \ee\ff}{\sigma \ee\g}{\sigma \ee\hh}{.}
\]
and we have $\tore{i}{\hh}{\sigma }=0$ for $i\geq 1$.
By Proposition \ref{PropositionDeuxComplementQuatreUnArt1} (ii), we
obtain the equalities
$\ba\ch(\sigma \ee\ff)=\sigma \ee\ba\ch(\ff)$ and
$\ba\ch(\sigma \ee\g)=\sigma \ee\ba\ch(\g)$, and by Proposition
\ref{PropositionUnChClArt1} (iii) we get
$\ch(\sigma \ee\hh)=\sigma \ee\ch(\hh)$. Thus
\[
\sigma \ee\bigl(\ba\ch(\ff)+\ch(\hh)-\ba\ch(\g)\bigr)=\ba\ch
(\sigma \ee\ff)+\ch(\sigma \ee\hh)-\ba\ch(\sigma \ee\g).
\]
\end{proof}
\begin{lemma}\label{EncoreUnLemDeuxWhitneyFormulaArt1}
Suppose that $(\emph{W}_{n})$ holds
if $\ff$, $\g$ are locally free sheaves and $\hh$ is the push-forward of
a locally
free sheaf on a smooth hypersurface.
Then $(\emph{W}_{n})$ holds when $\ff$, $\g$ are
locally free sheaves and $\hh$ is principal.
\end{lemma}
\begin{proof}
We argue by induction on $\nu (\hh)$.
If $\nu (\hh)=0$, $\hh=0$ and $\ff\simeq\g$.
If $\nu (\hh)=1$, $\hh$ is
the push-forward of a locally free sheaf on a smooth hypersurface and
there is nothing to prove.
\par\medskip
In the general case,
let $Y$ be a branch of $D_{1}$. Since $Y\leq D_{i}$
for every $i$ with $1\leq i\leq
r$, we can see that $\eee=\hh_{\vert Y}\he$ is locally free
on $Y$. Besides,
if we define $\ti\hh$ by the exact sequence
\[
\sutrgdpt{\ti{\hh}}{\hh}{i_{Y*}\he\eee}{,}
\]
$\ti\hh$ is
everywhere locally isomorphic
to $\bop_{i=1}^{r}\oo_{X}/_{\ds
\ii_{D_{i}-Y}\he}$. Thus $\ti\hh$ is principal and $\nu (\ti\hh)=\nu (\hh)-1$.
We define $\ti\eee$ by the exact sequence:
\[
\sutrgdpt{\ti\eee}{\g}{i_{Y*}\he\eee}{.}
\]
By Lemma \ref{NouveauLemmeClChArt1}, $\ti\eee$ is locally free.
Furthermore, we have an exact sequence
\[
\sutrgdpt{\ff}{\ti\eee}{\ti\hh}{.}
\]
By induction, $\ba\ch\bigl(\ti\eee\bigr)=\ba\ch(\ff)+\ch\bigl(\ti\hh\bigr)$
and by our hypothesis
$\ch(\g)=\ba\ch\bigl(\ti\eee\bigr)+\ch\bigl(i_{Y*}\he\eee\bigr)$. Since
$\ti\hh$, $\hh$ and $i_{Y*}\he\eee$ are torsion sheaves,
$\ch(\hh)=\ch\bigl(\ti\hh\bigr)+\ch\bigl(i_{Y*}\eee\bigr)$ and we get
$\ba\ch(\g)=\ba\ch(\ff)+\ch(\hh)$.
\end{proof}
Putting the two lemmas together, we obtain Proposition
\ref{LemmeDeuxWhitneyFormulaArt1}.
\end{proof}
\subsection{Proof of the Whitney formula}\label{ProofWhitney}
We are now ready to prove Theorem \ref{EncoreUnTheoWhFoArt1}.
\par\medskip
In the sections \ref{SectionUnWhitneyFormulaArt1}
and \ref{SectionDeuxWhitneyFormulaArt1},
we have made successive reductions in order
to prove the Whitney formula in a tractable context,
so that we are reduced to the case where
$\ff$ and $\g$ are locally free sheaves and $\hh=
i_{Y*}\he\eee$, where $Y$ is a smooth hypersurface
of $X$ and $\eee$ is a locally free sheaf on $Y$.
Our working hypotheses will be these.
\par\medskip
Let us briefly explain the sketch of the argument. We consider
the sheaf $\ti\g$ on $X\times\P^{1}$ obtained by deformation of the second
extension class of the exact sequence
$\sutrgdpt{\ff}{\g}{\hh}{.}$ Then
$\ti\g_{\vert X\times\{0\}}\simeq \ff\oplus\hh$ and
$\ti\g_{\vert X\times\{t\}}\simeq\g$ for $t\neq 0$. It will turn out
that $\ti\g$ admits a locally free quotient
of maximal rank $\qq$ on the blowup of
$X\times \P^{1}$ along $Y\times \{0\}$, and the associated kernel $\nn$
will be the push-forward of a locally free
sheaf on the exceptional divisor $E$, say
$\nn=i_{E*}\LL$. Then we consider the class
$\alpha =\ba\ch(\qq)+i_{E*}\bigl(
\ba\ch(\LL)\td(N_{E/X})^{-1}
\bigr)$ on the blowup. After explicit computations, it will appear that
$\alpha $ is the pullback of a form $\beta $ on the base
$X\times\P^{1}$. By the \mbox{$\P^{1}$-homotopy} invariance of Deligne cohomology
(Proposition \ref{PropositionUnComplementQuatreUnArt1} (vi)),
$\beta _{\vert X\times\{t\}}$ does not depend on $t$. This will give the
desired result.
\par\medskip
Let us first introduce some notations.
The morphism $\xymatrix{\ff\ar[r]&\g}$ will be denoted by $\gamma $.
Let $s$ be a global section of $\oo_{\P^{1}}\he(1)$ which vanishes exactly
at $\{0\}$.
Let
$\apl{\pr_{1}}{X\tim\P^{1}}{X}$  be the projection on the first
factor. The relative $\oun$, namely $\oo_{X}\he\boxtimes\,  \oo
_{\P^{1}}\he(1)$, will still be denoted by $\oun$. We define a sheaf
$\ti\g$ on $X\tim\P^{1}$ by the exact sequence
\[
\xymatrix@C=7ex{0\ar[r]
&\pr_{1}\ee\ff\ar[r]_-{(\id\oti s,\, \gamma )}&\pr_{1}\ee
\ff(1)\oplus\pr_{1}\ee
\g\ar[r]&\ti\g\ar[r]&0.
}
\]
Remark that $\ti{\g}_{0}\simeq\ff\oplus\hh$ and
$\ti{\g}_{t}\simeq\g$ if $t\not = 0$.
\begin{lemma}\label{LemmeTroisWhitneyFormulaArt1}
There exist two exact sequences
\begin{align}
&\sutrgd{\pr\ee_{1}\ff(1)}{\ti{\g}}{\pr\ee_{1}\hh}\tag{i}
\label{EquationUnWhitneyFormulaArt1}\\
&\sutrgdpt{\ti{\g}}{\pr_{1}\ee\g(1)}{i_{X_{0}*}\he\hh}{.}\tag{ii}
\label{EquationDeuxWhitneyFormulaArt1}
\end{align}
\end{lemma}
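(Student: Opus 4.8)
The plan is to produce both maps explicitly and identify their kernels and cokernels by a purely local computation, using only two soft facts: the projection $\pr_1\colon X\times\P^1\to X$ is flat, so $\pr_1\ee$ is exact, and the section $s$ cuts out the Cartier divisor $X_0:=X\times\{0\}$ (with inclusion $i_{X_0}\colon X_0\hookrightarrow X\times\P^1$). Throughout I would write $\gamma\colon\ff\to\g$ for the given injection, so that $\hh=\coker\gamma$; then $\pr_1\ee\gamma\colon\pr_1\ee\ff\to\pr_1\ee\g$ is again injective with cokernel $\pr_1\ee\hh$, and tensoring by the line bundle $\oo_X\boxtimes\oo_{\P^1}(1)$ preserves both injectivity and cokernels, so the morphisms $\pr_1\ee\gamma\otimes\id$ and $\id\otimes s$ are available between the relevant twists and will be used freely.

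For the exact sequence (i), I would take the composite of the inclusion of the first summand $\pr_1\ee\ff(1)\hookrightarrow\pr_1\ee\ff(1)\oplus\pr_1\ee\g$ with the quotient map onto $\ti{\g}$. Its kernel consists of those $w$ with $(w,0)$ in the image of $(\id\otimes s,\gamma)$; if $(w,0)=((\id\otimes s)(v),\gamma(v))$ then $\gamma(v)=0$ forces $v=0$ and hence $w=0$, so the composite is a monomorphism. For the cokernel I would note that inside $\pr_1\ee\ff(1)\oplus\pr_1\ee\g$ one has $\pr_1\ee\ff(1)+\im(\id\otimes s,\gamma)=\pr_1\ee\ff(1)\oplus\im(\pr_1\ee\gamma)$, because the first summand already absorbs the component $\id\otimes s$; hence $\ti{\g}/\pr_1\ee\ff(1)\cong\pr_1\ee\g/\im(\pr_1\ee\gamma)=\pr_1\ee\hh$, which gives (i).

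For (ii), the plan is to introduce the morphism $\phi\colon\pr_1\ee\ff(1)\oplus\pr_1\ee\g\to\pr_1\ee\g(1)$ given by $\phi(a,b)=(\pr_1\ee\gamma\otimes\id)(a)-(\id\otimes s)(b)$. Since $(\gamma\otimes\id)\circ(\id\otimes s)$ and $(\id\otimes s)\circ(\gamma\otimes\id)$ both equal $\gamma\otimes s$, one gets $\phi\circ(\id\otimes s,\gamma)=0$, so $\phi$ descends to $\ba{\phi}\colon\ti{\g}\to\pr_1\ee\g(1)$. To compute $\coker\ba{\phi}=\pr_1\ee\g(1)/\bigl(\im(\gamma\otimes\id)+\im(\id\otimes s)\bigr)$ I would divide in two steps: modding out by $\im(\gamma\otimes\id)$ yields $\pr_1\ee\hh(1)$ by right-exactness of tensor, and the residual image of $\id\otimes s$ there is $(\id\otimes s)(\pr_1\ee\hh)$ (using that $\g\to\hh$ is onto), so the final quotient is $\pr_1\ee\hh\otimes_{\oo}\bigl(\oo_{\P^1}(1)/s\,\oo_{\P^1}\bigr)=\pr_1\ee\hh\otimes\oo_{X_0}$, i.e. $i_{X_0*}\hh$.

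The step I expect to be the real obstacle is the injectivity of $\ba{\phi}$, equivalently the identity $\ker\phi=\im(\id\otimes s,\gamma)$. Given $(a,b)\in\ker\phi$, projecting the relation $(\gamma\otimes\id)(a)=(\id\otimes s)(b)$ into the cokernel $\pr_1\ee\hh(1)$ kills the left-hand side and shows that the class of $b$ is annihilated by $\id\otimes s\colon\pr_1\ee\hh\to\pr_1\ee\hh(1)$; this last map is injective because $s$ is the local equation of the Cartier divisor $X_0$ while $\pr_1\ee\hh=\hh\boxtimes\oo_{\P^1}$ has no associated point contained in $X_0$ (all its associated primes are pulled back from $X$, equivalently $\pr_1\ee\hh$ is flat over $\P^1$), so $b=(\pr_1\ee\gamma)(v)$ for a (unique) $v$; then $(\gamma\otimes\id)\bigl(a-(\id\otimes s)(v)\bigr)=0$, and injectivity of $\gamma\otimes\id$ forces $a=(\id\otimes s)(v)$, whence $(a,b)=(\id\otimes s,\gamma)(v)$. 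The only nonformal ingredient is thus this non-zero-divisor property on $\pr_1\ee\hh$; everything else is bookkeeping with the exactness of $\pr_1\ee$ and of tensoring by a line bundle on $\P^1$.
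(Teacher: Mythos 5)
Your proposal is correct. The construction of the two morphisms is the same as in the paper, and your treatment of (i) and of the cokernel in (ii) (the two-step quotient $\pr_1\ee\g(1)\to\pr_1\ee\hh(1)\to\pr_1\ee\hh(1)/(\id\otimes s)\pr_1\ee\hh\simeq i_{X_0*}\he\hh$) is just a cleaner bookkeeping of what the paper does by restricting $\phi$ to the central fibre. Where you genuinely diverge is on the injectivity of $\ba{\phi}$, which is indeed the delicate point. The paper observes that $\ker\phi$ is killed by $t$, writes it as $i_{X_0*}\he\zz$, and then uses the flatness of $\ti\g$ over $\P^1$ (deduced from (i)) together with $\tor_{1}\bigl(i_{X_0*}\he\zz,i_{X_0}\he\bigr)\simeq\zz\oti N\ee_{X_0/X\tim\P^1}\simeq\zz$ to force $\zz=0$. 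You instead run a direct diagram chase whose only nonformal input is that $s$ is a non-zero-divisor on $\pr_1\ee\hh$; this is correct (locally $(\pr_1\ee\hh)_{(x,0)}=\hh_x\oti_{\oo_{X,x}}\oo_{X\tim\C,(x,0)}$, and a free resolution of $\hh_x$ base-changed along the flat extension shows multiplication by $t$ is injective), and it is exactly the same flatness phenomenon the paper exploits, applied to $\pr_1\ee\hh$ rather than to $\ti\g$. Your version is more elementary in that it does not need the Tor formalism or the prior knowledge that $\ti\g$ is flat; the paper's version is shorter once that machinery is in place and makes the role of Remark \ref{LastRemarkArt1} explicit. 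Either argument is acceptable here.
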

\begin{remark}\label{LastRemarkArt1}
(i) implies that $\ti\g$ is flat over $\P^{1}$.
\end{remark}
\begin{proof}
(i) The morphism
$\xymatrix{
\pr\ee_{1}\ff(1)\oplus \pr\ee_{1}\g\ar@{->>}[r]&\pr\ee_{1}\g\ar@{->>}[r]&
\pr\ee_{1}\hh}$
induces a morphism
$\xymatrix{\ti{\g}\ar@{->>}[r]&\pr_{1}\ee\hh}$\!. If $\kk$ is
the kernel of this
morphism, we obtain an exact sequence
\[
\xymatrix@C=10ex{0\ar[r]
&\pr_{1}\ee\ff\ar[r]_-{(\id\oti s,\, \id)}&\pr_{1}\ee
\ff(1)\oplus\pr_{1}\ee
\ff\ar[r]&\kk\ar[r]&0.
}
\]
Thus $\kk=\pr\ee_{1}\ff(1)$.
\par\medskip
(ii) We consider the morphism
$
\xymatrix@R=1ex{
\pr\ee_{1}\ff(1)\oplus \pr\ee_{1}\g\ar@{->>}[r]&\pr\ee_{1}\g(1)\\
f+g\ar[r]&\gamma (f)-g\oti s.
}$
\par
It induces a morphism $\aplpt{\phi }{\ti\g}{\pr_{1}\ee\g(1)}.$
The last morphism of (ii) is just the composition
\[
\xymatrix{
\pr\ee_{1}\g(1)\ar@{->>}[r]&i_{X_{0}*}\he\g\ar@{->>}[r]&i_{X_{0}*}\he\hh\!.
}
\]
The cokernel of this morphism has support in $X\tim\{0\}$. Besides,
the action of $t$ on this cokernel is zero. The restriction
of $\phi $ to the fiber
$X_{0}=X\tim\{0\}$
is the morphism $\flgd{\ff\oplus\hh}{\g}$\!, thus
the
sequence $\sutrd{\ti\g}{\pr_{1}\ee\g(1)}{i_{X_{0}*}\he\hh}$ is exact.
The kernel of $\phi $, as its cokernel,
is an \mbox{$\oo_{X_{0}}\he$-module}.
Thus we can find $\zz$ such that
$\ker\phi =i_{X_{0}*}\he
\zz$. Since $X_{0}$ is a hypersurface of $X\tim\P^{1}$,
for every coherent sheaf $\LL$ on
$X\tim\P^{1}$, we have
$\tore{2}{\LL}{i_{X_{0}}\he}=0$.
Applying this to
$\LL=\ti{\g}/_{\ds i_{X_{0}*}^{\vphantom{-1}}\zz}$
and using Remark \ref{LastRemarkArt1}, we
get
\[
\tore{1}{i_{X_{0}*}\he\zz}{i_{X_{0}}\he}
\suq \tore{1}{\ti{\g}}{i_{X_{0}}\he}=\{0\}.
\]
But
$\tore{1}{i_{X_{0}*}\he\zz}{i_{X_{0}}\he}\simeq
\zz\oti N\ee_{X_{0}/X\tim\P^{1}}
\simeq \zz
$,
so $\zz=\{0\}$.
\end{proof}
Recall now that $\hh=i_{Y*}\he\eee$ where $Y$ is a smooth hypersurface
of $X$ and $\eee$ is a locally free sheaf on $Y$. We consider the space $M_{Y/X}\he$
of the deformation of the normal cone of $Y$ in $X$ (see \cite{Ful}).
Basically, $M_{Y/X}\he$ is the blowup of $X\tim\P^{1}$ along
$Y\tim\{0\}$.
Let $\apl{\sigma }{M_{Y/X}\he}{X\tim\P^{1}}$ be the canonical
morphism. Then $\sigma \ee {X_{0}}$ is a Cartier divisor in $M_{Y/X}\he$
with two simple branches:
$E=\P\bigl(N_{Y/X}\he\oplus\oo_{Y}\he\bigr)$ and
$D=\bl_{Y}X\simeq X$, which intersect at $\P\bigl(N_{Y/X}\he\bigr)\simeq Y$.
The projection of
the blowup
from $E$ to $Y\times\{0\}$ will be denoted by $q$, and the canonical isomorphism
from $D$ to $X\tim\{0\}$ will be denoted by $\mu $.
\par\medskip
We now show:
\begin{lemma}\label{LemmeQuatreWhitneyFormulaArt1}
The sheaf
$\sigma \ee\ti\g$ admits a locally free quotient with maximal rank  on
$M_{Y/X}\he$, and the associated kernel $\nn$ is the push-forward
of a locally free sheaf on
$E$. More explicitly, if $F$ is the excess conormal bundle of $q$, $\nn
=i_{E*}\he\bigl(q\ee
\eee\oti F\bigr)$.
\end{lemma}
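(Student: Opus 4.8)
The plan is to obtain the locally free quotient by pulling back the second exact sequence (ii) of Lemma~\ref{LemmeTroisWhitneyFormulaArt1} under $\sigma$ and reading off the kernel and the quotient from the resulting long exact sequence of $\tor$-sheaves. As a preliminary observation, $\ti\g$ is flat over $\P^{1}$ (Remark~\ref{LastRemarkArt1}), with $\ti\g_{t}\simeq\g$ for $t\neq0$ and $\ti\g_{0}\simeq\ff\oplus\hh$, so $\ti\g$ is locally free outside $Y\times\{0\}$; since $\sigma$ is an isomorphism away from $E=\sigma^{-1}(Y\times\{0\})$, the sheaf $\sigma^{*}\ti\g$ is locally free outside $E$ and its generic rank equals $\ran\g=:r$.

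The key point is the computation of the derived pull-back of the term $i_{X_{0}*}\hh$. Since $\hh=i_{Y*}\eee$ with $\eee$ locally free, $i_{X_{0}*}\hh=i_{Y\times\{0\}*}\eee$ is the push-forward, from the smooth codimension-two centre $Y\times\{0\}$ of the blowup $\sigma$, of a locally free sheaf. The derived pull-back of the structure sheaf of the centre of a blowup is a standard Koszul computation: with $F$ the excess conormal bundle of $q$, defined by $0\to F\to q^{*}N^{*}_{Y\times\{0\}/X\times\P^{1}}\to N^{*}_{E/M_{Y/X}}\to0$ as in Proposition~\ref{PropositionDeuxInsertC}~(vi), one has $\tor_{j}\bigl(i_{Y\times\{0\}*}\eee,\sigma\bigr)\simeq i_{E*}\bigl(q^{*}\eee\otimes\Lambda^{j}F\bigr)$ for all $j$. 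Here $\ran F=1$, hence $\sigma^{*}i_{X_{0}*}\hh=i_{E*}q^{*}\eee$, $\tor_{1}\bigl(i_{X_{0}*}\hh,\sigma\bigr)=i_{E*}\bigl(q^{*}\eee\otimes F\bigr)$, and $\tor_{\geq2}\bigl(i_{X_{0}*}\hh,\sigma\bigr)=0$. Pulling back the exact sequence $0\to\ti\g\to\pr_{1}^{*}\g(1)\to i_{X_{0}*}\hh\to0$ and using that $\pr_{1}^{*}\g(1)$ is locally free, the long exact sequence of $\tor(\,\cdot\,,\sigma)$ collapses to $\tor_{1}(\ti\g,\sigma)\simeq\tor_{2}(i_{X_{0}*}\hh,\sigma)=0$, together with an exact sequence
\[
0\longrightarrow i_{E*}\bigl(q^{*}\eee\otimes F\bigr)\longrightarrow\sigma^{*}\ti\g\longrightarrow\sigma^{*}\pr_{1}^{*}\g(1)\xrightarrow{\;\psi\;}i_{E*}q^{*}\eee\longrightarrow0 .
\]

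To conclude, set $\qq=\ker\psi$. Since $E$ is a smooth hypersurface of $M_{Y/X}$, the sheaf $q^{*}\eee$ is locally free on $E$ and $\sigma^{*}\pr_{1}^{*}\g(1)$ is locally free on $M_{Y/X}$, so Lemma~\ref{NouveauLemmeClChArt1} applied to $0\to\qq\to\sigma^{*}\pr_{1}^{*}\g(1)\to i_{E*}q^{*}\eee\to0$ shows that $\qq$ is locally free; its rank is $\ran\g=r$, the generic rank of $\sigma^{*}\ti\g$. The displayed sequence then yields $0\to i_{E*}(q^{*}\eee\otimes F)\to\sigma^{*}\ti\g\to\qq\to0$, exhibiting $\qq$ as a locally free quotient of $\sigma^{*}\ti\g$ of maximal rank with torsion kernel $\nn=i_{E*}(q^{*}\eee\otimes F)$; by the uniqueness clause of Theorem~\ref{TheoremeUnChClArt1}, $\qq$ is \emph{the} locally free quotient of maximal rank, which is the assertion.

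The step I expect to be the main obstacle is the precise identification $\tor_{1}(i_{X_{0}*}\hh,\sigma)\simeq i_{E*}(q^{*}\eee\otimes F)$, i.e.\ checking that the line bundle produced by the derived pull-back is exactly the excess conormal bundle $F$, with the correct conventions for $\mathcal{O}_{\P(N_{Y/X}\oplus\mathcal{O}_{Y})}(\pm1)$ and for normal versus conormal bundles. This is a local Koszul-complex calculation: in the two charts of $M_{Y/X}$ the ideal $(\,t,\,$ equation of $Y)$ becomes principal, $\sigma^{*}\ti\g$ becomes visibly a direct sum of a free module and copies of $\mathcal{O}_{E}$, and one only has to track the transition functions to recognise the twist as $F$. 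Everything else is formal: the vanishing of $\tor_{\geq2}(i_{X_{0}*}\hh,\sigma)$, the exactness of the four-term sequence, the local freeness of $\qq$ via Lemma~\ref{NouveauLemmeClChArt1}, and uniqueness via Theorem~\ref{TheoremeUnChClArt1}.
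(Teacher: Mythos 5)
Your proposal is correct and follows essentially the same route as the paper: pull back the exact sequence $0\to\ti\g\to\pr_{1}^{*}\g(1)\to i_{X_{0}*}\hh\to0$ under $\sigma$, identify $\qq$ as the kernel of $\sigma^{*}\pr_{1}^{*}\g(1)\to\sigma^{*}i_{X_{0}*}\hh=i_{E*}q^{*}\eee$ and invoke Lemma~\ref{NouveauLemmeClChArt1} for its local freeness, and identify the torsion kernel $\tor_{1}(i_{X_{0}*}\hh,\sigma)$ with $i_{E*}(q^{*}\eee\otimes F)$ (the paper cites Borel--Serre \S\,15 for exactly the Koszul computation you outline).
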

\begin{proof}
We start from the exact sequence
$\sutrgdpt{\ti{\g}}{\pr_{1}\ee\g(1)}{i_{X_{0}*}\he\hh}{.}$ We define $\qq$ by
the exact sequence $\sutrgdpt{\qq}{\sigma \ee\pr_{1}\ee\g(1)}
{\sigma \ee i_{X_{0}*}\he\hh}{.}$ Since
${\sigma \ee i_{X_{0}*}\he\hh}$ is
the push-forward of a locally free sheaf on $E$,
by Lemma \ref{NouveauLemmeClChArt1}, the sheaf
$\qq$ is locally free on $M_{Y/X}\he$. The sequence
\[
\sutrgd{\tore{1}{i_{X_{0}*}\he\hh}
{\sigma }}{\sigma \ee\ti\g}{\qq}
\]
is exact. The first sheaf being a torsion sheaf, $\qq$ is a locally free
quotient of $\ti\g$ with maximal rank. Besides, using the notations
given in the following diagram
\[
\xymatrix@C=8ex{
E\, \, \ar@{^{(}->}[r]^-{i_{E}\he}\ar[d]_{q}&M_{Y/X}\he\ar[d]^{\sigma }\\
Y\tim \{0\}\, \ar@{^{(}->}[r]_-{i_{Y\tim\{0\}}\he}&X\tim\P^{1}
}
\]
we have $\tore{1}{i_{X_{0}*}\he\hh}
{\sigma }=i_{E*}\he\bigl(q\ee
\eee\oti F\bigr)$ where $F$ is the excess conormal bundle
of $q$ (see \cite[\S\,15]{BoSe}. Be aware of the fact that
what we note $F$ is  $F\ee$ in \cite{BoSe}). This finishes the proof.
\end{proof}
We consider now the exact sequence
$\sutrgd{\nn}{\sigma \ee\ti\g}{\qq}$ where
$\qq$ is locally free on $M_{Y/X}\he$
and $\nn=i_{E*}\he\bigl(q\ee\eee\oti F\bigr)=i_{E*}\he\LL$.
We would like to introduce the form
$\ch(\sigma \ee\ti\g)$, but it is not defined since
$M_{Y/X}$ is of dimension $n+1$. However, $\sigma \ee\ti\g$ fits in a
short exact sequence where the Chern classes of the two other sheaves
can be defined. Remark that we need Lemma
\ref{LemmeQuatreWhitneyFormulaArt1}
to perform this trick, since it cannot be done on $X\times\P^{1}$.
\begin{lemma}\label{LemmeSixWhitneyFormulaArt1}
Let $\alpha $ be the Deligne class on $M_{Y/X}\he$ defined by
$\alpha =\ba\ch(\qq)+i_{E*}\he\Bigl(
\ba\ch(\LL)\td\bigl(N_{E/M_{Y/X}\he}\he\bigr)^{-1}\Bigr)$.
\begin{enumerate}
  \item [(i)] The class $\alpha $ is the pullback of a
  Deligne class on $X\tim\P^{1}$.

  \item [(ii)]
  We have $i\ee_{D}\alpha ={\mu }\ee\be\ch^{X_{0}}\be\bigl(
  \ti{\g}_{0}\bigr)$.
\end{enumerate}
\end{lemma}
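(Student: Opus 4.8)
The plan is to prove (i) by means of the blow‑up structure theorem for Deligne cohomology (Proposition \ref{PropositionDeuxInsertC} (vi)): writing $M=M_{Y/X}\he$, a Deligne class on $M$ is the pullback of a (unique) Deligne class on $X\tim\P^{1}$ as soon as its restriction to $E$ is a $q$‑pullback from $Y\tim\{0\}$. So I would compute $i_{E}\ee\alpha$. Since $E$ is a smooth hypersurface of $M$, the self‑intersection formula (the hypersurface case of Proposition \ref{PropositionDeuxInsertC} (vii)) gives $i_{E}\ee i_{E*}\bigl(\ba\ch(\LL)\,\td(N_{E/M})^{-1}\bigr)=\ba\ch(\LL)\bigl(1-e^{-c_{1}(N_{E/M})}\bigr)$ (using $\td(L)^{-1}c_{1}(L)=1-e^{-c_{1}(L)}$). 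Restricting the exact sequence $0\to i_{E*}\LL\to\sigma \ee\ti{\g}\to\qq\to 0$ to $E$, using that $\qq$ is locally free (so no $\tor_{1}$ against $\oo_{E}$ survives) and that $i_{E}\ee i_{E*}\LL=\LL$, yields $0\to\LL\to i_{E}\ee\sigma \ee\ti{\g}\to i_{E}\ee\qq\to 0$, hence $\ba\ch(i_{E}\ee\qq)=\ch(i_{E}\ee\sigma \ee\ti{\g})-\ba\ch(\LL)$ and therefore $i_{E}\ee\alpha=\ch(i_{E}\ee\sigma \ee\ti{\g})-\ba\ch(\LL)\,e^{-c_{1}(N_{E/M})}$. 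Now $\sigma \circ i_{E}=i_{Y\tim\{0\}}\circ q$, and $\ti{\g}$ is flat over $\P^{1}$ (Remark \ref{LastRemarkArt1}), so $i_{E}\ee\sigma \ee\ti{\g}=q\ee\bigl(\ti{\g}_{\vert Y\tim\{0\}}\bigr)=q\ee\bigl((\ff\oplus\hh)_{\vert Y}\bigr)=q\ee(\ff_{\vert Y})\oplus q\ee\eee$ (using $\ti{\g}_{0}\simeq\ff\oplus\hh$ and $\hh=i_{Y*}\eee$); both summands are locally free, so $\ch(i_{E}\ee\sigma \ee\ti{\g})=q\ee\bigl(\ba\ch(\ff_{\vert Y})+\ba\ch(\eee)\bigr)$. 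Finally $N_{E/M}=\oo_{E}(-1)$, and the defining sequence of the excess bundle $F$ (Proposition \ref{PropositionDeuxInsertC} (vi), for the blow‑up of $X\tim\P^{1}$ along $Y\tim\{0\}$, whose conormal bundle is $N_{Y/X}\ee\oplus\oo_{Y}$) gives $c_{1}(F)=-q\ee c_{1}(N_{Y/X}\he)-c_{1}(\oo_{E}(1))$; since $\LL=q\ee\eee\oti F$ this gives $\ba\ch(\LL)\,e^{-c_{1}(N_{E/M})}=q\ee\ba\ch(\eee)\,e^{c_{1}(F)+c_{1}(\oo_{E}(1))}=q\ee\bigl(\ba\ch(\eee)\,e^{-c_{1}(N_{Y/X}\he)}\bigr)$, the powers of $c_{1}(\oo_{E}(1))$ cancelling. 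Hence $i_{E}\ee\alpha=q\ee\Bigl[\ba\ch(\ff_{\vert Y})+\ba\ch(\eee)\bigl(1-e^{-c_{1}(N_{Y/X}\he)}\bigr)\Bigr]$ is a $q$‑pullback, which proves (i).

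For (ii) the plan is to restrict $\alpha$ to the other branch $D$ of $\sigma \ee X_{0}$, using that $D$ and $E$ meet transversally along $W:=D\cap E$, a smooth hypersurface of $D\simeq X$, and that $\sigma \circ i_{D}=i_{X_{0}}\circ\mu $. By the transversal intersection formula (Proposition \ref{PropositionDeuxInsertC} (iv)), $i_{D}\ee i_{E*}\bigl(\ba\ch(\LL)\,\td(N_{E/M})^{-1}\bigr)=i_{W*}\Bigl(\ba\ch(\LL_{\vert W})\,\td\bigl(N_{E/M\,\vert W}\bigr)^{-1}\Bigr)$, where $i_{W}\colon W\hookrightarrow D$. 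Restricting $0\to i_{E*}\LL\to\sigma \ee\ti{\g}\to\qq\to 0$ to $D$ ($\qq$ locally free, and $i_{E*}\LL\oti\oo_{D}=i_{W*}(\LL_{\vert W})$ by transversality) and using $i_{D}\ee\sigma \ee\ti{\g}=\mu \ee\ti{\g}_{0}$ gives $[\qq_{\vert D}]=\mu \ee[\ti{\g}_{0}]-[i_{W*}(\LL_{\vert W})]$ in $\kan(D)$, so $i_{D}\ee\ba\ch(\qq)=\ch^{D}(\mu \ee\ti{\g}_{0})-\ch\bigl(i_{W*}(\LL_{\vert W})\bigr)$. Here $\ch^{D}(\mu \ee\ti{\g}_{0})=\mu \ee\ch^{X_{0}}(\ti{\g}_{0})$ (functoriality of $\ba\ch$ on the locally free summand $\mu \ee\ff$, Proposition \ref{PropositionDeuxComplementQuatreUnArt1} (ii), together with Proposition \ref{PropositionUnChClArt1} (iii) on the torsion summand $\mu \ee\hh$), while $\ch\bigl(i_{W*}(\LL_{\vert W})\bigr)=i_{W*}\bigl(\ba\ch(\LL_{\vert W})\,\td(N_{W/D}\he)^{-1}\bigr)$ by the already‑proved (GRR) formula for the hypersurface immersion $i_{W}\he$ (Proposition \ref{PropositionUnChClArt1} (iv)). Since transversality gives $N_{E/M\,\vert W}=N_{W/D}\he$, the two terms supported on $W$ cancel and $i_{D}\ee\alpha=\mu \ee\ch^{X_{0}}(\ti{\g}_{0})$, which is (ii).

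The computations themselves are short; the point that requires attention is the sheaf‑theoretic bookkeeping — checking that the only $\tor$ sheaves that survive the various restrictions to $E$ and to $D$ are exactly the ones listed above (this is where local freeness of $\qq$, flatness of $\ti{\g}$ over $\P^{1}$, and the transversality of $D$ and $E$ are used) — together with the exact determination of $c_{1}(N_{E/M})$ and $c_{1}(F)$ in (i), so that the $c_{1}(\oo_{E}(1))$‑terms genuinely cancel. Neither is a serious obstacle; it is essentially a matter of assembling correctly the formulae of Sections \ref{SectionDeux} and \ref{SubsectionTorsionsheaves}.
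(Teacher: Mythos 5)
Your proposal is correct and follows essentially the same route as the paper: restrict $\alpha$ to $E$ via the self-intersection formula and the exact sequence $\sutrgd{\nn}{\sigma\ee\ti{\g}}{\qq}$, identify the result as a $q$-pullback using the excess conormal sequence (your explicit $c_{1}$-computation with $N_{E/M_{Y/X}}=\oo_{E}(-1)$ is the same cancellation the paper packages as $F\oti N\ee_{E/M_{Y/X}}=q\ee\det N\ee_{Y/X\tim\P^{1}}$), and for (ii) restrict to $D$ using transversality of $D$ and $E$ and the forced GRR formula on the hypersurface $W=D\cap E$ of $D$. The only difference is bookkeeping (you subtract the $W$-supported term and show cancellation where the paper adds and recognizes the sum as $\ch^{D}\bigl(i_{D}\ee\sigma\ee\ti{\g}\bigr)$), which is immaterial.
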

\begin{proof}
We compute explicitly $\ie\ee\alpha $.
\begin{align*}
i\ee_{E}\, i_{E*}\he\, \Bigl(
\ba\ch(\LL)\td\bigl(N_{E/M_{Y/X}\he}\he\bigr)^{-1}\Bigr)
&=\ba{\ch}(\LL)\, \, \td\bigl(N_{E/M_{Y/X}\he}\he\bigr)^{-1}\, \,
c_{1}\bigl(N_{E/M_{Y/X}\he}\he\bigr)&&\textrm{by Proposition
\ref{PropositionDeuxInsertC} (vii)}\\
&=\ba{\ch}(\LL)\biggl(
1-e^{-\ds c_{1}\bigl(N_{E/M_{Y/X}\he}\bigr)}\biggr)\\
&=\ba{\ch}(\LL)-\ba{\ch}
\Bigl(\LL\oti N\ee_{E/M_{Y/X}\he}\Bigr)
&&\textrm{by Proposition \ref{PropositionDeuxComplementQuatreUnArt1} (iii)}\\
&=\ba{\ch}\bigl(\ie\ee\nn\bigr)-\ba{\ch}
\Bigl(\LL\oti N\ee_{E/M_{Y/X}\he}\Bigr).
\end{align*}
From the exact sequence $\sutrgdpt{\nn}
{\sigma \ee\ti\g}{\qq}{,}$ we get the exact sequence of locally free
sheaves on $E$:
$\sutrgdpt{i_{E}\ee\nn}{\ie\ee\, \, \sigma \ee\ti{\g}}{\ie\ee\qq}{.}$
Since $\ie\ee\, \, \ba{\ch}(\qq)=\bh\bigl(\ie\ee\qq\bigr)$, we obtain
\begin{align*}
\ie\ee\alpha
&=\ba\ch\bigl(i_{E}\ee\qq\bigr)+\ba\ch\bigl(i_{E}\ee\nn\bigr)
-\ba\ch\bigl(\LL\oti N\ee_{E/M_{Y/X}\he}\bigr)\\
&=\bh\bigl(\ie\ee\sigma \ee\ti\g\bigr)-\bh\bigl(
\LL\oti N\ee_{E/M_{Y/X}\he}
\bigr)&&\textrm{by Proposition \ref{PropositionDeuxComplementQuatreUnArt1}
(i)}\\
&=\bh\bigl(q\ee i_{Y}\ee\ff\bigr)+\bh\bigl(q\ee i_{Y}\ee\hh\bigr)-
\bh\bigl(\LL\oti N\ee_{E/M_{Y/X}\he}\bigr)\\
&=q\ee\ba\ch\bigl(i_{Y}\ee\ff\bigr)+q\ee\ba\ch(\eee)-
\bh\bigl(q\ee\eee\oti F\oti N\ee_{E/M_{Y/X}\he}\bigr)&&
\textrm{by Proposition \ref{PropositionDeuxComplementQuatreUnArt1} (ii).}
\end{align*}
Recall that the conormal excess bundle $F$ is
the line bundle
defined by the exact
sequence
\[
\sutrgdpt{F}{q\ee N\ee_{Y/X\tim\P^{1}}}{N\ee_{E/M_{Y/X}}}{.}
\]
Thus, we have $\det\bigl(q\ee N\ee_{Y/X\tim\P^{1}}\bigr)=
F \oti N\ee_{E/M_{Y/X}\he}$. Since $\det\bigl(q\ee N\ee_{Y/X\tim\P^{1}}\bigr)
=q\ee \det\bigl(N\ee_{Y/X\tim\P^{1}}\bigr)$, we get
by Proposition \ref{PropositionDeuxComplementQuatreUnArt1} (ii)
again
\[
\ie\ee\alpha =q\ee\bigl[\, \ba{\ch}\bigl(
i_{Y}\ee\ff\bigr)+\ba{\ch}(\eee)-\ba{\ch}\bigl(\eee\oti
\det\bigl(N\ee_{Y/X\tim\P^{1}}\bigr)\bigr)\bigr].
\]
This proves (i).
\par\medskip
(ii) The divisors $E$ and $D$ meet transversally. Then, by
Proposition \ref{PropositionDeuxInsertC} (iv),
\begin{align*}
i_{D}^{\, *}\alpha &=i_{D}^{\, *}\ba{\ch}(\qq)+i_{D}^{\, *}
i_{E*}\he\Bigl(\ba{\ch}(\LL)\td\bigl(N_{E/M_{Y/X}\he}\he\bigr)^{-1}\Bigr)\\
&=\ba{\ch}\bigl(i_{D}^{\, *}\qq\bigr)+
i_{\flcourtegd{\st E\cap D}{\st D*}}\he\, \,
i_{\flcourtegd{\st E\cap D}{\st E}}^{\, *}
\Bigl(\ba{\ch}(\LL)\td\bigl(N_{E/M_{Y/X}\he}\he\bigr)^{-1}\Bigr)\\
&=\ba{\ch}\bigl(i_{D}^{\, *}\qq\bigr)+
i_{\flcourtegd{\st E\cap D}{\st D*}}\he\Bigl(
\ba{\ch}\bigl(i_{\flcourtegd{\st E\cap D}{\st E}}^{\, *}\LL\bigr)
\td\bigl(N_{E\cap D/D}\he\bigr)^{-1}
\Bigr).
\end{align*}
We remark now that $i_{\flcourtegd{\st E\cap D}{\st E}}^{\, *}\LL
=i_{D\cap E}^{\, *}\, \, \nn$. Since $\dim(E\cap D)=n-1$,
we obtain
\[
i_{D}^{\, *}\alpha =\ba{\ch}\bigl(i_{D}^{\,
*}\qq\bigr)+\ch^{D}\be
\bigl(i_{\flcourtegd{\st E\cap D}{\st D*}}\he i_{E\cap D}^{\, *}
\nn\bigr)=\ba{\ch}\be\bigl(i_{D}^{\,
*}\qq\bigr)+\ch^{D}\be \bigl(i_{D}\ee\nn\bigr).
\]
We have the exact sequence on $D$:
$\sutrgdpt{i_{D}\ee\nn}{i_{D}\ee\, \, \sigma \ee\ti\g}{i_{D}\ee\qq}{.}$
Therefore
$i_{D}\ee\, \, \sigma \ee\ti\g$ admits a locally free quotient of maximal rank
and, $\mu $ being an isomorphism,
\[
\ba\ch\bigl(i_{D}\ee\qq\bigr)+\ch^{D}\be(i_{D}\ee\nn)=
\ch^{D}\be\bigl(i_{D}\ee\, \, \sigma \ee\ti\g
\bigr)=\ch^{D}\be\bigl({\mu }\ee\ti{\g}_{0}\bigr)={\mu }\ee\ch
^{X_{0}}\be\bigl(\ti{\g}_{0}\bigr).
\]
\end{proof}
We are now ready to use the homotopy property for Deligne cohomology
(Proposition \ref{PropositionUnComplementQuatreUnArt1} (vi)).
\par\medskip
Let $\alpha $ be the form defined in Lemma \ref{LemmeSixWhitneyFormulaArt1}.
Using (i) of this lemma and Proposition \ref{PropositionDeuxInsertC} (vi), we
can write  $\alpha =\sigma \ee\beta $. Thus
$i_{D}\ee\, \alpha =i_{D}\ee\, \sigma \ee\beta ={\mu }\ee \be i_{X_{0}}\ee\beta $.
By (ii) of the same lemma,
$i_{D}\ee\alpha ={\mu }\ee\be\ch^{X_{0}}\be\bigl(\ti{\g}_{0}\bigr)$
and we get $i_{X_{0}}\ee\beta =\ch^{X_{0}}\be\bigl(\ti{\g}_{0}\bigr)$.
If $t\in\P^{1}
\backslash\{0\}$, we have clearly $\beta _{\vert X_{t}}=\ba\ch(\g)$.
Since
$\beta _{\vert X_{t}}\he=\beta _{\vert X_{0}}\he $
we obtain
\[
\ba\ch(\g)=\ch^{X_{0}}\be\bigl(\ti{\g}_{0}\bigr)
=\ba\ch(\ff)+\ch(\hh).
\]
\begin{flushright}
$\Box$
\end{flushright}
We can now establish the remaining induction properties.
\begin{theorem}\label{PropositionDeuxWhitneyFormulaArt1}
The following assertions are valid:
\begin{enumerate}

  \item [(i)] Property $(\emph{F}_{n})$ holds.

  \item [(ii)] Property $(\emph{P}_{n})$ holds.
\end{enumerate}
\end{theorem}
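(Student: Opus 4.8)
The plan is to deduce both properties from the machinery already assembled, using the Whitney formula $(\textrm{W}_{n})$ just established --- which makes $\ch$ a well-defined additive map on $\kan(X)$ --- together with the constant use of two facts: pullback along a composition of blowups with smooth centers is an injective ring homomorphism, and, by Theorem \ref{TheoremeUnChClArt1}, after such a pullback any coherent sheaf becomes, in $K$-theory, the sum of a locally free sheaf and a torsion sheaf. As a preliminary step I would record $(\textrm{F}_{n})$ for a bimeromorphic morphism $\sigma\colon\tix\to X$ which is a composition of blowups: by $(\textrm{W}_{n})$ one reduces to $y=[\ff]$, then takes further blowups so that all the relevant pullbacks of $\ff$ acquire a locally free quotient of maximal rank, and concludes by applying Theorem \ref{LemmeHuitChClArt1} (i) (together with $(\textrm{C}_{n})$) to the locally free part and Proposition \ref{LemmeQuatreChClArt1} (i) to the torsion part, descending through injectivity of $\sigma^{*}$. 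Functoriality for the immersion $i_{Y}$ of a submanifold is just Theorem \ref{LemmeHuitChClArt1} (ii), extended to arbitrary $K$-theory classes by $(\textrm{W}_{n})$.

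For $(\textrm{F}_{n})$ I would argue as in the proof of Proposition \ref{PropositionUnChClArt1} (iii): by $(\textrm{W}_{n})$ reduce to $y=[\ff]$; then desingularize $f(X)$, perform the matching blowups on $Y$ and desingularize the resulting meromorphic map, so that $f$ factors, up to bimeromorphic modifications of source and target, as $\tix\xrightarrow{\tilde f}W\hookrightarrow\tiy\to Y$ with $\tilde f$ surjective and $W$ a submanifold of $\tiy$; since functoriality is known for the two bimeromorphic maps and for the immersion of $W$, this reduces the problem to $f$ surjective. When $f\colon X\to Y$ is surjective with $\dim X,\dim Y\le n$, either $\dim Y<n$, in which case $\dim X=\dim Y<n$ and $(\textrm{F}_{n-1})$ applies, or $\dim Y=n$, hence $\dim X=n$; in the latter case one desingularizes $\ff$ on $Y$ by $\tau\colon\tiy\to Y$ and base-changes $f$ to a surjective $\tilde f\colon T\to\tiy$ with $\pi\colon T\to X$ bimeromorphic. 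Writing $\tau^{!}[\ff]=[\eee_{0}]+[\ttt_{1}]$ in $\kan(\tiy)$ with $\eee_{0}$ locally free and $\ttt_{1}$ a torsion sheaf (so that $\tilde f^{*}\ttt_{1}$ is again torsion), one gets
\[
\ch(\tilde f^{!}\tau^{!}[\ff])=\tilde f^{*}\ba\ch(\eee_{0})+\tilde f^{*}\ch(\ttt_{1})=\tilde f^{*}\ch(\tau^{!}[\ff])=\tilde f^{*}\tau^{*}\ch(\ff)
\]
by Proposition \ref{PropositionDeuxComplementQuatreUnArt1} (ii), Proposition \ref{PropositionUnChClArt1} (iii) and Theorem \ref{LemmeHuitChClArt1} (i); comparing this with $\ch(\tilde f^{!}\tau^{!}[\ff])=\pi^{*}\ch(f^{!}[\ff])$ (the preliminary step) and using injectivity of $\pi^{*}$ yields $\ch(f^{!}[\ff])=f^{*}\ch(\ff)$.

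For $(\textrm{P}_{n})$, $(\textrm{W}_{n})$ and bilinearity reduce the multiplicativity to $\ch([\ff]\,.\,[\g])=\ch(\ff)\ch(\g)$ for coherent sheaves $\ff,\g$, while $\ch(1)=\ba\ch(\oo_{X})=1$ is immediate. I would pick a bimeromorphic $\sigma\colon\tix\to X$ by blowups such that $\sigma^{*}\ff$ and $\sigma^{*}\g$ both admit locally free quotients of maximal rank, so that $\sigma^{!}[\ff]=[\eee_{1}]+[\ttt_{1}]$ and $\sigma^{!}[\g]=[\eee_{2}]+[\ttt_{2}]$ in $\kan(\tix)$; as $\sigma^{!}$ is a ring morphism, $\sigma^{*}$ an injective ring morphism, and $\ch\circ\sigma^{!}=\sigma^{*}\circ\ch$ by $(\textrm{F}_{n})$, it suffices to expand $\sigma^{!}([\ff]\,.\,[\g])=\sigma^{!}[\ff]\,.\,\sigma^{!}[\g]$ and check multiplicativity term by term. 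The three terms with a locally free factor follow from Proposition \ref{PropositionDeuxComplementQuatreUnArt1} (iii) and Proposition \ref{PropositionUnChClArt1} (ii). The heart of the matter is $\ch(\ttt\,.\,\ttt')=\ch(\ttt)\ch(\ttt')$ for torsion sheaves: after one more blowup and $(\textrm{W}_{n})$ one may assume $\ttt'=i_{Z*}\g'$ with $Z$ a smooth hypersurface and $\g'$ coherent on $Z$, and then the projection formula in $K$-theory together with the Grothendieck--Riemann--Roch formula for the hypersurface $Z$ (Proposition \ref{PropositionUnChClArt1} (iv)), $(\textrm{P}_{n-1})$ on $Z$, Proposition \ref{LemmeQuatreChClArt1} (ii) and the projection formula in Deligne cohomology (Proposition \ref{PropositionDeuxInsertC} (i)) give
\[
\ch\bigl(\ttt\,.\,i_{Z*}\g'\bigr)=\ch\bigl(i_{Z*}(i_{Z}^{!}\ttt\,.\,[\g'])\bigr)=i_{Z*}\bigl(i_{Z}^{*}\ch(\ttt)\,.\,\ch^{Z}(\g')\,\td(N_{Z/X})^{-1}\bigr)=\ch(\ttt)\,.\,\ch(i_{Z*}\g');
\]
descending through the blowup pullback finishes the argument.

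The part I expect to be most troublesome is not any single computation but the bookkeeping of the many bimeromorphic reductions: one must ensure at each stage that the sheaves involved really do acquire locally free quotients of maximal rank, or stay torsion with torsion pullback, and must keep track of the $\tor$-sheaves produced by the base changes. The reduction to a surjective morphism in $(\textrm{F}_{n})$ and the reduction of a torsion sheaf to a push-forward from a smooth hypersurface in $(\textrm{P}_{n})$ are the places where this requires genuine care.
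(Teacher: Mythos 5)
Your proposal is correct and follows essentially the same route as the paper's own proof: for $(\textrm{F}_{n})$, reduction to the surjective case via desingularization of $f(X)$ and the factorization through an immersion, then splitting the pulled-back class into a locally free part and a torsion part handled by Proposition \ref{PropositionDeuxComplementQuatreUnArt1} (ii) and Proposition \ref{PropositionUnChClArt1} (iii); for $(\textrm{P}_{n})$, the four-term expansion after a bimeromorphic modification, with the torsion-times-torsion product computed by the $K$-theoretic projection formula, GRR for a smooth hypersurface, $(\textrm{P}_{n-1})$ and the projection formula in Deligne cohomology. The only slip is cosmetic: a surjection need not be equidimensional, so $\dim Y<n$ does not force $\dim X<n$ and $(\textrm{F}_{n-1})$ need not apply there, but your main argument for surjective $f$ covers that situation anyway, exactly as in the paper.
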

\begin{proof}
(i) We take $y=[\ff]$. Let us first suppose that $f$ is a bimeromorphic
map. Then there exists a bimeromorphic map $\apl{\sigma }{\ti{X}}{X}$
such that $(f\circ \sigma )\ee \ff$ admits a locally free quotient
of maximal rank. Then by Theorem \ref{LemmeHuitChClArt1} (i),
\[
\sigma \ee\ch\bigl(f\pe\be[\ff]\bigr)=\ch\bigl(\sigma \pe\be\, f\pe\be
[\ff]\bigr)=(f\circ\sigma )\ee\be\ch\ff=\sigma
\ee\be\bigl[f\ee\be\ch(\ff)\bigr].
\]
Suppose now that $f$ is a surjective map.
Then there exist two bimeromorphic maps
$\aplpt{\pi _{X}\he}{\tix}{X}{,}$ $\apl{\pi _{Y}\he}{\tiy}{Y}$ and a
surjective map
$\apl{\ti{f}}{\ti{X}}{\ti{Y}}$ such that:
\begin{enumerate}
  \item [--] the diagram
  $
\xymatrix{
\tix\ar[r]^{\ti{f}}\ar[d]_{\pi _{X}\he}&\tiy\ar[d]_{\pi _{Y}\he}\\
X\ar[r]_{f}&Y
}
  $ \quad is commutative.
  \item [--] the sheaf $\pi _{Y}\ee\ff$ admits a locally free quotient
  $\eee$ of maximal
  rank.
\end{enumerate}
We can write $\pi _{Y}\pe[\ff]=[\eee]+\ti{y}$\quad in $G(\ti{Y})$,
where $\ti{y}$ is in the image of the natural map $\aplpt{\iota }
{G_{\textrm{tors}}\he(\ti{Y})}{G(\ti{Y})}{.}$
The functoriality property being known for bimeromorphic maps, it
holds for $\pi _{X}\he$ and $\pi _{Y}\he$. The result is now a consequence of
Proposition \ref{PropositionDeuxComplementQuatreUnArt1} (ii) and
Proposition \ref{PropositionUnChClArt1} (iii).
\par\medskip
In the general case, we consider the diagram used in the proof of
Proposition \ref{PropositionUnChClArt1} (iii)
\[
\xymatrix@C=8ex{
\tix\ar[r]^{\ti{f}}\ar[d]_{\pi _{X}\he}&W\ar[r]^{i_{W}\he}\ar[d]^{\tau
}&\tiy\ar[d]^{\pi _{Y}\he}\\
X\ar[r]_-{f}&f(X)\ar[r]&Y
}
\]
where $\ti{f}$ is surjective. Then the functoriality property holds for
$\ti{f}$ by the argument above and for $i_{W}\he$ by Theorem
\ref{LemmeHuitChClArt1} (ii). This finishes the proof.

\par\medskip
(ii) We can suppose that $x=[\ff]$, $y=[\g]$ and that
$\ff$ and $\g$ admit locally free quotients $\eee_{1}$, $\eee_{2}$ of
maximal rank. Let $\ttt_{1}$ and $\ttt_{2}$ be the associated kernels.
We can also suppose that $\textrm{supp}(\ttt_{1})$ lies in a simple
normal crossing divisor.
Then
\begin{align*}
\ch([\ff].[\g])&=\ba\ch([\eee_{1}\he].[\eee_{2}\he])+\ch([\eee_{1}\he].
[\ttt_{2}])+\ch([\eee_{2}\he].[\ttt_{1}])+\ch([\ttt_{1}].
[\ttt_{2}])\\
&=\ba\ch(\eee_{1}\he)\ba\ch(\eee_{2}\he)+\ba\ch(\eee_{1}\he)\ch(\ttt_{2})
+\ba\ch(\eee_{2}\he)\ch(\ttt_{1})+\ch([\ttt_{1}].[\ttt_{2}])
\end{align*}
by $(\textrm{W}_{n})$, Proposition \ref{PropositionUnChClArt1} (ii) and
Proposition \ref{PropositionDeuxComplementQuatreUnArt1} (iii).
By d\'{e}vissage, we can suppose that $\ttt_{1}$ is a
\mbox{$\oo_{Z}\he$-module}, where $Z$ is a smooth hypersurface of $X$.
We write
$[\ttt_{1}]=i_{Z!}\he u$\quad  and $[\ttt_{2}]=v$.
Then
$[\ttt_{1}]\, .\, [\ttt_{2}]=i_{Z!}\he\bigl(u\, .\, i_{Z}\pe v\bigr)$. So
\begin{align*}
\ch\bigl([\ttt_{1}].[\ttt_{2}]\bigr)&=
i_{Z*}\he\Bigl(\ch^{Z}(u\, .\, i_{Z}\pe v)\, \,
\td\bigl(N_{Z/X}\bigr)^{-1}\Bigr)\\
&=i_{Z*}\he\Bigl(\ch^{Z}(u)\, \,
i_{Z}\ee\ch(v)\,\,\td\bigl(N_{Z/X}\bigr)^{-1}\Bigr)
&&\textrm{by $(\textrm{P}_{n-1})$ and
Proposition \ref{LemmeQuatreChClArt1} (ii)}\\
&=i_{Z*}\he\Bigl(\ch^{Z}(u)\,\,
\td\bigl(N_{Z/X}\bigr)^{-1}\Bigr)\ch(v)&&\textrm{by the projection formula}\\
&=\ch\bigl(i_{Z!}\he u\bigr)\ch(v)=\ch\bigl([\ttt_{1}]\bigr)
\ch\bigl([\ttt_{2}]\bigr)
\end{align*}
\end{proof}
The proof of Theorem \ref{TheoremTroisTrois} is
now concluded with the exception of property $(\textrm{RR}_{n})$.
\section{The Grothendieck-Riemann-Roch theorem for
projective morphisms}
\label{GRRForAnImmersionClassesArt1}
\subsection{Proof of the GRR formula}
We have already obtained the (GRR) formula for
the immersion of a smooth
divisor. We reduce now the general case to the divisor case by  a
blowup. This construction is classical (see \cite{BoSe}).
\begin{theorem}\label{SansRef}
Let $Y$ be a smooth submanifold of $X$. Then, for all
$y$ in $G(Y)$, we have
\[
\ch\bigl(i_{Y!}\he y\bigr)=i_{Y*}\he
\bigl(\ch(y)\td(N_{Z/X}\he)^{-1}\bigr).
\]
\end{theorem}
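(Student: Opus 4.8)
The plan is to reduce the statement to the case of a smooth hypersurface, which is built into the construction (Proposition~\ref{PropositionUnChClArt1}~(iv)), by the classical blow-up device going back to Borel--Serre. First I would reduce to $y=[\ff]$ for a single coherent sheaf $\ff$ on $Y$: since $i_{Y}$ is a closed immersion, $i_{Y!}=i_{Y*}$ is exact and hence a group morphism $G(Y)\to G(X)$, and composing with $\ch$, which is a group morphism on $G(X)$ by $(\textrm{W}_{n})$, makes the left-hand side additive in $y$; on the other side $\ch$ is a group morphism on $G(Y)$ by $(\textrm{W}_{n-1})$ (one has $\dim Y<n$, the case $Y=X$ being trivial) and the Gysin map $i_{Y*}$ in Deligne cohomology is additive, so the right-hand side is additive as well.

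Next, put $d=\codim_{X}Y$ and blow up $X$ along $Y$:
\[
\xymatrix{
E\ar[r]^{j}\ar[d]_{q}&\tix\ar[d]^{p}\\
Y\ar[r]_{i_{Y}}&X
}
\]
Here $E=\P(N_{Y/X})$ is a smooth hypersurface of $\tix$, $q$ is a projective bundle, and $N_{E/\tix}\simeq\oo_{N_{Y/X}}(-1)$. Since $p$ is bimeromorphic, Proposition~\ref{PropositionDeuxInsertC}~(iii) gives $p_{*}p\ee=\id$, so $p\ee$ is injective and it suffices to check the identity after applying $p\ee$. For the right-hand side, the excess formula of Proposition~\ref{PropositionDeuxInsertC}~(vi) gives directly
\[
p\ee\, i_{Y*}\bigl(\ch(\ff)\,\td(N_{Y/X})^{-1}\bigr)=j_{*}\bigl(q\ee\ch(\ff)\,\, q\ee\td(N_{Y/X})^{-1}\,\, c_{d-1}(F\ee)\bigr),
\]
where $F$ is the excess conormal bundle, $\sutrgd{F}{q\ee N_{Y/X}\ee}{N_{E/\tix}\ee}$.

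For the left-hand side, functoriality $(\textrm{F}_{n})$ applied to $p$ (equivalently Theorem~\ref{LemmeHuitChClArt1}~(i), since $i_{Y*}\ff$ is a torsion sheaf) gives $p\ee\ch(i_{Y*}\ff)=\sum_{i\geq 0}(-1)^{i}\ch\bigl(\tore{i}{i_{Y*}\ff}{p}\bigr)$, all terms being Chern characters of torsion sheaves on $\tix$. The main input is the classical blow-up computation of Borel--Serre (see \cite[\S\,12--15]{BoSe}), which identifies this alternating sum in $G(\tix)$:
\[
p\pe\, i_{Y!}[\ff]=j_{*}\Bigl(q\pe[\ff]\cdot\textstyle\sum_{k=0}^{d-1}(-1)^{k}\bigl[\bwe^{k}F\bigr]\Bigr),
\]
obtained by computing the derived pullback under $p$ of the Koszul-type resolution of $i_{Y*}\ff$ and keeping track of the twist by $\oo_{\tix}(-E)$. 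Then, applying (GRR) for the smooth hypersurface $E$ (Proposition~\ref{PropositionUnChClArt1}~(iv)) together with $(\textrm{P}_{n})$ and $(\textrm{F}_{n})$ on $E$, the left-hand side becomes $j_{*}\bigl(q\ee\ch(\ff)\,\ba\ch\bigl(\sum_{k}(-1)^{k}\bwe^{k}F\bigr)\td(N_{E/\tix})^{-1}\bigr)$. It remains to reconcile the two expressions by an elementary identity of vector bundles on $E$: the splitting principle gives $\ba\ch\bigl(\sum_{k}(-1)^{k}\bwe^{k}F\bigr)=c_{d-1}(F\ee)\,\td(F\ee)^{-1}$, and dualizing the defining sequence of $F$ gives $\td(q\ee N_{Y/X})=\td(N_{E/\tix})\,\td(F\ee)$, hence $\td(F\ee)^{-1}\td(N_{E/\tix})^{-1}=q\ee\td(N_{Y/X})^{-1}$. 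Substituting, both sides equal $j_{*}\bigl(q\ee\ch(\ff)\, q\ee\td(N_{Y/X})^{-1}c_{d-1}(F\ee)\bigr)$, and injectivity of $p\ee$ finishes the proof.

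The delicate step is the Borel--Serre $K$-theoretic identity for $p\pe i_{Y!}[\ff]$ on the blow-up; once it is granted, the argument is just the already established hypersurface case together with Todd-class bookkeeping along the excess exact sequence.
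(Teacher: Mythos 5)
Your proposal is correct and follows essentially the same route as the paper: blow up $X$ along $Y$, use injectivity of $p\ee$, the excess formulae in $K$-theory (Proposition \ref{CorollaireUnAppendixArt1} (ii)) and in Deligne cohomology (Proposition \ref{PropositionDeuxInsertC} (vi)), GRR for the hypersurface $E$, and the identity $\ch(\lambda_{-1}[F])=c_{d-1}(F\ee)\td(F\ee)^{-1}$ together with the Todd-class bookkeeping along the excess sequence. The only cosmetic difference is that the paper quotes the $K$-theoretic excess formula from its appendix rather than rederiving it from a Koszul resolution.
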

\begin{proof}
We blow up $Y$ along $X$ as shown below, where $E$ is the exceptional
divisor.
\[
\xymatrix{
E\ar[r]^{i_{E}\he}\ar[d]_{q}&\tix\ar[d]^{p}\\
Y\ar[r]_{i_{Y}\he}&X
}
\]
Let $F$ be the excess conormal bundle of $q$ defined
by the exact sequence
\[
\sutrgdpt{F}{q\ee N\ee_{Y/X}}{N\ee_{E/\tix}}{.}
\]
If $d$ is the codimension of $Y$ in $X$, then $\textrm{rank}(F)=d-1$.
Recall the following formulae:
\begin{enumerate}
  \item [(a)] Excess formula in \mbox{$K$-theory}
  (Proposition \ref{CorollaireUnAppendixArt1} (ii)): for all $y$ in
  $\kan(Y)$,
  $p\pe\, i_{!}\he y=i_{E!}\he\bigl(q\pe y\, .\, \lambda_{-1}\he
  F\bigr)$.

  \item [(b)] Excess formula in Deligne cohomology
  (Proposition \ref{PropositionDeuxInsertC} (vi)):
  for each Deligne class $\beta $ on $Y$,
  \[
p\ee\, i_{Y*}\he\, \beta =i_{E*}\he
  \bigl(q\ee\beta \, \, c_{d-1}(F\ee)\bigr).
\]
 \item [(c)] If $G$ is a vector bundle of rank $r$, then
  $\ch\bigl(\lambda_{-1}\he [G]\bigr)=c_{r}(G\ee\be)\td(G\ee\be)^{-1}$
  (\cite[Lemme 18]{BoSe}).
\end{enumerate}
We compute now
\begin{align*}
p\ee\ch\bigl(i_{Y!}\he y\bigr)&=\ch\bigl(p\pe\, i_{Y!}\he y\bigr)
=\ch\bigl(i_{E!}\he(q\pe y\, .\, \lambda _{-1}\he [F])\bigr)
&&\textrm{by $(\textrm{F}_{n})$ and (a)}\\
&=i_{E*}\he\Bigl(\ch^{E}
\bigl(q\pe y\, .\, \lambda _{-1}\he[F] \bigr)\td\bigl(N_{E/\tix}\bigr)
^{-1}\Bigr)&&\textrm{by (GRR) for $i_{E}\he$}\\
&=i_{E*}\he\Bigl(q\ee\ch^{Y}(y)\ch\bigl(\lambda _{-1}\he [F]\bigr)\, \,
q\ee\td\bigl(N_{Y/X}\he\bigr)^{-1}\td\bigl(F\ee\bigr)\Bigr)
&&\textrm{by $(\textrm{F}_{n})$, $(\textrm{P}_{n})$ and Proposition
\ref{PropositionDeuxComplementQuatreUnArt1} (i)}\\
&=i_{E*}\he
\Bigl(q\ee\bigl(\ch^{Y}(y)\td\bigl(N_{Y/X}\he\bigr)^{-1}\bigr)\,
c_{d-1}(F\ee)\Bigr)&&\textrm{by (c)}\\
&=p\ee\, i_{Y*}\he\Bigl(\ch^{Y}(y)\td\bigl(N_{Y/X}\he\bigr)^{-1}\Bigr)&&
\textrm{by (b).}
\end{align*}
Thus $\ch\bigl(i_{Y!}\he y\bigr)=i_{Y*}\he\bigl(\ch^{Y}(y)
\td\bigl(N_{Y/X}\he\bigr)^{-1}\bigr)$.
\end{proof}
Now we can prove a more general Grothendieck-Riemann-Roch
theorem:
\begin{theorem}\label{ThComplementaire}
The GRR theorem holds in rational Deligne cohomology for
projective morphisms between smooth complex compact manifolds.
\end{theorem}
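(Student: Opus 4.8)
The plan is to follow the classical reduction of Borel and Serre. A projective morphism $\apl{f}{X}{Y}$ factors as the composition of a closed immersion $\apl{i}{X}{Y\tim\P^{N}}$ with the first projection $\apl{p}{Y\tim\P^{N}}{Y}$; since $X$ is a smooth compact manifold, $i$ is the immersion of a smooth compact submanifold of $Y\tim\P^{N}$. The (GRR) formula for $i$ is Theorem \ref{SansRef}, so it is enough to prove it for $p$ and to check that (GRR) is compatible with composition. The latter is routine: writing $\td(X)=i\ee\td(Y\tim\P^{N})\, \td\bigl(N_{X/Y\tim\P^{N}}\bigr)^{-1}$ from the conormal exact sequence, the projection formula for the Gysin map (Proposition \ref{PropositionDeuxInsertC} (i)) together with $(\textrm{F}_{n})$, $(\textrm{P}_{n})$ and (GRR) for $i$ gives $i_{*}\bigl(\ch(\ff)\td(X)\bigr)=\ch\bigl(i_{!}[\ff]\bigr)\td(Y\tim\P^{N})$ for every coherent sheaf $\ff$ on $X$; applying $p_{*}$ and (GRR) for $p$ then yields (GRR) for $f$.

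For the projection $p$ I use that the external product map $\flgd{G(Y)\oti_{\Z}G(\P^{N})}{G(Y\tim\P^{N})}$ is surjective (see \cite[Expos\'{e} VI]{SGA6} and \cite{Bei}), so by additivity it suffices to treat classes $z=p\ee[\ff]\, .\, \pr\ee[\g]$, where $\ff$ is coherent on $Y$, $\g$ is coherent on $\P^{N}$ and $\apl{\pr}{Y\tim\P^{N}}{\P^{N}}$ denotes the second projection. On the \mbox{$K$-theory} side, the projection formula together with K\"{u}nneth ($R^{j}p_{*}\pr\ee\g\simeq H^{j}(\P^{N},\g)\oti_{\C}\oo_{Y}$) gives $p_{!}\, z=\chi(\P^{N},\g)\, [\ff]$, hence $\ch(p_{!}z)\td(Y)=\chi(\P^{N},\g)\, \ch(\ff)\td(Y)$. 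On the cohomology side, $\ch(z)=p\ee\ch(\ff)\, .\, \pr\ee\ch(\g)$ by $(\textrm{P}_{n})$ and $(\textrm{F}_{n})$, while $\td(Y\tim\P^{N})=p\ee\td(Y)\, .\, \pr\ee\td(\P^{N})$ by $(\textrm{W}_{n})$; using the projection formula and the standard evaluation of the Gysin map of the trivial projective bundle $p$ (the class $c_{1}\bigl(\oo_{Y\tim\P^{N}}(1)\bigr)^{k}$ pushes forward to $0$ for $k<N$ and to $1$ for $k=N$, a consequence of Proposition \ref{PropositionUnComplementQuatreUnArt1} (v) and Proposition \ref{PropositionDeuxInsertC} (i)), one obtains $p_{*}\bigl(\ch(z)\td(Y\tim\P^{N})\bigr)=\ch(\ff)\td(Y)\, .\, \pi_{*}\bigl(\ch(\g)\td(\P^{N})\bigr)$, where $\apl{\pi}{\P^{N}}{\mathrm{pt}}$. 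Thus (GRR) for $p$ on $z$ is equivalent to the identity $\pi_{*}\bigl(\ch(\g)\td(\P^{N})\bigr)=\chi(\P^{N},\g)$ in $H^{0}_{D}(\mathrm{pt},\Q)=\Q$, i.e.\ to Hirzebruch--Riemann--Roch on $\P^{N}$.

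It remains to prove this last identity. Since $G(\P^{N})$ is generated by the line bundles $\oo_{\P^{N}}(m)$, $m\in\Z$, and both sides are additive, I may take $\g=\oo_{\P^{N}}(m)$. Computing $\td(\P^{N})=\td(T_{\P^{N}})$ from the Euler sequence $0\to\oo_{\P^{N}}\to\oo_{\P^{N}}(1)^{N+1}\to T_{\P^{N}}\to 0$ by means of $(\textrm{W}_{n})$, the identity becomes the classical residue computation $\pi_{*}\bigl(e^{m\, c_{1}(\oo_{\P^{N}}(1))}\td(\P^{N})\bigr)=\binom{m+N}{N}=\chi\bigl(\P^{N},\oo_{\P^{N}}(m)\bigr)$; since $\pi_{*}$ and $c_{1}$ in Deligne cohomology are compatible with their topological analogues (Proposition \ref{PropositionDeuxInsertC} (i) and (ii)), this reduces to the usual computation on $\P^{N}$. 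The step I expect to be the main obstacle is not any isolated deep point but the careful matching, in the projection case, of the external product structures on analytic $K$-theory and on Deligne cohomology: it requires the K\"{u}nneth formula for $\ch$ (obtained from $(\textrm{P}_{n})$ and $(\textrm{F}_{n})$), the projection and base-change formulae for the Gysin map of $p$ (Proposition \ref{PropositionDeuxInsertC} (i)--(ii)), and the identification of the degree map on $\P^{N}$ with its topological counterpart; each of these is elementary, but they must be assembled with some care.
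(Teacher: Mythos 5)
Your proposal is correct and follows exactly the paper's route: factor $f$ through an immersion into $Y\times\P^{N}$ (handled by Theorem \ref{SansRef}), use the surjectivity of $G(Y)\otimes_{\Z}G(\P^{N})\to G(Y\times\P^{N})$ from \cite{Bei} to reduce the projection to external products, and conclude by the multiplicativity of $\ch$ together with Hirzebruch--Riemann--Roch on $\P^{N}$. The paper states this in a few lines; you have merely filled in the routine verifications (compatibility with composition, the K\"{u}nneth identification of $p_{!}$, the evaluation of the Gysin map of the trivial $\P^{N}$-bundle), all of which are sound.
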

\begin{proof}
Let $\apl{f}{X}{Y}$ be a projective morphism. Then we can write $f$
as the composition of an immersion $\apl{i}{X}{Y\times\P^{N}}$
and the second projection $\apl{p}{Y\times\P^{N}}{Y}$. By
Theorem \ref{SansRef}, GRR is true for $i$. Now the arguments in
\cite{Bei} show that the canonical map from $G(Y)\oti_{\Z}\he
G(\P^{N})$ to $G(Y\times\P^{N})$
is surjective. Therefore, it is enough to prove GRR for $p$ with
elements of the form $y\, .\, w$, where $y$ belongs to $G(Y)$
and $w$ belongs to $G(\P^{N})$. By the product formula for the
Chern character, we are led to the Hirzebruch-Riemann-Roch
formula for $\P^{N}$, which is well known.
\end{proof}
\subsection{Compatibility of Chern classes and the GRR formula}
We will show that the GRR formula for immersions combined with
some basic properties can be sufficient to characterize
completely a theory of Chern classes. This will give various
compatibility theorems.
\par\medskip
We assume to be given for each smooth complex compact manifold $X$
a graded commutative cohomology ring $A(X)=\oplus_{i=0}^{\dim X}
A^{i}(X)$ which is an algebra over $\Q$,
$\Q\subset A^{0}\be(X)$, with the following properties:
\begin{enumerate}\label{unicity}
  \item [$(\alpha)$] For each holomorphic map
  $\apl{f}{X}{Y}$, there is a pull-back morphism
  $\apl{f\ee}{A(Y)}{A(X)}$ which is functorial and compatible
  with the products and the gradings.

  \item [$(\beta) $] If $\sigma $ is the blowup of a smooth
  complex compact manifold along a smooth submanifold, then
  $\sigma \ee$ is injective.

  \item [$(\gamma) $] If $E$ is a holomorphic vector bundle
  on $X$ and  $\apl{\pi }{\P(E)}{X}$
  is the projection of the associated projective bundle, then
  $\pi \ee\be$ is injective.

  \item [$(\delta) $] If $X$ is a smooth complex compact manifold and $Y$
  is a smooth submanifold of codimension $d$, then there is a
  Gysin morphism $\apl{i_{*}\he}{A\ee\be(Y)}{A^{*+d}\be(X)}$.
\end{enumerate}
The main examples are:
\[
A^{i}(X)=H^{2i}_{D}(X,\Q(i)),\
\HHH^{2i}(X,\Omega _{X}^{  {\ds\bullet}\geqslant i}),\
H^{i}(X,\Omega ^{i}_{X}),\  F^{i}H^{2i}(X,\C),
\ H^{2i}(X,\Q)\ \textrm{and}\   H^{2i}(X,\C).
\]
Then we have the following theorem:
\begin{theorem}\label{ThPartSixTrois}
Suppose that we have two theories of Chern classes $c_{i}$ and
$c'_{i}$ for coherent sheaves on any smooth complex compact
manifold $X$ with values in $A^{i}(X)$ such that $c_{0}
=c'_{0}=1$ and
\begin{enumerate}
  \item [(i)] The Whitney formula holds for the total classes
  $c$ and $c'$.

  \item [(ii)] The functoriality formula holds for $c$ and $c'$.

  \item [(iii)] If $L$ is a holomorphic line bundle,
  then
  \[
  c(L)=1+c_{1}(L)=c'(L)=1+c'_{1}(L).
  \]

  \item [(iv)] In both theories,
  the GRR theorem holds for immersions.
\end{enumerate}
Then for every coherent sheaf $\ff$, $c(\ff)$ and $c'(\ff)$ are
equal.
\end{theorem}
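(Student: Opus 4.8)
The plan is to reduce the comparison of total Chern classes to a comparison of the induced Chern characters, and then to prove $\ch(\ff)=\ch'(\ff)$ for every coherent sheaf by induction on $n=\dim X$, using Theorem \ref{TheoremeUnChClArt1} together with hypothesis (iv).

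\textbf{Reduction to Chern characters.} Since $A(X)$ is a $\Q$-algebra with $\Q\subset A^{0}(X)$, the total Chern class of a coherent sheaf and its Chern character (built from the $c_{i}$ via Newton's identities) determine each other through universal polynomials with rational coefficients; so it is enough to prove $\ch(\ff)=\ch'(\ff)$, where $\ch,\ch'$ are the characters attached to $c,c'$. By (i) these are additive on $G(X)$, and by (ii) they commute with the pullback maps on Grothendieck groups. First, $\ch$ and $\ch'$ agree on locally free sheaves: pulling a rank $r$ bundle $E$ back to its full flag bundle $\mathrm{Fl}(E)\to X$ realizes it as a successive extension of line bundles, so by (i) and (iii) the classes $c(E)$ and $c'(E)$ have the same pullback to $\mathrm{Fl}(E)$; iterating $(\gamma)$, this pullback is injective, hence $c(E)=c'(E)$ and a fortiori $\ch(E)=\ch'(E)$. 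In particular, for any vector bundle $N$ the class $\td(N)$ is invertible in $A(X)$ and is the same whether computed from $c$ or from $c'$.

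\textbf{The induction.} We argue by induction on $n=\dim X$, the case $n=0$ being trivial; assume $\ch=\ch'$ on every smooth complex compact manifold of dimension $<n$. \emph{Torsion sheaves.} Let $\ttt$ be a torsion sheaf on an $n$-dimensional $X$. Desingularizing $\supp(\ttt)$ gives a finite composition of blowups with smooth centres $\tau\colon\tix\to X$ with $\tau^{-1}(\supp\ttt)$ contained in a simple normal crossing divisor $D=D_{1}+\dots+D_{N}$. By (ii), $\tau^{*}\ch(\ttt)=\ch(\tau^{!}[\ttt])$ and $\tau^{*}\ch'(\ttt)=\ch'(\tau^{!}[\ttt])$, while $\tau^{!}[\ttt]=\sum_{i}(-1)^{i}[\tor_{i}(\ttt,\tau)]$ is a $\Z$-combination of torsion sheaves supported in $D$ (the higher $\tor_{i}$ being already torsion, supported on the exceptional locus). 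A standard dévissage (filtering by the ideals $\ii_{D_{i}}$, cf.\ Proposition \ref{PropositionUnTraduction}) writes the class of such a sheaf as a $\Z$-combination of classes $i_{D_{I}*}\,\g_{I}$, where $D_{I}=\bigcap_{i\in I}D_{i}$ is a smooth submanifold of $\tix$ of dimension $n-|I|<n$ and $\g_{I}$ is coherent on $D_{I}$. Applying (iv) in both theories,
\[
\ch\bigl(i_{D_{I}*}\g_{I}\bigr)=i_{D_{I}*}\Bigl(\ch^{D_{I}}(\g_{I})\,\td\bigl(N_{D_{I}/\tix}\bigr)^{-1}\Bigr),
\]
and likewise for $\ch'$; here $\ch^{D_{I}}(\g_{I})$ agrees in the two theories by the induction hypothesis, and $\td(N_{D_{I}/\tix})$ by the remark above. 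Summing, $\ch(\tau^{!}[\ttt])=\ch'(\tau^{!}[\ttt])$, and since $\tau^{*}$ is injective by $(\beta)$ we get $\ch(\ttt)=\ch'(\ttt)$.

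\textbf{Arbitrary sheaves.} Let $\ff$ be any coherent sheaf on $X$ with $\dim X=n$. By Theorem \ref{TheoremeUnChClArt1} there is a finite composition of blowups with smooth centres $\sigma\colon\tix\to X$ such that $\sigma^{*}\ff$ has a locally free quotient $\eee$ of maximal rank with torsion kernel; as $\sigma$ is bimeromorphic, the sheaves $\tor_{i}(\ff,\sigma)$ for $i\ge 1$ are torsion as well, so $\sigma^{!}[\ff]=[\eee]+t$ with $\eee$ locally free on $\tix$ and $t$ a $\Z$-combination of torsion sheaves on the $n$-dimensional manifold $\tix$. By (ii), $\sigma^{*}\ch(\ff)=\ch(\sigma^{!}[\ff])$ and $\sigma^{*}\ch'(\ff)=\ch'(\sigma^{!}[\ff])$; the locally free term contributes equally by the first paragraph and the torsion term by the previous step, so $\sigma^{*}\ch(\ff)=\sigma^{*}\ch'(\ff)$, whence $\ch(\ff)=\ch'(\ff)$ by injectivity of $\sigma^{*}$. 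This closes the induction, and therefore $c(\ff)=c'(\ff)$ for every coherent sheaf $\ff$. The two genuinely delicate points are the dévissage of a torsion sheaf supported on a normal crossing divisor into pushforwards from its smooth closed strata, and the bookkeeping ensuring that every sheaf surviving the reductions lives on a strictly lower-dimensional manifold — which is exactly why Theorem \ref{TheoremeUnChClArt1}, rather than (nonexistent) global locally free resolutions, is indispensable.
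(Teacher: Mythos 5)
Your proof is correct and follows essentially the same strategy as the paper's: reduce to Chern characters, establish agreement on locally free sheaves via the splitting principle and $(\gamma)$, then induct on $\dim X$, using Theorem \ref{TheoremeUnChClArt1} to write $\sigma^{!}[\ff]$ as a locally free class plus classes supported on a simple normal crossing divisor, to which GRR for immersions and the induction hypothesis apply, concluding by injectivity of $\sigma^{*}$ from $(\beta)$. The only cosmetic differences are your use of the full flag bundle in place of the paper's rank induction on $\P(E)$, and your separate preliminary treatment of torsion sheaves, which the paper folds directly into the surjection $\bigoplus_{i}G_{D_{i}}\he(\tix)\to G_{D}\he(\tix)$.
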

\begin{remark}
\begin{enumerate}
  \item [1.] The same conclusion holds
  for cohomology algebras over $\Z$ if we assume GRR
  \emph{without denominators}.

  \item [2.] If $X$ is projective, (i) and (iii) are sufficient
  to imply the equality of $c$ and $c'$ because of the existence
  of global locally free resolutions.

  \item [3.] In (iv), the Todd classes of the normal bundle are
  defined in $A(X)$ since $A(X)$ is a \mbox{$\Q$-algebra}.
\end{enumerate}
\end{remark}
\begin{proof}
We start by proving that for any holomorphic vector bundle
$E$,
$c(E)$ and $c'(E)$ are equal. Actually, this proof is a
variant of the splitting principle. We argue by induction on the
rank of $E$. Let $\apl{\pi }{\P(E)}{X}$ be the projective bundle
of $E$. Then we have the exact sequence
\[
\sutrgd{\oo_{E}\he(-1)}{\pi \ee\be E}{F}
\]
on $\P(E)$, where $F$ is a holomorphic vector bundle on $\P(E)$
whose rank is the rank of $E$ minus one. By induction,
$c(F)=c'(F)$ and by (iii), $c(\oo_{E}(-1))=c'(\oo_{E}(-1))$.
By (i), $c(\pi \ee\be E)=c'(\pi \ee\be E)$ and by (ii),
$\pi \ee\be[c(E)-c'(E)]=0$. By $(\gamma )$, $c(E)=c'(E)$.
\par\medskip
We can now prove Theorem \ref{ThPartSixTrois}.
As usual, we deal with exponential Chern classes. The proof
proceeds by induction on the dimension of the base manifold $X$.
\par\medskip
Let $\ff$ be a coherent sheaf on $X$. By Theorem
\ref{TheoremeUnChClArt1}
there exists a bimeromorphic morphism
$\apl{\sigma }{\ti{X}}{X}$
which is a finite composition of
blowups with smooth centers and a locally free sheaf $\eee$ on
$\ti{X}$ which is a quotient of maximal rank of $\sigma \ee\be
\ff$. Furthermore, by Hironaka's theorem, we can suppose that
the exceptional locus of $\sigma $ and the kernel of the
morphism
$\xymatrix{\sigma \ee\be\ff\ar[r]&\eee}$ are both contained in a
simple normal crossing divisor $D$ of $\ti{X}$. Thus
\[
\sigma \pe\be[\ff]=\sum _{i=0}^{n}(-1)^{i}\bigl[
\tore{i}{\ff}{\sigma }\bigr]=[\eee]+\sum _{i=1}^{n}
(-1)^{i}\bigl[\tore{i}{\ff}{\sigma }\bigr]
\]
and then $\sigma \pe\be[\ff]$ belongs to $[\eee]+G_{D}\he
(\ti{X})$. Now there is a surjective morphism
\[
\xymatrix{\bigoplus\limits_{i=1}^{N}G_{D_{i}}\he(\ti{X})\ar[r]&
G_{{D}}(\ti{X})\he}.
\]
Moreover, by Proposition \ref{LemmeUnAppendixArt1},
$G(D_{i})$ is isomorphic to $G_{D_{i}}\he(\ti{X})$.
Remark that $\td \bigl(N_{D_{i}/\ti{X}}\he\bigr)
=\td'\bigl(N_{D_{i}/\ti{X}}\he\bigr)$.
By the
GRR formulae (iv) and the induction hypothesis,
$\ch$ and $\ch'$ are
equal on each $G_{D_{i}}\he(\ti{X})$. By the first part of
the proof,
$\ch(\eee)=\ch'(\eee)$. Thus $\ch\bigl(\sigma \pe\be[\ff]\bigr)=
\ch'\bigl(\sigma \pe\be[\ff]\bigr)$. By (ii),
$\sigma \ee\be\bigl[\ch(\ff)-\ch'(\ff)\bigr]=0$.
Since $\sigma \ee\be$ is injective by $(\beta )$,
$\ch(\ff)=\ch'(\ff)$.
\end{proof}
\begin{corollary}\label{DernierCorollaire}
Let $\ff$ be a coherent analytic sheaf on $X$. Then:
\begin{enumerate}
  \item [(i)] The classes $c_{i}(\ff)$ in $H^{2i}_{D}
  (X,\Q(i))$ and $c_{i}(\ff)^{\textrm{top}}$ in
  $H^{2i}(X,\Z)$ have the same image in $H^{2i}(X,\Q)$.

  \item [(ii)] The image of $c_{i}(\ff)$ via the natural
  morphism from $H^{2i}_{D}(X,\Q(i))$ to $H^{i}(X,
  \Omega _{X}^{i})$ is the $i\textrm{th}$ Atiyah Chern class of
  $\ff$
\end{enumerate}
\end{corollary}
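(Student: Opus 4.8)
The statement is a compatibility assertion, and the natural strategy is to derive it from the uniqueness result Theorem \ref{ThPartSixTrois}. I would fix once and for all the natural ring morphisms $\varphi\colon H\ee_{D}(X,\Q)\to A\ee(X)$ where $A^{i}(X)$ is either $H^{2i}(X,\Q)$ (for (i)) or $H^{i}(X,\Omega_{X}^{i})$ (for (ii)); in the first case $\varphi$ is the map to Betti cohomology, in the second it is the composition of the map to $\HHH^{2i}(X,\Omega_{X}^{{\ds\bullet}\geqslant i})$ with the map induced by $\Omega_{X}^{{\ds\bullet}\geqslant i}\to\Omega_{X}^{\, i}[-i]$. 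Both rings appear in the list of main examples on page \pageref{unicity} and satisfy $(\alpha)$--$(\delta)$, and $\varphi$ is functorial with respect to pullbacks and compatible with the Gysin morphisms of Proposition \ref{PropositionDeuxInsertC} (this compatibility is built into the construction of these Gysin maps). The first step is therefore to observe that $c_{i}^{A}(\ff):=\varphi\bigl(c_{i}(\ff)\bigr)$ defines a theory of Chern classes with values in $A^{i}(X)$: it satisfies the Whitney formula and functoriality because $\varphi$ is a ring morphism commuting with pullbacks and $c_{i}(\ff)$ does by Theorem \ref{MainTheoremArt1} (i)--(ii), it sends $c_{1}(L)$ to the usual first Chern class of a line bundle in $A^{1}(X)$, and it satisfies the GRR formula for immersions by Theorem \ref{SansRef} together with the compatibility of $\varphi$ with Gysin maps and Todd classes.

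The second step is to check that the competing theory is also a theory of Chern classes in the sense of Theorem \ref{ThPartSixTrois}. For (i), coherent sheaves admit locally free real-analytic resolutions after tensoring with $\cc_{X}^{\omega}$ by Grauert's theorem \cite{Gra}, so the Borel--Serre recipe \cite{BoSe} produces classes $c_{i}(\ff)^{\textrm{top}}$ in $H^{2i}(X,\Z)$; these satisfy the Whitney and functoriality formulae and agree with $c_{1}$ on line bundles, and the GRR theorem for an immersion in the topological category is exactly the Atiyah--Hirzebruch result \cite{AtHi}. After tensoring with $\Q$ we obtain a theory with values in $H^{2i}(X,\Q)$. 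For (ii), the Atiyah Chern classes $c_{i}(\ff)^{\textrm{an}}$ in $H^{i}(X,\Omega_{X}^{i})$, defined via traces of Yoneda products of the Atiyah class of the principal-parts extension (see \cite{Ati}, \cite{Il}), likewise satisfy the Whitney and functoriality properties and the line-bundle normalization, and the GRR theorem for immersions in the Dolbeault ring is established by O'Brian, Toledo and Tong in \cite{OBToTo2}, \cite{OBToTo3}.

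The third and final step is simply to invoke Theorem \ref{ThPartSixTrois}: in each case the two theories agree with $c_{1}$ on holomorphic line bundles and both satisfy (i)--(iv), hence they coincide on every coherent sheaf $\ff$. Applied to $H^{2i}(X,\Q)$ this gives assertion (i), and applied to $H^{i}(X,\Omega_{X}^{i})$ it gives assertion (ii).

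I expect the only genuinely delicate point to be the second step, namely confirming that the topological and the Atiyah theories, viewed as assignments defined on \emph{all} coherent sheaves, really do satisfy the full list of axioms of Theorem \ref{ThPartSixTrois}, with the GRR-for-immersions inputs \cite{AtHi} and \cite{OBToTo2}, \cite{OBToTo3} being the nontrivial external ingredients; one must also make sure the natural map $\varphi$ is compatible with the Gysin morphisms used in Theorem \ref{SansRef}. Everything else is formal once these facts are in place.
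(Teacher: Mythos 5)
Your proposal is correct and follows exactly the paper's own route: verify that the topological and Atiyah theories satisfy the hypotheses of Theorem \ref{ThPartSixTrois} (with the line-bundle normalization checked via the morphisms of complexes out of $\Z_{D,X}(1)$, and GRR for immersions supplied by \cite{AtHi} and \cite{OBToTo3}), then invoke the uniqueness theorem. The paper's proof is simply a terser version of the same argument.
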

\begin{proof}
If $L$ is a holomorphic line bundle, then (i) and (ii) hold for
$L$. Indeed, using the isomorphism between $\textrm{Pic}(X)$ and
$H^{2}_{D}(X,\Z(1))$, the topological and Atiyah first Chern
classes are obtained by the two morphisms of complexes
\[
\xymatrix{
\Z\ar[r]^{2i\pi }\ar[d]&\oo\\
\Z&
}\qquad\textrm{and}\qquad
\xymatrix{
\Z\ar[r]^{2i\pi }&\oo\ar[d]^{d/2i\pi }\\
&\Omega ^{1}_{X}
}
\]
On the other hand,
GRR for immersions holds for topological Chern classes by
\cite{AtHi} and for Atiyah Chern classes by \cite{OBToTo3}.
\end{proof}
\begin{remark}
If $X$ is a K\"{a}hler complex manifold, the Green Chern classes
are the same as the Atiyah Chern classes and the
complex topological
Chern classes. If $X$ is non K\"{a}hler, GRR does not seem to be
known for the Green Chern classes, except for a constant
morphism (see \cite{ToTo}). If this were true
for immersions, it would imply
the compatibility of $c_{i}(\ff)$ and
$c_{i}(\ff)^{\textrm{Gr}}$, via the map
from $H^{2i}_{D}(X,\Q(i))$ to
$\HHH^{2i}(X,\Omega _{X}^{  {\ds\bullet}\geqslant i})$.
On the other hand, if this compatibility holds,
it implies GRR for
immersions for the Green Chern classes.
\end{remark}
\section{Appendix.
Analytic \mbox{$K$-theory} with support}\label{AppendixArt1}
Our aim in this appendix is to answer the following question:
if $\apl{f}{X}{Y}$ is a morphism, $\ff$ a torsion sheaf on $Y$ with
support $W$, and $Z=f^{-1}(W)$, can we express
the sheaves $\tore{i}{\ff}{f}$ in terms of the sheaves
$\tore{i}{\ff_{\vert W}}{f_{\vert Z}}$?
\subsection{Definition of the analytic \mbox{$K$-theory} with support}
Let $X$ be a smooth compact complex manifold and $Z$ be an analytic subset
of $X$. We will denote by $\coh_{Z}\he(X)$ the abelian category of coherent
sheaves  on $X$ with support in $Z$. Then, $X$ being compact,
\[
\coh_{Z}\he(X)=\bigl\{\ff,\ \ff\in \coh(X)\ \textrm{such that\ }\ii_{Z}^{n}
\ff=0\ \textrm{for\ }n\gg 0 \bigr\}.
\]
\begin{definition}\label{DefinitionUnAppendixArt1}
We define $\kan_{Z}\he(X)$ as the quotient of $\Z\bigl[\coh_{Z}\he(X)\bigr]$
by the relations: $\ff+\hh=\g$ if there exists an exact sequence
$\sutrgd{\ff}{\g}{\hh}$ with $\ff$, $\g$ and $\hh$ elements of
$\coh_{Z}\he(X)$. In other words, $\kan_{Z}\he(X)$ is the Grothendieck group
of $\coh_{Z}\he(X)$. The image of an element $\ff$ of $\coh_{Z}\he(X)$ in
$\kan_{Z}\he(X)$ will be denoted by $[\ff]$.
\end{definition}
We first prove
\begin{proposition}\label{LemmeUnAppendixArt1}
The map $\apl{i_{Z*}}{\kan(Z)}{\kan_{Z}\he(X)}$ is a group isomorphism.
\end{proposition}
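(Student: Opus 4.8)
The plan is to show that $i_{Z*}$ is both surjective and injective. Surjectivity is immediate: if $\ff$ is a coherent sheaf on $X$ with support in $Z$, then $\ii_Z^n \ff = 0$ for $n \gg 0$, so $\ff$ admits a finite filtration $0 = \ff_0 \subset \ff_1 \subset \cdots \subset \ff_n = \ff$ with $\ff_{k} = \ii_Z^{n-k}\ff$, whose successive quotients $\ff_{k}/\ff_{k-1}$ are annihilated by $\ii_Z$, hence are of the form $i_{Z*}\g_k$ for coherent sheaves $\g_k$ on $Z$ (using the reduced induced structure, or more generally on the scheme-theoretic support, and then filtering further). Since in $\kan_Z(X)$ we have $[\ff] = \sum_k [\ff_k/\ff_{k-1}] = \sum_k [i_{Z*}\g_k] = i_{Z*}\bigl(\sum_k [\g_k]\bigr)$, the map $i_{Z*}$ hits every generator, so it is surjective.

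For injectivity I would construct an inverse map $\apl{\rho}{\kan_Z(X)}{\kan(Z)}$ at the level of the free group $\Z[\coh_Z(X)]$ and check it descends. The natural candidate, when $Z$ is a \emph{smooth} submanifold, is a ``dévissage'' map: to $\ff \in \coh_Z(X)$ with $\ii_Z^n\ff = 0$ associate $\rho([\ff]) = \sum_{k\geq 0}\bigl[\bigl(\ii_Z^k\ff/\ii_Z^{k+1}\ff\bigr)\bigr]$, viewing each graded piece $\gr^k\ff = \ii_Z^k\ff/\ii_Z^{k+1}\ff$ as a coherent $\oo_Z$-module (it is annihilated by $\ii_Z$). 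One must check: (a) $\rho$ respects short exact sequences in $\coh_Z(X)$, so that it factors through $\kan_Z(X)$ — this follows because passing to the associated graded with respect to the $\ii_Z$-adic filtration is exact on $\coh_Z(X)$ when $Z$ is smooth (the filtration is finite and the relevant $\tor$ terms against $\oo_Z$ vanish in the right range, or one argues directly that $\gr$ of a short exact sequence of $\ii_Z$-power-torsion sheaves is a short exact sequence after summing over all graded degrees in $\kan(Z)$); and (b) $\rho \circ i_{Z*} = \id_{\kan(Z)}$, which is clear since for $\g$ a sheaf on $Z$ one has $\ii_Z \cdot i_{Z*}\g = 0$, so only the $k=0$ term survives and $\rho([i_{Z*}\g]) = [\g]$. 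Together with surjectivity this forces $i_{Z*}$ to be an isomorphism.

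The main obstacle is point (a): verifying that the associated-graded construction is additive on the Grothendieck group. The subtlety is that for a short exact sequence $\sutrgd{\ff'}{\ff}{\ff''}$ in $\coh_Z(X)$, the induced filtration on $\ff$ restricts to a filtration on $\ff'$ that need not coincide with the $\ii_Z$-adic one on $\ff'$ (Artin–Rees), so $\gr(\ff')$ as a subobject differs from the intrinsic $\gr$ of $\ff'$. However, the two filtrations on $\ff'$ are cofinal, so they have the same associated graded \emph{class} in $\kan(Z)$ after summing all degrees — this is the standard argument that the class of $\gr_F M$ in a Grothendieck group is independent of the choice of a good filtration $F$. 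Carrying this through rigorously, possibly by reducing to the case where $\ff$ itself is an $\oo_Z$-module by induction on the smallest $n$ with $\ii_Z^n\ff = 0$ (the base case $n=1$ being trivial and the inductive step using the exact sequence $\sutrgd{\ii_Z\ff}{\ff}{\ff/\ii_Z\ff}$), is where the real work lies; everything else is formal.
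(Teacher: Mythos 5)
Your approach is essentially the paper's: both filter a sheaf $\ff$ in $\coh_{Z}\he(X)$ by the powers $\ii_{Z}^{k}\ff$, observe that the finitely many graded pieces lie in $\coh(Z)$, and conclude by d\'{e}vissage --- the only difference is that the paper simply cites Quillen's d\'{e}vissage theorem at this point, whereas you reprove the $K_{0}$ case by hand. Two remarks on that hand-proof: the first justification you offer in (a), that passing to the associated graded of the $\ii_{Z}$-adic filtration is exact degree by degree, is false even for smooth $Z$ (take $\ff=\oo_{X}\he/\ii_{Z}^{2}$ and the sequence with $\ff'=\ii_{Z}\he\ff$, $\ff''=\ff/\ii_{Z}\he\ff$: in degree $0$ one gets $N_{Z/X}\ee\to\oo_{Z}\he\to\oo_{Z}\he$, which is not short exact), but your fallback --- additivity of the \emph{total} class $\sum_{k}[\gr^{k}]$, which rests on its independence from the choice of finite filtration with quotients in $\coh(Z)$, proved by the standard refinement argument since $\coh(Z)$ is closed under subobjects and quotients --- is exactly the right route and needs no $\tor$ vanishing. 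Finally, smoothness of $Z$ plays no role here: the proposition is stated for an arbitrary analytic subset $Z$, and your argument (surjectivity and the filtration-independence of the total graded class) works verbatim in that generality.
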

\begin{proof}
We consider the inclusion $\coh(Z)\suq\coh_{Z}\he(X)$. If $\ff$ is in
$\coh_{Z}\he(X)$, we can filter $\ff$ by the formula
$F^{i}\ff=\ii_{Z}^{i}\ff$. This is a finite filtration and the
associated quotients $F^{i}\ff/_{\ds F^{i+1}\ff}$ are in
$\coh(Z)$. Therefore, we can use the d\'{e}vissage theorem for the
Grothendieck group, which is in fact valid for all \mbox{$K$-theory}
groups (see \cite[\S\,5, Theorem 4]{Qui}).
\end{proof}

\subsection{Product on the \mbox{$K$-theory} with support}
\label{SousSectionDeuxAppendixArt1}
Let $Z$ be a \emph{smooth} submanifold of $X$. For any $x$ in
$\kan\iz\he(X)$, by Proposition \ref{LemmeUnAppendixArt1}, there exists a
unique $\ba{x}$ in $\kan(Z)$ such that $i_{Z*}\he\ba{x}=x$.
\begin{definition}\label{DefinitionDeuxAppendixArt1}
We define a product $\xymatrix{
\kan(Z)\oti_{\Z}\he\kan\iz\he(X)\ar[r]_-{\pz{Z}}&\kan\iz\he(X)
}$
by $x\pz{Z}  y=i_{Z*}\he(x.\, \ba{y})$.
\end{definition}
In other words, the following diagram is commutative:
\[
\xymatrix@C=8ex{
\kan(Z)\oti_{\Z}\he\kan\iz\he(X)\ar[r]_-{\pz{Z}}
\ar[d]^{\simeq}&\kan\iz\he(X)\ar[d]^{\simeq}\\
\kan(Z)\oti_{\Z}\he\kan(Z)\ar[r]_-{\pti{}}&\kan(Z)
}
\]
\begin{remark}\label{RemarqueUnAppendixArt1}
Definition \ref{DefinitionDeuxAppendixArt1} has some obvious consequences:
\begin{enumerate}
  \item [\textbf{1.}] For any $x$ in $\kan(Z)$,
  $i_{Z*}\he x=x\, \, \pz{Z}\, \,  [i_{Z*}\he\oo_{Z}\he]$.

  \item [\textbf{2.}] More generally,
  $i_{Z*}\he(x.\, y)=x\, \, \pz{Z}\, \,  i_{Z*}\he y $.

  \item [\textbf{3.}] The product $\pz{Z}$
  endows $\kan\iz\he(X)$ with the structure of a
  \mbox{$\kan(Z)$-module}.
\end{enumerate}
\end{remark}
\subsection{Functoriality}
Let $\apl{f}{X}{Y}$ be a holomorphic map
and $W$ be a smooth
submanifold of $Y$ such that $Z=f^{-1}(W)$ is
smooth.
We can define a morphism $\apl{f\pe}{\kan_{W}\he(Y)}{\kan\iz\he(X)}$\!
by the usual formula
$
f\ee[\kk]=\sum _{i\geq 0}\, (-1)^{i}\Bigl[\tore{i}{\kk}{f}\Bigr]
$.
Let $\ba{f}$ be the restriction of $f$ to $Z$, as shown in the following
diagram:
\[
\xymatrix@C=8ex{
Z\ar[r]^-{i_{Z}\he}\ar[d]_{\ba f}&X\ar[d]^{f}\\
W\ar[r]_-{i_{W}\he}&Y
}
\]
Then we have the following proposition:
\begin{proposition}\label{TheoremeUnAppendixArt1}
For all $x$ in $\kan(W)$ and for all $y$ in $\kan_{W}\he(Y)$, we have
$f\pe\bigl(x\, \, \pz{W}\, \, y\bigr)=\ba{f}{}\pe x\, \, \pz{Z}\, \, f \pe y$.
\end{proposition}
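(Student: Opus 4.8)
The plan is to reduce the identity, using the module structures already set up, to a single base-change formula for $f\pe\circ i_{W*}$, and then to prove that formula by a local Koszul computation.

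\emph{Reduction.} By Proposition \ref{LemmeUnAppendixArt1} write $y=i_{W*}\ba y$, where $\ba y\in\kan(W)$ is the unique class with $i_{W*}\ba y=y$; then $x\,\pz{W}\,y=i_{W*}(x\cdot\ba y)$ by Definition \ref{DefinitionDeuxAppendixArt1}, the product $x\cdot\ba y$ being taken in the ring $\kan(W)$ (well defined since $W$ is smooth). So the statement reduces to the \emph{key identity}
\[
f\pe\bigl(i_{W*}\be u\bigr)=\ba f\pe u\,\,\pz{Z}\,\,f\pe\bigl(i_{W*}\oo_{W}\he\bigr),\qquad u\in\kan(W).
\]
Indeed, applying this with $u=x\cdot\ba y$, and using that $\ba f\pe\colon\kan(W)\to\kan(Z)$ is a ring homomorphism together with the fact that $\pz{Z}$ makes $\kan_{Z}\he(X)$ a $\kan(Z)$-module (Remark \ref{RemarqueUnAppendixArt1}), one gets
\[
f\pe\bigl(x\,\pz{W}\,y\bigr)=\bigl(\ba f\pe x\cdot\ba f\pe\ba y\bigr)\,\pz{Z}\,f\pe\bigl(i_{W*}\oo_{W}\he\bigr)=\ba f\pe x\,\pz{Z}\,\bigl(\ba f\pe\ba y\,\pz{Z}\,f\pe(i_{W*}\oo_{W}\he)\bigr)=\ba f\pe x\,\pz{Z}\,f\pe y,
\]
the last equality being the key identity read backwards, with $u=\ba y$.

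\emph{Proof of the key identity.} Both sides are additive in $u$, so take $u=[\mm]$ for a coherent sheaf $\mm$ on $W$. Work locally near a point of $Z$: choose $z_{1},\dots,z_{d}$ generating $\ii_{W}$ (so $d=\codim W$), whence $w_{i}:=f\ee z_{i}$ generate $\ii_{Z}$, and $d'=\codim Z\leq d$. The Koszul complex on $z_{1},\dots,z_{d}$ is locally a locally free resolution of $i_{W*}\oo_{W}\he$, and its inverse image under $f$ is the Koszul complex on $w_{1},\dots,w_{d}$. Since $Z$ is smooth, after an $\oo_{Y}\he$-linear change of the $z_{i}$ one may assume that $w_{1},\dots,w_{d'}$ is a regular sequence generating $\ii_{Z}$ while $w_{d'+1},\dots,w_{d}\in\ii_{Z}$; reducing the remaining Koszul factor modulo $\ii_{Z}$ then makes it $\bwe^{\bullet}\bigl(\oo_{Z}^{\oplus(d-d')}\bigr)$ with zero differential. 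Globalising via the excess conormal bundle $F$, of rank $d-d'$, defined by the exact sequence
\[
0\to F\to\ba f\ee N_{W/Y}\ee\to N_{Z/X}\ee\to 0,
\]
this yields a canonical identification $\tore{j}{i_{W*}\oo_{W}\he}{f}\simeq i_{Z*}\bwe^{j}F$ for all $j\geq 0$. For general $\mm$, writing the derived tensor product $i_{W*}\mm\oti^{L}_{\oo_{Y}\he}\oo_{X}\he$ (whose homology sheaves are the $\tore{i}{i_{W*}\mm}{f}$) as $i_{W*}\mm\oti^{L}_{\oo_{W}\he}\bigl(\oo_{W}\he\oti^{L}_{\oo_{Y}\he}\oo_{X}\he\bigr)$ and inserting the identification above, the associated hyperhomology spectral sequence has $E_{2}$-terms $i_{Z*}\bigl(\tore{p}{\mm}{\ba f}\oti_{\oo_{Z}\he}\bwe^{q}F\bigr)$ and converges to $\tore{p+q}{i_{W*}\mm}{f}$. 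Taking Euler characteristics — finitely many terms are nonzero since $W$ is smooth and $F$ has finite rank, and each $\bwe^{q}F$ is $\oo_{Z}\he$-locally free — gives
\[
f\pe[i_{W*}\mm]=i_{Z*}\bigl(\ba f\pe[\mm]\cdot\lambda\bigr)=\ba f\pe[\mm]\,\,\pz{Z}\,\,i_{Z*}\lambda,\qquad \lambda:=\sum_{j}(-1)^{j}[\bwe^{j}F],
\]
the second step being Remark \ref{RemarqueUnAppendixArt1}. The case $\mm=\oo_{W}\he$ identifies $i_{Z*}\lambda$ with $f\pe(i_{W*}\oo_{W}\he)$, which proves the key identity.

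The main obstacle is the global identification $\tore{j}{i_{W*}\oo_{W}\he}{f}\simeq i_{Z*}\bwe^{j}F$: one must check that the exterior powers computed locally are intrinsically those of the excess conormal bundle determined by the conormal sequence, so that the local isomorphisms patch, and that the $\oo_{Z}\he$-module structures are the expected ones. This can be carried out exactly as the excess computation of \cite[\S\,15]{BoSe}. A minor point to settle at the outset is that $Z=f^{-1}(W)$ is to be understood with its natural complex-space structure, so that the functions $w_{i}$ genuinely generate $\ii_{Z}$.
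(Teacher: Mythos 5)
Your reduction is sound: writing $y=i_{W*}\he\ba{y}$ and invoking the key identity $f\pe\bigl(i_{W*}\he u\bigr)=\ba{f}{}\pe u\,\pz{Z}\,f\pe\bigl(i_{W*}\he\oo_{W}\he\bigr)$ together with the multiplicativity of $\ba{f}{}\pe$ and the $\kan(Z)$-module structure of $\pz{Z}$ does give the proposition. The gap is in your proof of the key identity, precisely at the step where you claim that after an $\oo_{Y}\he$-linear change of the $z_{i}$ the pullbacks $w_{1},\dots ,w_{d'}$ form a regular sequence generating $\ii_{Z}\he$. That claim is equivalent to $f\ee\ii_{W}\he\cdot\oo_{X}\he=\ii_{Z}\he$, i.e.\ to the surjectivity of $\ba{f}{}\ee N\ee_{W/Y}\to N\ee_{Z/X}$, and it does \emph{not} follow from the hypothesis that the set $f^{-1}(W)$ is a smooth submanifold. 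Local model: $f(s,t)=(s^{2},t)$ and $W=\{u=0\}$; then $Z=\{s=0\}$ is smooth with $d=d'=1$, but $f\ee\ii_{W}\he\oo_{X}\he=(s^{2})\subsetneq(s)$, the conormal map is zero, $\tore{0}{i_{W*}\he\oo_{W}\he}{f}=\oo_{X}\he/(s^{2})$ is not an $\oo_{Z}\he$-module, and no bundle $F$ of rank $d-d'=0$ can give $\tore{j}{i_{W*}\he\oo_{W}\he}{f}\simeq i_{Z*}\he\bwe^{j}F$. Your proposed remedy --- read $Z$ with its natural complex-space structure --- is not available: that structure may be non-reduced, while the smoothness hypothesis, the isomorphism $\kan(Z)\simeq\kan_{Z}\he(X)$ and the very definition of $\pz{Z}$ all presuppose the reduced manifold $Z$. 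So this is not a minor point; as written the argument only proves the proposition under the extra hypothesis $f\ee\ii_{W}\he\oo_{X}\he=\ii_{Z}\he$ (which, to be fair, holds in every application the paper makes of this statement: self-pullback, transverse intersections, blowups).

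The statement is nevertheless true in general, and your spectral-sequence skeleton can be repaired by \emph{not} identifying the Tor sheaves. Keep the change-of-rings factorization $f^{-1}(i_{W*}\he\mm)\oti^{L}_{f^{-1}\oo_{Y}\he}\oo_{X}\he\simeq f^{-1}(i_{W*}\he\mm)\oti^{L}_{B}K$ with $B=f^{-1}(i_{W*}\he\oo_{W}\he)$ and $K=Lf\ee(i_{W*}\he\oo_{W}\he)$, which yields
\[
f\pe\bigl[i_{W*}\he\mm\bigr]=\sum _{p,q}(-1)^{p+q}\Bigl[\tor_{p}^{\ba{f}^{-1}\oo_{W}\he}\bigl(\ba{f}^{-1}\mm,\,T_{q}\bigr)\Bigr],\qquad T_{q}=\tore{q}{i_{W*}\he\oo_{W}\he}{f}.
\]
Use only that each $T_{q}$ is coherent, supported on $Z$, and a $\ba{f}^{-1}\oo_{W}\he$-module. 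Filtering $T_{q}$ by $\ii_{Z}^{k}T_{q}$, whose graded pieces are $\oo_{Z}\he$-modules on which $\ba{f}^{-1}\oo_{W}\he$ acts through $\oo_{Z}\he$, and using $\ba{f}^{-1}\mm\oti^{L}_{\ba{f}^{-1}\oo_{W}\he}\g\simeq L\ba{f}{}\ee\mm\oti^{L}_{\oo_{Z}\he}\g$ for any $\oo_{Z}\he$-module $\g$, one gets $\sum_{p}(-1)^{p}\bigl[\tor_{p}(\ba{f}^{-1}\mm,T_{q})\bigr]=\ba{f}{}\pe[\mm]\,\pz{Z}\,[T_{q}]$; summing over $q$ gives the key identity with no hypothesis on the conormal map.
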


Recall that if $E$ is a holomorphic vector bundle on $X$,
$\lambda _{-1}[E]$ is the element of $\kan(X)$ defined by
\[
\lambda _{-1}[E]=1-[E]+\bigl[\bigwedge ^{2}E\bigr]-
\bigl[\bigwedge ^{3}E\bigr]+\cdots
\]
\begin{proposition}\label{CorollaireUnAppendixArt1}
\emph{(In the algebraic case, see
\mbox{\cite[Proposition 12 and Lemme 19 c.]{BoSe}})}
\begin{enumerate}
  \item [(i)] If $Z$ is a smooth submanifold of $X$,
  then for all $x$ in
  $\kan(Z)$, $i\iz\pe i_{Z*}\he x=x\, .\,
  \lambda _{-1}[N\ee_{Z/X}]$.

  \item [(ii)] Consider the blowup $\ti{X}$ of $X$ along a smooth submanifold
  Y, and let $E$ be the exceptional divisor as shown in the following
  diagram:
  \[
\xymatrix{
  E\ar[r]^{j}\ar[d]_{q}&\tix\ar[d]^{p}\\
  Y\ar[r]_{i}&X
  }
\]
  Let $F$ be the excess conormal bundle on $E$ defined by the exact
  sequence
  \[
\sutrgdpt{F}{q\ee N\ee_{Y/X}}{N\ee_{E/\tix}}{.}
  \]
  Then for
  all $x$ in $\kan(Y)$, $p\pe
  i_{\, !}\he x=j_{\, !}\he\bigl(q\pe x\, .\, \lambda _{-1}F\bigr)$.
\end{enumerate}
\emph{The assertion (ii) of Proposition \ref{CorollaireUnAppendixArt1} is the
\textit{excess formula} in \mbox{$K$-theory}}.
\end{proposition}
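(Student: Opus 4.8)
The plan is to prove both parts by directly identifying the Tor sheaves entering the definition of $f\pe$, the only substantial input being that $Z$ (resp.\ $Y$) is smooth, hence locally a complete intersection, so that the Koszul resolution of its structure sheaf is available \emph{locally}. Since Tor sheaves are computed locally this is enough, and in particular one never needs a global locally free resolution (which, as recalled in the introduction, need not exist).

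For (i) I would first reduce to $x=[\g]$ for a single coherent sheaf $\g$ on $Z$: both sides are additive in $\g$, the left-hand side because of the long exact sequence of Tor and the right-hand side because multiplication by the fixed class $\lambda_{-1}[N\ee_{Z/X}]$ is a group endomorphism of $\kan(Z)$. Then $i_{Z*}x=[i_{Z*}\g]$ and $i_Z\pe[i_{Z*}\g]=\sum_{k\geq 0}(-1)^k\bigl[\tore{k}{i_{Z*}\g}{i_Z}\bigr]$, where $\tore{k}{i_{Z*}\g}{i_Z}$ is the sheaf $\tor_k^{\oo_X}(i_{Z*}\g,\oo_Z)$ seen on $Z$. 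Locally $Z=V(f_1,\dots,f_d)$, and this Tor is computed from the Koszul complex $K_\bullet$ of $(f_1,\dots,f_d)$ tensored over $\oo_X$ with $\g$; since $\g$ is an $\oo_Z$-module the $f_i$ act by zero, hence \emph{all} differentials of $\g\oti_{\oo_X}K_\bullet$ vanish and $\tor_k^{\oo_X}(\g,\oo_Z)=\g\oti_{\oo_X}\bwe^k\oo_X^d$. The canonical isomorphism $\oo_X^d|_Z\simeq N\ee_{Z/X}=\ii_Z/\ii_Z^2$ sending $e_i$ to $f_i$ makes these local computations patch to $\tore{k}{i_{Z*}\g}{i_Z}=\g\oti_{\oo_Z}\bwe^k N\ee_{Z/X}$. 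Summing with signs and using $[\g\oti_{\oo_Z}\bwe^kN\ee_{Z/X}]=[\g]\cdot[\bwe^kN\ee_{Z/X}]$ in $\kan(Z)$ gives $i_Z\pe i_{Z*}x=x\cdot\lambda_{-1}[N\ee_{Z/X}]$.

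For (ii) the same additivity argument reduces the claim to $x=[\g]$, since $q\pe=q\ee$ is exact ($q$ is a projective bundle), $j_{\,!}=j_*$ is exact, and $\lambda_{-1}F$ is fixed. Then $i_{\,!}x=[i_{Y*}\g]$ and $p\pe i_{\,!}x=\sum_{k\geq 0}(-1)^k\bigl[\tore{k}{i_{Y*}\g}{p}\bigr]$, so the heart of the matter is the identity
\[
\tore{k}{i_{Y*}\g}{p}\;\simeq\;j_*\bigl(q\ee\g\oti_{\oo_E}\bwe^k F\bigr).
\]
I would prove this locally near $E$. Writing $Y=V(f_1,\dots,f_d)$, resolve $\g$ freely over $\oo_Y$, resolve $\oo_Y$ by the Koszul complex $K_\bullet$ over $\oo_X$, and compose to get an $\oo_X$-free resolution of $i_{Y*}\g$; tensoring it with $\oo_{\tix}$ over $\oo_X$ replaces $K_\bullet$ by the Koszul complex on $(p\ee f_1,\dots,p\ee f_d)$. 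Since $p\ee\ii_Y\cdot\oo_{\tix}=\ii_E=\oo_{\tix}(-E)$, locally $p\ee f_i=t\,h_i$ with $t$ a local equation for $E$ and $(h_1,\dots,h_d)=\oo_{\tix}$; an $\oo_{\tix}$-linear change of basis then carries $(p\ee f_1,\dots,p\ee f_d)$ to $(t,0,\dots,0)$, so this Koszul complex splits as $\bigl[\oo_{\tix}\xrightarrow{\ t\ }\oo_{\tix}\bigr]\oti_{\oo_{\tix}}\bwe^\bullet\oo_{\tix}^{\,d-1}$ with zero differential on the second factor. As $t$ is a nonzerodivisor ($E$ is a Cartier divisor in the smooth manifold $\tix$), the first factor resolves $\oo_E$; tensoring in $\g$ (flatly, via $q$) and noting that the rank $d-1$ free factor restricts on $E$ to the excess conormal bundle $F$ by the very definition of $0\to F\to q\ee N\ee_{Y/X}\to N\ee_{E/\tix}\to 0$, one obtains the displayed identity. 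Summing $\sum_k(-1)^k$ and using $[q\ee\g\oti\bwe^kF]=[q\ee\g]\cdot[\bwe^kF]$ in $\kan(E)$ yields $p\pe i_{\,!}x=j_{\,!}(q\pe x\cdot\lambda_{-1}F)$.

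The step I expect to be the main obstacle is the passage from the local to the global identification of $F$: one must check that the local rank $d-1$ complement produced by the change of basis is canonically the excess conormal bundle, i.e.\ that the local trivializations glue via the transition functions of $F$. This is exactly what the excess conormal sequence records — in the above coordinates the surjection $q\ee N\ee_{Y/X}\to N\ee_{E/\tix}$ is $df_i\mapsto \bar h_i\,\bar t$, whose kernel is the span of the relations $\sum_i a_i\bar h_i=0$, matching the complement appearing in the Koszul splitting — so once this compatibility is spelled out the argument closes. The analogous, easier point in (i) is the canonicity of $\oo_X^d|_Z\simeq N\ee_{Z/X}$.
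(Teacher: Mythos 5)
Your argument is correct, but it takes a genuinely different route from the paper's. The paper never computes the Tor sheaves for a general $x$: it uses the $\kan(Z)$-module structure on $\kan_{Z}\he(X)$ (Definition \ref{DefinitionDeuxAppendixArt1} and Remark \ref{RemarqueUnAppendixArt1}) together with the compatibility $f\pe\bigl(x\, \pz{W}\, y\bigr)=\ba{f}{}\pe x\, \pz{Z}\, f\pe y$ of Proposition \ref{TheoremeUnAppendixArt1} to reduce both identities to the single class $x=[\oo_{Z}\he]$ (resp.\ $x=[\oo_{Y}\he]$), and then cites Borel--Serre (Proposition 12 and Lemme 19.c) for the two structure-sheaf computations $\tore{i}{i_{Z*}\he\oo_{Z}\he}{i_{Z}\he}=\bwe^{i}N\ee_{Z/X}$ and $p\pe\bigl[i_{*}\he\oo_{Y}\he\bigr]=j_{!}\he\bigl(\lambda_{-1}[F]\bigr)$. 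You instead bypass the module structure and compute the Tor sheaves directly for an arbitrary coherent $\g$, which amounts to reproving (with coefficients) those two lemmas; this buys you the stronger sheaf-level isomorphisms $\tore{k}{i_{Z*}\he\g}{i_{Z}\he}\simeq\g\oti\bwe^{k}N\ee_{Z/X}$ and $\tore{k}{i_{Y*}\he\g}{p}\simeq j_{*}\he\bigl(q\ee\g\oti\bwe^{k}F\bigr)$ rather than merely their alternating sums in $K$-theory, at the cost of redoing the local Koszul analysis that the paper delegates to \cite{BoSe}. Two steps in your treatment of (ii) should be made explicit: ``compose to get an $\oo_{X}\he$-free resolution'' really means forming the total complex of a lifted double complex, i.e.\ invoking the change-of-rings spectral sequence $\tor_{p}^{\oo_{Y}\he}\bigl(\g,\tor_{q}^{\oo_{X}\he}(\oo_{Y}\he,\oo_{\tix}\he)\bigr)\Rightarrow\tor_{p+q}^{\oo_{X}\he}\bigl(i_{Y*}\he\g,\oo_{\tix}\he\bigr)$; and its degeneration onto the row $p=0$ is exactly where the flatness of the projective bundle $q$ enters (so that $\bwe^{q}F$ is $\oo_{Y}\he$-flat and the higher $\tor_{p}^{\oo_{Y}\he}(\g,-)$ vanish) --- you flag this only parenthetically, yet it is the step that makes the formula hold for non-locally-free $\g$. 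With those two points spelled out, and the gluing of the rank $d-1$ Koszul complement to $F$ that you already identify, the proof is complete.
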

\begin{proof}
(i) We write $i\iz\pe i_{Z*}\he x=
i\iz\pe\bigl(x\pz{Z}[i_{Z*}\he\oo_{Z}\he]\bigr)=
x\, .\, i\iz\pe [i_{Z*}\he\oo_{Z}\he]$.
Now
$\tore{i}{i_{Z*}\he\oo_{Z}\he}{i_{Z}\he}=\bigwedge^{i}
N\ee_{Z/X}$ (\cite[Proposition 12]{BoSe}) and we are done.
\par\medskip
(ii) The equality $p\pe\be\bigl[i_{*}\he\oo_{Y}\he\bigr]=
j_{!}\he(\lambda _{-1}\, [F])$ is proved in \cite[Lemme 19.c]{BoSe}.
Thus
\[
p\pe i_{\, !}\he x=p\pe\bigl(x\, \, \pz{Y}\, \, [i\ee\be\oo_{Y}]\he\bigr)=q\pe
x\, \, \pz{\ti{Y}}\, \, p\pe [i_{*}\he\oo_{Y}]\he=
q\pe x\, \, \pz{\ti{Y}}\, \, j_{!}\he\bigl(\lambda _{-1}[F]\bigr)
=j_{*}\he\bigl(q\pe x\, .\, \lambda _{-1}[F]\bigr).
\]
\end{proof}
We will now need a statement similar to Proposition
\ref{TheoremeUnAppendixArt1},
with the hypothesis
that $Z$ is a divisor with simple
normal crossing.
\par\medskip
Let $\apl{f}{X}{Y}$ be a surjective holomorphic map, $D$ be a smooth
hypersurface of $Y$, and $\ti{D}=f\ee\be(D)$.
We suppose that $\ti{D}$ is a divisor of $X$ with simple normal crossing. Let
$\ti{D}_{k}\he$,
$1\leq k\leq N$,
be the branches of $\ti{D}^{\textrm{red}}$. We denote by $\ba{f}_{k}$ the
restricted map
$\aplpt{{f}}{\ti{D}_{k}}{D}{.}$
\begin{proposition}\label{TheoremeDeuxAppendixArt1}
For all elements $u_{k}$ in $\kan_{\ti{D}_{k}}\he(X)$ such that
  $\oo_{\ti{D}}\he=u_{1}+\cdots +u_{N}\he$ and for all $y$ in $\kan(D)$,
\[
f\pe\bigl(i_{D!}\he\,  y\bigr)=\ds\sum _{k=1}^{N}
  \ba{f}{}\pe_{k}(y)\, \, \pz{\ti{D}_{k}}\, \, u_{k}
\]
\end{proposition}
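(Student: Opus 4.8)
The plan is to compute $f\pe\bigl(i_{D*}y\bigr)$ by base change along the Cartier divisor $D$ and then to spread the result over the branches $\tid_{k}$ by the projection formula; this is the analogue of Proposition \ref{TheoremeUnAppendixArt1}, whose hypothesis that $f^{-1}(D)$ be smooth fails here, the role of the smooth fibre being now played by the individual $\tid_{k}$. First I would reduce to $y=[\ff]$ for $\ff$ a coherent sheaf on $D$, which is harmless since both members of the asserted identity are additive over short exact sequences on $D$ (the long exact $\tor$-sequence taking care of the left-hand side, additivity of $\ba{f}_{k}\pe$ and bilinearity of $\pz{\tid_{k}}$ of the right-hand side). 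Since $D$ is a smooth hypersurface it is locally cut out by one equation $t$, and as $f$ is surjective $f\ee t$ is a non-zero-divisor cutting out the effective Cartier divisor $\tid=f\ee D=\sum_{k}m_{k}\tid_{k}$; tensoring the two-term resolution $\oo_{Y}(-D)\hookrightarrow\oo_{Y}$ of $\oo_{D}$ with $\oo_{X}$ gives $\oo_{D}\otimes^{L}_{\oo_{Y}}\oo_{X}\simeq\oo_{\tid}$, where $\oo_{\tid}=\oo_{X}/\oo_{X}(-\tid)$ is the structure sheaf of the complex space $\tid$ (in general neither reduced nor smooth). Hence, for the $\oo_{D}$-module $\ff$, one obtains $\tore{i}{i_{D*}\ff}{f}\simeq i_{\tid*}\tore{i}{\ff}{\ba{f}}$, where $\ba{f}\colon\tid\to D$ is induced by $f$ and $i_{\tid}\colon\tid\hookrightarrow X$ is the inclusion, i.e.\ $f\pe\bigl(i_{D*}y\bigr)=i_{\tid*}\bigl(\ba{f}\pe y\bigr)$ in $K_{\tid}(X)$. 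I would then work inside $K(\tid)$, using that $i_{\tid*}\colon K(\tid)\to K_{\tid}(X)$ is an isomorphism (it follows from Proposition \ref{LemmeUnAppendixArt1} by d\'{e}vissage along the filtration of an $\oo_{\tid}$-module by the powers of $\ii_{\tid^{\textrm{red}}}$), so that the hypothesis $[\oo_{\tid}]=\sum_{k}u_{k}$ can be read in $K(\tid)$ as $[\oo_{\tid}]=\sum_{k}j_{k*}\ba{u}_{k}$, where $j_{k}\colon\tid_{k}\hookrightarrow\tid$ is the inclusion and $\ba{u}_{k}\in K(\tid_{k})$ is the unique class with $i_{\tid_{k}*}\ba{u}_{k}=u_{k}$.

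The point of the second step is that $\ba{f}\pe y\in K(\tid)$ is the class of a \emph{perfect} complex: $\ff$ is coherent on the smooth manifold $D$, hence locally admits finite free resolutions, which pull back to bounded complexes of free $\oo_{\tid}$-modules. This legitimizes the following computation despite the singularities of $\tid$: multiplying the relation $[\oo_{\tid}]=\sum_{k}j_{k*}\ba{u}_{k}$ by $\ba{f}\pe y$ and applying the projection formula to each closed immersion $j_{k}$, together with $j_{k}^{\,!}\bigl(\ba{f}\pe y\bigr)=(\ba{f}\circ j_{k})\pe y=\ba{f}_{k}\pe y$ (functoriality of derived pull-back), yields
\[
\ba{f}\pe y=\ba{f}\pe y\otimes^{L}[\oo_{\tid}]=\sum_{k}j_{k*}\bigl(\ba{f}_{k}\pe y\cdot\ba{u}_{k}\bigr).
\]
Pushing forward by $i_{\tid*}$, using the formula of the first step on the left and Definition \ref{DefinitionDeuxAppendixArt1} of the product $\pz{\tid_{k}}$ on the right, gives exactly $f\pe\bigl(i_{D*}y\bigr)=\sum_{k}\ba{f}_{k}\pe y\,\pz{\tid_{k}}\,u_{k}$.

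The hard part will be the first step: justifying the base-change isomorphism $\oo_{D}\otimes^{L}_{\oo_{Y}}\oo_{X}\simeq\oo_{\tid}$ (where surjectivity of $f$ is used, to guarantee that $f\ee t$ is a non-zero-divisor), keeping careful track of the scheme structure of the fibre $\tid=f\ee D$, and checking that $\ba{f}\pe y$ is a perfect class so that $\ba{f}\pe y\otimes^{L}(-)$ and $j_{k}^{\,!}\bigl(\ba{f}\pe y\bigr)$ are well defined and compatible with the projection formula on the singular space $\tid$. Once these points are settled the computation is purely formal. Note that the argument makes no choice of the $u_{k}$, so it also shows a posteriori that $\sum_{k}\ba{f}_{k}\pe y\,\pz{\tid_{k}}\,u_{k}$ is independent of the decomposition $\oo_{\tid}=u_{1}+\cdots+u_{N}$.
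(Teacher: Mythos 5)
The paper states Proposition \ref{TheoremeDeuxAppendixArt1} without proof (just as it does Proposition \ref{TheoremeUnAppendixArt1}), so there is no proof of record to compare with; judged on its own terms your argument is correct, and it is the natural one. The two pillars are right. First, Tor-independence: $\tore{i}{i_{D*}\he\oo_{D}\he}{f}=0$ for $i\geq 1$ because $f\ee t$ is a nonzerodivisor in the domain $\oo_{X,x}\he$ (here surjectivity of $f$, or just the hypothesis that $f^{-1}(D)$ is a divisor, guarantees $f\ee t\not\equiv 0$), whence $\oo_{D}\he\otimes^{L}_{\oo_{Y}\he}\oo_{X}\he\simeq\oo_{\tid}\he$ with the correct multiplicities and, by associativity of the derived tensor product, $f\pe\bigl(i_{D!}\he y\bigr)=i_{\tid *}\he\bigl(\ba{f}{}\pe y\bigr)$. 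Second, the dévissage identification $\kan(\tid)\simeq\kan_{\tid^{\textrm{red}}}\he(X)$, which lets you read the hypothesis $[\oo_{\tid}\he]=u_{1}+\cdots+u_{N}\he$ (an equality in $\kan_{\tid^{\textrm{red}}}\he(X)$, which is indeed how it is produced in Lemma \ref{LemUnInsertA} and consumed in Proposition \ref{LemmeTroisChClArt1}) as $[\oo_{\tid}\he]=\sum_{k}j_{k*}\he\ba{u}_{k}$ in $\kan(\tid)$; the projection formula along each closed immersion $j_{k}$ then finishes the computation, and pushing forward by $i_{\tid*}\he$ gives the statement via Definition \ref{DefinitionDeuxAppendixArt1}.

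One point of the write-up must be tightened. You invoke $j_{k}\pe\bigl(\ba{f}{}\pe y\bigr)=\ba{f}{}\pe_{k}y$ as ``functoriality of derived pull-back,'' but $j_{k}\pe$ is \emph{not} defined on $\kan(\tid)$: $\tid$ is singular and non-reduced, and a class in $\kan(\tid)$ does not determine its derived restriction to $\tid_{k}$. The identity has to be asserted at the level of the perfect complex itself, $Lj_{k}\ee\, L\ba{f}{}\ee\ff\simeq L\ba{f}{}\ee_{k}\ff$, which holds since $\ba{f}\circ j_{k}=\ba{f}_{k}$; likewise the operator ``$\ba{f}{}\pe y\otimes^{L}(-)$'' on $\kan(\tid)$ must be defined as $L\ba{f}{}\ee\ff\otimes^{L}(-)$, well defined on classes because $L\ba{f}{}\ee\ff$ is perfect --- locally the pullback of a finite free resolution on the smooth $D$ --- and not because $\ba{f}{}\pe y$ is an element of $\kan(\tid)$. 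You do flag exactly these issues as ``the hard part,'' and they all go through; note only that perfection is a local property, so no global locally free resolution of $\ff$ on $D$ is needed (none is available in general). With these caveats made explicit the proof is complete, and your closing observation --- that the right-hand side is independent of the chosen decomposition of $\oo_{\tid}\he$ --- is a genuine and useful byproduct.
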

\subsection{Analytic \mbox{$K$-theory} with support in a divisor with simple
normal crossing}
Let $X$ be a smooth complex compact manifold and $D$ be a reduced
divisor with simple normal crossing.
The branches of $D$ will be denoted by $D_{1}\he,\dots ,D_{N}\he$ and
$D_{ij}=D_{i}\cap D_{j}$.
By a d\'{e}vissage argument, the canonical map from $\bigoplus_{i=1}^{n}
G_{D_{i}}\he(X)$ to $G_{D}\he(X)$ is surjective. If $x$ belongs to
$G_{D_{ij}}\he(X)$, then the element $(x,-x)$ of
$G_{D_{i}}\he(X)\oplus G_{D_{j}}\he(X)$ is in the kernel of
this map. We will show that this kernel is generated by these elements:
\begin{proposition}\label{PropositionUnTraduction}
There is an exact sequence:
\[
\xymatrix{
\bigoplus_{i<j}G_{D_{ij}}\he(X)\ar[r]&\bigoplus_{i}G_{D_{i}}\he(X)\ar[r]&
G_{D}\he(X)
\ar[r]&0.}
\]
\end{proposition}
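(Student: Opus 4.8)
The plan is to derive the exactness from Quillen's localization and dévissage theorems (\cite{Qui}), by induction on the number $N$ of branches of $D$. Using Proposition \ref{LemmeUnAppendixArt1} one identifies $G_{D_i}(X)\simeq G(D_i)$, $G_{D_{ij}}(X)\simeq G(D_{ij})$ and $G_D(X)\simeq G(D)$ (the last one for $D$ with its reduced structure), so that the two maps in the sequence become, respectively, the difference of the push-forwards $D_{ij}\hookrightarrow D_i$ and the sum of the push-forwards $D_i\hookrightarrow D$. Right-exactness has already been recorded just above the statement, and the case $N=1$ is trivial. For the induction step I would single out one branch, say $A=D_1$ and $B=D_2\cup\cdots\cup D_N$, so that $D=A\cup B$, $B$ is a simple normal crossing divisor in $X$ with $N-1$ branches, and $A\cap B=D_{12}\cup\cdots\cup D_{1N}$ is a simple normal crossing divisor in $A$ with $N-1$ branches (its pairwise intersections being $D_{1i}\cap D_{1j}=D_1\cap D_i\cap D_j$).

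The key lemma will be a Mayer--Vietoris sequence attached to the closed cover $D=A\cup B$:
\[
G_{A\cap B}(X)\ \xrightarrow{\ \psi\ }\ G_A(X)\oplus G_B(X)\ \xrightarrow{\ \phi\ }\ G_D(X)\ \longrightarrow\ 0,
\]
with $\psi$ the difference and $\phi$ the sum of the push-forwards. To prove it I would apply Quillen's localization theorem twice: to the Serre subcategory $\coh_A(X)\subseteq\coh_D(X)$, and to the Serre subcategory $\coh_{A\cap B}(X)\subseteq\coh_B(X)$. Both quotient categories are canonically $\coh_{D\setminus A}(X\setminus A)$, and --- after identifying $G_0(\coh_A(X))=G_A(X)$ and $G_0(\coh_{A\cap B}(X))=G_{A\cap B}(X)$ by dévissage, exactly as in Proposition \ref{LemmeUnAppendixArt1} --- the inclusion of categories $\coh_B(X)\hookrightarrow\coh_D(X)$, which is exact, carries $\coh_{A\cap B}(X)$ into $\coh_A(X)$ and induces the identity on the common quotient. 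This gives a morphism between the two localization long exact sequences; in particular the two connecting homomorphisms out of $K_1\bigl(\coh_{D\setminus A}(X\setminus A)\bigr)$ differ by the push-forward $G_{A\cap B}(X)\to G_A(X)$. Surjectivity of $\phi$ is the dévissage statement already recalled, and exactness in the middle follows from the two localization sequences together with this comparison of connecting maps, by a short diagram chase.

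Finally, I would assemble the conclusion from three exact sequences: the Mayer--Vietoris sequence above, the induction hypothesis for $B$ (branches $D_2,\dots,D_N$), and the induction hypothesis for $A\cap B$ (branches $D_{12},\dots,D_{1N}$). Splitting $\bigoplus_{i<j}G_{D_{ij}}(X)$ as $\bigl(\bigoplus_{j\geq 2}G_{D_{1j}}(X)\bigr)\oplus\bigl(\bigoplus_{2\leq i<j}G_{D_{ij}}(X)\bigr)$ and $\bigoplus_i G_{D_i}(X)$ as $G_A(X)\oplus\bigl(\bigoplus_{i\geq 2}G_{D_i}(X)\bigr)$, and using the evident compatibilities of the push-forwards, one runs the following chase: given an element of $\bigoplus_i G_{D_i}(X)$ killed in $G_D(X)$, push its $B$-part to $G_B(X)$, use Mayer--Vietoris to write the resulting pair as $\psi(w)$, pull $w$ back (by the induction hypothesis for $A\cap B$) to $\bigoplus_{j\geq 2}G_{D_{1j}}(X)$, subtract the image of that preimage under the first map of the sequence so as to reduce to an element supported on $B$ with trivial class in $G_B(X)$, and conclude with the induction hypothesis for $B$.

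The main obstacle is the Mayer--Vietoris lemma --- more precisely the identification of the two connecting homomorphisms, which is what forces one to use the functoriality of Quillen's localization sequence rather than manipulate Grothendieck groups alone; one also has to check carefully that the relevant quotient categories really are $\coh_{D\setminus A}(X\setminus A)$, which uses that $A$ is a hypersurface, so that coherent sheaves on $X\setminus A$ extend coherently to $X$. Once the Mayer--Vietoris sequence is available, the rest is formal bookkeeping with direct sums and push-forwards.
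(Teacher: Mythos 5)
Your overall architecture coincides with the paper's: induction on the number $N$ of branches, with the whole weight carried by the two-piece sequence $G_{A\cap B}(X)\to G_A(X)\oplus G_B(X)\to G_D(X)\to 0$ obtained by splitting off one branch, followed by a diagram chase using the easy surjectivity for $A\cap B$ and the induction hypothesis for $B$ (your final chase is correct and is essentially the paper's). Where you diverge is in the proof of that key lemma: the paper avoids higher $K$-theory entirely by writing down an explicit inverse $\psi(\ff)=[i_{D'}^{*}\ff]+[\ii_{D'}\ff]$ of the surjection, checking by hand that the defect of additivity is a single class supported on $D'\cap D_N$ appearing with opposite signs in the two summands, whereas you propose two Quillen localization sequences and a comparison of connecting homomorphisms out of $K_1$ of the common quotient category. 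Your route is legitimate in principle and the chase you would run with the two localization sequences does close up.

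There is, however, one concretely false step in your version: the claim that both quotient categories are $\coh_{D\setminus A}(X\setminus A)$ because ``coherent sheaves on $X\setminus A$ extend coherently to $X$'' when $A$ is a hypersurface. This extension statement fails for coherent \emph{analytic} sheaves: take $X=\P^1\times\P^1$, $A=\{0\}\times\P^1$, $B=\P^1\times\{0\}$, and the sheaf $\bigoplus_{n\geq 1}\C_{(1/n,0)}$ on $X\setminus A$; it is coherent there and supported on $B\setminus A$, but its support accumulates on $A$, so it admits no coherent extension to $X$. Hence restriction $\coh_D(X)\to\coh_{D\setminus A}(X\setminus A)$ is not essentially surjective and the quotient category is not the geometric one. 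The gap is repairable because your argument never actually needs the geometric identification, only that the exact inclusion $\coh_B(X)\hookrightarrow\coh_D(X)$ induces an \emph{equivalence} $\coh_B(X)/\coh_{A\cap B}(X)\simeq\coh_D(X)/\coh_A(X)$; this is true and can be checked directly (for essential surjectivity, note that if $\ii_D^{\,k}\ff=0$ then $\ii_A^{\,k}\ff$ is supported on $B$ and $\ff/\ii_A^{\,k}\ff$ on $A$, so $\ff\cong\ii_A^{\,k}\ff$ in the quotient; full faithfulness is the usual computation with the calculus of fractions). With that substitution your comparison of connecting maps, and hence the Mayer--Vietoris lemma, goes through — but as written the justification is wrong, and it is worth knowing that the paper's explicit $\psi$ sidesteps both $K_1$ and this issue altogether.
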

\begin{proof}
We will deal with the exact sequence
\[
\xymatrix{
\bigoplus_{i<j}G(D_{ij})\he\ar[r]&\bigoplus_{i}G(D_{i})\he\ar[r]&
G(D)\he
\ar[r]&0.
}
\]
By the d\'{e}vissage theorem \ref{LemmeUnAppendixArt1}, this sequence is
isomorphic to the initial one.
We proceed by induction on the number $N$ of the branches of $D$.
Let $D'$ be the divisor whose branches are
$D_{1}\he,\dots ,D_{N-1}\he$.
We have a complex
\begin{equation}\label{Etoile}
\xymatrix{
\kan(D'\cap D_{N}\he)\ar[r]&\kan(D')\oplus\kan(D_{N}\he)
\ar[r]^(.62){\pi}&\kan(D)\ar[r]&0
}\tag{$*$}
\end{equation}
where the first map is given by
$
\xymatrix{\alpha \ar@{|->}[r]&(\alpha ,-\alpha )}
$.
Let us verify that this complex is exact. Consider the map
$\apl{\psi\,  }{ \Z[\, \coh(D)\, ]}{\kan(D')
\oplus\kan(D_{N}\he)\bigm/\kan(D'\cap D_{N}\he)}
$
defined by $\psi (\ff)=[i_{D'}\ee\ff]+[\ii_{D'}\he\ff]$.
Remark that
$\ii_{D'}\he$ is a sheaf of $\oo_{D_{N}\he}$-modules, namely
the sheaf of ideals of $D'\cap D_{N}\he$ in
$D_{N}\he$
extended to $D$ by zero.
Let us show that $\psi $ can be defined in $K$-theory.
\par
We consider an exact sequence
$\sutroiszerogd{\ff}{\g}{\hh}$ of coherent sheaves on $D$. Let us
define the sheaf $\nn$
by the exact sequence
\[
\xymatrix{
0\ar[r]&\nn\ar[r]&i_{D'}\ee\ff\ar[r]&i_{D'}\ee\g
\ar[r]&i_{D'}\ee\hh\ar[r]&0.
}
\]
It is clear that
$\nn$ is a sheaf of $\oo_{D'}\he$-modules with support in
$D'\cap D_{N}\he$,
and $[i_{D'}\ee\g]-[i_{D'}\ee\ff]-[i_{D'}\ee\hh]=-[\nn]$
in $\kan
(D')$.
Let us consider the following exact sequence of complexes:
\[
\xymatrix{
0\ar[r]&\ii_{D'}\he\ff\ar[r]\ar[d]&\ff\ar[r]\ar[d]&i_{D'}\ee\ff\ar[r]\ar[d]&0\\
0\ar[r]&\ii_{D'}\he\g\ar[r]\ar[d]&\g\ar[r]\ar[d]&i_{D'}\ee\g\ar[r]\ar[d]&0\\
0\ar[r]&\ii_{D'}\he\hh\ar[r]&\hh\ar[r]&i_{D'}\ee\hh\ar[r]&0}
\]
Let $\cc$ be the first complex, that is the first column of the diagram
above. It is a complex of \mbox{$\oo_{D_{N}\he}\he$-modules}.
If we denote by
$\hh^{k}(\cc)$, $0\leq k\leq 2$, the cohomology sheaves of
$\cc$, we have the long exact sequence
\[
\xymatrix{
0\ar[r]&\hh^{0}\be(\cc)\ar[r]&0\ar[r]
&\nn\ar[r]&\hh^{1}\be(\cc)\ar[r]&0\ar[r]&0\ar[r]&
\hh^{2}\be(\cc)\ar[r]&0.
}
\]
Since
$\hh^{1}(\cc)$ is a sheaf of $\oo_{D_{N}\he}\he\!\!$-modules, $\nn$
is also a sheaf of $\oo_{D_{N}\he}\he\!\!$-modules.
Therefore, $\nn$ is a sheaf of
$\oo_{\dpp\cap D_{N}\he}\he\!\!$-modules.
\par\medskip
In $\kan(D_{N}\he)$ we have
$
[\ii_{\dpp}\he\ff]-[\ii_{\dpp}\he\g]+[\ii_{\dpp}\he\hh]=
[\hh^{0}(\cc)]-[\hh^{1}(\cc)]+[\hh^{2}(\cc)]=-[\nn]
$. Thus
$\psi (\ff)-\psi (\g)+\psi (\hh)=([\nn],-[\nn])=0$
in the quotient.
\par\medskip
If $\ff$ belongs to $\kan(D)$, then $[\ff]=[i_{\dpp}\ee\ff]+
[\ii_{\dpp}\he\ff]$ in $\kan(D)$.
This means that
$\pi \circ\psi =\id$.
We consider now
$\hh$ in $\kan(\dpp)$ and $\kk$ in $\kan(D_{N}\he)$. Then
\[
\psi (\pi (\hh,\kk))=([i_{\dpp}\ee\hh]+[i_{\dpp}\ee\kk])\oplus
([\ii_{\dpp}\he\hh]+[\ii_{\dpp}\he\kk])=
([\hh]+[\kk_{|\dpp\cap D_{N}\he}\he])\oplus
[\ii_{\dpp\cap D_{N}\he}\he\kk].
\]
Remark that
$[\ii_{\dpp\cap D_{N}\he}\he\kk]=[\kk]-
[\kk_{|\dpp\cap D_{N}\he}\he]$
in $G\bigl(D_{N}\he\bigr)$.
Thus
$([\hh]+[\kk_{|\dpp\cap D_{N}\he}\he])\oplus([\kk]-[\kk_{|\dpp\cap
D_{N}\he}\he])=[\hh]\oplus[\kk]$ modulo $G(D'\cap D_{N}\he)$,
so that $\psi \circ\pi =\id$.
This proves that (\ref{Etoile}) is exact.
\par\medskip
We can now use the induction hypothesis with $D'$. We obtain the
following diagram,
where the columns as well as
the first line are exact:
\[
\xymatrix{
0&0&&\\
\kan(\dpp\cap D_{N}\he)\ar[r]^{r}\ar[u]&\kan(\dpp)\oplus\kan(D_{N}\he)
\ar[r]^(.6){u}\ar[u]&\kan(D)\ar[r]&0\\
\bigoplus_{i<N}\kan(D_{iN}\he)\ar[r]^(.4){s}\ar[u]_{q}&\bigoplus_{i<N}\kan(D_{i})
\oplus\kan(D_{N}\he)\ar[u]_{p}&&\\
& \bigoplus_{i<j<N}G(D_{ij})\ar[u]_{t}&&
}
\]
The map ${\bigoplus_{i}
\kan(D_{i})}\xymatrix{\ar[r]^{\pi }&}{\kan(D)}$ is clearly onto. Let
$\alpha $ be an element of ${\bigoplus_{i}
\kan(D_{i})}$ such that $\pi (\alpha )=0$. Then
$u(p(\alpha ))=0$, so that there exists $\beta $ such that
$r(\beta )=p(\alpha )$. There exists $\gamma $ such that
$q(\gamma )=\beta $. Then
$p(\alpha -s(\gamma ))=p(\alpha )-r(q(\gamma ))=0$.
So there exists $\delta $ such that $\alpha =s(\gamma )+t(\delta )$.
It follows that $\alpha $
is in the image of $\bigoplus_{i<j}\kan(D_{ij})$.
Hence we have the exact
sequence
\[
\xymatrix@C=9ex{
\bigoplus_{i<j}\kan(D_{ij})\ar[r]^(.53){s+t}&\bigoplus_{i}
\kan(D_{i})\ar[r]^(.55){\pi }
&\kan(D)\ar[r]&0,
}
\]
which finishes the proof.
\end{proof}

\end{document}